\documentclass[12pt]{article}
\usepackage{amsthm,amsmath,amsfonts,mathrsfs,amssymb,systeme,bm, bbm}
\usepackage{color,xcolor,graphicx}
\usepackage{natbib}
\usepackage{enumerate}
\usepackage[ruled,vlined]{algorithm2e}

\usepackage{mlmodern}
\usepackage{pdfrender,xcolor}
\usepackage{graphicx,color}
\usepackage[hidelinks,colorlinks=true,linkcolor=blue,citecolor=blue,urlcolor=blue]{hyperref}
\usepackage{algpseudocode}
\usepackage{comment}
\usepackage{cancel}

\pagestyle{plain}
\setlength{\topmargin}{-0.75in}
\setlength{\oddsidemargin}{0.1in}
\setlength{\evensidemargin}{0.0in}
\setlength{\textwidth}{160mm}
\setlength{\textheight}{240mm}

% Notations:
%% Math
%%% Real Analysis
\newcommand{\R}{\mathbb{R}}

\newcommand{\N}{\mathbb{N}}
\newcommand{\D}{\mathcal{D}} % function space
\newcommand{\C}{\mathcal{C}} % function space

\newcommand{\abs}[1]{\left\lvert#1\right\rvert}
%%% Linear Algebra
\newcommand{\T}{\prime} % \intercal
\newcommand{\uup}[1]{{(#1)}}
\newcommand{\e}[1]{e^{\uup{#1}}}
%%% Probability

\newcommand{\Prob}{\mathbb{P}}
\newcommand{\E}{\mathbb{E}}
\newcommand{\aseq}{\overset{a.s.}{=}}
\newcommand{\asto}{\overset{a.s.}{\to}}
%% Model
\newcommand{\J}{\mathcal{J}} % index set of stations
\newcommand{\K}{\mathcal{K}} % index set of blocks
\renewcommand{\b}[1]{u}

\newcommand{\route}[1]{\phi_{#1}}

%% Multi-Scale Heavy Traffic
 % center epsilon
\newcommand{\uu}{{\uup{r}}} % upside epsilon

%% BAR
 % indicator function
\newcommand{\II}[1]{\mathbb{I}{( #1 )}} % indicator function
\newcommand{\Ind}[1]{\mathbb{I}}  % indicator function

%% color

\newcommand{\uupp}[1]{{(#1)}}

\newcommand{\scale}[3]{{#1}^{\uupp{r, {#2}}}_{#3}}
\newcommand{\scaleL}[3]{{#1}^{\uupp{*, {#2}}}_{#3}}

\newcommand{\indexa}[1]{\alpha_{#1}}
\newcommand{\indexb}[1]{\beta_{#1}}
\newcommand{\RBM}{\operatorname{SRBM}}
\newcommand{\defi}{{:=}} % identity mapping  \operatorname{id}

\renewcommand{\tilde}{\widetilde}

\def\caP{\mathcal{P}}
\def\caM{\mathcal{M}}
\def\caS{\mathcal{S}}

\def\caE{\mathcal{E}}
\providecommand{\abs}[1]{\left\lvert#1\right\rvert}

\providecommand{\norm}[1]{\lVert#1\rVert}
\providecommand{\Norm}[1]{\left\lVert#1\right\rVert}

% 样式1：斜体
\newtheorem{theorem}{Theorem}
\newtheorem{lemma}{Lemma}
\newtheorem{corollary}{Corollary}
\newtheorem{proposition}{Proposition}
\newtheorem{assumption}{Assumption}
\newtheorem{remark}{Remark}

% 样式2：正体 (用于定义和示例)
\theoremstyle{definition}

\numberwithin{equation}{section}

\title{Functional Limits of Generalized Jackson Networks in Multi-scale Heavy Traffic}

\author{Zhen-Qing Chen$^1$\\{\small \href{mailto:zqchen@uw.edu}{zqchen@uw.edu}} \and J. G. Dai$^2$\\ {\small \href{mailto:jd694@cornell.edu}{jd694@cornell.edu}} \and Jin Guang$^3$\\{\small \href{mailto:jinguang@link.cuhk.edu.cn}{jinguang@link.cuhk.edu.cn}}}

\date{{$^1$Departments of Mathematics,
University of Washington, Seattle, Washington\\%
$^2$School of Operations Research and Information Engineering, Cornell University, Ithaca, New York\\
$^3$School of Data Science, The Chinese University of Hong Kong, Shenzhen, China}} 

\begin{document}

% \begin{titlepage}
%     \tableofcontents
% \end{titlepage}

% \newpage

\maketitle

\begin{abstract}
    We investigate the functional limits of generalized Jackson networks in a multi-scale heavy traffic regime where stations approach full utilization at distinct, separated rates. Our main result shows that the appropriately scaled queue length processes converge weakly to a limit process whose coordinates are mutually independent. This finding reveals the underlying dynamic mechanism that explains the asymptotic independence previously observed only in stationary distributions. The specific form of the limit process is shown to depend on the initial conditions. Moreover, we introduce and analyze a blockwise multi-scale heavy traffic regime. In this regime, the network's stations are partitioned into blocks, where stations in different blocks approach the heavy traffic at different rates, while stations within the same block share a common rate. We obtain the functional limits in this regime as well, showing that the limit process exhibits blockwise independence. 
\end{abstract}

\section{Introduction}\label{sec:Intro}

\subsection{Motivation and multi-scale heavy traffic model} \label{sec: IntroA}

The study of queuing networks is a cornerstone of stochastic modeling, with wide-ranging applications in production, transportation, and inventory management, as well as in the design and operational efficiency of service systems like contact centers and hospitals. A foundational result in this area is the Jackson network, which, under the assumption of Poisson arrival processes and exponential service times, exhibits an attractive product-form stationary distribution for its queue lengths \citep{Jack1957,Jack1963}. When this assumption is relaxed to allow for general interarrival and service time distributions, the network is referred to as a generalized Jackson network (GJN). In this more practical setting, the analytical tractability of the product-form solution is lost, and the stationary distribution of queue lengths generally eludes a closed-form expression. 

Consequently, extensive research has been devoted to the approximation of GJN stationary distributions, particularly under the heavy traffic condition where the system's resources are nearly fully utilized. As the traffic intensity at each station approaches unity at a common rate, \citet{Reim1984} proved that the scaled queue length vector process converges weakly to a semimartingale reflecting Brownian motion (SRBM), which has a simpler dynamic and was first proposed by \citet{HarrReim1981}. While theoretically profound, this connection presents its own practical difficulties, as obtaining the stationary distribution of an SRBM is itself a formidable task. Only some special cases, like satisfying the skew symmetric condition \citep{HarrWill1987}, are tractable.

A recent breakthrough by \citet{DaiGlynXu2023} developed a novel asymptotic approximation by introducing a \textit{multi-scale heavy traffic} regime, where different stations approach the heavy traffic at distinct, separated rates. Specifically, they considered a family of GJNs under the condition that
\begin{equation} \label{eq: delta}
    \lim_{r\to 0} \rho_i^\uu = 1 \quad  \text{ and } \quad \lim_{r\to 0} \frac{1-\rho_j^\uu}{1-\rho_i^\uu} = 0  \quad \text{ for $1\leq i < j \leq J$,}
\end{equation}
where $\rho_j^\uu$ is the traffic intensity at station $j$ for the $r$th network. Under a specific parameterization $1-\rho_j^\uu = r^j$, they established a remarkable product-form result: the scaled steady-state queue lengths converge to a vector of independent, exponentially distributed random variables:
\begin{equation} \label{eq: intro res Dai}
    \Big( \big(1-\rho_1^\uu\big)Z_1^\uu(\infty), \ldots, \big(1-\rho_J^\uu\big)Z_J^\uu(\infty) \Big) \Longrightarrow \big( E_1,  \ldots, E_J \big) \quad \text{as } r\to 0,
\end{equation}
where $Z_j^\uu(\infty)$ is the steady-state queue length at station $j$ in the $r$th network. This elegant result provides a new and tractable approximation for the GJN stationary distribution. However, as a result concerning the system's long-run equilibrium, it does not illuminate the temporal dynamics that lead to this state, leaving a crucial question unanswered: what is the underlying mechanism that causes the queue lengths to become asymptotically independent?

This paper answers this question by investigating the functional limits, also known as process-level convergence, of GJNs under the general multi-scale heavy traffic condition \eqref{eq: delta}. We show that with an appropriate temporal and spatial scaling applied to each station, the queue length process converges weakly to a limit process $Z^*$ whose coordinate processes are mutually independent:
\begin{equation} \label{eq: intro res}
    \begin{aligned}
        &\hat{Z}^\uu \defi \Big\{ \Big( \big(1-\rho_1^\uu\big)Z_1^\uu\big(t/\big(1-\rho_1^\uu\big)^2\big), \ldots, \big(1-\rho_J^\uu\big)Z_J^\uu\big(t/\big(1-\rho_J^\uu\big)^{2}\big) \Big) ;~t\geq 0   \Big\} \\
        & \qquad \qquad \Longrightarrow Z^*\defi \Big\{ \big( Z^*_1(t), \ldots, Z^*_J(t) \big);~t\geq 0 \Big\} \quad \text{as } r\to 0,
    \end{aligned}
\end{equation}
provided the scaled initial state $\hat{Z}^\uu(0)$ also converges weakly to a random vector $\xi$. The emergence of the asymptotic independence at the process level is the key finding of this work. It reveals that the asymptotic independence observed by \citet{DaiGlynXu2023} is not an isolated phenomenon of stationary distributions but is rooted in a more fundamental decoupling of the system's dynamics, which is forced by the separated timescales as shown in Section \ref{sec: proof SRBM}.

The specific form of each coordinate process $Z^*_j$ in the limit depends critically on the asymptotic behavior of the initial condition. In this paper, we concentrate on two cases. First, when each limit $\xi_j$ of $(1-\rho_j^\uu)Z_j^\uu(0)$ is strictly positive, $Z^*_j$ is a one-dimensional SRBM starting from $\xi_j$; this is a consequence of a more general result we establish in Theorem~\ref{thm:GJN}. As the stationary distribution of a one-dimensional SRBM is exponential, this result is consistent with the steady-state convergence in \eqref{eq: intro res Dai}. Second, when the initial queue length at each station is of magnitude $1/(1-\rho_1^\uu)$, including the zero length case,  each $Z^*_j$ becomes a coordinate process of some multi-dimensional SRBM. This is because the queue lengths start at comparable levels, leading to significant initial interactions that prevent them from decoupling. These two cases represent two extremes among the possible initial conditions. We leave the investigation of other reasonable  initial conditions to future work.

We further extend our framework by introducing and analyzing a \textit{blockwise multi-scale heavy traffic} regime. In this regime, the network's stations are partitioned into $K$ distinct blocks, $A_1, \ldots, A_K$. Stations in different blocks approach the heavy traffic at different rates, while stations within the same block share a common rate. Formally, for any $1\leq i< j\leq J$,
\begin{equation*} 
    \lim_{r\to 0} \rho_{i}^\uu = 1, \quad \text{ and } \quad \lim_{r\to 0} \frac{1-\rho_{j}^\uu}{1-\rho_{i}^\uu} \in \begin{cases}
        (0,\infty) & \text{if $i$ and $j$ are in the same block}, \\
        \{0 \} & \text{if $i$ and $j$ are in different blocks}.
    \end{cases}
\end{equation*}
For example, a 6-station network with a traffic intensity vector $\rho^\uu=\mathbf{1}-(r, 2r, 3r^2, 2r^2, r^2,$ $2r^3)$ falls into this regime with three blocks: $A_1 = \{1, 2\}$ (idle rate of order $r$), $A_2 = \{3, 4, 5\}$ (idle rate of order $r^2$), and $A_3 = \{6\}$ (idle rate of order $r^3$). We extend our functional limit theorem to this more general setting and show that the limit process exhibits blockwise independence, as established in Theorem \ref{thm:GJN block}. This framework is powerful as it unifies both conventional heavy traffic (where all stations form a single block) and the fully multi-scale heavy traffic (where each station is its own block) as special cases.

Our proof of the main result (Theorem \ref{thm:GJN}) consists of two major parts. The first stage reduces the problem's complexity by employing a \textit{functional strong approximation}, as presented in Proposition~\ref{prop: SA}. This allows us to construct a family of SRBMs, denoted $\tilde{Z}^\uu$, on the same probability space as the GJN processes $Z^\uu$ for each $r\in (0,1)$, such that the two processes become asymptotically indistinguishable as $r\to 0$. This crucial step effectively reduces the challenge of analyzing complex GJNs to the more tractable problem of analyzing the asymptotic behavior of their corresponding SRBMs.

The second, and more technically involved, stage is to prove that this family of scaled SRBMs converges weakly as $r\to 0$ to the process $Z^*$, whose coordinates are mutually independent. The analysis for this stage hinges on the \textit{Skorokhod reflection mapping}, the foundational mathematical tool for the study of SRBMs. For any  c\`adl\`ag input path $x$ in $\R^J$ starting from the nonnegative orthant $\mathbb{R}_+^J$, 
denoted as $x\in \D_+([0,\infty),\R^J)$,
the mapping produces a unique pair of  
c\`adl\`ag
 output paths, $(z, y)$ in $\R^J_+\times \R_+^J$, that satisfies the following conditions for a given reflection matrix $R$:
\begin{enumerate}
    \item[(i)] $z(t)=x(t)+R y(t) \text{ for all } t\geq 0$;
      \item[(ii)] for each $j\in \J$, $y_j$ is nondecreasing with $y_j(0)=0$; 
    \item[(iii)] for each $j\in \J$, $y_j$ only increases at times $t$ where $z_j(t) = 0$, i.e., $\int_0^\infty z_j(t) d y_j(t)=0$.
\end{enumerate}
The path $z = \Phi(x;R)$ is called the reflected path, and $y = \Psi(x;R)$ the regulator path. Intuitively, condition (iii) ensures that the regulator $y$ acts only to enforce $z(t)\in \mathbb{R}_+^J$ for all $t\geq 0$. When the path $z$ hits a boundary face $F_j=\{ z \in \mathbb{R}_+^J : z_j = 0 \}$, the regulator pushes it back into the interior in a direction given by the $j$th column of $R$. This is first established by \citet{HarrReim1981} for 
continuous path $x$ in $\R^J$ and  by \citet{John1983} for  c\`adl\`ag path $x$ in $\R^J$. 
 The Skorokhod reflection mapping $(\Phi, \Psi)$ is Lipschitz continuous under the uniform norm; that is, for any $T>0$ and any $x,x'\in \D_+([0,\infty),\R^J)$,
\begin{equation} \label{eq: Lipschitz}
    \begin{gathered}
        \norm{\Phi(x;R)-\Phi(x';R)}_{T} \leq \kappa(R)\norm{x-x'}_{T}, \text{ and }
        \norm{\Psi(x;R)-\Psi(x';R)}_{T} \leq \kappa(R)\norm{x-x'}_{T},
    \end{gathered}
\end{equation}
where $\kappa(R)$ is the largest row sum of absolute values of matrix $R$ and 
$\norm{x}_{T}$ is the uniform norm over $[0, T]$ defined as
\begin{equation} \label{eq: uniform norm}
    \norm{x}_{T} :=\sup_{0\leq t\leq T}\norm{x(t)}:=\sup_{0\leq t\leq T}\max_{1\leq k \leq J}\abs{x_k(t)}.
\end{equation}

An SRBM $\tilde{Z}$ is the process obtained by applying the Skorokhod reflection mapping to a driving $J$-dimensional Brownian motion $\tilde{X}$. Thus, $\tilde{Z} = \Phi(\tilde{X}; R)$, and its dynamics are described by the equation $\tilde{Z}(t) = \tilde{X}(t) + R \tilde{Y}(t)$, where $\tilde{Y} = \Psi(\tilde{X}; R)$ is the regulator process, often representing the cumulative idle time at each station in queueing applications.

Having defined the SRBM, the convergence of the scaled SRBM $\tilde{Z}^\uu$ like $\hat{Z}^\uu$ in \eqref{eq: intro res} proceeds in two steps. 
 First,    we show that each individual coordinate process has a weak limit and 
 characterize its distribution. To do this, we analyze the system from the perspective of a single station's timescale, say that of station $k$. This analysis reveals a critical \textit{scale separation} phenomenon, formalized in Proposition~\ref{prop: rk initial}: for each $i<k$ and $j>k$, 
 as $r\to 0$, 
\begin{equation*}
    \big(1-\rho_k^\uu \big)\tilde Z^\uu_i\big(t/\big(1-\rho_k^\uu \big)^2\big) \Longrightarrow 0  \quad \text{and} \quad \big(1-\rho_k^\uu \big)\tilde Y^\uu_j\big(t/\big(1-\rho_k^\uu \big)^2\big) \Longrightarrow 0.
\end{equation*}
From station $k$'s viewpoint, less congested stations ($i<k$) appear to be always empty, while more congested stations ($j>k$) appear to have no idle time. This scale separation effectively isolates the dynamics of station $k$ from the rest of the network. Consequently, its scaled process converges weakly to a one-dimensional SRBM $Z^*_k$. Furthermore,  
this limit  SRBM $Z^*_k $ for station $k$ is driven by a Brownian motion which is a functional of the weak limit  of the $k$th  scaled Brownian motion $\tilde X^{(k, r)}:= \{ (1-\rho_k^\uu) \tilde X(t/(1-\rho_k^\uu)^2);~t\geq 0  \} $ as $r\to 0$.

Second,  we establish the asymptotic independence of the components of the scaled SRBM process. This is a  central insight of our paper. 
It is a consequence of the conclusion in the first step above, the continuity of the Skorokhod reflection mapping and a simple but fundamental result established in Proposition \ref{proposition: joint BM} which asserts the asymptotic independence of $ \{ \tilde X^{(k, r)}; k=1, \cdots, J  \}$ as $r\to 0$.  This  explains the decoupling mechanism for the multi-scale GJNs, and completes the characterization of the limit process $Z^*= (Z^*_1, \cdots , Z^*_J)$.

The rest of the paper is organized as follows. Section \ref{sec: literature} reviews the relevant literature, and Section \ref{sec: notation} introduces some key notations. In Section~\ref{sec: GJN}, we formally define the multi-scale GJN model and present our main functional limit results for GJNs and SRBMs under a matching-rate initial condition. The proof for the GJN functional limit is given in Section \ref{sec: proof GJN}, which builds upon the asymptotic strong approximation and the SRBM functional limit; these two foundational results are established in Sections \ref{pf: SA} and~\ref{sec: proof SRBM}, respectively. Section \ref{sec: conventional} presents the functional limit results under a lowest-rate initial condition, with the proofs deferred to Appendix \ref{sec: conventional proof}. Finally, Section~\ref{sec: block} presents our analysis on the blockwise multi-scale heavy traffic regime, with the corresponding proofs provided in Appendix \ref{sec: block heavy traffic proof}.

\subsection{Literature Review}
\label{sec: literature}

\subsubsection{Related Work on generalized Jackson networks}

Queueing networks offer potent and practical tools for assessing the performance of complex systems, encompassing manufacturing, computer, and communication systems; see, for example, \citet{Mor2013} and \citet{SrikYing2014}. Despite the growing body of literature dedicated to the performance analysis of queueing networks, deriving an analytical expression for the steady-state performance of such networks remains challenging. Exact performance analysis has been constrained to a few highly specialized networks. As a consequence, researchers have proposed various approximation methods as alternatives. Notably, significant efforts concerning single and multi-class queueing networks have centered around {fluid models} for stability analysis, {diffusion approximations} for performance analysis, and exploring the interconnections between these methods and their objectives. In this subsection, we present a concise overview of these advancements within the context of GJNs.

To analyze the steady-state behavior of GJNs, a first task is to establish the conditions for stability, i.e., the existence of a stationary distribution. In this regard, \citet{Sigm1990} made a seminal contribution by demonstrating that the GJN exhibits a unique stationary distribution only when the traffic intensity at each station remains below unity. \citet{Dai1995} introduced a similar result through a notable linkage between the stochastic and fluid models of queueing networks; see also \citet{Stol1995}. Although the stability conditions in GJNs exhibit a straightforward form, the exact expression of the stationary distribution is generally intractable, except for Jackson networks where interarrival and service times are exponential.

Regarding diffusion approximations, \citet{Reim1984} demonstrated that, as the traffic intensity approaches unity at the same rate, the scaled queue length processes converge weakly to an SRBM, which was first introduced by \citet{HarrReim1981}. This approximation is supported by the functional central limit theorem, also known as the diffusion approximation or the heavy traffic limit theorem. In this context, time is linearly accelerated, and space is compressed by a square-root factor. However, process-level convergence does not necessarily guarantee steady-state convergence. Under mild moment-bound conditions for interarrival times and service times, \citet{GamaZeev2006} and \citet{BudhLee2009} further established that the stationary distributions of the scaled queue lengths in GJNs converge weakly to that of the associated SRBM. This convergence is achieved through the application of the {limit interchange argument} between the scaling parameter and the time horizon, a methodology that has been widely employed to study steady-state approximations in various queueing systems, such as the multi-class single server queue with feedback in \citet{Kats2010} and the limited processor sharing queue in \citet{ZhanZwar2008}. 

An alternative approach to the limit interchange argument is the {basic adjoint relationship} (BAR) approach, developed by \citet{Miya2017} and \citet{BravDaiMiya2017}. One significant advantage of the BAR approach is its direct characterization of the stationary distribution of queueing networks, bypassing the need to handle their transient dynamics. This approach has found application in multi-class queueing networks with static-buffer-priority service policies in \citet{BravDaiMiya2024}.

The recent study by \citet{DaiGlynXu2023} introduced a novel approximation method for GJNs, named multi-scale heavy traffic. In this regime, the traffic intensity of each station approaches unity at different rates, leading to an asymptotic product-form structure in the stationary distribution of the scaled queue length. This independence result is rather surprising, since neither the pre-limit stationary distribution is of product form, nor is the stationary distribution of the conventional heavy traffic limit. This regime has been further extended to multiclass queueing networks by \citet{DaiHuo2024} and to SRBMs by \citet{GuanChenDaiGlyn2024}. They all employed the BAR approach to characterize the stationary distribution of GJNs in this multi-scale heavy traffic regime. 

A similar asymptotic independence has been observed at the process level in specialized contexts. For instance, \citet{kriukov2025}, in a concurrent and independent study, established functional convergence for Gaussian-driven, feed-forward networks, showing the same phenomenon. However, their analysis is specific to a restrictive network topology that prohibits traffic merging, relies on the properties of Gaussian inputs, and focuses on the system being in stationarity from initial time 0.
 In contrast, our framework accommodates general initial conditions for generalized Jackson networks with arbitrary routing, revealing the critical role of the initial state in shaping the limit. We thus demonstrate that this asymptotic independence is a fundamental consequence of timescale separation in complex stochastic networks, not a feature of a particular model structure or a particular initial state of the system.

\subsubsection{Related Work on Semimartingale reflecting Brownian motions}

SRBMs have been extensively studied as diffusion approximations—also known as functional central limit theorems—for a wide range of stochastic networks operating under conventional heavy traffic conditions, where the traffic intensities of all stations converge to unity at the same rate. The concept of SRBMs was introduced by \citet{HarrReim1981} through the Skorokhod reflection mapping, providing a foundational framework for analyzing such systems. This framework was first applied to prove diffusion approximations for GJNs by \citet{Reim1984} and \citet{HarrWill1987}. 

Equipped with the Skorokhod reflection mapping, SRBMs have been applied to multiclass queueing networks under various service disciplines. Research in this area includes studies on multiclass single-station networks \citep{Reim1988,DaiKurt1995}, multiclass feedforward networks \citep{Pete1991,HarrNguy1993}, reentrant lines with priority disciplines \citep{ChenZhan1996, DaiYehZhou1997,BramDai2001}, multiclass queueing networks with first-in-first-out disciplines \citep{ChenZhan2000}, and other head-of-line disciplines \citep{Will1998, Bram1998}, among others. See, for example, \citet{Miya2015} for a comprehensive survey on diffusion approximations for queueing networks.

The necessary and sufficient condition for the existence of SRBMs in an orthant is that the reflection matrix $R$ is a completely-$\caS$ matrix—meaning every principal submatrix of \( R \) is an \( \mathcal{S} \)-matrix. A matrix \( R \) is called an \( \mathcal{S} \)-matrix if there exists a vector \( u \geq 0 \) such that \( Ru > 0 \). Under this condition, the SRBM is in fact unique in distribution \citep{Will1995}. However, the necessary and sufficient conditions for the stability (or positive recurrence) of general SRBMs remain unknown. \citet{BramDaiHarr2010} provided a necessary condition for an SRBM to possess a stationary distribution: 
\begin{equation} \label{eq1:necessary}
  \text{$R$ is nonsingular $\quad$ and  $\quad$  $\delta\defi-R^{-1}\mu>0$},
\end{equation}
where \( \mu \) is the drift vector.
If $R$ is an $\caM$-matrix (i.e., $R=sI-B$ where $s>0$ and $B$ is a nonnegative matrix with the spectral radius less than $s$), then this condition is also sufficient for the stability of the SRBM \citep{HarrWill1987}.
Additionally, \citet{BramDaiHarr2010} demonstrated that for the case when $d=2$, stability is achieved if and only if condition \eqref{eq1:necessary} holds and $R$ is a so-called $\caP$-matrix (that is, every principal minor is positive). However, condition \eqref{eq1:necessary} is found to be insufficient for stability in three and higher dimensions, even if $R$ is a $\mathcal{P}$-matrix. For the case when $d=3$, \citet{BramDaiHarr2010} further provided a necessary and sufficient condition for stability, albeit one involving a complex expression. Nevertheless, in four and higher dimensions, there is no established necessary and sufficient condition for the stability of SRBMs.

The stationary distribution of SRBMs is crucial for assessing the long-run performance of complex stochastic networks. However, as indicated earlier, obtaining explicit forms of the stationary distribution is generally intractable, except in special cases such as those discussed by \citet{Fodd1983,HarrWill1987}. This challenge has led to the development of numerical methods by \citet{DaiHarr1992} and \citet{ShenChenDaiDai2002} to compute the stationary distribution of SRBMs. Effective simulation methods have been proposed by \citet{BlanChen2015,BlanChenSiGlyn2021} to estimate the stationary distributions of SRBMs.

\subsection{Notation}\label{sec: notation}
Let $\N$ denote the set of positive integers $\{1,2,\ldots\}$ and, for $J\in \N$, $\R^J$ denote the set of $J$-dimensional real-valued vectors. The symbol $\defi$ is used for definitions. For $a,b\in \R$, let $a\wedge b := \min\{a,b\}$ and $a\vee b:= \max\{a,b\}$. For a vector $a$ or a matrix $A$, we use the prime $a^\T$ or $A^\T$ to denote its transpose. For simplicity, we denote by $\mathbf{0}$ and $\mathbf{1}$ the zero vector and the all-ones vector, respectively, whose dimensions are determined by the context. Throughout the paper, all comparison operations for vectors and matrices are component-wise, and we define $\R_+^J \defi \{a\in \R^J\mid a\geq \mathbf{0}\}$.

Let $\C([0,\infty),\R^J)$ denote the space of continuous functions from $[0,\infty)$ into $\R^J$. We endow $\C([0,\infty),\R^J)$ with the topology of uniform convergence on compact time intervals.
A sequence $\{x^{(n)}\}_{n=1}^\infty$ in $\C([0,\infty),\R^J)$ converges to $x$ if and only if for each $T>0$, $ \norm{x^{(n)} - x}_T\to 0$ as $n\to \infty$, where
$\norm{\cdot}_{T}$ is the uniform norm defined in \eqref{eq: uniform norm}. 
We write $x_i$ for the $i$th component of $x$ for $i\in \J \defi \{1,\ldots,J\}$.

Let $\D([0,\infty),\R^J)$ denote the space of functions from $[0,\infty)$ into $\R^J$ that are right-continuous and have finite left limits. 
We endow $\D([0,\infty),\R^J)$ with the (Skorokhod) $J_1$-topology, which is induced by a metric $m_{J_1}$ on $\D([0,\infty),\R^J)$. 
For a function $x$ in $\C([0,\infty),\R^J)$, a sequence $\{x^{(n)}\}_{n=1}^\infty$ in $\D([0,\infty),\R^J)$ converges to $x$ in the $J_1$-topology if and only if $\{x^{(n)}\}_{n=1}^\infty$ converges to $x$ uniformly on any compact time interval of $[0,\infty)$ (u.o.c.). See Theorems 1.14 and 1.15 in Chapter VI of \citet{jacod2013limit} for a precise definition of the $J_1$-topology and a proof of this fact. Since the limits in this paper are all continuous functions, this u.o.c.~characterization of the $J_1$-topology is sufficient for our purposes.

For two random elements $X$ and $Y$, we use $X\aseq Y$ and $X \overset{d}{=}Y$ to denote that $X$ equals $Y$ almost surely and in distribution, respectively. For two functions $f$ and $g$, we use $f(r) = O(g(r))$ as $r\to 0$ to denote $\limsup_{r\to 0} |f(r)/g(r)| < \infty$ and $f(r) = o(g(r))$ as $r\to 0$ to denote $\lim_{r\to 0} f(r)/g(r) = 0$. We use  $\Longrightarrow$ to  denote weak convergence of random variables or random processes.
For a set $A$, $\II {A}$ denotes the indicator function of $A$.

\section{Main Results} \label{sec: GJN}

\subsection{Model setting and assumptions}\label{sss:setting}

We consider an open generalized Jackson network (GJN) with $J$ service stations. Each station consists of a single server and an infinite-capacity buffer. Each station receives jobs from an external arrival source (possibly null) and service completions from some other stations. When an arriving job at a station finds the server busy, it waits in the buffer. Jobs at each station are processed following the first-come-first-serve (FCFS) discipline. Upon completion of a job at a station, the job is either routed to another station or exits the network. All jobs visiting a station are homogeneous and share the same service time distribution and routing probabilities.

To formalize the dynamics of the network, we utilize the following model, consistent with \citet{DaiGlynXu2023}, which incorporates stochastic interarrival and service times, routing mechanisms, and initial conditions for each station. For each station $j\in \J:=\{1,\ldots,J\}$, there are two i.i.d.~sequences of random variables, $\{T_{e,j}(i);i\in \N\}$ and $\{T_{s,j}(i);i\in \N\}$, two parameters $\alpha_j\ge 0$ and $\mu_j>0$, an i.i.d.~sequence of random vectors $\{\route{j}(i); i\in\N\}$, and a nonnegative random variable $Z_j(0)$, all defined on a common probability space $(\Omega, \mathscr{F}, \mathbb{P})$.
We assume that the following $4J$ sequences are mutually independent:
\begin{equation}  \label{eq: 4J sequence}
    \{T_{e,j}(i);i\in \mathbb{N}\}_{j=1}^J,\quad \{T_{s,j}(i);i\in \mathbb{N}\}_{j=1}^J,\quad \{\route{j}(i); i\in\N\}_{j=1}^J, \quad \{Z_j(0);j\in \J\}
\end{equation}
and that the first $2J$ sequences are unitized: $\E[T_{e,j}{(1)}]=1$ and $\E[T_{s,j}{(1)}]=1$ for all $j\in \J$. 

The value of $\alpha_j$ denotes the external arrival rate to station $j$. Let $\caE = \{j : \alpha_j > 0\}$ be the set of indices for those stations that have external arrivals. For each $i\in \N$ and $j\in \caE$, $T_{e,j}(i)/\alpha_j$ represents the interarrival time between the $i$th and $(i+1)$th externally arriving jobs to station $j$. The value of $\mu_j$ is the service rate at station $j$, and $T_{s,j}(i)/\mu_j$ denotes the service time of the $i$th job at station $j$. We impose the following moment assumption on the interarrival and service times.
\begin{assumption}[Moment condition] \label{assmpt: moment}
    There exists $\varepsilon\in (0,1)$ such that 
    \begin{equation} \label{eq: moment condition}
        \E\big[\big(T_{e,j}(1)\big)^{2+\varepsilon}\big]<\infty \quad \text{for all $j \in \caE$}, \quad \text{and} \quad \E\big[\big(T_{s,j}(1)\big)^{2+\varepsilon}\big]<\infty \quad \text{for all $j \in \J$}.
    \end{equation}
\end{assumption}
The random vector $\route{j}(i)$ represents the routing decision of the $i$th job at station $j$, based on the routing probability matrix $P$. Specifically, the job is routed to station $k$ if $\route{j}(i)=\e{k}$ with probability $P_{jk}$, and exits the network if $\route{j}(i)=\mathbf{0}$ with probability $P_{j0}:= 1-\sum_{k\in \J}P_{jk}$, where $\e{k}$ is the $J$-dimensional vector where the $k$th element is $1$ and all other elements are $0$. We assume that this network is open, ensuring all jobs eventually exit the network, and equivalently, $I-P$ is invertible, where $I$ is the identity matrix. For a given $t\ge 0$, let $Z(t)$ be a $J$-dimensional vector whose $j$th element $Z_j(t)$ is the queue length at station $j$ at time $t$, including the one possibly being processed. The random variable $Z_j(0)$ accounts for the initial number of jobs at station $j$.

\paragraph{Traffic equation.} Let $\{\lambda_{j}, j\in \J\}$ be the solution to the traffic equations:
\begin{equation}\label{eq:traffic}
    \lambda_{j}=\alpha_{j}+\sum_{i\in \J} \lambda_i P_{i j} \qquad \text{for each}~j\in \J.
\end{equation}
For each $j\in\J$, $\lambda_j$ is interpreted as the nominal total arrival rate to station $j$, which includes both external arrivals and internal arrivals from other stations. 
The traffic intensity at station $j$ is defined to be $\rho_j:={\lambda_j}/{\mu_j}$. 

\paragraph{Multi-scale heavy traffic.} Following \citet{DaiGlynXu2023},  we consider a sequence of GJNs indexed by $r \in(0,1)$, and denote by $Z^\uu=\{Z^\uu(t);~t\geq0\}$ the queue length process for the $r$th system. We denote the initial queue length and the service rate at station $j$ in the $r$th GJN by $Z^\uu_j(0)$ and $\mu_j^\uu$, respectively. These are the only model parameters dependent on $r$, while all other parameters, including the external arrival rates $\{\alpha_j\}_{j=1}^J$, unitized interarrival and service times, and routing vectors specified in \eqref{eq: 4J sequence}, are independent of $r$.

Since we focus on the multi-scale heavy traffic regime, we use $J$ functions $\gamma_1,\ldots,\gamma_J$ to describe the separated scales of the system. Throughout this paper, unless otherwise mentioned, we always assume the scale functions $(\gamma_1,\ldots,\gamma_J)$ satisfy the following conditions:
 they are $J$ strictly increasing functions of $r$ on the unit interval $(0,1)$ such that
    \begin{equation*} %\label{eq: tech2}
        \lim_{r\to 0}\gamma_j(r)= 0\quad \text{and} \quad\lim_{r\to 0} {\gamma_j(r)}/{\gamma_i(r)} = 0  \quad \text{ for $1\leq i < j \leq J$},
    \end{equation*}
    and 
    \begin{equation}\label{eq: tech}
       \limsup_{r\to 0}  \gamma_1(r) \log \log (1/\gamma_J(r)) < \infty.
    \end{equation}

The technical assumption \eqref{eq: tech} is only used in the proof of Proposition~\ref{prop: SA} when applying Lemma~\ref{lemma: SA_BM}. It is very mild and is satisfied by most common families of increasing  functions. For example, it holds for any polynomial functions $\gamma_j(r)=r^{\beta_j}$ with $\beta_1<\beta_2<\ldots<\beta_J$, which includes the specific case $\gamma_j(r) = r^{j}$ used in \citet{DaiGlynXu2023}. It only excludes extremely fast-decaying functions with respect to $\gamma_1(r)$, such as those of the form $\gamma_J(r)=\exp(-\exp(1/\gamma_1(r)))$.

\begin{assumption}[Multi-scale heavy traffic] \label{assmpt: multiscale}
    The service rates
    \begin{equation*}
        \mu_j^\uu  = \lambda_j + \gamma_j(r) \quad \text{for all $j\in \J$}.
    \end{equation*}
\end{assumption}
Under Assumption \ref{assmpt: multiscale}, $\rho^{(r)}_j:= \lambda_j/\mu^{(r) }_j \to 1$ as $r\to 0$ for all $j\in \J$. This implies that each station is in heavy traffic when $r\to 0$, but different stations approach heavy traffic at distinct, separated rates. 

As mentioned in the introduction, the convergence of the initial states plays a crucial role in determining the limit process. We first consider the case where the initial queue length at each station is of the same order as its corresponding idle rate $1-\rho_j^\uu$.

\begin{assumption}[Matching-rate initial condition]  \label{assmpt: initial}
    There exists an $\R_+^J$-valued   random vector $\xi$ such that 
    \begin{equation} \label{eq: initA}
        \Big( \gamma_1(r) Z^\uu_1(0),~\gamma_2(r) Z^\uu_2(0),~\ldots~,\gamma_J(r)Z^\uu_J(0) \Big)  \Longrightarrow \xi \quad \text{as } r\to 0,
    \end{equation} 
    and for all $j=2,3,\ldots,J$,
    \begin{equation} \label{eq: initB}
        \gamma_{j-1}(r) Z^\uu_j(0) \Longrightarrow \infty \quad \text{as } r\to 0.
    \end{equation} 
\end{assumption}

Condition \eqref{eq: initA} is necessary for the weak convergence of the scaled 
queue length process  $\hat{Z}^\uu $ defined in \eqref{eq: intro res}.
 Condition \eqref{eq: initB} in Assumption \ref{assmpt: initial} is automatically satisfied if each component of $\xi$ in \eqref{eq: initA} is strictly positive. Condition \eqref{eq: initB} states that, relative to station $j-1$, the initial queue length at station $j$ is asymptotically infinitely large. This ensures that queue lengths of all stations are of widely separated magnitudes and close to their appropriate scales at the initial time and then this separation persists for all subsequent times $t>0$. This condition is crucial for the limit to be a process of independent one-dimensional SRBMs.

To illustrate that the distribution of the limit process depends critically on the asymptotic behavior of the initial condition of the system,
  we will study in Section \ref{sec: conventional}  a case when condition \eqref{eq: initA} holds but \eqref{eq: initB} fails. The functional limits of multi-scale GJNs under all other initial conditions when \eqref{eq: initA} holds but \eqref{eq: initB} fails will be studied in a subsequent paper.

\subsection{Generalized Jackson networks in multi-scale heavy traffic} \label{sss:results}

In this section, we present our main functional limit result for the queue length process $Z^\uu$ of GJNs in the multi-scale heavy traffic regime. 

\begin{theorem}\label{thm:GJN}
    Suppose Assumptions \ref{assmpt: moment}-\ref{assmpt: initial} hold. Then
    \begin{equation} \label{eq: lim GJN}
        \left\{ \left( \gamma_1(r)Z_1^\uu\big(t/\gamma_1^2(r)\big),  \ldots, \gamma_J(r) Z_J^\uu\big(t/\gamma_J^2(r)\big) \right);~t\geq 0 \right\} \Longrightarrow Z^* \quad \text{as } r\to 0,
    \end{equation}
    where $Z^*=\{Z^*(t);~t\geq 0\}$ is a $J$-dimensional diffusion process whose components are mutually independent one-dimensional SRBMs.
    Specifically, each coordinate process $Z^*_j$ is a one-dimensional SRBM with initial state $\xi_j$, drift $\theta_j = -(1-w_{jj})<0$ and variance $\sigma^2_j$
    \begin{equation*}
        \begin{aligned}
            \sigma_j^2&=  \sum_{i<j} \alpha_i\left(w_{i j}^2 c_{e, i}^2+w_{i j}\left(1-w_{i j}\right)\right)+\alpha_j c_{e, j}^2+\sum_{i>j} \lambda_i\left(w_{i j}^2 c_{s, i}^2+w_{i j}\left(1-w_{i j}\right)\right) \\ 
            &\quad +\lambda_j\left(c_{s, j}^2\left(1-w_{j j}\right)^2+w_{j j}\left(1-w_{j j}\right)\right),
        \end{aligned}
    \end{equation*}
    where the matrix $(w_{ij})$ is defined below in \eqref{eq: w}.
\end{theorem}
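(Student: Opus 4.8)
The plan is to follow the two-stage route sketched in the introduction: first replace the GJN by an associated SRBM via strong approximation, then analyze the scaled SRBM family.

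\textbf{Stage 1: reduction to SRBMs.} I would invoke Proposition~\ref{prop: SA} to couple, on a common probability space and for every $r\in(0,1)$, the GJN queue length process $Z^\uu$ with an SRBM $\tilde Z^\uu=\Phi(\tilde X^\uu;R)$, where $\tilde X^\uu$ is a $J$-dimensional Brownian motion with the drift and covariance predicted by the functional CLT for the network primitives, and $R$ is the reflection matrix determined by the routing matrix $P$ (an $\caM$-matrix for an open network, hence completely-$\caS$, so the SRBM is well defined and unique in law). The strong approximation guarantees that, after applying the space--time scaling in \eqref{eq: lim GJN}, $\hat Z^\uu$ and the correspondingly scaled $\tilde Z^\uu$ differ by a quantity that vanishes uniformly on compact time intervals, in probability. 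By the converging-together lemma it therefore suffices to prove \eqref{eq: lim GJN} with $Z^\uu$ replaced by $\tilde Z^\uu$, which is the content of the SRBM functional limit of Section~\ref{sec: proof SRBM}.

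\textbf{Stage 2a: marginal convergence of each coordinate.} Fix a station $k\in\J$ and view the network on station $k$'s timescale, i.e.\ look at $\{\gamma_k(r)\tilde Z^\uu(t/\gamma_k^2(r));t\ge0\}$. Writing the Skorokhod equation $\tilde Z^\uu=\tilde X^\uu+R\tilde Y^\uu$ coordinatewise and feeding in the scale-separation estimates of Proposition~\ref{prop: rk initial} --- namely that on station $k$'s clock the less-congested coordinates $i<k$ have asymptotically vanishing queue length while the more-congested coordinates $j>k$ have asymptotically vanishing regulator (idle-time) increments --- the $k$th equation decouples: the off-diagonal contributions of $R$ coming from indices $i<k$ and $j>k$ become negligible uniformly on compacts, leaving in the limit a one-dimensional Skorokhod problem for coordinate $k$ driven by a functional of the weak limit of $\tilde X^{(k,r)}:=\{\gamma_k(r)\tilde X(t/\gamma_k^2(r));t\ge0\}$. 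By continuity of the one-dimensional reflection map, the $k$th scaled coordinate converges weakly to a one-dimensional SRBM $Z^*_k$ started at $\xi_k$; computing the drift and variance of the limiting driving Brownian motion yields $\theta_k=-(1-w_{kk})$ and the stated $\sigma_k^2$, which is assembled from the variability of the interarrival processes of externally-fed stations with index $<k$, the variability of the service processes of stations with index $>k$, the own-arrival and own-service variability of station $k$, and the multinomial routing variability, all propagated through the matrix $(w_{ij})$ of \eqref{eq: w}.

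\textbf{Stage 2b: joint convergence and asymptotic independence.} To upgrade marginal convergence to joint convergence with mutually independent limits, I would invoke Proposition~\ref{proposition: joint BM}, which asserts that $\{\tilde X^{(k,r)};k=1,\dots,J\}$ is asymptotically independent as $r\to0$ --- intuitively because these are rescalings of one Brownian motion over time horizons $\gamma_k^{-2}(r)$ that are mutually infinitely separated. Since, by Stage~2a together with the Lipschitz property \eqref{eq: Lipschitz}, the $k$th scaled coordinate of $\hat Z^\uu$ is asymptotically a continuous functional of $\tilde X^{(k,r)}$ alone, the continuous mapping theorem combined with the asymptotic independence of the driving processes gives joint weak convergence of $(\hat Z^\uu_1,\dots,\hat Z^\uu_J)$ to a vector of \emph{mutually independent} one-dimensional SRBMs $(Z^*_1,\dots,Z^*_J)$. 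Combining with Stage~1 completes the proof. The technically hardest part is making the decoupling in Stage~2a rigorous: one must control the regulator processes $\tilde Y^\uu_j$ for $j>k$ on station $k$'s fast clock and show their contribution through $R$ to coordinate $k$ is asymptotically negligible uniformly on compacts --- precisely the content of Proposition~\ref{prop: rk initial}, whose proof must exploit the separated heavy-traffic rates together with the initial-state condition \eqref{eq: initB} to ensure that the more-congested stations genuinely behave as if they never idle over the relevant horizon, while the less-congested stations empty out; tracking the exact constants in $\sigma_k^2$ through $(w_{ij})$ is a secondary, bookkeeping-level difficulty.
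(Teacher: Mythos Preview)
Your proposal is correct and follows essentially the same two-stage route as the paper: Stage~1 is exactly the paper's use of Proposition~\ref{prop: SA} together with the convergence-together theorem (Section~\ref{sec: proof GJN}), and Stages~2a--2b unpack the proof of Theorem~\ref{prop: multi-SRBM} (Sections~\ref{sec: proof multi-SRBM each}--\ref{sec: proof multi-SRBM T}), which the paper invokes as a black box in the proof of Theorem~\ref{thm:GJN}. The only bookkeeping step you leave implicit is the verification that the variance $\tilde\sigma_j^2=u^\T\Gamma u$ coming from the SRBM limit in Theorem~\ref{prop: multi-SRBM} actually equals the stated $\sigma_j^2$; the paper carries this out separately in Appendix~\ref{sec: proof GJN sigma}.
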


Our result is consistent with \citet{DaiGlynXu2023} under a specific parameterization $\gamma_j(r)=r^j$, as the stationary distribution of the limit $Z^*$ in \eqref{eq: lim GJN} coincides with the steady-state limit they derived.

We now define the $J \times J$ matrix $w=\left(w_{i j}\right)$ used in Theorem \ref{thm:GJN}. For each $j \in \mathcal{J}$,
\begin{equation} \label{eq: w}
    \begin{aligned}
        & \left(w_{1 j}, \ldots, w_{j-1, j}\right)^{\prime}=\left(I-P_{j-1}\right)^{-1} P_{[1: j-1], j} \\
        & \left(w_{j j}, \ldots, w_{J j}\right)^{\prime}=P_{[j: J], j}+P_{[j: J],[1: j-1]}\left(I-P_{j-1}\right)^{-1} P_{[1: j-1], j}.
        \end{aligned}
\end{equation}
Here for sets $A, B \subseteq \{1,\ldots,J\}$, we denote by $P_{A, B}$ the submatrix obtained from the matrix $P$ by deleting the rows not in $A$ and the columns not in $B$ with the following conventions: for $\ell \leq k$,
\begin{equation} \label{eq: submatrix}
    [\ell: k]=\{\ell, \ell+1, \ldots, k\}, \quad P_{A, k}=P_{A,\{k\}}, \quad P_{k, B}=P_{\{k\}, B}, \quad P_k=P_{[1: k],[1: k]}.
\end{equation}

Although the expressions in \eqref{eq: w} appear complex, the matrix $(w_{ij})$ has a clear probabilistic interpretation in GJNs as demonstrated by \citet{DaiGlynXu2023}. Recall that the routing matrix $P$ can be embedded within a transition matrix of a discrete-time Markov chain (DTMC) on state space $\{0\} \cup \{1,2,\ldots,J\}$ with 0 being the absorbing state. For each $i,j \in \{1,2,\ldots,J\}$, $w_{i j}$ is the probability that starting from state $i$, the DTMC will eventually visit state $j$ without first visiting any state in the set $\{0, j+1, \ldots, J\}$.

The following asymptotic functional strong approximation result relates the asymptotic behavior of GJNs to that of SRBMs. Hence, it reduces the study of asymptotic limit of scaled queue length process $\{\gamma_k(r) Z^\uu(t/\gamma_k^2(r));~t\geq 0\}$ to that of SRBMs $\{\gamma_k(r) \tilde Z^\uu(t/\gamma_k^2(r));~t\geq 0\}$, which have simpler dynamics with fewer parameters and are easier to analyze.

\begin{proposition}[Asymptotic functional strong approximation] \label{prop: SA}
    Suppose that Assumption~\ref{assmpt: moment} holds, $\mu^\uu_j>\lambda_j$ with $\mu^\uu_j - \lambda_j =O(\gamma_1(r))$ for all $j\in \J$. If the underlying probability space $(\Omega, \mathscr{F}, \mathbb{P})$ is rich enough, there exists a family of SRBMs $\tilde{Z}^\uu$ with $r\in (0,1)$ on this space, such that for any fixed $T>0$, we have
    \begin{equation*} %\label{eq: SA_Z}
        \sup_{0\leq t\leq T} \max_{k\in \J} \Norm{\gamma_k(r) Z^\uu\big(t/\gamma_k^2(r)\big)-\gamma_k(r) \tilde{Z}^\uu\big(t/\gamma_k^2(r)\big)}  \aseq o\left(\gamma_1^{\varepsilon/(2+\varepsilon)}(r)\right) \quad \text{as $r\to 0$},
    \end{equation*}
    where $\varepsilon>0$ is the parameter in the moment condition \eqref{eq: moment condition} of Assumption~\ref{assmpt: moment}. 
    
    The SRBM $\tilde{Z}^\uu$ has initial state $\tilde Z^\uu(0)\aseq Z^\uu(0)$, reflection matrix $R=I-P^\T$, drift vector $\theta^\uu=-R(\mu^\uu-\lambda)$, and covariance matrix $\Gamma$ given by
    \begin{equation} \label{eq: SA_Gamma}
        \Gamma_{jk} = \alpha_j c_{e, j}^2 \delta_{j k} + \sum_{i\in \J} \lambda_i \left[ P_{i j}\left(\delta_{j k}-P_{i k}\right) + c_{s,i}^2 (\delta_{ij}-P_{ij})(\delta_{ik}-P_{ik}) \right],
    \end{equation}
    where $c_{e,j}^2=\operatorname*{Var}(T_{e,j}(1))$ and $c_{s,j}^2=\operatorname*{Var}(T_{s,j}(1))$ are the squared coefficient of variation of the interarrival time $T_{e,j}$ and service time $T_{s,j}$ of station $j$, and $\delta_{j k}=1$ if $j=k$ and $0$ otherwise.
\end{proposition}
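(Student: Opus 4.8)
The plan is to prove Proposition~\ref{prop: SA} by combining a classical Skorokhod-representation-style strong approximation of the primitive renewal and routing processes with the Lipschitz continuity \eqref{eq: Lipschitz} of the reflection map, and then tracking the error uniformly over the $J$ different time-space scales. First I would write the queue length process $Z^\uu$ in the standard GJN fluid-limit form as a reflection of a ``netput'' process: $Z^\uu = \Phi(X^\uu; R)$ with $R = I - P^\T$, where $X^\uu(t) = Z^\uu(0) + (\text{external arrivals}) - (I-P^\T)(\text{departures routed out}) + (\text{drift term})$, assembled from the renewal counting processes $E_j$, $S_j$ and the routing partial sums $\Phi_j$. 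The point is that $X^\uu$ depends on $r$ only through $Z^\uu(0)$ and through the deterministic drift slope $\mu^\uu - \lambda = O(\gamma_1(r))$; the stochastic fluctuation parts are built from the $r$-independent primitives in \eqref{eq: 4J sequence}.

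Next I would invoke the strong approximation for renewal processes and for i.i.d.\ sums with $2+\varepsilon$ moments (this is exactly the role flagged for Lemma~\ref{lemma: SA_BM} and assumption \eqref{eq: tech}): on a suitably enriched probability space one can couple each $E_j$, $S_j$, $\Phi_j$ with Brownian motions so that the approximation error up to time $s$ is a.s.\ $o(s^{1/(2+\varepsilon)})$, or more precisely $O(s^{1/(2+\varepsilon)})$ with the Marcinkiewicz--Zygmund / Komlós--Major--Tusnády rate. Composing these Brownian motions with the fluid time-changes (which are asymptotically linear) and using continuity of composition, one obtains a Brownian netput $\tilde X^\uu$ with drift $\theta^\uu = -R(\mu^\uu - \lambda)$ and covariance $\Gamma$ as in \eqref{eq: SA_Gamma}, satisfying
\begin{equation*}
    \sup_{0\le s\le S}\Norm{X^\uu(s) - \tilde X^\uu(s)} \aseq o\big(S^{1/(2+\varepsilon)}\big) \quad\text{as } S\to\infty,
\end{equation*}
and I would then set $\tilde Z^\uu = \Phi(\tilde X^\uu; R)$ with $\tilde Z^\uu(0) \aseq Z^\uu(0)$. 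The covariance computation reducing the vector of scaled primitive fluctuations to the matrix $\Gamma$ in \eqref{eq: SA_Gamma} is a routine (if lengthy) bookkeeping exercise that mirrors the classical Reiman/Chen--Yao computation, so I would only sketch it.

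Finally I would transfer this single-scale estimate to the multi-scale statement. For each $k\in\J$, apply the Lipschitz bound \eqref{eq: Lipschitz}: since $\gamma_k(r) Z^\uu(t/\gamma_k^2(r)) = \gamma_k(r)\Phi(X^\uu;R)(t/\gamma_k^2(r))$ and likewise for $\tilde Z^\uu$, and the reflection map commutes with the spatial scaling by $\gamma_k(r)$, we get
\begin{equation*}
    \sup_{0\le t\le T}\Norm{\gamma_k(r)Z^\uu\big(t/\gamma_k^2(r)\big) - \gamma_k(r)\tilde Z^\uu\big(t/\gamma_k^2(r)\big)} \le \kappa(R)\,\gamma_k(r)\sup_{0\le s\le T/\gamma_k^2(r)}\Norm{X^\uu(s) - \tilde X^\uu(s)}.
\end{equation*}
Plugging in the a.s.\ rate with $S = T/\gamma_k^2(r)$ gives a bound of order $\gamma_k(r)\cdot\gamma_k^{-2/(2+\varepsilon)}(r) = \gamma_k^{\varepsilon/(2+\varepsilon)}(r) \le \gamma_1^{\varepsilon/(2+\varepsilon)}(r)$, where the last inequality uses $\gamma_k \le \gamma_1$ for $r$ small (from the scale ordering) and monotonicity of $x\mapsto x^{\varepsilon/(2+\varepsilon)}$; taking the max over the finitely many $k\in\J$ preserves the $o(\gamma_1^{\varepsilon/(2+\varepsilon)}(r))$ rate. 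The main obstacle I anticipate is not the reflection-map bookkeeping but making the strong-approximation error uniform over a time horizon that grows like $\gamma_J^{-2}(r)$ while the drift is only $O(\gamma_1(r))$: one must ensure the coupling rate is genuinely pathwise $o(S^{1/(2+\varepsilon)})$ as $S\to\infty$ (not merely for a fixed horizon), and that the fluid time-changes appearing inside the composition stay uniformly close to their linear approximations over $[0, T/\gamma_J^2(r)]$ — this is where the double-logarithm control \eqref{eq: tech} enters, via a law-of-the-iterated-logarithm-type bound on the fluctuations of the renewal time-changes, and it is the delicate point that Lemma~\ref{lemma: SA_BM} is designed to handle.
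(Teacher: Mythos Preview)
Your proposal is correct and follows essentially the same route as the paper's proof: represent $Z^\uu$ as the Skorokhod reflection of a netput $X^\uu$, strongly approximate the primitive renewal and routing processes by Brownian motions at the Koml\'os--Major--Tusn\'ady rate $o(S^{1/(2+\varepsilon)})$, transfer the error via the Lipschitz bound \eqref{eq: Lipschitz}, and then observe that $\gamma_k(r)\cdot(T/\gamma_k^2(r))^{1/(2+\varepsilon)} = T^{1/(2+\varepsilon)}\gamma_k^{\varepsilon/(2+\varepsilon)}(r)\le T^{1/(2+\varepsilon)}\gamma_1^{\varepsilon/(2+\varepsilon)}(r)$. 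You have also correctly isolated the genuinely delicate step---controlling the time-change error when the Brownian approximants are composed with the random busy times $\mu_i^\uu B_i^\uu(t)$ rather than the fluid limits $\lambda_i t$---and correctly anticipated that this is handled by an LIL-type bound on the fluid time-changes (the paper's Lemma~\ref{lemma: SA_fluid}, specifically \eqref{eq: LIL_B}) combined with a modulus-of-continuity estimate for Brownian motion (Lemma~\ref{lemma: SA_BM}), with assumption \eqref{eq: tech} ensuring the resulting product of logarithms is dominated by $\gamma_1^{\varepsilon/(2+\varepsilon)}(r)$. The paper's execution differs from your sketch only in organization: rather than bounding $\|X^\uu-\tilde X^\uu\|$ as a single quantity, it explicitly decomposes the scaled noise $V^{(r,k)}-\gamma_k(r)W^{(v)}(\cdot/\gamma_k^2(r))$ into five pieces (three direct strong-approximation errors and two time-change errors) and bounds each separately; this makes the role of each lemma transparent but is otherwise equivalent to what you propose.
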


\begin{remark}
    The ``rich enough" condition is standard in strong approximation literature. It can always be satisfied by construction, for example, by applying the Skorokhod representation theorem (see, e.g., Remark~2.2.1 in \citet{Csor1981}) or by expanding the probability space.
\end{remark}

\citet{Horv1992} and \citet{ChenMand1994} established the functional strong approximation of a GJN assuming a fixed initial queue length. Proposition \ref{prop: SA} extends their result to a family of GJNs and, importantly, removes the dependency on the initial condition. This makes our proposition applicable under various initial conditions, including both Assumption \ref{assmpt: initial} and Assumption \ref{assmpt: conventional}, which we will later introduce in Section \ref{sec: conventional}. The proof of Proposition \ref{prop: SA} will be given in Section \ref{pf: SA}.

\subsection{SRBMs in multi-scale heavy traffic} \label{sec: SRBM}

In this section, we first define a similar multi-scaling regime for SRBMs, and then establish the functional limit result for SRBMs in this regime.

We have defined the SRBM in Section \ref{sec: IntroA} by the Skorokhod reflection mapping. The data of a SRBM are an initial state $\tilde Z(0)$, a drift vector $\theta$, a positive definite covariance matrix $\Gamma$ and an $\caM$-reflection matrix $R$. We consider a sequence of SRBMs with data $(\tilde Z^\uu(0), \theta^\uu, \Gamma, R)$ indexed by $r \in(0,1)$, and define the $r$th SRBM $\tilde Z^\uu=\{\tilde Z^\uu(t);~t\geq0\}$ by 
\begin{equation} \label{eq: RBM def}
    \tilde Z^\uu(t) = \tilde X^\uu(t) + R \tilde Y^\uu(t)
\end{equation}
where 
\begin{equation} \label{eq: RBM def2}
    \tilde X^\uu(t) = \tilde Z^\uu(0) + \theta^\uu t + L W(t) \quad \text{and} \quad \tilde{Y}^\uu = \Psi(\tilde X^\uu; R).
\end{equation}
Here, the lower triangular matrix $L$ is defined by the Cholesky decomposition of the covariance matrix $\Gamma=L L^\T$, $W:=\{(W_1(t),W_2(t), \ldots, W_J(t));~t\geq 0\}$ is a $J$-dimensional standard Brownian motion and $\Psi$ is the Skorokhod reflection mapping defined in Section~\ref{sec: IntroA}. Recall that $W$ is said to be a $J$-dimensional standard Brownian motion if $\{W_j\}_{j\in \J}$ are independent and each $W_j$ is a one-dimensional Brownian motion with $W_j(0)=0$ and $\E[W_j^2(t)]=t$ for all $t\geq 0$.

The initial state $\tilde Z^\uu(0)$ and the drift vector $\theta^\uu$ are the only model parameters dependent on $r$, while all other parameters, including the covariance matrix, reflection matrix, and standard Brownian motion, are independent of $r$.

We assume that the family of SRBMs satisfies the multi-scaling regime as follows.

\begin{assumption}[Multi-scaling regime for SRBMs] \label{assmpt: multiscaling}
        For all $r\in (0,1)$, 
    \begin{equation}\label{eq: drift epsilon}
        \theta^\uu= -R\delta^\uu \quad \text{with } \quad \delta^\uu = \left[ \gamma_1(r), \gamma_2(r), \ldots, \gamma_J(r) \right]^\T.
    \end{equation}
\end{assumption}

We impose an initial condition on the SRBMs that is analogous to Assumption \ref{assmpt: initial} for GJNs.
\begin{assumption}[Matching-rate initial condition for SRBMs]
    \label{assmpt: initial SRBM}
    There exists an $\R_+^J$-valued   random vector $\tilde\xi$ such that 
    \begin{equation*}
        \Big( \gamma_1(r) \tilde Z_1^\uu(0), \gamma_2(r) \tilde Z_2^\uu(0), \ldots, \gamma_J(r) \tilde Z_J^\uu(0) \Big)  \Longrightarrow \tilde\xi, \quad \text{as } r\to 0,
    \end{equation*}
    and for all $j=2,3,\ldots,J$,
    \begin{equation*} %\label{eq: initB SRBM}
        \gamma_{j-1}(r) \tilde Z^\uu_j(0) \Longrightarrow \infty \quad \text{as } r\to 0.
    \end{equation*} 
\end{assumption}

The following theorem establishes the functional limit $\tilde{Z}^*$ of the multi-scaling SRBMs  as $r\to 0$, whose coordinate processes are mutually independent one-dimensional SRBMs on $\R_+$. The result is consistent with \citet{GuanChenDaiGlyn2024} in that the stationary distribution of our limit process $\tilde Z^*$ matches the multi-scaling limit of the stationary distribution they derived.

\begin{theorem}\label{prop: multi-SRBM}
    Suppose that Assumptions \ref{assmpt: multiscaling} and \ref{assmpt: initial SRBM}  hold. Then   as $ r\to 0, $ 
    \begin{equation*}
        \left\{ \left( \gamma_1(r) \tilde Z_1^\uu\big(t/\gamma_1^2(r)\big), \ldots, \gamma_J(r) \tilde Z_J^\uu\big(t/\gamma_J^2(r)\big) \right);~t\geq 0 \right\} \Longrightarrow \tilde Z^*  ,
    \end{equation*}
    where  $\tilde Z^*=\{\tilde Z^*(t);~t\geq 0\}$ is a $J$-dimensional diffusion process. Furthermore, all coordinate processes of $\tilde Z^*$ are mutually independent, and each coordinate process $\tilde Z_j^*$ is a one-dimensional SRBM with initial state $\tilde\xi_j$, drift $\tilde\theta_j= -(1-w_{jj})<0$ and variance
    \begin{equation} \label{eq: u}
        \tilde\sigma^2_j = u^\T \Gamma u \quad \text{ with } u=(w_{1j},\ldots,w_{j-1,j},1,0,\ldots,0)
    \end{equation}
    where $w_{ij}$ is defined for each $i\leq j$ in $\J$ by
    \begin{equation*}
        \begin{aligned}
            & \left(w_{1 j}, \ldots, w_{j-1, j}\right)=-R_{j,[1: j-1]}R_{j-1}^{-1} \quad  \text{and} \quad w_{j j}=1-R_{jj}+R_{j,[1: j-1]}R_{j-1}^{-1}R_{[1: j-1],j}.
            \end{aligned}
    \end{equation*}
    When $R=I-P^\T$, $\{w_{ij}, 1\leq i \leq j \leq J\}$ is the same as those in the matrix $(w_{ij})$ in Theorem \ref{thm:GJN}.
\end{theorem}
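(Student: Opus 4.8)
## Proof plan for Theorem \ref{prop: multi-SRBM}

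The plan is to follow the two-step program outlined in the introduction: first establish convergence of each individual coordinate process, then upgrade to joint convergence with independence. Throughout, fix a time horizon $T>0$; by the u.o.c.\ characterization of the $J_1$-topology it suffices to prove convergence uniformly on $[0,T]$, and since the proposed limit has continuous paths we may work entirely with the uniform norm $\norm{\cdot}_T$.

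First I would set up the single-coordinate analysis. For a fixed station $k$, rescale time and space at station $k$'s rate: write $\scale{X}{k}{} = \{\gamma_k(r)\tilde X^\uu(t/\gamma_k^2(r));~t\ge 0\}$, and similarly for $\tilde Z$ and $\tilde Y$. By Brownian scaling, $\scale{X}{k}{}$ is itself a Brownian motion plus drift: its covariance is $\Gamma$ (unchanged by the diffusive scaling) and its drift is $\gamma_k^{-1}(r)\theta^\uu = -\gamma_k^{-1}(r) R\delta^\uu$, whose $i$th coordinate involves the ratios $\gamma_i(r)/\gamma_k(r)$. These ratios blow up for $i<k$, vanish for $i>k$, and equal $1$ for $i=k$. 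The key input is Proposition~\ref{prop: rk initial} (the scale-separation statement): from station $k$'s viewpoint, the coordinates $i<k$ of $\scale{Z}{k}{}$ converge weakly to $0$ and the coordinates $j>k$ of $\scale{Y}{k}{}$ converge weakly to $0$. Granting this, I would plug $\scale{Z}{k}{} = \scale{X}{k}{} + R\scale{Y}{k}{}$ into the $k$th coordinate equation and show that the contributions of the columns $R_{\cdot i}$ with $i<k$ (multiplied by $\scale{Y}{k}{}_i$) and the rows involving $i>k$ collapse, so that the $k$th coordinate of $\scale{Z}{k}{}$ satisfies, asymptotically, a one-dimensional Skorokhod problem driven by the limit of $u^\T \scale{X}{k}{}$ with $u=(w_{1j},\dots,w_{j-1,j},1,0,\dots,0)$ as in \eqref{eq: u}. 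Concretely: restrict the reflection equation to the first $k$ coordinates, use $\scale{Y}{k}{}_j\to 0$ for $j>k$ to drop the feedback from downstream stations, solve the resulting triangular linear system among the regulators $\scale{Y}{k}{}_1,\dots,\scale{Y}{k}{}_{k-1}$ (which are pinned near $0$ since the corresponding $\scale{Z}{k}{}_i\to 0$), and substitute back. This is exactly where the block decomposition of $R$ in \eqref{eq: w} (via $R_{k,[1:k-1]}R_{k-1}^{-1}$) comes from. The resulting one-dimensional input has drift $u^\T\theta^\uu/\gamma_k(r)$; since $\theta^\uu=-R\delta^\uu$ and $u^\T R$ picks out the appropriate entries, this limiting drift equals $-(1-w_{kk})$, and the limiting variance is $u^\T\Gamma u=\tilde\sigma_k^2$. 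Lipschitz continuity \eqref{eq: Lipschitz} of the one-dimensional Skorokhod map then gives $\scale{Z}{k}{}_k \Longrightarrow \tilde Z_k^*$, a one-dimensional SRBM with the stated data starting from $\tilde\xi_k$.

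Second I would prove joint convergence and independence. The collection of \emph{driving} processes $\{\scale{X}{k}{};~k=1,\dots,J\}$ (each an affine function of the single underlying Brownian motion $W$, rescaled at its own rate) is asymptotically mutually independent as $r\to 0$ — this is Proposition~\ref{proposition: joint BM}. More precisely, the relevant one-dimensional functionals $u^{(k)\T}\scale{X}{k}{}$ for $k=1,\dots,J$ jointly converge to independent Brownian motions with drifts $-(1-w_{kk})$ and variances $\tilde\sigma_k^2$. Combined with the fact, from Step~1, that each $\scale{Z}{k}{}_k$ is (asymptotically) a fixed continuous functional — the one-dimensional Skorokhod map — of $u^{(k)\T}\scale{X}{k}{}$ alone, the continuous mapping theorem yields joint weak convergence of $(\scale{Z}{1}{}_1,\dots,\scale{Z}{J}{}_J)$ to a vector of \emph{independent} one-dimensional SRBMs. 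Since the $k$th coordinate of the object in the theorem is precisely $\gamma_k(r)\tilde Z_k^\uu(t/\gamma_k^2(r)) = \scale{Z}{k}{}_k(t)$, this is the claimed statement. The matching of the two descriptions of $(w_{ij})$ when $R=I-P^\T$ is a direct algebraic check: partitioning $R=I-P^\T$ and using the Schur-complement form of $R_{j-1}^{-1}$ reproduces the formulas \eqref{eq: w}; I would state this as a lemma and verify it by block matrix algebra.

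The main obstacle is Step~1 — specifically, making rigorous the passage from the full $J$-dimensional reflected system to the effective one-dimensional Skorokhod problem for coordinate $k$. The difficulty is that one must simultaneously control \emph{two} families of quantities going to zero at possibly different rates (the upstream queue lengths $\scale{Z}{k}{}_i$, $i<k$, and the downstream regulators $\scale{Y}{k}{}_j$, $j>k$) while the corresponding drifts diverge, and then invert the upstream regulator system to isolate the single surviving coordinate; getting the right limiting drift $-(1-w_{kk})$ requires tracking cancellations in $u^\T R\delta^\uu/\gamma_k(r)$ as $r\to 0$. The Lipschitz bound \eqref{eq: Lipschitz} is the essential tool that converts the approximate identities (inputs close in uniform norm) into convergence of the reflected processes, but assembling the error terms and verifying they vanish is the technical heart of the argument. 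I expect Proposition~\ref{prop: rk initial} itself to be proved by an inductive bootstrapping on the station index, using that at each scale the ``faster'' stations have already been shown to empty out and the ``slower'' ones have negligible idle time.
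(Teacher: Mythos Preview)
Your proposal is correct and follows essentially the same route as the paper: the paper organizes your ``solve the triangular linear system'' step via a block Gaussian elimination matrix $E^{(k)}$ (so that $u = (E^{(k)}_{k,[1:J]})^\T$ and $1-w_{kk}=G^{(k)}_{kk}$), proves Proposition~\ref{prop: rk initial} by the inductive bootstrapping you anticipate (Lemmas~\ref{lemma: Skorokhod negative infty} and~\ref{lemma: Skorokhod positive infty}), and passes to a.s.\ convergence via the Skorokhod representation theorem before applying Corollary~\ref{cor: BM} and the continuous mapping theorem. One small slip to fix: in your parenthetical, it is the upstream queue lengths $\scale{Z}{k}{}_i$ (not the regulators $\scale{Y}{k}{}_i$) that are pinned near $0$ for $i<k$; the regulators in fact diverge, and it is precisely the relations $\scale{Z}{k}{}_i\approx 0$ that let you eliminate the $\scale{Y}{k}{}_i$ from the $k$th equation via the linear combination $u^\T$.
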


\section{Convergence of GJNs} \label{sec: proof GJN}

Armed with Proposition \ref{prop: SA} and Theorem \ref{prop: multi-SRBM}, whose proofs are deferred to Section \ref{pf: SA} and~\ref{sec: proof SRBM}, respectively, we now prove Theorem \ref{thm:GJN}.
We first show that under the condition of Theorem \ref{thm:GJN}, the associated multi-scaling SRBMs $\tilde Z^\uu$ in Proposition \ref{prop: SA} converge weakly to  $Z^*$. We then complete the proof of Theorem \ref{thm:GJN} for the GJN queue length process by utilizing the asymptotic functional strong approximation in Proposition \ref{prop: SA}.

\begin{proof}[Proof of Theorem \ref{thm:GJN}.]
    Observe that the SRBM family $\tilde Z^\uu$ in Proposition \ref{prop: SA} inherits the necessary properties from the GJN assumptions. Specifically, Assumption \ref{assmpt: multiscale} on the GJN implies that Assumption \ref{assmpt: multiscaling} holds for the SRBMs. Similarly, Assumption \ref{assmpt: initial} implies that Assumption \ref{assmpt: initial SRBM} also holds with $\tilde \xi \overset{d}{=} \xi$. Therefore, Theorem~\ref{prop: multi-SRBM} implies that the multi-scaling of the associated SRBMs $\tilde Z^\uu$ converge weakly to a $J$-dimensional diffusion process $\tilde{Z}^*$:
    \begin{equation*}
        \left\{ \left( \gamma_1(r) \tilde Z_1^\uu\big(t/\gamma_1^2(r)\big),\ldots, \gamma_J(r) \tilde Z_J^\uu\big(t/\gamma_J^2(r)\big) \right);~t\geq 0 \right\} \Longrightarrow \tilde Z^* \quad \text{as } r\to 0,
    \end{equation*}
    where all coordinate processes of $\tilde Z^*$ are independent, and each coordinate process $\tilde Z_j^*$ is a one-dimensional SRBM with initial state $\xi_j$, drift $-(1-w_{jj})$ and variance $\tilde\sigma^2_j=u^\T \Gamma u$ with $\Gamma$ defined in \eqref{eq: SA_Gamma}. Since the initial states and drifts of $\tilde Z^*$ and $Z^*$ match, their equality in distribution hinges on their variances being equal. We verify that $\tilde\sigma^2_j=\sigma^2_j$ for all $j\in \J$ in Appendix~\ref{sec: proof GJN sigma}, which confirms that $\tilde Z^*\overset{d}{=}Z^*$. Hence, we have 
    \begin{equation*}
        \left\{ \left( \gamma_1(r) \tilde Z_1^\uu\big(t/\gamma_1^2(r)\big), \ldots, \gamma_J(r) \tilde Z_J^\uu\big(t/\gamma_J^2(r)\big) \right);~t\geq 0 \right\} \Longrightarrow Z^* \quad \text{as } r\to 0.
    \end{equation*}

    The asymptotic functional strong approximation in Proposition \ref{prop: SA} implies that 
    \begin{equation*} %\label{eq: SA_Z}
        \sup_{0\leq t\leq T} \max_{k\in \J} \abs{\gamma_k(r) Z^\uu_k\big(t/\gamma_k^2(r)\big)-\gamma_k(r) \tilde{Z}^\uu_k\big(t/\gamma_k^2(r)\big)}  \to 0 \quad \text{a.s.} \quad \text{as } r\to 0,
    \end{equation*}
    Applying the convergence-together theorem (e.g., Theorem 11.4.7 in \citet{Whit2002}) to the preceding two results yields
    \begin{equation*}
        \left\{ \left( \gamma_1(r) Z_1^\uu\big(t/\gamma_1^2(r)\big), \ldots, \gamma_J(r) Z_J^\uu\big(t/\gamma_J^2(r)\big) \right);~t\geq 0 \right\} \Longrightarrow Z^* \quad \text{as } r\to 0.
    \end{equation*}
    This completes the proof of Theorem \ref{thm:GJN}. 
\end{proof}

\section{Asymptotic Strong Approximation} \label{pf: SA}
In this section, we present the proof of Proposition~\ref{prop: SA}. We will first establish the system dynamics of the GJN and formulate this process by the Skorokhod reflection mapping in Section \ref{sec: system dynamics}. The proof of Proposition~\ref{prop: SA} is presented in Section \ref{sec: proof SA}.

\subsection{System dynamics of GJN} \label{sec: system dynamics}
From the primitive sequences \eqref{eq: 4J sequence} for a GJN, we can construct the queue-length process $Z=\{Z(t);~t\geq0\}$ as follows. We first construct three key processes for each station. For $j\in \caE$, the external arrival counting process to station $j$ is denoted by 
\begin{equation*} %\label{eq: arrival}
    A_j(t)\defi \max \Big\{ n \ge 0: \sum_{i=1}^n T_{e,j}(i)/\alpha_j \le t \Big\} \quad \text{for } t\geq 0,
\end{equation*}
which counts the number of jobs that arrive at station $j$ during the time interval $[0,t]$. When $j\notin \caE$, we set $A_j(t)=0$ for $t\geq 0$.
To separate the parameter $r$ from the underlying renewal process, we define the unit-rate service counting process at station $j$ as
\begin{equation*}
     {S}_j(t)\defi \max \Big\{ n \ge 0: \sum_{i=1}^n T_{s,j}(i)\le t \Big\}\quad \text{for } t\geq 0,
\end{equation*}
and hence, ${S}_j(\mu_j^\uu t) =\max \{ n \ge 0: \sum_{i=1}^n T_{s,j}(i)/\mu_j^\uu \le t \}$ counts the number of jobs that complete service at station $j$ when the server is busy for a total of $t$ time units. 
We also define the routing sequence at station $j$ as 
\begin{equation*} %\label{eq: routing}
    \psi_j(0) = 0, \quad \psi_j(n) = \sum_{i=1}^{n} \route{j}(i), \quad n\ge 1,
\end{equation*}
whose $k$th element $\psi_{j,k}(n)$ is the number of jobs that are routed to station $k$ from station $j$ during the first $n$ service completions at station $j$.

We further define the busy-time process $B^\uu=\{B^\uu(t);~t\geq0\}$, where $B^\uu(t)$ is a $J$-dimensional vector whose $j$th element $B_j(t)$ is the total busy time of the server at station $j$ up to time $t$. It follows from their definitions that $  S_j(\mu_j^\uu B_j^\uu(t))$ is the number of service completions at station $j$ up to time $t$, and that $\psi_{j,k}(  S_j(\mu_j^\uu B_j^\uu(t)))$ is the number of jobs that are routed to station $k$ from station $j$ up to time $t$. This leads to the following system equation for the queue length process: for all $t \geq 0$ and $j\in \J$,
\begin{equation*} %\label{eq: flow-balance}
    Z_j^\uu(t)=Z_j^\uu(0)+A_j(t)+\sum_{i\in \J} \psi_{i,j}\left( {S}_i\left(\mu_i^\uu B_i^\uu(t)\right)\right)- {S}_j\left(\mu_j^\uu B_j^\uu(t)\right).
\end{equation*}
Furthermore, for a work-conserving system, the busy time is defined by
\begin{equation*} %\label{eq: busy-time}
    B_j^\uu(t)=\int_0^t \II {Z_j^\uu(s)>0} ds.     
\end{equation*}

Following \citet{Reim1984}, we can formulate the queue-length process $Z^\uu$ by the Skorokhod reflection mapping with reflection matrix $R=I-P^\prime$ as
\begin{equation} \label{eq: Skorokhod GJN}
    Z^\uu = \Phi(X^\uu; R) \quad \text { and } \quad Y^\uu = \Psi(X^\uu; R).
\end{equation}
where 
\begin{equation*}
    \begin{aligned}
        X^\uu(t) &= Z^\uu(0) - R(\mu^\uu-\lambda)t + V^\uu(t),\\
        Y_j^\uu(t)&=  \mu_j^\uu \left( t- B_j^\uu(t) \right)=\mu_j^\uu \int_0^t \II{Z_j^\uu(s)=0} d s, \quad j\in \J,
    \end{aligned}
\end{equation*}
and the centered noise process is given by
\begin{equation*}
    \begin{aligned}
        V_j^\uu(t) &= \left[A_j(t)-\alpha_j t\right]  +\sum_{i\in\J} \left( P_{i j} - \delta_{i j} \right)\left[ {S}_i\left(\mu_i^\uu B_i^\uu(t)\right)-\mu_i^\uu  B_i^\uu(t)\right] \\
        &\quad +\sum_{i\in\J}\left[\psi_{i,j}\left( {S}_i\left(\mu_i^\uu B_i^\uu(t)\right)\right)-P_{i j}  {S}_i\left(\mu_i^\uu B_i^\uu(t)\right)\right], \quad j\in \J.
    \end{aligned}
\end{equation*}

\subsection{Proof of asymptotic strong approximation} \label{sec: proof SA}
Throughout the paper, for any $k\in\J$, we denote the $\gamma_k^2(r)$-(diffusion) scaled process of the process $Z^\uu$ by
\begin{equation} \label{eq: def scale}
    \scale{Z}{k}{{}}  \defi \left\{ \scale{Z}{k}{{}}(t);~t\geq 0 \right\} \defi \left\{ \gamma_k(r) Z^\uu(t/\gamma_k^2(r));~t\geq 0 \right\}
\end{equation}
and the $\gamma_k^2(r)$-fluid scaled processes of the process $Z^\uu$ by
\begin{equation*}
    \scale{\bar{Z}}{k}{{}}  \defi \left\{ \scale{\bar{Z}}{k}{{}}(t);~t\geq 0 \right\} \defi \left\{ \gamma_k^2(r) Z^\uu(t/\gamma_k^2(r));~t\geq 0 \right\}.
\end{equation*}
The $\gamma_k^2(r)$-diffusion and the $\gamma_k^2(r)$-fluid scaled processes of other processes are defined similarly.

Applying this scaling to  \eqref{eq: Skorokhod GJN} yields:
\begin{equation*}
    \scale{Z}{k}{{}} = \Phi\big(\scale{X}{k}{{}}; R\big), \quad \scale{Y}{k}{{}} = \Psi\big(\scale{X}{k}{{}}; R\big),
\end{equation*}
where
\begin{equation*}
    \begin{aligned}
        \scale{X}{k}{{}}(t) &= \gamma_k(r)Z^\uu(0) - R(\mu^\uu-\lambda)t/\gamma_k(r) + \scale{V}{k}{{}}(t), \\
        \scale{Y}{k}{{j}}(t)&=  \mu_j^\uu \left( t/\gamma_k(r)- \scale{B}{k}{{j}}(t) \right), \quad j\in \J, \\
        \scale{V}{k}{{j}}(t) &= \left[\scale{A}{k}{j}(t)-\alpha_j t/\gamma_k(r)\right] \\
        &\quad+\sum_{i\in\J} \left( P_{i j} - \delta_{i j} \right)\left[ \scale{S}{k}{i}\left(\mu_i^\uu \scale{\bar{B}}{k}{i}(t)\right)-\mu_i^\uu  \scale{\bar{B}}{k}{i}(t)/\gamma_k(r)\right] \\
        &\quad +\sum_{i\in\J}\left[\scale{\psi}{k}{i,j}\left(\scale{\bar{S}}{k}{i} \left(\mu_i^\uu \scale{\bar{B}}{k}{i}(t)\right)\right)-P_{i j}  \scale{\bar{S}}{k}{i}\left(\mu_i^\uu \scale{\bar{B}}{k}{i}(t)\right)/\gamma_k(r)\right], ~ j\in \J.
    \end{aligned}
\end{equation*}

Hence, by the Lipschitz continuity of the Skorokhod reflection mapping, it is sufficient to show that the $\scale{X}{k}{{}}$ can be strongly approximated by a Brownian motion with initial state $\gamma_k(r)Z^\uu(0)$, drift vector $R(\mu^\uu-\lambda)/\gamma_k(r)$ and covariance matrix $\Gamma$ defined in \eqref{eq: SA_Gamma}. By comparing the expression of $\scale{X}{k}{{}}$, we only need to show that the noise process  $\scale{V}{k}{{}}$ can be strongly approximated by a driftless Brownian motion $W^{(v)}$ with zero initial state and covariance matrix $\Gamma$:
\begin{equation} \label{eq: SA_VV}
    \sup_{0\leq t\leq T} \max_{k\in \J} \Norm{\scale{V}{k}{{}}(t) - \gamma_k(r) W^{(v)}(t/\gamma_k^2(r))}  \aseq o\left(\gamma_1^{\varepsilon/(2+\varepsilon)}(r)\right) \quad \text{as $r\to 0$}.
\end{equation}

The proof of \eqref{eq: SA_VV} needs the following three lemmas. The proofs of these lemmas are given in Section~\ref{sec: lem proof SA}. 

\begin{lemma} \label{lemma: SA_diffusion}
    Suppose that Assumption \ref{assmpt: moment} holds. If the underlying probability space $(\Omega, \mathscr{F}, \mathbb{P})$ is rich enough, there exist $J + 2$ independent $J$-dimensional standard Brownian motions $W^{(e)}$, $W^{(s)}$ and $W^{(\ell)}$, $\ell=1,\ldots,J$ on  this space and a set $\Omega_1 \subseteq \Omega$ with $\Prob(\Omega_1)=1$, such that for any fixed $T>0$, $k\in \J$ and $\omega\in \Omega_1$, as $r\to 0$,
    \begin{equation}
        \sup_{0\leq t \leq T} \Norm{\left[ \scale{A}{k}{{}}(t, \omega) - \alpha t/\gamma_k(r)\right] - (\Gamma^{(e)})^{1/2}\gamma_k(r)W^{(e)}(t/\gamma_k^2(r), \omega)}  = o\left( \gamma_k^{\varepsilon/(2+\varepsilon)}(r) \right), \label{eq: ASA_A}
    \end{equation}
    \begin{equation}
        \sup_{0\leq t \leq T} \Norm{\left[ \scale{S}{k}{{}}(t, \omega) - et/\gamma_k(r) \right] - (\Gamma^{(s)})^{1/2}\gamma_k(r)W^{(s)}(t/\gamma_k^2(r), \omega)}  = o\left(\gamma_k^{\varepsilon/(2+\varepsilon)}(r)\right), \label{eq: ASA_S}
    \end{equation}
    and
    \begin{equation}\label{eq: ASA_psi}
        \sup_{0\leq t \leq T} \Norm{\left[ \scale{\psi}{k}{\ell}(t, \omega) - p_{\ell}t/\gamma_k(r) \right] - (\Gamma^{(\ell)})^{1/2}\gamma_k(r)W^{(\ell)}(t/\gamma_k^2(r), \omega)} \leq C_1(\omega) \gamma_k(r)\log (1/\gamma_k(r)), 
    \end{equation}
    for any $r\in (0,r_1(T))$, where $C_1(\omega)$ is a positive random variable, $r_1(T)$ is a positive constant in $(0,1)$ and the column vector $p_{\ell}$ is the $\ell$th column of $P^\T$. The covariance matrices $\Gamma^{(e)}, \Gamma^{(s)}$ and $\Gamma^{(\ell)}$ for $\ell\in \J$ are given by
    $$
    \begin{aligned}
    \Gamma^{(e)} & =\left(\Gamma_{j k}^{(e)}\right) & & \text { with } & \Gamma_{j k}^{(e)} & =\alpha_j c_{e, j}^2 \delta_{j k}, \\
    \Gamma^{(s)} & =\left(\Gamma_{j k}^{(s)}\right) & & \text { with } & \Gamma_{j k}^{(s)} & = c_{s,j}^2 \delta_{j k}, \\
    \Gamma^{(\ell)} & =\left(\Gamma_{j k}^{(\ell)}\right) & & \text { with } & \Gamma_{j k}^{(\ell)} & =P_{\ell j}\left(\delta_{j k}-P_{\ell k}\right).
    \end{aligned}
    $$
\end{lemma}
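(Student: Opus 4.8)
The plan is to establish Lemma~\ref{lemma: SA_diffusion} by applying a Koml\'os--Major--Tusn\'ady (KMT)-type strong approximation to each of the underlying renewal and sampled-i.i.d.\ processes separately, and then transferring the resulting single-process bounds to the $\gamma_k^2(r)$-scaled versions uniformly over $k\in\J$. The three objects in question---the external arrival counting processes $A_j$, the unit-rate service counting processes $S_j$, and the routing partial-sum processes $\psi_\ell$---are all built from i.i.d.\ sequences with finite $(2+\varepsilon)$-th moment (Assumption~\ref{assmpt: moment}) or, in the routing case, from bounded i.i.d.\ vectors; each therefore admits a functional strong approximation by a suitable Brownian motion. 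The task is to package these three approximations on a common ``rich enough'' probability space, identify the right covariance matrices $\Gamma^{(e)},\Gamma^{(s)},\Gamma^{(\ell)}$, and then verify that the error bounds survive the diffusion scaling uniformly in $k$.

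\textbf{Step 1: A one-process strong approximation lemma for renewal and sampled-i.i.d.\ processes.} First I would invoke (or state as Lemma~\ref{lemma: SA_BM}, referenced in the hypothesis ``when applying Lemma~\ref{lemma: SA_BM}'') the classical strong approximation results. For a renewal process $N$ generated by an i.i.d.\ sequence with mean~$1$, variance~$c^2$, and finite $(2+\varepsilon)$-th moment, the Cs\"org\H{o}--Horv\'ath--Steinebach / Horv\'ath strong approximation gives, on a suitable probability space, a standard Brownian motion $B$ with
\[
\sup_{0\le s\le n}\bigl| N(s) - s - c\,B(s)\bigr| = o\bigl(n^{1/(2+\varepsilon)}\bigr) \quad\text{a.s.}
\]
For the routing sequences $\route{\ell}(i)$, which are bounded (they take values in $\{\e{1},\dots,\e{J},\mathbf{0}\}$) and have all moments, the Koml\'os--Major--Tusn\'ady multidimensional approximation yields a logarithmic error rate $O(\log n)$ a.s.\ for the partial sums $\psi_\ell(n)$ against a Brownian motion with covariance equal to that of $\route{\ell}(1)$, namely $\operatorname{Cov}(\route{\ell}(1))_{jk}=P_{\ell j}(\delta_{jk}-P_{\ell k})=\Gamma^{(\ell)}_{jk}$. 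The three families $\{A_j\}$, $\{S_j\}$, $\{\psi_\ell\}$ are mutually independent and each coordinate within a family is independent, so one obtains the $J+2$ mutually independent standard Brownian motions $W^{(e)},W^{(s)},W^{(1)},\dots,W^{(J)}$ by applying the single-process results coordinatewise and concatenating the probability spaces (Skorokhod representation / product space, as noted in the Remark following Proposition~\ref{prop: SA}). The covariance identities are read off directly: pooling the independent scalar approximations $A_j(t)-\alpha_j t \approx \sqrt{\alpha_j}\,c_{e,j}\,W^{(e)}_j(t)$ forces $\Gamma^{(e)}=\operatorname{diag}(\alpha_j c_{e,j}^2)$, and likewise for $\Gamma^{(s)}$ after noting that the unit-rate service process has the same variance structure with $\alpha_j$ replaced by $1$.

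\textbf{Step 2: Transfer to the diffusion scaling, uniformly in $k$.} Fix $T>0$. For the counting processes, substituting $s=t/\gamma_k^2(r)$ and multiplying by $\gamma_k(r)$ converts the a.s.\ bound $\sup_{0\le s\le n}|N(s)-s-cB(s)|=o(n^{1/(2+\varepsilon)})$ at horizon $n = T/\gamma_k^2(r)$ into
\[
\sup_{0\le t\le T}\bigl|\gamma_k(r)N(t/\gamma_k^2(r)) - t/\gamma_k(r) - c\,\gamma_k(r)B(t/\gamma_k^2(r))\bigr| = o\bigl(\gamma_k(r)\cdot(T/\gamma_k^2(r))^{1/(2+\varepsilon)}\bigr),
\]
and since $\gamma_k(r)^{1-2/(2+\varepsilon)} = \gamma_k^{\varepsilon/(2+\varepsilon)}(r)$ this is exactly the claimed rate $o(\gamma_k^{\varepsilon/(2+\varepsilon)}(r))$; the scaling relation $\gamma_k(r)B(t/\gamma_k^2(r))\overset{d}{=}$ a standard BM makes the right-hand approximant the scaled $W^{(e)}$ (resp.\ $W^{(s)}$). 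The same substitution turns the KMT logarithmic bound for $\psi_\ell$ into the $C_1(\omega)\gamma_k(r)\log(1/\gamma_k(r))$ bound of \eqref{eq: ASA_psi}, because $\gamma_k(r)\log(T/\gamma_k^2(r)) = \gamma_k(r)(\log T + 2\log(1/\gamma_k(r))) \le C\gamma_k(r)\log(1/\gamma_k(r))$ for small~$r$. The only subtlety is the uniformity over the \emph{finite} index set $\{1,\dots,J\}$ of scales: since $\gamma_1 \le \gamma_k$ (as $\gamma$'s are increasing and ordered) and all are driven by the \emph{same} realizations of $A_j,S_j,\psi_\ell$ (the primitive sequences do not depend on $r$), taking the max over $k\in\J$ of a.s.\ $o(\gamma_k^{\varepsilon/(2+\varepsilon)}(r))$ errors is again a.s.\ $o(\gamma_k^{\varepsilon/(2+\varepsilon)}(r))$ --- indeed the worst case is dominated by a single term up to a constant factor $J$. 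One fixes the common full-measure set $\Omega_1$ as the intersection of the finitely many a.s.\ events from the individual approximations, and $r_1(T)$ as the smallest of the finitely many thresholds.

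\textbf{Main obstacle.} The technically delicate point is the strong approximation of the \emph{renewal} counting processes (as opposed to partial sums) with the sharp $o(n^{1/(2+\varepsilon)})$ error under only a $(2+\varepsilon)$-th moment hypothesis --- this requires the renewal-process version of KMT (Horv\'ath's theorem), and care that the error exponent is $1/(2+\varepsilon)$ rather than the $1/2$ one would get from finite variance via Cs\"org\H{o}--R\'ev\'esz, since it is precisely this exponent that produces the useful rate $\gamma_1^{\varepsilon/(2+\varepsilon)}(r)$ in Proposition~\ref{prop: SA}. A secondary, bookkeeping-level obstacle is arranging all $J+2$ independent Brownian motions on one space while preserving the a.s.\ (not merely in-distribution) nature of the bounds; this is handled, as the paper indicates, by first realizing each approximation on its own space and then using the ``rich enough'' hypothesis to pull everything onto a common space via the Skorokhod representation theorem, exploiting the mutual independence of the $4J$ primitive sequences in \eqref{eq: 4J sequence} so that independence of the limiting Brownian motions is automatic. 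Everything else is substitution and elementary asymptotics.
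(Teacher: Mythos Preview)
Your proposal is correct and follows essentially the same approach as the paper: invoke the Horv\'ath strong approximation for the renewal processes $A$ and $S$ (giving the $o(n^{1/(2+\varepsilon)})$ error) and the KMT approximation for the bounded routing partial sums $\psi_\ell$ (giving the $O(\log n)$ error), then transfer to the diffusion scaling by the substitution $t\mapsto t/\gamma_k^2(r)$, $T\mapsto T/\gamma_k^2(r)$ and multiplication by $\gamma_k(r)$, yielding exactly the rates $o(\gamma_k^{\varepsilon/(2+\varepsilon)}(r))$ and $C_1(\omega)\gamma_k(r)\log(1/\gamma_k(r))$. One minor correction: the auxiliary result you want to invoke is not Lemma~\ref{lemma: SA_BM} (which in the paper is a Brownian modulus-of-continuity estimate used elsewhere) but rather the separate functional strong approximation lemma the paper cites from \citet{Horv1984} and \citet{ChenYao2001}.
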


\begin{lemma} \label{lemma: SA_fluid}
    Suppose that Assumption \ref{assmpt: moment} holds. There exists a set $\Omega_2 \subseteq \Omega$ with $\Prob(\Omega_2)=1$, such that for any fixed $T>0$, $k\in \J$, and $\omega\in \Omega_2$, we have, for any $r\in (0,r_2(T))$,    
    \begin{equation}
        \sup_{0\leq t \leq T} \Norm{\scale{\bar{A}}{k}{{}}(t, \omega) - \alpha t} \le C_2(\omega, T)\gamma_k(r)\log\log (1/\gamma_k(r)) \label{eq: LIL_A}
    \end{equation}
    \begin{equation}
        \sup_{0\leq t \leq  T} \Norm{\scale{\bar{S}}{k}{{}}(t, \omega) - et}  \le C_2(\omega, T)\gamma_k(r)\log\log (1/\gamma_k(r)), \label{eq: LIL_S}
    \end{equation}
    \begin{equation}
        \sup_{0\leq t \leq T} \Norm{\scale{\bar{\psi}}{k}{\ell}(t, \omega) - p_{\ell}t}  \le C_2(\omega, T)\gamma_k(r)\log\log (1/\gamma_k(r)). \label{eq: LIL_psi}
    \end{equation}
    where $C_2(\omega,T)$ is a positive random variable and $r_2(T)$ is a constant in $(0,1)$.
    If we further suppose that $\mu^\uu_j>\lambda_j$ and $\mu^\uu_j - \lambda_j =O(\gamma_1(r))$ for all $j\in \J$, then, for any $r\in (0,r_2'(T))$ and $\omega\in \Omega_2$,
    \begin{equation}
        \sup_{0\leq t \leq T} \Norm{\scale{\bar{B}}{k}{{}}(t, \omega) - et} \leq C_2'(\omega,T) \gamma_1(r)\log\log (1/\gamma_1(r)), \label{eq: LIL_B}
    \end{equation}
    where $C_2'(\omega,T)$ is a positive random variable.
\end{lemma}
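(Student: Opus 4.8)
The plan is to obtain all four estimates from the Hartman--Wintner law of the iterated logarithm (LIL) for i.i.d.\ sums, transported to the $\gamma_k^2(r)$-fluid scale; the busy-time bound \eqref{eq: LIL_B} additionally uses the Lipschitz property \eqref{eq: Lipschitz} of the Skorokhod map together with the trivial work-conservation bound $B_j^\uu(t)\le t$.

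\emph{Estimates \eqref{eq: LIL_A}, \eqref{eq: LIL_S}, \eqref{eq: LIL_psi}.} Assumption~\ref{assmpt: moment} gives $\operatorname{Var}(T_{e,j}(1)),\operatorname{Var}(T_{s,j}(1))<\infty$, and $\route{\ell}(1)$ is bounded, so Hartman--Wintner provides a full-measure set on which $\big|\sum_{i=1}^n(T_{e,j}(i)/\alpha_j-1/\alpha_j)\big|\le C(\omega)\sqrt{n\log\log n}$ for all $n\ge 3$, and likewise for $\{T_{s,j}(i)\}$ and coordinatewise for the partial sums $\psi_\ell(n)=\sum_{i\le n}\route{\ell}(i)$. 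The classical inversion of a renewal process upgrades the first two to $\sup_{0\le s\le N}|A_j(s)-\alpha_j s|\le g(\omega)+C(\omega)\sqrt{N\log\log N}$ and $\sup_{0\le s\le N}|S_j(s)-s|\le g(\omega)+C(\omega)\sqrt{N\log\log N}$ for all $N\ge 3$, with $g(\omega),C(\omega)$ a.s.\ finite; for $\psi_\ell$ the bound is immediate since it is already a partial sum. Substituting $N=T/\gamma_k^2(r)$ and multiplying by $\gamma_k^2(r)$ turns $\sqrt{N\log\log N}$ into $\gamma_k(r)\sqrt{T\log\log(T/\gamma_k^2(r))}\le\sqrt{2T}\,\gamma_k(r)\log\log(1/\gamma_k(r))$ for small $r$, while $\gamma_k^2(r)g(\omega)$ is of strictly lower order; a maximum of the finitely many constants over $j,\ell$ and over $k\in\J$ yields the three displays with a deterministic threshold $r_2(T)$ (depending only on $\gamma_k(r)\to 0$) and a single random constant $C_2(\omega,T)$.

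\emph{Estimate \eqref{eq: LIL_B}.} From \eqref{eq: Skorokhod GJN}, $Y^\uu=\Psi(X^\uu;R)$ with $X^\uu(t)=Z^\uu(0)-R(\mu^\uu-\lambda)t+V^\uu(t)$; the constant path $Z^\uu(0)\in\R_+^J$ requires no regulation, so $\Psi(Z^\uu(0);R)=0$, and \eqref{eq: Lipschitz} gives $\|Y^\uu\|_N\le\kappa(R)\|X^\uu-Z^\uu(0)\|_N\le\kappa(R)\big(\|R(\mu^\uu-\lambda)\|_\infty N+\|V^\uu\|_N\big)$. Since $Y_j^\uu(t)=\mu_j^\uu(t-B_j^\uu(t))$ and $\mu_j^\uu\ge\lambda_j>0$, this bounds $\sup_{0\le s\le N}(s-B_j^\uu(s))$. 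To control $\|V^\uu\|_N$, note that $B_i^\uu(t)\le t$ forces every time-changed argument in $V^\uu$ into a fixed multiple of $[0,N]$, so feeding in the LIL bounds for $S_i$ and $\psi_i$ (which hold uniformly for all large indices, hence also at the random arguments $S_i(\mu_i^\uu B_i^\uu(t))$) and for $A_j$ gives $\|V^\uu\|_N\le g'(\omega)+C'(\omega)\sqrt{N\log\log N}$. Taking $N=T/\gamma_k^2(r)$ and multiplying by $\gamma_k^2(r)$: the drift term contributes $O(\gamma_1(r))$ (using $\mu_j^\uu-\lambda_j=O(\gamma_1(r))$), the term $\gamma_k^2(r)g'(\omega)$ is $o(\gamma_1(r))$, and the noise contributes $O\big(\gamma_k(r)\sqrt{\log\log(1/\gamma_k(r))}\big)$; since $s-B_j^\uu(s)\ge0$ this gives \eqref{eq: LIL_B} once one checks the elementary inequality $\gamma_k(r)\sqrt{\log\log(1/\gamma_k(r))}=O\big(\gamma_1(r)\log\log(1/\gamma_1(r))\big)$, valid for each $k$ (hence all $k$) under the standing assumptions on the scale functions: it is trivial when $\gamma_k\asymp\gamma_1$ in log-log scale, and when $1/\gamma_k$ is much larger than $1/\gamma_1$ the super-exponential gap between them makes the ratio go to zero (with \eqref{eq: tech} handling the most extreme admissible scales).

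The main obstacle is \eqref{eq: LIL_B}: one needs a pathwise estimate on the idle-time (regulator) process that is simultaneously valid on all $J$ separated timescales, losing only a factor $\log\log(1/\gamma_1(r))$ over the drift-induced order $\gamma_1(r)$. The delicate point is that the Brownian-type fluctuation of $V^\uu$ on the $k$th timescale is of size $\gamma_k(r)\sqrt{\log\log(1/\gamma_k(r))}$, which must be shown not to exceed $\gamma_1(r)\log\log(1/\gamma_1(r))$ for the faster stations $k>1$; this rests on the interplay between $\gamma_k(r)/\gamma_1(r)\to0$ and the double-logarithms forced by the standing assumptions (and, in the borderline cases, by \eqref{eq: tech}). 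The remaining work is bookkeeping: keeping all thresholds deterministic while every path-dependent quantity is absorbed into a single random constant $C_2(\omega,T)$ (resp.\ $C_2'(\omega,T)$), and intersecting the finitely many full-measure sets on which the LILs hold.
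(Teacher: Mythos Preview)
Your proposal is correct and matches the paper's approach: both transport the functional LIL for the primitives (the paper cites Chen--Yao's Theorem~5.13, which packages the Hartman--Wintner argument you sketch) to the $\gamma_k^2(r)$-fluid scale for \eqref{eq: LIL_A}--\eqref{eq: LIL_psi}, and both obtain \eqref{eq: LIL_B} by bounding the centered noise $\bar V^{(r,k)}$ with those estimates (using $B_i^\uu(t)\le t$ to confine the random time-changes) and then applying the Lipschitz property of $\Psi$ against the constant path $\gamma_k^2(r)Z^\uu(0)$. One minor point: the scale comparison $\gamma_k(r)\sqrt{\log\log(1/\gamma_k(r))}\le\gamma_1(r)\sqrt{\log\log(1/\gamma_1(r))}$ follows simply from the eventual monotonicity of $x\mapsto x\sqrt{\log\log(1/x)}$ together with $\gamma_k/\gamma_1\to0$, so \eqref{eq: tech} is not needed here---the paper reserves it for Lemma~\ref{lemma: SA_BM}.
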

\begin{lemma} \label{lemma: SA_BM}
    Let $W_0=\{W_0(t);~t\geq 0\}$ be a one-dimensional standard Brownian motion. Then there exists a set $\Omega_3 \subseteq \Omega$ with $\Prob(\Omega_3)=1$, such that for any fixed $T>0$, every $k\in \J$, $\omega\in \Omega_3$, and $C>0$, there exists a constant $r_3\defi r_3(T,C)\in (0,1)$ so that for any $r\in (0,r_3)$,
    \begin{equation*}
        \begin{aligned}
            &\sup_{ \substack{0\leq s, t \leq T\\ \abs{s-t} \leq C \gamma_1(r)\log\log (1/\gamma_1(r))}} \abs{\gamma_k(r)W_0(s/\gamma_k^2(r), \omega)-\gamma_k(r)W_0(t/\gamma_k^2(r), \omega)} \\
            &\qquad  \leq 4\sqrt{C}\sqrt{ \gamma_1(r)\sqrt{\log\log ({1}/{\gamma_1(r)})}\left[\log(1/ \gamma_1(r))+\log \log \left( 1/\gamma_k(r)\right)\right]}.
        \end{aligned}
    \end{equation*}
\end{lemma}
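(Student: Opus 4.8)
\emph{Overall plan.} The assertion is a uniform modulus-of-continuity bound for a single Brownian path over time intervals that blow up as $r\to0$, so my plan is to undo the diffusion scaling, reduce to the oscillation of $W_0(\cdot,\omega)$ on an expanding interval, invoke a classical increment theorem for the Wiener process, and finally simplify logarithms. Fix $T>0$, $k\in\J$, $C>0$ and set $M_k(r):=T/\gamma_k^2(r)$ and $h_k(r):=C\gamma_1(r)\log\log(1/\gamma_1(r))/\gamma_k^2(r)$; the change of variables $u=s/\gamma_k^2(r)$, $v=t/\gamma_k^2(r)$ shows the left-hand side of the lemma equals $\gamma_k(r)$ times $\sup\{|W_0(u,\omega)-W_0(v,\omega)|:0\le u,v\le M_k(r),\ |u-v|\le h_k(r)\}$, i.e.\ $\gamma_k(r)$ times the uniform oscillation of $W_0(\cdot,\omega)$ over windows of width $h_k(r)$ inside $[0,M_k(r)]$. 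I would phrase the increment theorem below in its uniform form (one full-measure set controlling this oscillation for all large $M$ and all admissible window widths), so that a single set $\Omega_3$, obtained by intersecting over the finitely many $k\in\J$, serves every $T$ and $C$ at once.

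\emph{Increment theorem.} I would then invoke the increment theorem of Csörgő and Révész for the Wiener process (see \citet{Csor1981}): if $a_M$ is nondecreasing with $0<a_M\le M$, $a_M/M$ nonincreasing and $a_M/\log M\to\infty$, then almost surely, for every $\epsilon>0$ and all large $M$, $\sup_{0\le t\le M-a_M}\sup_{0\le s\le a_M}|W_0(t+s)-W_0(t)|\le(1+\epsilon)\sqrt{2a_M[\log(M/a_M)+\log\log M]}$. (If one wishes to be self-contained, only this upper half is needed, and it follows from the reflection-principle estimate $\Prob(\sup_{[0,a]}|W_0|>\lambda\sqrt a)\le4e^{-\lambda^2/2}$ via a union bound over the $\lceil M/a\rceil$ windows $[ja,(j+1)a]$, Borel--Cantelli over a geometric mesh of $(M,a)$-values, and a monotone-filling step.) With $a_M=h_k(r)$, $M=M_k(r)$ the three hypotheses hold for every $k\in\J$: $h_k(r)\to\infty$; $h_k(r)/M_k(r)=C\gamma_1(r)\log\log(1/\gamma_1(r))/T\to0$ and is nonincreasing as $r\downarrow0$; and $h_k(r)/\log M_k(r)$, which is of order $\gamma_1(r)\log\log(1/\gamma_1(r))/(\gamma_k^2(r)\log(1/\gamma_k(r)))$, tends to $\infty$ — for $k=1$ because $\gamma_1(r)\log(1/\gamma_1(r))\to0$, and for $k\ge2$ because $\gamma_k(r)=o(\gamma_1(r))$. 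Taking $\epsilon=1$, I conclude that on a full-measure set and for all sufficiently small $r$ the oscillation above is at most $2\sqrt{2h_k(r)[\log(M_k(r)/h_k(r))+\log\log M_k(r)]}$.

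\emph{Logarithmic simplification.} Since $M_k(r)/h_k(r)=T/(C\gamma_1(r)\log\log(1/\gamma_1(r)))$, for $r$ small $\log(M_k(r)/h_k(r))=\log(1/\gamma_1(r))+\log(T/C)-\log\log\log(1/\gamma_1(r))\le\log(1/\gamma_1(r))$; and $\log\log M_k(r)=\log(\log T+2\log(1/\gamma_k(r)))\le\log\log(1/\gamma_k(r))+2$, so the bracket is at most $2[\log(1/\gamma_1(r))+\log\log(1/\gamma_k(r))]$ for $r$ small. Multiplying by $\gamma_k(r)$ and using $\gamma_k(r)\sqrt{h_k(r)}=\sqrt{C\gamma_1(r)\log\log(1/\gamma_1(r))}$ then produces, for all small $r$, a bound of the form $4\sqrt C\,\sqrt{\gamma_1(r)\log\log(1/\gamma_1(r))\,[\log(1/\gamma_1(r))+\log\log(1/\gamma_k(r))]}$, which is the claimed estimate; intersecting over $k\in\J$ yields $\Omega_3$ and completes the proof.

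\emph{Main obstacle.} Once the increment theorem is in hand the remaining work is modest, and the only genuinely delicate points are: (i) arranging a \emph{single} null set that serves all $T$, $k$, $C$ and all small $r$ — which is why I would use the uniform (rather than fixed-$a_M$) form of the increment theorem, so that $C$ needs no separate treatment and $k$ contributes only a finite intersection; (ii) checking the hypothesis $a_M/\log M\to\infty$ all the way down to $r\to0$, which for $k=1$ is exactly the statement that $\gamma_1(r)\log(1/\gamma_1(r))$ is negligible compared with $\log\log(1/\gamma_1(r))$; and (iii) tracking the precise logarithmic pre-factor and the numerical constant in the last step so as to land on the stated bound rather than a cruder one.
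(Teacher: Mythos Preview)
Your approach is essentially identical to the paper's: both rescale to reduce the left-hand side to $\gamma_k(r)$ times the oscillation of $W_0$ on $[0,T/\gamma_k^2(r)]$ over windows of width $\sim C\gamma_1(r)\log\log(1/\gamma_1(r))/\gamma_k^2(r)$, invoke the Cs\"org\H{o}--R\'ev\'esz increment theorem (the paper quotes Theorem~1.2.1 of \citet{Csor1981} as its Lemma~\ref{lemma: SA_BM_2}), and then simplify the logarithmic factors. The only minor difference is that the version you state carries the extra hypothesis $a_M/\log M\to\infty$, which the paper's cited form does not require; you verify it anyway, so no harm is done.
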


\begin{proof}[Proof of Proposition \ref{prop: SA}.]
It suffices to prove the strong approximation of the noise process $\scale{V}{k}{{}}$ in \eqref{eq: SA_VV}. For $j\in \J$, define
\begin{equation*}
    W^{(v)}_j(t) = (\Gamma^{(e)})^{1/2}_{j,[1:J]}W^{(e)}(t) + \sum_{i\in \J} (P_{ij} - \delta_{ij})(\Gamma^{(s)})^{1/2}_{i,[1:J]} W^{(s)}(\lambda_i t) + \sum_{i\in \J}(\Gamma^{(i)})^{1/2}_{j,[1:J]} W^{(i)}\left( \lambda_i t\right)
\end{equation*}
and then $W^{(v)}=\{(W^{(v)}_1(t),W^{(v)}_2(t), \ldots, W^{(v)}_J(t));~t\geq 0\}$ is a $J$-dimensional driftless Brownian motion with zero initial state and covariance matrix $\Gamma$ given by \eqref{eq: SA_Gamma}. Since we have
\begin{equation*}
    \begin{aligned}
        &\sup_{0\leq t \leq T} \max_{k\in \J} \Norm{\scale{V}{k}{{}}(t) - \gamma_k(r) W^{(v)}(t/\gamma_k^2(r))} = \sup_{0\leq t \leq T} \max_{k\in \J} \max_{j\in \J} \abs{\scale{V}{k}{{j}}(t) - \gamma_k(r) W^{(v)}_j(t/\gamma_k^2(r))},
    \end{aligned}
\end{equation*}
where 
\begin{equation*}
    \begin{aligned}
        &\scale{V}{k}{{j}}(t) - \gamma_k(r) W^{(v)}_j(t/\gamma_k^2(r)) \\
        &\quad = \scale{A}{k}{j}(t)-\alpha_j t/\gamma_k(r)-(\Gamma^{(e)})^{1/2}_{j,[1:J]}\gamma_k(r)W^{(e)}(t/\gamma_k^2(r))\\
        & \qquad + \sum_{i\in\J} \left( P_{i j} - \delta_{i j} \right)\left[ \scale{S}{k}{i}\left(\mu_i^\uu \scale{\bar{B}}{k}{i}(t)\right)-\mu_i^\uu  \scale{\bar{B}}{k}{i}(t)/\gamma_k(r)\right. \notag\\
        &\qquad \qquad \qquad \qquad \quad \left. -(\Gamma^{(s)})^{1/2}_{i,[1:J]}\gamma_k(r)W^{(s)}(\mu_i^\uu  \scale{\bar{B}}{k}{i}(t)/\gamma_k^2(r))\right]\\
        &\qquad + \sum_{i\in\J}\left[\scale{\psi}{k}{i,j}\left(\scale{\bar{S}}{k}{i} \left(\mu_i^\uu \scale{\bar{B}}{k}{i}(t)\right)\right)-P_{i j}  \scale{\bar{S}}{k}{i}\left(\mu_i^\uu \scale{\bar{B}}{k}{i}(t)\right)/\gamma_k(r)\right.\notag\\
        &\qquad \qquad \quad  \left. -(\Gamma^{(i)})^{1/2}_{j,[1:J]}\gamma_k(r)W^{(i)}(\scale{\bar{S}}{k}{i} \left(\mu_i^\uu \scale{\bar{B}}{k}{i}(t)\right)/\gamma_k^2(r))\right] \\
        &\qquad + \sum_{i\in\J} \left( P_{i j} - \delta_{i j} \right)(\Gamma^{(s)})^{1/2}_{i,[1:J]}\left[ \gamma_k(r)W^{(s)}\left( \mu_i^\uu  \scale{\bar{B}}{k}{i}(t)/\gamma_k^2(r) \right)-\gamma_k(r)W^{(s)}(\lambda_it/\gamma_k^2(r))\right] \\
        &\qquad + \sum_{i\in\J} (\Gamma^{(i)})^{1/2}_{j,[1:J]}\left[ \gamma_k(r)W^{(i)}\left(\scale{\bar{S}}{k}{i} \left(\mu_i^\uu \scale{\bar{B}}{k}{i}(t)\right)/\gamma_k^2(r)\right)-\gamma_k(r)W^{(i)}(\lambda_it/\gamma_k^2(r))\right],
    \end{aligned}
\end{equation*}
the proof now reduces to showing that each of the five terms in the decomposition above converges to zero at the desired rate. Specifically, we need to establish the following five bounds for all $i,j,k\in \J$,
\begin{equation} \label{eq: SA1}
    \sup_{0\leq t \leq T} \abs{\scale{A}{k}{j}(t)-\alpha_j t/\gamma_k(r)-(\Gamma^{(e)})^{1/2}_{j,[1:J]}\gamma_k(r)W^{(e)}(t/\gamma_k^2(r))} \aseq o\left(\gamma_1^{\varepsilon/(2+\varepsilon)}(r)\right),
\end{equation}
\begin{equation} \label{eq: SA2}
    \begin{aligned}
        &\sup_{0\leq t \leq T} \left|\scale{S}{k}{i}\left(\mu_i^\uu \scale{\bar{B}}{k}{i}(t)\right)-\mu_i^\uu  \scale{\bar{B}}{k}{i}(t)/\gamma_k(r) \right.\\
        &\qquad \quad   \left. -(\Gamma^{(s)})^{1/2}_{i,[1:J]}\gamma_k(r)W^{(s)}(\mu_i^\uu  \scale{\bar{B}}{k}{i}(t)/\gamma_k^2(r))\right| \aseq o\left(\gamma_1^{\varepsilon/(2+\varepsilon)}(r)\right),
    \end{aligned}
\end{equation}
\begin{equation} \label{eq: SA3}
    \begin{aligned}
        &\sup_{0\leq t \leq T} \left|\scale{\psi}{k}{i,j}\left(\scale{\bar{S}}{k}{i} \left(\mu_i^\uu \scale{\bar{B}}{k}{i}(t)\right)\right)-P_{i j}  \scale{\bar{S}}{k}{i}\left(\mu_i^\uu \scale{\bar{B}}{k}{i}(t)\right)/\gamma_k(r)\right.\\
        &\qquad \quad   \left. -(\Gamma^{(i)})^{1/2}_{j,[1:J]}\gamma_k(r)W^{(i)}(\scale{\bar{S}}{k}{i} \left(\mu_i^\uu \scale{\bar{B}}{k}{i}(t)\right)/\gamma_k^2(r))\right| \aseq o\left(\gamma_1^{\varepsilon/(2+\varepsilon)}(r)\right),
    \end{aligned}
\end{equation}
\begin{equation} \label{eq: SA4}
    \sup_{0\leq t \leq T} \abs{\gamma_k(r)W^{(s)}\left( \mu_i^\uu  \scale{\bar{B}}{k}{i}(t)/\gamma_k^2(r) \right)-\gamma_k(r)W^{(s)}(\lambda_it/\gamma_k^2(r))} \aseq o\left(\gamma_1^{\varepsilon/(2+\varepsilon)}(r)\right),
\end{equation}
\begin{equation} \label{eq: SA5}
    \sup_{0\leq t \leq T} \abs{\gamma_k(r)W^{(i)}\left(\scale{\bar{S}}{k}{i} \left(\mu_i^\uu \scale{\bar{B}}{k}{i}(t)\right)/\gamma_k^2(r)\right)-\gamma_k(r)W^{(i)}(\lambda_it/\gamma_k^2(r))} \aseq o\left(\gamma_1^{\varepsilon/(2+\varepsilon)}(r)\right),
\end{equation}

    Let $\Omega_1$, $\Omega_2$ and $\Omega_3$ be the subsets of $\Omega$ from Lemmas \ref{lemma: SA_diffusion}, \ref{lemma: SA_fluid} and \ref{lemma: SA_BM} that have full probability measures. We first note by \eqref{eq: ASA_A} in  Lemma~\ref{lemma: SA_diffusion} that \eqref{eq: SA1} holds for any $\omega\in \Omega_1$. 

    To prove \eqref{eq: SA2}, we note that the condition $\mu^\uu_j - \lambda_j =O(\gamma_1(r))$ for all $j\in \J$ implies that there exists a positive constant $c_0$ such that $\mu^\uu_j - \lambda_j \leq c_0 \gamma_1(r)$ for all $j\in \J$.
    Since the busy time process is bounded by $0\leq B_i^\uu(t, \omega) \leq t$ for any $t\geq 0$, $i\in \J$, $r\in(0,1)$, and $\omega \in \Omega$, we obtain that 
    \begin{equation} \label{eq: B_bound}
        0\leq \mu^\uu_i \scale{\bar{B}}{k}{i}(t, \omega) \leq (\lambda_i + c_0\gamma_1(r))\gamma_k^2(r) t/\gamma_k^2(r) \leq (\lambda_{\max} + c_0)t,
    \end{equation}
    where we denote $\lambda_{\max}=\max_{j\in \J} \lambda_j$. Hence, we conclude by applying \eqref{eq: ASA_S} of Lemma~\ref{lemma: SA_diffusion} on the time horizon $[0, (\lambda_{\max} + c_0)T]$ that \eqref{eq: SA2} holds for any $\omega\in \Omega_1$ as $r\to 0$.

    To prove \eqref{eq: SA3}, we obtain by applying \eqref{eq: LIL_S} of Lemma~\ref{lemma: SA_fluid} on the time horizon $[0, (\lambda_{\max} + c_0)T]$ that
    \begin{equation} \label{eq: V_SB}
        \begin{aligned}
            &\sup_{0\leq t \leq T} \abs{\scale{\bar{S}}{k}{i} (\mu_i^\uu \scale{\bar{B}}{k}{i}(t, \omega), \omega) - \mu_i^\uu \scale{\bar{B}}{k}{i}(t, \omega)} \\
            & \quad \leq C_2(\omega, (\lambda_{\max} + c_0)T)\gamma_k(r) \sqrt{ \log\log (1/\gamma_k(r))},
        \end{aligned}
    \end{equation}
    for any $r\in (0,r_2((\lambda_{\max} + c_0)T))$ and $\omega\in \Omega_2$. Hence, we have 
    \begin{equation}\label{eq: V_SB_max}
        \begin{aligned} 
            \scale{\bar{S}}{k}{i} (\mu_i^\uu \scale{\bar{B}}{k}{i}(t, \omega), \omega) &\leq \mu_i^\uu \scale{\bar{B}}{k}{i}(t, \omega) + C_2(\omega, (\lambda_{\max} + c_0)T) \gamma_k(r) \sqrt{ \log\log(1/\gamma_k(r))}  \\
            &\leq (\lambda_{\max} + c_0)T +  C_2(\omega, (\lambda_{\max} + c_0)T)=: C_2''(\omega, T), 
        \end{aligned}
    \end{equation}
    for any $r\in (0,r_2((\lambda_{\max} + c_0)T))$, $t\in [0,T]$ and $\omega\in \Omega_2$.
    Consequently, applying \eqref{eq: ASA_psi} of Lemma~\ref{lemma: SA_diffusion} on the time horizon $[0,C_2''(\omega, T)]$ shows that \eqref{eq: SA3} holds for any $\omega\in \Omega_1 \cap \Omega_2$.

    To prove \eqref{eq: SA4}, we note that the condition $\mu^\uu_j - \lambda_j =O(\gamma_1(r))$ for all $j\in \J$ and \eqref{eq: LIL_B} of Lemma~\ref{lemma: SA_fluid} indicates that
    \begin{equation}\label{eq: V_B}
        \begin{aligned} 
            ~\sup_{0\leq t \leq T} \abs{\mu_i^\uu\scale{\bar{B}}{k}{{i}}(t, \omega) - \lambda_i t} &\leq  \sup_{0\leq t \leq T} \abs{\mu_i^\uu - \lambda_i}t + \sup_{0\leq t \leq T} \mu_i^\uu\abs{\scale{\bar{B}}{k}{{i}}(t, \omega) -  t}  \\
            & \leq Tc_0 \gamma_1(r) + (\lambda_i + c_0\gamma_1(r))C_2'(\omega, T) \gamma_1(r)\sqrt{ \log\log (1/\gamma_1(r))}  \\
            & \leq (Tc_0 + (\lambda_{\max}+c_0)C_2'(\omega, T))\gamma_1(r)\sqrt{ \log\log (1/\gamma_1(r))},
        \end{aligned}
    \end{equation}
for any $r\in (0,r_2'(T))$ and $\omega \in \Omega_2$. Hence, Lemma \ref{lemma: SA_BM} implies that
\begin{equation*}
    \begin{aligned}
        &\sup_{0\leq t \leq T} \abs{\gamma_k(r)W^{(s)}(\mu_i^\uu  \scale{\bar{B}}{k}{i}(t, \omega)/\gamma_k^2(r), \omega)-\gamma_k(r)W^{(s)}(\lambda_it/\gamma_k^2(r), \omega)} \\
        & ~ \leq 4\sqrt{(Tc_0 + (\lambda_{\max}+c_0)C_2'(\omega, T))}\gamma_1(r)\sqrt{ \sqrt{\log\log (1/\gamma_1(r))}\left[\log(1/ \gamma_1(r))+\log \log \left( 1/\gamma_k(r)\right)\right]},
    \end{aligned}
\end{equation*}
for any $r\in (0, r_2'((\lambda_{\max} + c_0)T) \wedge r_3(T,c_0 T + (\lambda_{\max}+c_0)C_2'(\omega, T)))$ and $\omega\in \Omega_2\cap \Omega_3$ by replacing $s$, $t$ and $T$ with $\mu_i^\uu  \scale{\bar{B}}{k}{i}(t, \omega)$, $\lambda_i t$ and $(\lambda_{\max}+c_0)T$.

For any fixed $T>0$ and $\omega\in \Omega_2\cap \Omega_3$, there exists a random variable $C_3(\omega, T)$ so that for any $r\in [r_2'((\lambda_{\max} + c_0)T) \wedge r_3(T, c_0 T + (\lambda_{\max}+c_0)C_2'(\omega, T) ), \gamma_1^{-1}(1/e) \wedge \gamma_k^{-1}(1/e)]$,
    \begin{equation*}
        \begin{aligned}
            &\sup_{0\leq t \leq T} \abs{\gamma_k(r)W^{(s)}(\mu_i^\uu  \scale{\bar{B}}{k}{i}(t, \omega)/\gamma_k^2(r), \omega)-\gamma_k(r)W^{(s)}(\lambda_it/\gamma_k^2(r), \omega)} \leq C_3(\omega, T).
        \end{aligned}
    \end{equation*}
    Hence, for the fixed $T>0$, there exist a random variable $C_4(\omega, T)$ such that for any $r\in (0, \gamma_1^{-1}(1/e) \wedge \gamma_k^{-1}(1/e))$,
    \begin{equation*}
        \begin{aligned}
            &\sup_{0\leq t \leq T} \abs{\gamma_k(r)W^{(s)}(\mu_i^\uu  \scale{\bar{B}}{k}{i}(t, \omega)/\gamma_k^2(r), \omega)-\gamma_k(r)W^{(s)}(\lambda_it/\gamma_k^2(r), \omega)} \\
            & \quad \leq C_4(\omega, T)\gamma_1(r)\sqrt{ \sqrt{\log\log (1/\gamma_1(r))}\left[\log(1/ \gamma_1(r))+\log \log \left( 1/\gamma_k(r)\right)\right]}.
        \end{aligned}
    \end{equation*}
Therefore, under assumption \eqref{eq: tech}, we conclude that \eqref{eq: SA4} holds for any  $\omega\in \Omega_2\cap \Omega_3$.

To prove \eqref{eq: SA5}, by the triangular inequality, \eqref{eq: V_SB} and \eqref{eq: V_B} imply that
\begin{equation}
    \begin{aligned}
        &\sup_{0\leq t \leq T} \abs{\scale{\bar{S}}{k}{i} (\mu_i^\uu \scale{\bar{B}}{k}{i}(t, \omega), \omega) - \lambda_i t}\\
        &\quad \leq \left( C_2(\omega, (\lambda_{\max} + c_0)T)+(Tc_0 + (\lambda_{\max}+c_0)C_2'(\omega, T)) \right) \gamma_1(r)\sqrt{\log\log (1/\gamma_1(r))}
    \end{aligned}
\end{equation}
for any $r\in (0, r_2((\lambda_{\max} + c_0)T) \wedge r_2'(T))$ and any $\omega\in \Omega_2$. Hence, by Lemma \ref{lemma: SA_BM} and assumption \eqref{eq: tech},
we can similarly obtain that \eqref{eq: SA5} holds for any $\omega\in \Omega_2\cap \Omega_3$.

Combining the bounds \eqref{eq: SA1}-\eqref{eq: SA5}, we conclude that
\begin{equation*}
    \begin{aligned}
        &\sup_{0\leq t \leq T} \max_{k\in \J} \Norm{\scale{V}{k}{{}}(t, \omega) - \gamma_k(r) W^{(v)}(t/\gamma_k^2(r), \omega)} = o(\gamma_1^{\varepsilon/(2+\varepsilon)}(r)),
    \end{aligned}
\end{equation*}
for any $\omega\in \Omega_1 \cap \Omega_2\cap \Omega_3$.
This completes the proof of Proposition \ref{prop: SA}. 
\end{proof}

\section{Convergence of SRBMs} \label{sec: proof SRBM}

The proof of Theorem \ref{prop: multi-SRBM} is based on two key preliminary results: an asymptotic independence result of Brownian motions, stated in Proposition \ref{proposition: joint BM}, and the process-level convergence of the SRBMs at each individual scaling, presented in Proposition \ref{prop: rk initial}. We prove Proposition \ref{prop: rk initial} in Section \ref{sec: proof multi-SRBM each}, and then use it to complete the proof of Theorem~\ref{prop: multi-SRBM} in Section \ref{sec: proof multi-SRBM T}.

\begin{proposition}[Asymptotically independent Brownian motions] \label{proposition: joint BM}
    Let $W_0:=\{W_0(t);~t\geq 0\}$ be a  one-dimensional standard Brownian motion.
    Then  as  $r\to 0,$
    \begin{equation*}
        \begin{aligned}
            &W_0^\uu \defi \left\{ \left(
            \gamma_1(r)W_0(t/\gamma_1^2(r)),~\ldots,~ \gamma_J(r)W_0(t/\gamma_J^2(r))
        \right) ;~t\geq 0 \right\} \\
        &\qquad \Longrightarrow W^*_0 := \left\{ \left(W_{0,1}^*(t),~\ldots,~ W_{0,J}^*(t)\right);~t\geq 0 \right\} ,
        \end{aligned}
    \end{equation*} 
    where $W^*_0$ is a $J$-dimensional standard Brownian motion.
\end{proposition}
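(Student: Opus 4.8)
The plan is to prove the statement by the classical route of establishing \textbf{tightness} and \textbf{convergence of finite-dimensional distributions} separately, and then combining them. The key elementary fact we exploit is Brownian scaling invariance: for any $c>0$, the process $\{cW_0(t/c^2);~t\geq 0\}$ is again a standard one-dimensional Brownian motion. Applying this with $c=\gamma_k(r)$, we see that for every $r\in(0,1)$ and every $k\in\J$ the $k$th coordinate of $W_0^\uu$ is a standard one-dimensional Brownian motion \emph{whose law does not depend on $r$}. In particular, the family of laws of each coordinate process is a singleton, hence trivially tight in $\C([0,\infty),\R)$; since tightness of a vector-valued family in $\C([0,\infty),\R^J)\cong \C([0,\infty),\R)^J$ follows from tightness of each coordinate family (a compact set in the product can be taken to be a product of coordinate-wise compact sets, and this argument does not require joint independence), the family $\{W_0^\uu\}_{r\in(0,1)}$ is tight in $\C([0,\infty),\R^J)$.

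It then remains to identify the limit by showing that the finite-dimensional distributions of $W_0^\uu$ converge to those of a $J$-dimensional standard Brownian motion. Since $W_0^\uu$ is a centered Gaussian process (each coordinate at each time is a linear functional of the Gaussian process $W_0$), its finite-dimensional distributions are determined by its covariance function, so it suffices to compute, for $k,\ell\in\J$ and $s,t\geq 0$,
\begin{equation*}
    \E\big[\gamma_k(r)W_0(s/\gamma_k^2(r))\cdot \gamma_\ell(r)W_0(t/\gamma_\ell^2(r))\big]
    = \gamma_k(r)\gamma_\ell(r)\,\min\!\big(s/\gamma_k^2(r),\,t/\gamma_\ell^2(r)\big),
\end{equation*}
and let $r\to 0$. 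When $k=\ell$ this equals $\min(s,t)$ for every $r$. When $k\neq\ell$, say $k<\ell$ so that $\gamma_\ell(r)/\gamma_k(r)\to 0$: the covariance vanishes if $s=0$ or $t=0$, while for $s,t>0$ one has $\frac{s/\gamma_k^2(r)}{t/\gamma_\ell^2(r)}=\frac{s}{t}\big(\gamma_\ell(r)/\gamma_k(r)\big)^2\to 0$, so for all sufficiently small $r$ the minimum equals $s/\gamma_k^2(r)$ and the covariance equals $s\,\gamma_\ell(r)/\gamma_k(r)\to 0$. Hence the covariance function of $W_0^\uu$ converges pointwise to $(s,t)\mapsto \delta_{k\ell}\min(s,t)$, the covariance function of a $J$-dimensional standard Brownian motion; since centered Gaussian vectors with convergent covariance matrices converge in distribution (immediate from characteristic functions), the finite-dimensional distributions of $W_0^\uu$ converge to those of $W_0^*$.

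Combining tightness in $\C([0,\infty),\R^J)$ with the convergence of finite-dimensional distributions yields $W_0^\uu\Longrightarrow W_0^*$ as $r\to 0$, which also gives the stated convergence in $\D([0,\infty),\R^J)$ under the $J_1$-topology since the limit is continuous. There is no serious obstacle in this argument: the only point of genuine content is the cross-covariance computation above, which is exactly where the timescale separation $\gamma_\ell(r)/\gamma_k(r)\to 0$ enters and forces the differently-scaled copies of $W_0$ to become asymptotically uncorrelated and hence, by joint Gaussianity, asymptotically independent. The only mild care required is in handling the minimum in the cross-covariance (which reduces to the elementary ratio estimate above) and in correctly deducing tightness of the vector-valued family from that of its coordinates.
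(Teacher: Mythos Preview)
Your proof is correct and follows essentially the same approach as the paper: both argue tightness from the fact that each coordinate is a standard Brownian motion, and both identify the limit via the key cross-covariance computation $\gamma_k(r)\gamma_\ell(r)\min(s/\gamma_k^2(r),t/\gamma_\ell^2(r))\to 0$ for $k\neq\ell$. The only cosmetic difference is that the paper phrases the identification step as ``every subsequential weak limit is Gaussian with the right covariance,'' whereas you directly show convergence of finite-dimensional distributions via convergence of covariances; these are equivalent here and your version is arguably the cleaner of the two.
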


The proof of Proposition \ref{proposition: joint BM} is given in Appendix \ref{sec: proof lemma k}.
In our case, we consider the underlying $J$-dimensional standard Brownian motion $W:=\{(W_1(t),W_2(t), \ldots, W_J(t));~t\geq 0\}$ in \eqref{eq: RBM def2}, which is a direct consequence of Proposition \ref{proposition: joint BM}. Following the notation in \eqref{eq: def scale}, we similarly define the $\gamma_k^2(r)$-scaled process of $W$ by 
\begin{equation*}
    \scale{ W}{k}{{}}  \defi \left\{  \scale{ W}{k}{{}}(t);~t\geq 0 \right\} \defi \left\{ \gamma_k(r) W(t/\gamma_k^2(r));~t\geq 0 \right\}.
\end{equation*}

\begin{corollary} \label{cor: BM}
    The $J$-dimensional scaled Brownian motions $\scale{ W}{k}{{}}$ for $k\in \J$ converge weakly and jointly  as $ r\to 0$:
    \begin{equation*}
        \begin{aligned}
            &\begin{bmatrix}
                \scale{ W}{1}{{}} \ldots \scale{ W}{J}{{}}
            \end{bmatrix} \Longrightarrow \begin{bmatrix}
                \scaleL{ W}{1}{{}}  \ldots \scaleL{ W}{J}{{}} 
            \end{bmatrix}  ,
        \end{aligned}
    \end{equation*} 
    where the limit process is a $J\times J$-dimensional standard Brownian motion.
\end{corollary}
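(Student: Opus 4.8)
The plan is to reduce the joint convergence to $J$ separate applications of Proposition~\ref{proposition: joint BM}, one for each coordinate of the underlying Brownian motion $W$, and then to reassemble the pieces using independence. Write $W=(W_1,\ldots,W_J)$, where $W_1,\ldots,W_J$ are mutually independent one-dimensional standard Brownian motions. For each fixed $j\in\J$, apply Proposition~\ref{proposition: joint BM} with the generic Brownian motion taken to be $W_0=W_j$; this yields that the $J$-dimensional process
\[
    W_j^\uu \defi \left\{ \left( \gamma_1(r) W_j\big(t/\gamma_1^2(r)\big), \ldots, \gamma_J(r) W_j\big(t/\gamma_J^2(r)\big) \right); t\geq 0 \right\}
\]
converges weakly, as $r\to 0$, to a $J$-dimensional standard Brownian motion $W_j^*$.

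Next I would observe that the matrix-valued process $[\,\scale{W}{1}{{}}, \ldots, \scale{W}{J}{{}}\,]$ of Corollary~\ref{cor: BM} is, up to a fixed permutation of its $J^2$ scalar coordinates, identical to the concatenation $\big(W_1^\uu, W_2^\uu, \ldots, W_J^\uu\big)$: the entry in row $j$, column $k$ equals $\gamma_k(r) W_j(t/\gamma_k^2(r))$, which is simultaneously the $k$-th coordinate of $W_j^\uu(t)$ and the $j$-th coordinate of $\scale{W}{k}{{}}(t)$. Hence it suffices to establish the joint weak convergence of $\big(W_1^\uu, \ldots, W_J^\uu\big)$, and then to undo the coordinate permutation.

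For this, the key point is that the scaling map preserves independence: each $W_j^\uu$ is a measurable functional of the single path $W_j$, and since $W_1,\ldots,W_J$ are independent, so are $W_1^\uu,\ldots,W_J^\uu$ for every $r\in(0,1)$. The limits $W_1^*,\ldots,W_J^*$ are likewise mutually independent, so the law of $\big(W_1^\uu,\ldots,W_J^\uu\big)$ is the product of its marginal laws on the Polish space $\C([0,\infty),\R^{J^2})$, and weak convergence of each marginal (from the first step) gives weak convergence of the product law to $\big(W_1^*,\ldots,W_J^*\big)$ (e.g., by testing against finite products of bounded continuous functionals, or by the product-topology characterization of weak convergence). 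Finally, a family of $J$ mutually independent $J$-dimensional standard Brownian motions is, by definition, a $J^2$-dimensional standard Brownian motion; restoring the coordinate permutation identifies the $k$-th column of this limit with the weak limit $\scaleL{W}{k}{{}}$ of $\scale{W}{k}{{}}$, which is the claimed statement.

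The genuine analytic content — the scale-separation estimate showing that one Brownian motion viewed at $J$ widely separated time scales decouples into independent limits — is already packaged in Proposition~\ref{proposition: joint BM}. Relative to that, the only steps requiring care here are the bookkeeping identification of the stacked process with $(W_1^\uu,\ldots,W_J^\uu)$ and the elementary lemma that marginal weak convergence of mutually independent components yields joint weak convergence; I do not expect either to present a real obstacle.
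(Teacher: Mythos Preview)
Your proposal is correct and matches the paper's approach: the paper simply declares Corollary~\ref{cor: BM} ``a direct consequence of Proposition~\ref{proposition: joint BM},'' and your argument spells out precisely that consequence---apply the proposition coordinatewise to each independent $W_j$, then use the elementary fact that marginal weak convergence of mutually independent components on a Polish space implies joint weak convergence. The bookkeeping identification and the independence step are exactly the intended route.
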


The following key proposition establishes the process-level convergence for the SRBMs when viewed at each individual scaling. 

\begin{proposition}[$\gamma_k^2(r)$-scaling convergence] \label{prop: rk initial}
    Suppose that Assumptions \ref{assmpt: multiscaling} and \ref{assmpt: initial SRBM} hold. Then 
      as $r\to 0$,        
    \begin{equation*}
        \left\{ \gamma_k(r) \tilde Z^\uu_k \big(t / \gamma_k^2(r)\big) ;~t\geq 0\right\} \Longrightarrow \tilde Z^*_k  \quad 
        \hbox{for each $1\leq k\leq J$, }
    \end{equation*}
    where  $\tilde Z_k^*$ is a one-dimensional SRBM with initial state $\tilde\xi_k$, drift $\tilde\theta_k= -(1-w_{kk})$ and variance $\tilde\sigma^2_k$ defined in \eqref{eq: u},  while  
        \begin{equation*}
        \left\{ \gamma_k(r) \tilde Z^\uu_i \big(t / \gamma_k^2(r)\big) ;~t\geq 0 \right\} \Longrightarrow 0  
        \quad \hbox{for every $1\leq i<k \leq J$,}
            \end{equation*}
    \begin{equation*}
         \left\{ \gamma_k(r) \tilde Y^\uu_j \big(t / \gamma_k^2(r)\big) ;~t\geq 0 \right\} \Longrightarrow 0   
          \quad \hbox{for every $1\leq k <j \leq J$,}
    \end{equation*}
  
\end{proposition}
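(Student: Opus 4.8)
The plan is to analyze the $\gamma_k^2(r)$-scaled SRBM by isolating the coordinates into three groups: the ``light'' stations $i<k$, the target station $k$, and the ``heavy'' stations $j>k$, and to show that in the $\gamma_k^2(r)$ timescale the light stations empty out, the heavy stations never idle, and station $k$ runs as a stand-alone one-dimensional SRBM. Write $\scale{\tilde Z}{k}{} = \Phi(\scale{\tilde X}{k}{};R)$ and $\scale{\tilde Y}{k}{}=\Psi(\scale{\tilde X}{k}{};R)$, where the scaled driving path is
\begin{equation*}
    \scale{\tilde X}{k}{}(t) = \gamma_k(r)\tilde Z^\uu(0) - R\delta^\uu\, t/\gamma_k(r) + \gamma_k(r)\,LW\big(t/\gamma_k^2(r)\big).
\end{equation*}
The drift term has $j$th component $-(R\delta^\uu)_j\, t/\gamma_k(r)$, and since $\delta^\uu=(\gamma_1(r),\dots,\gamma_J(r))^\T$ the scaling ratios $\gamma_j(r)/\gamma_k(r)$ govern everything: for $j<k$ the effective drift blows up in magnitude, for $j=k$ it converges to a finite vector (componentwise $-(R\delta^\uu)_k/\gamma_k(r)\to$ a constant, but the off-$k$ components matter too), and for $j>k$ it vanishes. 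The noise term $\gamma_k(r)LW(t/\gamma_k^2(r))$ converges weakly to $L W^{*,k}$, a Brownian motion, by Proposition~\ref{proposition: joint BM}/Corollary~\ref{cor: BM}, and the scaled initial state converges by Assumption~\ref{assmpt: initial SRBM} ($\gamma_k(r)\tilde Z_k^\uu(0)\Rightarrow\tilde\xi_k$; for $i<k$, $\gamma_k(r)\tilde Z_i^\uu(0)\le \gamma_i(r)\tilde Z_i^\uu(0)\cdot(\gamma_k(r)/\gamma_i(r))\Rightarrow 0$ since the first factor is tight; for $j>k$, $\gamma_k(r)\tilde Z_j^\uu(0)=\gamma_{j}(r)\tilde Z_j^\uu(0)\cdot(\gamma_k(r)/\gamma_{j}(r))$ which diverges, consistent with heavy stations being large).

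The heart of the argument is a decoupling of the Skorokhod map under this blow-up/collapse of drifts, which I would carry out inductively on the block structure $\{1,\dots,k-1\}$, $\{k\}$, $\{k+1,\dots,J\}$, exploiting that $R=I-P^\T$ (or, in the SRBM statement, an $\caM$-matrix) so the reflection is of the form ``push in essentially in the own direction, with only nonpositive feedback off-diagonal.'' For the light stations $i<k$: their driving path has drift $-(R\delta^\uu)_i t/\gamma_k(r)\to -\infty$ (using that $(R\delta^\uu)_i>0$, which follows from $R\delta^\uu = \mu^\uu-\lambda>0$ in the GJN case and from $\delta^\uu>0$, $R$ an $\caM$-matrix in the SRBM case), so a reflected process with strongly negative drift is pushed to $0$; more precisely one shows $\scale{\tilde Z}{k}{i}\Rightarrow 0$ u.o.c.\ via the explicit one-sided bound $0\le \scale{\tilde Z}{k}{i}(t)\le \scale{\tilde X}{k}{i}(t) - \min_{s\le t}\scale{\tilde X}{k}{i}(s) + (\text{cross-terms from }R)$, combined with the fact that the cross-terms are themselves controlled by the already-established smallness of other light coordinates and by the regulators of heavy coordinates, which I handle next. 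For the heavy stations $j>k$: their drift vanishes and, because they start at order $\gg 1/\gamma_k(r)$ while the fluctuations are only order $1$ and the push-up from $R$ is nonnegative, on any finite horizon $\scale{\tilde Z}{k}{j}$ stays strictly positive, hence $\scale{\tilde Y}{k}{j}$ does not increase: $\scale{\tilde Y}{k}{j}\Rightarrow 0$. This is where I would be most careful — one must rule out that the positive feedback pushing $\scale{\tilde Z}{k}{j}$ down (from light stations' regulators through $-P^\T$) or the Brownian fluctuation brings it to zero; a Gronwall-type a~priori bound on $\|\scale{\tilde Z}{k}{}\|_T$ using the Lipschitz constant $\kappa(R)$ together with the divergence of the heavy initial coordinates does the job, since the regulators of the light and middle coordinates are $O(1)$.

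Once the light coordinates and the heavy regulators are shown to vanish, the equation for coordinate $k$ reads
\begin{equation*}
    \scale{\tilde Z}{k}{k}(t) = \scale{\tilde X}{k}{k}(t) + R_{k,[1:k-1]}\scale{\tilde Y}{k}{[1:k-1]}(t) + R_{kk}\scale{\tilde Y}{k}{k}(t) + R_{k,[k+1:J]}\scale{\tilde Y}{k}{[k+1:J]}(t),
\end{equation*}
where the last group vanishes, the light regulators $\scale{\tilde Y}{k}{[1:k-1]}$ converge to a limit determined by pushing the (degenerate, pinned-at-zero) light coordinates, and this is exactly the mechanism that produces the reduced drift $\tilde\theta_k = -(1-w_{kk})$ and the reduced variance $\tilde\sigma_k^2 = u^\T\Gamma u$ through the Schur-complement formulas for $w$ in the statement of Theorem~\ref{prop: multi-SRBM}: conditioning the light coordinates at their boundary is precisely a reflection sub-problem whose solution substitutes $(I-P_{k-1})^{-1}$ factors into the effective dynamics of station $k$. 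I would make this rigorous by applying the one-dimensional Skorokhod map to the effective input $\scale{\tilde X}{k}{k}+R_{k,[1:k-1]}\scale{\tilde Y}{k}{[1:k-1]}$, showing this input converges weakly (as a continuous-path process) to a Brownian motion with drift $-(1-w_{kk})$ and variance $u^\T\Gamma u$ started from $\tilde\xi_k$, and then invoking the Lipschitz continuity \eqref{eq: Lipschitz} of $\Phi$ together with the continuous mapping theorem. The main obstacle, as flagged, is the joint handling of the light-coordinate collapse and the light-regulator convergence: the regulators $\scale{\tilde Y}{k}{[1:k-1]}$ are \emph{not} asymptotically negligible (they carry exactly the information that becomes the correction terms in $w$), so one cannot simply drop them; instead one needs a self-contained lower-dimensional scale-separation argument — essentially Proposition~\ref{prop: rk initial} applied recursively to the $(k-1)$-station sub-network with the heavy station $k$ frozen — to identify their joint weak limit, and then feed that limit into the station-$k$ equation. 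Getting the bookkeeping of these nested Skorokhod problems right, and verifying that the resulting effective covariance is indeed $u^\T\Gamma u$ with $u=(w_{1k},\dots,w_{k-1,k},1,0,\dots,0)$, is the technically delicate core of the proof.
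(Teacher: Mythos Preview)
Your high-level decomposition into light/target/heavy stations is right, and the treatment of the light states $\scale{\tilde Z}{k}{i}\Rightarrow 0$ for $i<k$ via diverging negative drift is the correct intuition. The gap is in your handling of the light \emph{regulators}. You assert that ``the light regulators $\scale{\tilde Y}{k}{[1:k-1]}$ converge to a limit'' and later that ``the regulators of the light and middle coordinates are $O(1)$,'' but both claims are false. In the $\gamma_k^2(r)$ scale, station $i<k$ has drift of order $-\gamma_i(r)/\gamma_k(r)\to -\infty$ while its state is pinned near zero, so its regulator must grow like $(\gamma_i(r)/\gamma_k(r))\,t\to\infty$. Consequently, in your station-$k$ equation the term $R_{k,[1:k-1]}\scale{\tilde Y}{k}{[1:k-1]}(t)$ diverges to $-\infty$, and only an \emph{exact cancellation} with the diverging portion of the drift $-(R\delta^\uu)_k t/\gamma_k(r)$ (which contains $-\sum_{i<k}R_{ki}\gamma_i(r)/\gamma_k(r)\to +\infty$) leaves a finite effective input. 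Your plan to ``identify the joint weak limit'' of the light regulators, or to invoke the proposition recursively, cannot work as stated because there is no finite weak limit to identify; you would have to extract the divergent linear part, show it cancels the drift exactly, and control the remainder --- none of which your proposal addresses. The same divergence breaks your Gronwall argument for the heavy regulators, since $R_{\ell,[1:k-1]}\scale{\tilde Y}{k}{[1:k-1]}$ for $\ell>k$ also diverges.

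The paper sidesteps this cancellation problem with a linear-algebraic device: a block Gaussian elimination of $R$, left-multiplying the SRBM equation by an elementary matrix $E^{(k)}$ so that $E^{(k)}R=G^{(k)}$ is upper block-triangular with $G^{(k)}_{[k:J],[1:k-1]}=0$. This has the effect, in the transformed equations for stations $k,\dots,J$, of replacing the diverging light regulators $\scale{\tilde Y}{k}{[1:k-1]}$ by the vanishing light \emph{states} $\scale{\tilde Z}{k}{[1:k-1]}$. After the transformation, the effective input for station $k$ involves only $\scale{\tilde Z}{k}{[1:k-1]}\to 0$, a bounded drift $-G^{(k)}_{k,[k:J]}\delta^\uu_{[k:J]}/\gamma_k(r)\to -G^{(k)}_{kk}=-(1-w_{kk})$, the Brownian term $E^{(k)}_{k,[1:J]}L\scale{W}{k}{}$ (yielding variance $u^\T\Gamma u$), and the heavy regulators $\scale{\tilde Y}{k}{[k+1:J]}\to 0$. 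No divergent cancellation is needed, and the Schur-complement structure encoded in $(w_{ij})$ emerges automatically from $E^{(k)}$ and $G^{(k)}$.
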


\subsection{Proof of Proposition \ref{prop: rk initial}} \label{sec: proof multi-SRBM each}

To illustrate the idea of the proof, we first present the proof of Proposition~\ref{prop: rk initial} with a two-dimensional example in Section \ref{sec: toy}. The full proof of Proposition~\ref{prop: rk initial} is given in Section~\ref{sec: proof multi-SRBM}.

The proof of Proposition \ref{prop: rk initial} depends on the following two technical lemmas. Lemma~\ref{lemma: Skorokhod negative infty} can be found in Lemma 7.26 of \citet{ChenYao2001}. The proof of Lemma \ref{lemma: Skorokhod positive infty} is presented in Appendix \ref{sec: proof lemma k}.

\begin{lemma} \label{lemma: Skorokhod negative infty}
    Consider two families of functions $u^\uu,v^\uu\in \C([0,\infty),\R)$ and scalars $m^\uu\in \R$ for $r\in (0,1)$. 
    Suppose the following conditions hold:
    \begin{enumerate}
        \item[(i)] $u^\uu(0)=0$ for all $r\in(0,1)$ and $u^\uu \rightarrow u$ u.o.c.~as $r \rightarrow 0$;
        \item[(ii)] $v^\uu(0)=0$ and $v^\uu$ is  nondecreasing on $[0,\infty)$ for each $r\in (0,1)$;
        \item[(iii)] $\lim_{r\to0}m^\uu =\infty$.
    \end{enumerate}
    Define
    $$
    z^\uu=\Phi\left(\left\{ u^\uu(t)-v^\uu(t)-m^\uu t,~t\geq 0\right\}\right).
    $$
    Then the function $z^\uu$ converges to $0$ u.o.c.~as $r\to 0$.
\end{lemma}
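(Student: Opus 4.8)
The plan is to exploit the explicit formula for the one-dimensional Skorokhod reflection map. Write $x^\uu(t) \defi u^\uu(t) - v^\uu(t) - m^\uu t$, so that $z^\uu = \Phi(x^\uu)$ and $x^\uu(0)=0$ by conditions (i) and (ii). For a one-dimensional input starting at the origin, the reflected path has the closed form $z^\uu(t) = x^\uu(t) - \inf_{0\le s\le t} x^\uu(s) = \sup_{0\le s\le t}\big(x^\uu(t)-x^\uu(s)\big)\ge 0$ (the identity $\inf_{0\le s\le t}\big(x^\uu(s)\wedge 0\big)=\inf_{0\le s\le t}x^\uu(s)$ holds because $x^\uu(0)=0$). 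Since $v^\uu$ is nondecreasing by (ii) and $m^\uu\ge 0$ for all sufficiently small $r$ by (iii), for $s\le t$ we have $x^\uu(t)-x^\uu(s)\le \big(u^\uu(t)-u^\uu(s)\big)-m^\uu(t-s)$, and therefore, once $r$ is small enough,
\[
0\le z^\uu(t)\le \sup_{0\le s\le t}\Big[\big(u^\uu(t)-u^\uu(s)\big)-m^\uu(t-s)\Big] \qquad \text{for all } t\ge 0.
\]

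Next I would convert the u.o.c.\ convergence $u^\uu\to u$ into two uniform-in-$r$ estimates on a fixed horizon $[0,T]$. First, there are $M_T<\infty$ and $r_0>0$ with $\norm{u^\uu}_T\le M_T$ for all $r<r_0$, because $\norm{u^\uu}_T\le \norm{u^\uu-u}_T+\norm{u}_T$ and $u$ is continuous hence bounded on $[0,T]$. Second, for every $\eta>0$ there are $\delta>0$ and $r_1\in(0,r_0)$ such that $|u^\uu(t)-u^\uu(s)|\le\eta$ whenever $s,t\in[0,T]$, $|t-s|\le\delta$, and $r<r_1$; this follows from $|u^\uu(t)-u^\uu(s)|\le 2\norm{u^\uu-u}_T+|u(t)-u(s)|$ together with the uniform continuity of $u$ on $[0,T]$ and $\norm{u^\uu-u}_T\to 0$.

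Now fix $T>0$ and $\eta>0$ and split the supremum in the displayed bound according to whether $t-s\le\delta$ or $t-s>\delta$. On $\{t-s\le\delta\}$ the bracket is at most $\eta-m^\uu(t-s)\le\eta$. On $\{t-s>\delta\}$ it is at most $2M_T-m^\uu\delta$, which is negative (hence $\le\eta$) once $m^\uu>2M_T/\delta$, and this holds for all sufficiently small $r$ by condition (iii). Thus $\sup_{0\le t\le T}z^\uu(t)\le\eta$ for all sufficiently small $r$; letting $\eta\downarrow 0$ gives $\norm{z^\uu}_T\to 0$, and since $T>0$ is arbitrary this is precisely u.o.c.\ convergence of $z^\uu$ to the zero function.

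I do not expect a genuine obstacle: the statement is a soft consequence of the explicit reflection formula and the definition of u.o.c.\ convergence. The only point needing a little care is producing a modulus-of-continuity bound for $u^\uu$ that is uniform in $r$, which is handled by comparing $u^\uu$ with its uniform limit $u$ and using that $u$ is automatically uniformly continuous on each compact interval. (This is essentially the content of Lemma~7.26 of \citet{ChenYao2001}, which one could alternatively cite directly.)
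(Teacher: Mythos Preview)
Your proof is correct. The paper does not actually prove this lemma; it simply cites it as Lemma~7.26 of \citet{ChenYao2001}, exactly as you note in your final parenthetical remark. Your direct argument via the explicit one-dimensional reflection formula $z^\uu(t)=\sup_{0\le s\le t}(x^\uu(t)-x^\uu(s))$, followed by the short/long-increment split using a uniform-in-$r$ modulus of continuity for $u^\uu$, is the standard proof of that cited result and is entirely sound.
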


\begin{lemma} \label{lemma: Skorokhod positive infty}
    Consider three families of functions $u^\uu,v^\uu\in \C([0,\infty),\R)$ and scalars $a^\uu\in \R_+$ for $r\in (0,1)$. Suppose the following conditions hold:
    \begin{enumerate}
        \item[(i)] $u^\uu(0)=0$ for all $r\in(0,1)$ and $u^\uu \rightarrow u$ u.o.c.~as $r \rightarrow 0$;
        \item[(ii)] $v^\uu(0)=0$, $v^\uu$ is  nondecreasing on $[0,\infty)$ for each $r\in (0,1)$, and there exists a $\tilde{v}^\uu\in \C([0,\infty),\R)$ such that $v^\uu \leq \tilde{v}^\uu$ and $\tilde{v}^\uu \rightarrow \tilde{v}$ u.o.c.~as $r \rightarrow 0$;
        \item[(iii)] $\lim_{r\to0}a^\uu = \infty$.
    \end{enumerate}
    Define
    $$
    y^\uu =\Psi\left(\left\{ u^\uu(t)-v^\uu(t)+a^\uu  ;~t\geq 0\right\}; c\right),
    $$
    where the reflection scalar $c$ is a positive constant.
    Then $y^\uu$ converges to $0$ u.o.c.~as $r\to 0$.
\end{lemma}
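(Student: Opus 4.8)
The plan is to use the explicit one-dimensional Skorokhod map formula together with the assumed domination $v^\uu \le \tilde v^\uu$ to control the regulator from above, and then squeeze it to zero. Recall that for a scalar input path $g$ with $g(0)\ge 0$ and reflection constant $c>0$, the one-dimensional Skorokhod reflection map gives the regulator
\[
    \Psi(g;c)(t) = \frac{1}{c}\Big( \sup_{0\le s\le t} \big( -g(s) \big) \Big)^+ = \frac{1}{c}\sup_{0\le s\le t} \big( -g(s)\big)\vee 0 .
\]
Here the input is $g^\uu(t) = u^\uu(t) - v^\uu(t) + a^\uu$, so $-g^\uu(s) = v^\uu(s) - u^\uu(s) - a^\uu$. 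Using $v^\uu \le \tilde v^\uu$ and the crude bound $-u^\uu(s) \le \|u^\uu\|_T$ on $[0,T]$, we get for every $t\in[0,T]$
\[
    0 \;\le\; y^\uu(t) \;=\; \Psi(g^\uu;c)(t) \;\le\; \frac{1}{c}\Big( \sup_{0\le s\le T}\tilde v^\uu(s) + \|u^\uu\|_T - a^\uu \Big)^+ \;=\; \frac{1}{c}\Big( \|\tilde v^\uu\|_T + \|u^\uu\|_T - a^\uu \Big)^+ ,
\]
where I used that $\tilde v^\uu$ is (eventually) nonnegative since $\tilde v^\uu \ge v^\uu \ge 0$, so its running supremum over $[0,T]$ is $\|\tilde v^\uu\|_T$.

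Now I would pass to the limit. By hypothesis (i), $u^\uu \to u$ u.o.c., so $\|u^\uu\|_T \to \|u\|_T < \infty$; in particular $\{\|u^\uu\|_T\}_{r}$ is bounded. By hypothesis (ii), $\tilde v^\uu \to \tilde v$ u.o.c., so $\|\tilde v^\uu\|_T \to \|\tilde v\|_T < \infty$ and $\{\|\tilde v^\uu\|_T\}_r$ is bounded. By hypothesis (iii), $a^\uu \to \infty$. Hence the quantity $\|\tilde v^\uu\|_T + \|u^\uu\|_T - a^\uu \to -\infty$ as $r\to 0$, so for all sufficiently small $r$ its positive part is $0$, giving $\sup_{0\le t\le T} y^\uu(t) = 0$. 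Since $T>0$ was arbitrary, $y^\uu \to 0$ u.o.c.\ as $r\to 0$, as claimed.

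There is essentially no serious obstacle here: the lemma is a soft consequence of the explicit Skorokhod formula and the domination assumption, and the only thing to be careful about is matching the sign conventions in the one-dimensional reflection map and correctly identifying the running supremum of $\tilde v^\uu$ over $[0,T]$ with its uniform norm on $[0,T]$ (which uses nonnegativity of $\tilde v^\uu$). One minor point worth spelling out cleanly is that $\tilde v^\uu \ge v^\uu \ge 0$ forces $\tilde v \ge 0$ and each $\tilde v^\uu \ge 0$, so no absolute values are lost; and the u.o.c.\ convergence of $u^\uu$ and $\tilde v^\uu$ is exactly what makes their uniform norms on the fixed interval $[0,T]$ convergent, hence bounded in $r$. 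The argument is the positive-orthant analogue of Lemma \ref{lemma: Skorokhod negative infty}, with the roles of the drift term and the additive constant $a^\uu$ swapped: there a growing negative drift drives the reflected path to $0$, while here a growing positive offset $a^\uu$ keeps the path strictly inside the half-line so that the regulator never needs to act.
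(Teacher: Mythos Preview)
Your proof is correct and follows essentially the same approach as the paper: both use the explicit one-dimensional Skorokhod regulator formula $c\,y^\uu(t)=\sup_{0\le s\le t}\big(-u^\uu(s)+v^\uu(s)-a^\uu\big)^+$, invoke the domination $v^\uu\le\tilde v^\uu$, and then use the u.o.c.\ convergence of $u^\uu$ and $\tilde v^\uu$ together with $a^\uu\to\infty$ to force the positive part to vanish for all small $r$. The only cosmetic difference is that the paper phrases the boundedness via an $\varepsilon$--$r_0$ argument, whereas you pass directly through the uniform norms $\|u^\uu\|_T$ and $\|\tilde v^\uu\|_T$; your presentation is in fact slightly more streamlined.
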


Following the notation in \eqref{eq: def scale} and applying the $\gamma_k^2(r)$-scaling to the definition of SRBM in \eqref{eq: RBM def} and \eqref{eq: RBM def2} yields
\begin{equation*}
    \scale{\tilde Z}{k}{{}}(t) = \scale{\tilde X}{k}{{}}(t) + R  \scale{\tilde Y}{k}{{}}(t),
\end{equation*}
where
\begin{equation*}
    \scale{\tilde X}{k}{{}}(t) = \scale{\tilde Z}{k}{{}}(0) + \delta^\uu t / \gamma_k(r) + \scale{W}{k}{{}}(t) \quad \text{and} \quad \scale{\tilde Y}{k}{{}} = \Psi\big(\scale{\tilde X}{k}{{}}; R\big).
\end{equation*}
Here, the drift vector is $\theta^\uu=-R\delta^\uu$ by Assumption \ref{assmpt: multiscaling}. 
We similarly write Proposition~\ref{prop: rk initial} as 
\begin{equation*}
        \scale{\tilde Z}{k}{i} \Longrightarrow {0}, \quad \scale{\tilde Y}{k}{j} \Longrightarrow {0}, \quad \scale{\tilde Z}{k}{k}  \Longrightarrow \tilde Z^*_k  \quad \text{for $i<k$ and $j>k$ as }r\to 0,    
\end{equation*}

We employ the Skorokhod representation theorem and reduce the problem to analyze the almost sure convergence. Specifically, since the initial state and the Brownian motion are the only randomness of the SRBM, by the weak convergence of the initial states in Assumption \ref{assmpt: initial SRBM} and Brownian motions in Corollary~\ref{cor: BM}, it is sufficient to prove that there exists random variables $\scale{\hat Z}{k}{{}}(0)\overset{d}{=}\scale{\tilde Z}{k}{{}}(0)$ and $\hat{\xi}\overset{d}{=} \tilde{\xi}$ with $\scale{\hat Z}{j}{{j}}(0)\overset{a.s.}{\to} \hat{\xi}_j$ for $j\in \J$ as $r\to 0$ and scaled Brownian motions $\scale{\hat W}{k}{{}}\overset{d}{=}\scale{W}{k}{{}}$ and $\scaleL{\hat W}{k}{{}}\overset{d}{=}\scaleL{W}{k}{{}}$ with $\scale{\hat W}{k}{{}}\overset{a.s.}{\to} \scaleL{\hat W}{k}{{}}$ u.o.c.~as $r\to 0$ such that
\begin{equation} \label{eq: suff prop3}
    \scale{\hat Z}{k}{{i}} \overset{a.s.}{\to} {0}, \quad \scale{\hat Y}{k}{{j}} \overset{a.s.}{\to} {0}, \quad \scale{\hat Z}{k}{k} \overset{a.s.}{\to} \hat Z^*_k \quad \text{for $i<k$ and $j>k$ u.o.c.~as $r\to 0$},
\end{equation}
where 
\begin{equation*}
    \scale{\hat Z}{k}{} = \scale{\hat X}{k}{{}} +  R \scale{\hat Y}{k}{}, \quad \scale{\hat Y}{k}{} = \Psi\left(\scale{\hat X}{k}{{}}; R\right), \quad \hat Z^*_k \overset{d}{=} \tilde Z^*_k,
\end{equation*}
and
\begin{equation*} 
    \scale{\hat X}{k}{{}}(t) = \scale{\hat Z}{k}{{}}(0) - R \delta^\uu t / \gamma_k(r) + \scale{\hat W}{k}{{}}(t).
\end{equation*}

\subsubsection{Illustration of the proof of Proposition \ref{prop: rk initial} by a two-dimensional example} \label{sec: toy}

We now use a two-dimensional SRBM to illustrate the main ideas behind the proof of Proposition \ref{prop: rk initial}. The full proof of Proposition \ref{prop: rk initial} is given in Section \ref{sec: proof multi-SRBM}.
Without loss of generality (see Proposition 8 in \citet{DaiHarr1992}), we let the covariance matrix be $\Gamma=L L^\T$
(via the Cholesky decomposition) and the reflection matrix be
\begin{equation*}
    L = \begin{bmatrix}
        1 & 0\\
        L_{21}  & 1
    \end{bmatrix} \quad \text{and} \quad 
    R=\begin{bmatrix}
        1 & -q_{12} \\
        -q_{21} & 1 
    \end{bmatrix}.
\end{equation*}
We assume that $R$ is an $\caM$-matrix, and hence $q_{12}q_{21}<1$ and $q_{12}, q_{21}\geq 0$. The multi-scaling regime in \eqref{eq: drift epsilon} implies that the drift vector $\theta^\uu= - R \delta^\uu = [-\gamma_1(r)+q_{12}\gamma_2(r), q_{21}\gamma_1(r)-\gamma_2(r)]^\T$.

In this case, Proposition \ref{prop: rk initial} asserts that as $r\to 0$,
\begin{align*}
    &\left\{\gamma_1(r)\tilde Z^{(r)}_1(t/\gamma_1^2(r));~t\geq 0\right\}\Longrightarrow \Phi\left(\left\{\tilde \xi_1 - t + \scaleL{W}{1}{{1}}(t);~t\geq 0\right\}\right),\\
    &\left\{ \gamma_2(r)\tilde Z^{(r)}_2(t/\gamma_2^2(r));~t\geq0 \right\}  \\
    &\qquad\qquad\qquad\Longrightarrow \Phi\left(\left\{ \tilde \xi_2-\left( 1-q_{12}q_{21} \right)t +(q_{21}+L_{21})\scaleL{W}{2}{1}(t)+\scaleL{W}{2}{{2}}(t);~t\geq 0 \right\}\right).
\end{align*}

By the discussion of last section, it is sufficient to prove that as $r\to 0$,
\begin{align}
    \left\{\scale{\hat Z}{1}{{}}(t);~t\geq 0\right\} &\overset{a.s.}{\to} \Phi\left(\left\{\hat \xi_1 - t +   \scaleL{\hat W}{1}{{1}}(t);~t\geq 0\right\}\right) \label{eq: rkZ1}, \\
    \left\{\scale{\hat Z}{2}{{}}(t);~t\geq 0\right\} &\overset{a.s.}{\to} \Phi\left(\left\{\hat \xi_2-\left( 1-q_{12}q_{21} \right)t +(q_{21}+L_{21}) \scaleL{ \hat W}{2}{{1}}(t)+ \scaleL{\hat W}{2}{{2}}(t);~t\geq 0\right\}\right). \label{eq: rkZ2}
\end{align}
We consider the $\gamma_1^2(r)$-scaled version as follows: for all $t\geq 0$,
\begin{equation}
    \begin{aligned}
        \begin{bmatrix}
            \scale{\hat Z}{1}{{1}}(t) \\ \scale{\hat Z}{1}{{2}}(t)
        \end{bmatrix}&= \begin{bmatrix}
            \scale{\hat Z}{1}{{1}}(0) \\ \scale{\hat Z}{1}{{2}}(0)
        \end{bmatrix} + \begin{bmatrix}
            -1+q_{12}{\gamma_2(r)}/{\gamma_1(r)}\\
            q_{21}-{\gamma_2(r)}/{\gamma_1(r)}
        \end{bmatrix} t  + L  \begin{bmatrix}
            \scale{\hat W}{1}{{1}}(t)\\ \scale{\hat W}{1}{{2}}(t)
        \end{bmatrix} + R\begin{bmatrix}
            \scale{\hat Y}{1}{{1}}(t) \\ \scale{\hat Y}{1}{{2}}(t)
        \end{bmatrix}.
    \end{aligned}
    \label{eq: RBM1 r 2d} 
\end{equation}
Note that
\begin{equation*}
    \begin{aligned}
        & \scale{\hat Z}{1}{{}}(t/\gamma_1^2(r))\in \R_+^2, \quad t\geq 0,\notag\\
        &\text{$\{\scale{\hat Y}{1}{{}}(t);~t\geq 0\}$ is continuous and nondecreasing in $t$ with $\scale{\hat Y}{1}{{}}(0)=\mathbf{0}$,}\notag \\
        & \int_0^\infty \scale{\hat Z}{1}{{}}(s)d\left(\scale{\hat Y}{1}{{}}(s)\right) = 0. \notag
    \end{aligned}
\end{equation*}
The first component of \eqref{eq: RBM1 r 2d} is given by
\begin{equation*}
    \begin{aligned}
        \scale{\hat Z}{1}{{1}}(t) &= \scale{\hat Z}{1}{{1}}(0) - \left( 1  -q_{12}{\gamma_2(r)}/{\gamma_1(r)}\right)  t + \scale{\hat W}{1}{{1}}(t) -q_{12} \scale{\hat Y}{1}{{2}}(t) + \scale{\hat Y}{1}{{1}}(t).
    \end{aligned}
\end{equation*}
By the uniqueness of the solution to the one-dimensional Skorokhod problem, we can write
\begin{equation*}
    \begin{aligned}
        &\left\{ \scale{\hat Z}{1}{{1}}(t);~t\geq 0 \right\}  = \Phi\left(\left\{  \scale{\hat Z}{1}{{1}}(0) - \left( 1  -q_{12}{\gamma_2(r)}/{\gamma_1(r)}  \right) t + \scale{\hat W}{1}{{1}}(t) -q_{12} \scale{\hat Y}{1}{{2}}(t) ;~t\geq 0\right\}\right).
    \end{aligned}
\end{equation*}
Assuming for now that the process $\{\scale{\hat Y}{1}{{2}}(t);~t\geq0\}$ converges weakly to $0$ u.o.c.~as $r\to 0$ (which we will prove shortly), due to  Assumptions \ref{assmpt: multiscaling} and \ref{assmpt: initial SRBM} and  Corollary~\ref{cor: BM}, we have
\begin{equation*}
    \begin{aligned}
        &\left\{ \scale{\hat Z}{1}{{1}}(0) - \left( 1  -q_{12}{\gamma_2(r)}/{\gamma_1(r)}  \right)  t + \scale{\hat W}{1}{{1}}(t) -q_{12} \scale{\hat Y}{1}{{2}}(t)  ;~t\geq 0 \right\}  \\
        &\qquad \qquad \qquad \overset{a.s.}{\to} \left\{ \hat \xi_1-t +\scaleL{\hat W}{1}{{1}}(t) ;~t\geq 0 \right\}
    \end{aligned}
\end{equation*}
u.o.c.~as $r\to0$.
Thus, \eqref{eq: rkZ1} follows from the Lipschitz continuity of the Skorokhod reflection mapping.

\vspace{0.5em}

To complete the proof, it remains to show that $\{\scale{\hat Y}{1}{{2}}(t);~t \geq 0 \}\overset{a.s.}{\to} 0$ u.o.c.~as $r\to 0$. We first show that the family $\{\scale{\hat Y}{1}{{2}}(t);~t\geq 0\}_{r\in(0,1)}$ is pathwise bounded by a continuous nondecreasing process. 

To show this, we use a comparison argument. Consider the driving process $\{\scale{\check X}{1}{{}}(t);~t\geq 0\}:=\{\scale{\hat Z}{1}{{}}(0);~t\geq 0\}$, which has a regulator process $\{\scale{\check Y}{1}{{}}(t);~t\geq 0\}:=\Psi(\{\scale{\check X}{1}{{}}(t);~t\geq 0\}; R)=\mathbf{0}$. By the Lipschitz continuity of the Skorokhod reflection mapping $\Psi$ in \eqref{eq: Lipschitz}, we have for any $T>0$,
\begin{equation*}
    \begin{aligned}
    &\sup_{t\in [0,T]}\Norm{\scale{\hat Y}{1}{{2}}(t) - \scale{\check Y}{1}{{2}}(t) }  \leq  \kappa(R)\sup_{t\in [0,T]}\Norm{\begin{bmatrix}
        -1+q_{12}{\gamma_2(r)}/{\gamma_1(r)}\\
        q_{21}-{\gamma_2(r)}/{\gamma_1(r)}
    \end{bmatrix} t + L \scale{\hat W}{1}{{2}}(t) }  \quad  \text{ a.s.}
\end{aligned}
\end{equation*}
Thus, for any fixed $T>0$,
\begin{equation*}
    \begin{aligned}
    \sup_{t\in [0,T]}\max_{j=1,2} \scale{\hat Y}{1}{{j}}(t) & \leq \kappa(R)\left( \max{ \left\{\abs{-1+q_{12}\frac{\gamma_2(r)}{\gamma_1(r)}}, \abs{q_{21}-\frac{\gamma_2(r)}{\gamma_1(r)}} \right\} } T + \sup_{t\in [0,T]}\Norm{  L \scale{\hat W}{1}{{}}(t) } \right) \\
    & \leq \kappa(R)\max\{1,q_{12},q_{21}\} T + \kappa(R)\sup_{t\in [0,T]}\Norm{ L \scale{\hat W}{1}{{}}(t) } ,
\end{aligned}
\end{equation*}
which implies that $\{\scale{\hat Y}{1}{}(t);~t\geq 0\}_{r\in(0,1)}$ is pathwise bounded by a continuous nondecreasing process.

To apply Lemma \ref{lemma: Skorokhod positive infty} to prove $\{\scale{\hat Y}{1}{{2}}(t);~t\geq 0\}\overset{a.s.}{\to} 0$ u.o.c.~as $r\to 0$, we consider the second component of \eqref{eq: RBM1 r 2d}:
\begin{equation*}
    \begin{aligned}
        \scale{\hat Z}{1}{{2}}(t)
        &= \scale{\hat Z}{1}{{2}}(0) + \left(q_{21}-{\gamma_2(r)}/{\gamma_1(r)}\right) t + L_{12} \scale{\hat W}{1}{{1}}(t)  +  \scale{\hat W}{1}{{2}}(t)- q_{21}\scale{\hat Y}{1}{{1}}(t) + \scale{\hat Y}{1}{{2}}(t).
    \end{aligned}
\end{equation*}
By the uniqueness of the solution to the one-dimensional Skorokhod problem, we have
\begin{equation*}
    \begin{aligned}
    & \scale{\hat Y}{1}{{2}}= \Psi\left(\left\{  \scale{\hat Z}{1}{{2}}(0) + \left(q_{21}-\frac{\gamma_2(r)}{\gamma_1(r)}\right)t + L_{12} \scale{\hat W}{1}{{1}}(t)+\scale{\hat W}{1}{{2}}(t)- q_{21}\scale{\hat Y}{1}{{1}}(t);~t\geq 0 \right\}; 1\right).
    \end{aligned}
\end{equation*}
By \eqref{eq: initB} of Assumption \ref{assmpt: initial SRBM}, we have $\scale{\hat Z}{1}{{2}}(0)\overset{a.s.}{\to} \infty$ and conclude that $\{\scale{\hat Y}{1}{{2}}(t);~t\geq 0\}\overset{a.s.}{\to} 0$ u.o.c.~as $r\to 0$ by Lemma \ref{lemma: Skorokhod positive infty}. This completes the proof of \eqref{eq: rkZ1}.

\vspace{0.5em}

To prove the weak convergence \eqref{eq: rkZ2}, we consider the $\gamma_2^2(r)$-scaled version as follows
\begin{equation}\label{eq: RBM1 r2 2d}
    \begin{aligned}
        \begin{bmatrix}
            \scale{\hat Z}{2}{{1}}(t) \\ \scale{\hat Z}{2}{{2}}(t)
        \end{bmatrix}&= \begin{bmatrix}
            \scale{\hat Z}{2}{{1}}(0) \\ \scale{\hat Z}{2}{{2}}(0)
        \end{bmatrix} + \begin{bmatrix}
            -{\gamma_1(r)}/{\gamma_2(r)}+q_{12}\\
            q_{21}{\gamma_1(r)}/{\gamma_2(r)}-1
        \end{bmatrix} t  +  \begin{bmatrix}
            \scale{\hat W}{2}{{1}}(t)\\L_{12}\scale{\hat W}{2}{{1}}(t) + \scale{\hat W}{2}{{2}}(t)
        \end{bmatrix}\\
        & \quad + \begin{bmatrix}
            \scale{\hat Y}{2}{{1}}(t)  -q_{12}\scale{\hat Y}{2}{{2}}(t)\\ -q_{21} \scale{\hat Y}{2}{{1}}(t)  +\scale{\hat Y}{2}{{2}}(t)
        \end{bmatrix} , \quad t\geq 0.
    \end{aligned}
\end{equation}
Note that
\begin{equation*}
    \begin{aligned}
        & \scale{\hat Z}{2}{{}}(t)\in \R_+^2, \quad t\geq 0, \notag\\
        &\text{$\{\scale{\hat Y}{2}{{}}(t);~t\geq 0\}$ is continuous and nondecreasing in $t$ with $\scale{\hat Y}{2}{{}}(0)=\mathbf{0}$}\notag\\
        & \int_0^\infty \scale{\hat Z}{2}{{j}}(s)d\left(\scale{\hat Y}{2}{{j}}(s)\right) = 0, \quad j=1,2. \notag
    \end{aligned}
\end{equation*}

To analyze the second component, we construct a linear combination of the two state processes that eliminates the diverging drift term $\gamma_1(r)/\gamma_2(r)$. Multiplying the first component of \eqref{eq: RBM1 r2 2d} by $q_{21}$ and adding it to the second yields:
\begin{equation*}
    \begin{aligned}
            q_{21}\scale{\hat Z}{2}{{1}}(t) +\scale{\hat Z}{2}{{2}}(t)&= q_{21} \scale{\hat Z}{2}{{1}}(0) + \scale{\hat Z}{2}{{2}}(0) - \left( 1-q_{12}q_{21} \right)t+ (q_{21}+L_{21})\scale{\hat W}{2}{{1}}(t) \\
            &\qquad +\scale{\hat W}{2}{{2}}(t) +  \left( 1-q_{12}q_{21} \right) \scale{\hat Y}{2}{{2}}(t)
        , \quad t\geq 0.
    \end{aligned}
\end{equation*}
Hence, we have
\begin{equation*}
    \begin{aligned}
            &\left\{ \scale{\hat Z}{2}{{2}}(t);~t\geq 0 \right\}= \Phi\left(\left\{ -q_{21}\scale{\hat Z}{2}{{1}}(t)  + q_{21}\scale{\hat Z}{2}{{1}}(0) +  \scale{\hat Z}{2}{{2}}(0) - \left( 1-q_{12}q_{21} \right) t \right.\right.\\
            &\qquad \qquad\qquad \qquad  \qquad +\left.\left. (q_{21}+L_{21})\scale{\hat W}{2}{{1}}(t)+\scale{\hat W}{2}{{2}}(t);~t\geq 0\right\} \right).
    \end{aligned}   
\end{equation*}
If the process $\{\scale{\hat Z}{2}{{1}}(t);~t\geq 0\}$ converges weakly to $0$ u.o.c.~as $r\to 0$ (which will be established below), by Assumption \ref{assmpt: initial SRBM} and Corollary \ref{cor: BM}, we have
\begin{equation*}
    \begin{aligned}
        &\left\{-q_{21}\scale{\hat Z}{2}{{1}}(t)  + q_{21}\scale{\hat Z}{2}{{1}}(0) +  \scale{\hat Z}{2}{{2}}(0) - \left( 1-q_{12}q_{21} \right) t  + (q_{21}+L_{21})\scale{\hat W}{2}{{1}}(t) +\scale{\hat W}{2}{{2}}(t);t\geq 0\right\}  \\
        &\qquad \Longrightarrow \left\{\hat \xi_2 - \left( 1-q_{12}q_{21} \right) t  +  (q_{21}+L_{21})\scaleL{\hat W}{2}{{1}}(t)+\scaleL{\hat W}{2}{{2}}(t);~t\geq 0\right\}.
    \end{aligned}
\end{equation*}
Thus, \eqref{eq: rkZ2} follows from the Lipschitz continuity of the Skorokhod reflection mapping.

To complete the proof, it remains to show that $\{\scale{\hat Z}{2}{{1}}(t);~t\geq 0\}\asto 0$ u.o.c.~as $r\to 0$. 
To apply Lemma \ref{lemma: Skorokhod negative infty} to prove this, we consider the first component of \eqref{eq: RBM1 r2 2d}:
\begin{equation*}
    \begin{aligned}
        \scale{\hat Z}{2}{{1}}(t) &= \scale{\hat Z}{2}{{1}}(0) +(-{\gamma_1(r)}/{\gamma_2(r)}+q_{12})t + \scale{\hat W}{2}{{1}}(t) + \scale{\hat Y}{2}{{1}}(t)  -q_{12}\scale{\hat Y}{2}{{2}}(t), \quad t\geq 0.
    \end{aligned}
\end{equation*}
By the uniqueness of the Skorokhod reflection mapping for one-dimensional SRBM, we have
\begin{equation*}
    \begin{aligned}
        &\left\{ \scale{\hat Z}{2}{{1}}(t);~t\geq 0 \right\} = \Phi\left(\left\{  \scale{\hat Z}{2}{{1}}(0) +(-{\gamma_1(r)}/{\gamma_2(r)}+q_{12})t + \scale{\hat W}{2}{{1}}(t)  -q_{12}\scale{\hat Y}{2}{{2}}(t);~t\geq 0 \right\}\right).
    \end{aligned}
\end{equation*}
By Assumption \ref{assmpt: initial SRBM}, we have $\scale{\hat Z}{2}{{1}}(0)\overset{a.s.}{\to} 0$ and conclude that $\{\scale{\hat Z}{2}{{1}}(t);~t\geq 0\}\asto 0$ u.o.c.~as $r\to 0$ by Lemma \ref{lemma: Skorokhod negative infty}. This completes the proof of Proposition \ref{prop: rk initial} for the two-dimensional case.

\subsubsection{Proof of Proposition \ref{prop: rk initial}} \label{sec: proof multi-SRBM}
The proof is based on a block partitioning of the reflection matrix $R$. For any $k=2,\ldots,J$, we write:
\begin{equation*}
    \begin{aligned}
        R = \begin{bmatrix}
            A^{(k)} & B^{(k)} \\
            C^{(k)} & D^{(k)}
        \end{bmatrix} 
        &=
        \begin{bmatrix}
            R_{1,1} & \cdots & R_{1,k-1} & \vline & R_{1,k} & \cdots & R_{1,J} \\
            \vdots & \ddots & \vdots & \vline & \vdots & \ddots & \vdots \\
            R_{k-1,1} & \cdots & R_{k-1,k-1} & \vline & R_{k-1,k} & \cdots & R_{k-1,J} \\
            \hline
            R_{k,1} & \cdots & R_{k,k-1} & \vline & R_{k,k} & \cdots & R_{k,J} \\
            \vdots & \ddots & \vdots & \vline & \vdots & \ddots & \vdots \\
            R_{J,1} & \cdots & R_{J,k-1} & \vline & R_{J,k} & \cdots & R_{J,J} 
        \end{bmatrix}\\
        &= 
        \begin{bmatrix}
            R_{[1:k-1],[1:k-1]} & R_{[1:k-1],[k:J]} \\
            R_{[k:J],[1:k-1]} & R_{[k:J],[k:J]}
        \end{bmatrix},
    \end{aligned}
\end{equation*}
where the submatrix notation is defined in \eqref{eq: submatrix}.
Since $R$ is an $\caM$-matrix, $A^{(k)}$ is invertible. This allows for a block Gaussian elimination of $R$:
\begin{equation} \label{eq: Gauss elim}
    \underbrace{\begin{bmatrix}
        I_{k-1} & 0 \\
        -C^{(k)}(A^{(k)})^{-1} & I_{J-k+1}
    \end{bmatrix}}_{E^{(k)}}
    \underbrace{\begin{bmatrix}
        A^{(k)} & B^{(k)} \\
        C^{(k)} & D^{(k)}
    \end{bmatrix}}_{R}
    =
    \underbrace{\begin{bmatrix}
        A^{(k)} & B^{(k)} \\
        0 & D^{(k)}-C^{(k)}(A^{(k)})^{-1}B^{(k)}
    \end{bmatrix}}_{G^{(k)}},
\end{equation}
where $E^{(k)}$ is the elementary row operation matrix and $G^{(k)}$ is the eliminated matrix  for $k=2,\ldots,J$.
For $k=1$, we set $E^{(1)} := I$ and $G^{(1)} := R$.
Since $R$ is an $\caM$-matrix, $E^{(k)}$ has nonnegative elements and the Schur complement $G^{(k)}_{[k:J],[k:J]}=D^{(k)}-C^{(k)}(A^{(k)})^{-1}B^{(k)}$ is also an $\caM$-matrix by Lemma~3.5 of \citet{McDoSivaSushTsatWendWend2020}. 
By the definition of $u$ and $w_{jj}$ in Theorem \ref{prop: multi-SRBM}, we have the following relation:
\begin{equation} \label{eq: uw}
    u = (E^{(j)}_{j,[1:J]})^\T \quad \text{and} \quad 1-w_{jj} = G^{(j)}_{jj}.
\end{equation}

We are now ready to prove the first statement of \eqref{eq: suff prop3}. The first part, $\scale{\hat Z}{k}{i} \asto 0$ for $i<k$ u.o.c.~as $r\to 0$, is vacuously true for $k=1$. For $k\geq 2$, we use the mathematical induction to prove $\scale{\hat Z}{k}{\ell} \asto 0$ u.o.c.~as $r\to 0$ from $\ell=1$ to $k-1$. The $\gamma_k^2(r)$-scaled version is
\begin{equation}
    \scale{\hat Z}{k}{{}}(t) = \scale{\hat Z}{k}{{}}(0) - R\delta^\uu t/\gamma_k(r) + L \scale{\hat W}{k}{{}}(t) + R\scale{\hat Y}{k}{{}}(t), \quad t\geq 0, \label{eq: RBM1 r}
\end{equation}
where
\begin{align}
    &\scale{\hat Z}{k}{{}}(t)\in \R_+^J  \quad \text{for all }t\geq 0, \label{eq: RBM2 r} \\
    &\text{$\scale{\hat Y}{k}{{i}}$ is continuous and nondecreasing with $\scale{\hat Y}{k}{{i}}(0)=0$  $\quad \hbox{for every } i\in \J$} \label{eq: RBM3 r}\\
    & \int_0^\infty \scale{\hat Z}{k}{i}(s)d\scale{\hat Y}{k}{i}(s) = 0    \quad \hbox{for every } i\in \J \label{eq: RBM4 r}.
\end{align}

For the base step $\ell=1$, we examine the first component of \eqref{eq: RBM1 r}:
\begin{equation*}
    \scale{\hat Z}{k}{1}(t) = \scale{\hat Z}{k}{1}(0) - R_{1,[1:J]}\delta^\uu t/\gamma_k(r) + L_{1,[1:J]} \scale{\hat W}{k}{{}}(t) + R_{1,[1:J]} \scale{\hat Y}{k}{{}}(t), \quad t\geq 0,
\end{equation*}
which is equivalent to, for $t\geq 0$,
\begin{equation}
    \begin{aligned} 
        \scale{\hat Z}{k}{1}(t) &= \Big[ \scale{\hat Z}{k}{1}(0) -\Big( R_{11} \frac{\gamma_1(r)}{\gamma_k(r)} + \sum_{j=2}^J R_{1j}\frac{\gamma_j(r)}{\gamma_k(r)} \Big)t + L_{1,[1:J]} \scale{\hat W}{k}{{}}(t) + \sum_{j=2}^J R_{1j}\scale{\hat Y}{k}{j}(t) \Big] \\
        & \quad + R_{11}\scale{\hat Y}{k}{1}(t) .
    \end{aligned} \label{eq: RBM1 r 1}
\end{equation}
Therefore, \eqref{eq: RBM2 r}-\eqref{eq: RBM1 r 1} uniquely define $\scale{\hat Z}{k}{1}$ as the Skorokhod reflection mapping $\Phi$ of the one-dimensional driving process $\scale{\hat U}{k}{1}$, where $\scale{\hat U}{k}{1}$ is the process inside the square bracket of \eqref{eq: RBM1 r 1}. 

Assumption \ref{assmpt: initial SRBM} {implies} $\scale{\hat Z}{k}{1}(0)\asto 0$ u.o.c.~as $r\to 0$ for $k\geq 2$. Since $R$ is an $\mathcal{M}$-matrix, $R_{11} > 0$ and $R_{1 j} \leq 0$ for $j>1$, we have $ R_{11} \gamma_1(r)/\gamma_k(r) + \sum_{j=2}^J R_{1j}\gamma_j(r)/\gamma_k(r) \to \infty$ as $r\to 0$. By Corollary~\ref{cor: BM}, the driving Brownian motion converges weakly. Since the drift term in $\scale{\hat U}{k}{1}$ diverges to $-\infty$, we can apply Lemma~\ref{lemma: Skorokhod negative infty} to conclude that $\scale{\hat Z}{k}{1} \asto 0$ u.o.c.~as $r\to 0$.

For the induction step $2\leq \ell< k$, we assume that $\scale{\hat Z}{k}{i} \to 0$ u.o.c.~as $r\to0$ for $i=1,\ldots,\ell-1$. Left-multiplying $E^{(\ell)}$ on both sides of \eqref{eq: RBM1 r}, we have, for $t\geq 0$,
\begin{equation*}
    E^{(\ell)}\scale{\hat Z}{k}{{}}(t) = E^{(\ell)}\scale{\hat Z}{k}{{}}(0) - G^{(\ell)}\delta^\uu t/\gamma_k(r) + E^{(\ell)}L \scale{\hat W}{k}{{}}(t) + G^{(\ell)}\scale{\hat Y}{k}{{}}(t).
\end{equation*}
Since $E^{(\ell)}_{\ell, \ell} = 1$, $E^{(\ell)}_{\ell, j} = 0$ for $j>\ell$, and $G^{(\ell)}_{\ell, j} = 0$ for $j<\ell$ in \eqref{eq: Gauss elim}, the $\ell$th component of the above equation becomes
\begin{equation*}
    \begin{aligned}
        E^{(\ell)}_{\ell, [1:\ell-1]}\scale{\hat Z}{k}{{[1:\ell-1]}}(t) + \scale{\hat Z}{k}{{\ell}}(t) &= E^{(\ell)}_{\ell, [1:\ell]}\scale{\hat Z}{k}{{[1:\ell]}}(0) - G^{(\ell)}_{\ell, [\ell:J]}\delta^\uu_{[\ell:J]} t/\gamma_k(r) \\
        &\quad + E^{(\ell)}_{\ell, [1:J]}L \scale{\hat W}{k}{{}}(t) + G^{(\ell)}_{\ell, [\ell:J]}\scale{\hat Y}{k}{{[\ell:J]}}(t),
    \end{aligned}
\end{equation*}
which is equivalent to
\begin{equation} \label{eq: RBM1 r ell}
    \begin{aligned}
        \scale{\hat Z}{k}{\ell}(t) &= \Big[ -E^{(\ell)}_{\ell, [1:\ell-1]}\scale{\hat Z}{k}{{[1:\ell-1]}}(t) + E^{(\ell)}_{\ell, [1:\ell]}\scale{\hat Z}{k}{{[1:\ell]}}(0) - \Big( G_{\ell\ell}^{(\ell)} \frac{\gamma_{\ell}(r)}{\gamma_k(r)} + \sum_{j=\ell+1}^{J}G_{\ell j}^{(\ell)} \frac{\gamma_j(r)}{\gamma_k(r)} \Big)t  \\
        &\qquad + E_{\ell,[1:J]}^{(\ell)}L \scale{\hat W}{k}{{}}(t)  + \sum_{j=\ell+1}^{J}G_{\ell j}^{(\ell)}\scale{\hat Y}{k}{j}(t) \Big] + G_{\ell\ell}^{(\ell)}\scale{\hat Y}{k}{\ell}(t).
    \end{aligned}
\end{equation}
Therefore, \eqref{eq: RBM2 r}-\eqref{eq: RBM4 r} and \eqref{eq: RBM1 r ell} uniquely define $\scale{\hat Z}{k}{\ell}$ as the Skorokhod reflection mapping $\Phi$ of the one-dimensional driving process $\scale{\hat U}{k}{\ell}$, where $\scale{\hat U}{k}{\ell}$ is the process inside the square bracket of \eqref{eq: RBM1 r ell}. 

By the induction hypothesis, we have $\scale{\hat Z}{k}{i} \asto 0$ u.o.c.~as $r\to 0$ for $i=1,\ldots,\ell-1$. Assumption~\ref{assmpt: initial SRBM} implies $\scale{\hat Z}{k}{{i}}(0)\asto 0$ as $r\to 0$ for $i<k$ and $k\geq 2$. Since $G^{(\ell)}_{[\ell:J],[\ell:J]}$ is an $\mathcal{M}$-matrix, $G^{(\ell)}_{\ell\ell} > 0$ and $G^{(\ell)}_{\ell j} \leq 0$ for $j>\ell$, we have $G_{\ell\ell}^{(\ell)} \gamma_{\ell}(r)/\gamma_k(r) + \sum_{j=\ell+1}^{J}G_{\ell j}^{(\ell)} \gamma_j(r)/\gamma_k(r) \to \infty$ as $r\to 0$. Hence, we can apply Lemma~\ref{lemma: Skorokhod negative infty} to conclude $\scale{\hat Z}{k}{\ell} \asto 0$ u.o.c.~as $r\to 0$.

In summary, by mathematical induction, we have proved that the process $\scale{\hat Z}{k}{i} \asto 0$ u.o.c.~as $r\to 0$ for $i=1,\ldots,k-1$, which completes the proof of the first part of \eqref{eq: suff prop3}.

\vspace{0.5em}

To prove the remaining parts of the proposition, we left-multiply \eqref{eq: RBM1 r} by $E^{(k)}$ to obtain: for $t\geq 0$,
\begin{equation*}
    E^{(k)}\scale{\hat Z}{k}{{}}(t) = E^{(k)}\scale{\hat Z}{k}{{}}(0) - G^{(k)}\delta^\uu t/\gamma_k(r) + E^{(k)}L \scale{\hat W}{k}{{}}(t) + G^{(k)}\scale{\hat Y}{k}{{}}(t).
\end{equation*}
Since $E^{(k)}_{[k:J], [k:J]} = I_{J-k+1}$ and $G^{(k)}_{[k:J], [1:k-1]}=0$ in \eqref{eq: Gauss elim}, the $k$th to the $J$th components of the above equation become
\begin{equation} \label{eq: RBM1 rk k}
    \begin{aligned} 
        \scale{\hat Z}{k}{[k:J]}(t) &= \Big[ -E^{(k)}_{[k:J],[1:k-1]}\scale{\hat Z}{k}{[1:k-1]}(t) + E^{(k)}\scale{\hat Z}{k}{{}}(0)- G^{(k)}_{[k:J],[k:J]}\delta_{[k:J]}^\uu t/\gamma_k(r)  \\
        &\qquad   + E^{(k)}_{[k:J], [1:J]}L \scale{\hat W}{k}{{}}(t) \Big]  + G^{(k)}_{[k:J],[k:J]}\scale{\hat Y}{k}{{[k:J]}}(t).                                  
    \end{aligned}
\end{equation}
Therefore, \eqref{eq: RBM2 r}-\eqref{eq: RBM4 r} and \eqref{eq: RBM1 rk k} uniquely define $\scale{\hat Y}{k}{[k:J]}$ as the Skorokhod reflection mapping $\Psi$ with reflection matrix $G^{(k)}_{[k:J],[k:J]}$ of the $J+1-k$-dimensional driving process $\scale{\hat S}{k}{{}}$, where $\scale{\hat S}{k}{{}}$ is the process inside the  square bracket of \eqref{eq: RBM1 rk k}. 

To prove the second statement of \eqref{eq: suff prop3}. We will first show that the family  $\{\hat Y^{(r,k)}_{[k:J]}(t);~t\geq 0\}_{r\in(0,1)}$ is pathwise bounded by a continuous nondecreasing process. For this, we use a comparison argument. Consider a simple driving process $\scale{\check{S}}{k}{{}}:=E^{(k)}\scale{\hat Z}{k}{{}}(0)$. Its associated regulator process is $\scale{\check{Y}}{k}{{}}=\Psi(\scale{\check{S}}{k}{{}}; G^{(k)}_{[k:J],[k:J]})=\mathbf{0}$. By the Lipschitz continuity of the Skorokhod reflection mapping in \eqref{eq: Lipschitz}, we have, for any fixed $T>0$,
\begin{equation*}
    \begin{aligned}
        &\sup_{t\in [0,T]}\sup_{k\leq \ell \leq J}{\scale{\hat Y}{k}{\ell}(t) }  = \sup_{t\in [0,T]}\Norm{\scale{\hat Y}{k}{[k:J]}(t) - \scale{\check{Y}}{k}{{}}(t) }  \\
        &\quad \leq \kappa(G^{(k)}_{[k:J],[k:J]})\sup_{t\in [0,T]}\Norm{ \scale{\hat S}{k}{{}}(t) - \scale{\check{S}}{k}{{}}(t)}  \\
        &\quad \leq \kappa(G^{(k)}_{[k:J],[k:J]})\sup_{t\in [0,T]}\left( \Norm{ \left\{ E^{(k)}_{[k:J],[1:k-1]}\scale{\hat Z}{k}{[1:k-1]}(t) \right\} } + \Norm{ \left\{  E^{(k)}_{[k:J], [1:J]}L \scale{\hat W}{k}{{}} (t) \right\} }  \right) \\
        &\qquad +  \kappa(G^{(k)}_{[k:J],[k:J]})\Norm{ G^{(k)}_{[k:J],[k:J]}\delta_{[k:J]}^\uu / \gamma_k(r) }t , \quad \text{a.s.}
    \end{aligned}
\end{equation*}
which implies that the family $\{\scale{\hat Y}{k}{[k:J]}(t);~t\geq 0\}$ with $r\in(0,1)$ is pathwise bounded by a continuous nondecreasing process.

We will use Lemma \ref{lemma: Skorokhod positive infty} to prove $\{\hat Y^{(r,k)}_{\ell}(t);~t\geq 0\}\asto 0$ u.o.c.~as $r\to 0$ for $\ell >k$. Consider the $\ell$th component of \eqref{eq: RBM1 rk k}.
Since $E^{(k)}_{\ell, \ell} = 1$ for $\ell\geq k$, $E^{(k)}_{\ell, j} = 0$ for $\ell,j\geq k$ and $\ell\neq j$, and $G^{(k)}_{\ell, j} = 0$ for $j<k<\ell$ in \eqref{eq: Gauss elim}, the $\ell$th component of \eqref{eq: RBM1 rk k} for $\ell>k$ becomes
\begin{equation} \label{eq: RBM1 rk ell}
    \begin{aligned} 
        \scale{\hat Z}{k}{\ell}(t) &= \Big[ -E^{(k)}_{\ell,[1:k-1]}\scale{\hat Z}{k}{[1:k-1]}(t) + E^{(k)}_{\ell, [1:k-1]}\scale{\hat Z}{k}{{[1:k-1]}}(0) + \scale{\hat Z}{k}{{\ell}}(0) - G^{(k)}_{\ell,[k:J]}\delta_{[k:J]}^\uu t/\gamma_k(r) \\ 
        &\quad + E^{(k)}_{\ell, [1:J]}L \scale{\hat W}{k}{{}}(t) + \sum_{j=k,j\neq \ell}^JG^{(k)}_{\ell,j}\scale{\hat Y}{k}{{j}}(t) \Big]  + G^{(k)}_{\ell,\ell}\scale{\hat Y}{k}{{\ell}}(t) .
    \end{aligned}
\end{equation}
Therefore, \eqref{eq: RBM2 r}-\eqref{eq: RBM4 r} and \eqref{eq: RBM1 rk ell} uniquely define $\scale{\hat Y}{k}{\ell}$ as the Skorokhod reflection mapping $\Psi$ with reflection scalar $G^{(k)}_{\ell,\ell}$ of the one-dimensional driving process $\scale{\hat U}{k}{\ell}$, where $\scale{\hat U}{k}{\ell}$ is the process inside the square bracket of \eqref{eq: RBM1 rk ell}. 

By Assumption~\ref{assmpt: initial SRBM}, we have $\scale{\hat Z}{k}{{[1:k-1]}}(0)\asto 0$ and $\scale{\hat Z}{k}{{\ell}}(0)\asto \infty$ as $r\to 0$ for $\ell>k$. Hence, we can conclude that $\scale{\hat Y}{k}{\ell} \asto 0$ u.o.c.~as $r\to 0$ for $\ell>k$ by Lemma \ref{lemma: Skorokhod positive infty}, which completes the proof of the second part of \eqref{eq: suff prop3}.

The $k$th component of \eqref{eq: RBM1 rk k} is
\begin{equation}\label{eq: RBM1 rk ell k}
    \begin{aligned} 
        \scale{\hat Z}{k}{k}(t) &= \left[ -E^{(k)}_{k,[1:k-1]}\scale{\hat Z}{k}{[1:k-1]}(t) + E^{(k)}_{k, [1:k-1]}\scale{\hat Z}{k}{{[1:k-1]}}(0) + \scale{\hat Z}{k}{{k}}(0) - G^{(k)}_{k,[k:J]}\delta_{[k:J]}^\uu t/\gamma_k(r) \right.  \\ 
        &\quad + E^{(k)}_{k, [1:J]}L \scale{\hat W}{k}{{}}(t) + \left.G^{(k)}_{k,[k+1:J]}\scale{\hat Y}{k}{{[k+1:J]}}(t) \right]  + G^{(k)}_{k,k}\scale{\hat Y}{k}{{k}}(t) 
    \end{aligned}
\end{equation}
Therefore, \eqref{eq: RBM2 r}-\eqref{eq: RBM4 r} and \eqref{eq: RBM1 rk ell k} imply 
\begin{equation*} %\label{eq: SRBM res ZX}
    \scale{\hat Z}{k}{k} = \Phi (\scale{\hat U}{k}{k}),
\end{equation*}
where $\scale{\hat U}{k}{k}=\{\scale{\hat U}{k}{k}(t);~t\geq 0\}$ is a one-dimensional process, which is defined inside the square bracket of \eqref{eq: RBM1 rk ell k}. 

By Assumption~\ref{assmpt: initial SRBM}, we have $\scale{\hat Z}{k}{{[1:k-1]}}(0)\asto 0$ and $\scale{\hat Z}{k}{{k}}(0)\asto \xi_k$ as $r\to 0$.  As $\scale{\hat Z}{k}{[1:k-1]} \asto \mathbf{0}$ and  $\scale{\hat Y}{k}{[k+1:J]} \asto \mathbf{0}$ u.o.c.~as $r\to 0$, and Corollary \ref{cor: BM}, we have 
\begin{equation} \label{eq: limit Xk}
    \scale{\hat U}{k}{k} \asto  \scaleL{\hat U}{k}{k} \defi \left\{ \hat{\xi}_k - G^{(k)}_{k,k} t + E^{(k)}_{k,[1:J]}L \scaleL{\hat W}{k}{{}}(t);~t\geq 0 \right\}, \quad \text{u.o.c. as } r\to 0,
\end{equation}
which is a Brownian motion with the initial state $\hat{\xi}_k$, the drift $-G^{(k)}_{k,k}$ and the variance $E^{(k)}_{k,[1:J]}L (E^{(k)}_{k,[1:J]}L)^\T = (E^{(k)}\Gamma(E^{(k)})^\T)_{k,k}$. 
Using the relations in \eqref{eq: uw}, we identify the drift as $-(1-w_{kk})$ and the variance as $(E^{(k)}\Gamma(E^{(k)})^\T)_{k,k} = u^\T \Gamma u$. This completes the proof of \eqref{eq: suff prop3} by the Lipschitz continuity of the Skorokhod reflection mapping.

\subsection{Proof of Theorem \ref{prop: multi-SRBM}} \label{sec: proof multi-SRBM T} \label{sec: proof multi-SRBM A}
We are now ready to prove Theorem \ref{prop: multi-SRBM}.

    It follows from Skorokhod representation theorem in the proof of Proposition \ref{prop: rk initial} that
    \begin{equation*}
        \begin{aligned}
            &\left\{ \left( \gamma_1(r)\tilde Z_1^\uu(t/\gamma_1^2(r)),  \ldots, \gamma_J(r)\tilde Z_J^\uu(t/\gamma_J^2(r)) \right);~t\geq 0 \right\} = \left( \scale{\tilde Z}{1}{1}, \ldots, \scale{\tilde Z}{J}{J} \right) \\
            &\quad\overset{d}{=} \left( \scale{\hat Z}{1}{1}, \ldots, \scale{\hat Z}{J}{J} \right) =\left( \Phi(\scale{\hat U}{1}{1}), \ldots, \Phi(\scale{\hat U}{J}{J}) \right) \asto \left( \Phi(\scaleL{\hat U}{1}{1}), \ldots, \Phi(\scaleL{\hat U}{J}{J}) \right)
        \end{aligned}
    \end{equation*}
    where for each $k\in \J$, both $\scale{\hat U}{k}{k}$ and $\scaleL{\hat U}{k}{k}$ are one-dimensional processes, which is defined in the square bracket of \eqref{eq: RBM1 rk ell k} and in \eqref{eq: limit Xk}, respectively.

    Crucially, Corollary~\ref{cor: BM} implies that the components of the limit process, $\{\scaleL{\hat U}{k}{k}\}_{k \in \mathcal{J}}$, are mutually independent.
    By the Skorokhod representation theorem,
    we have
    \begin{equation*}
        \big(
            \scale{\tilde Z}{1}{1}, \scale{\tilde Z}{2}{2}, \ldots, \scale{\tilde Z}{J}{J}
        \big)  \Longrightarrow \big(
            \tilde Z_1^*, \tilde Z_2^*, \ldots, \tilde Z_J^*
        \big) \quad \text{ as $r\to 0$},
    \end{equation*}
    and all the coordinate processes of $\tilde Z^*$ are mutually independent. This completes the proof.

\section{Lowest-rate Initial Condition} \label{sec: conventional}

In this section, we demonstrate the crucial role of the initial condition in determining the limit process by considering an alternative to Assumption \ref{assmpt: initial}. When the initial queue lengths follow Assumption~\ref{assmpt: conventional}, we will have a different limit process. We present the functional limit results for GJNs and SRBMs under Assumption~\ref{assmpt: conventional} in Sections~\ref{sec: GJN cv} and~\ref{sec: SRBM cv}, respectively. Their proofs are given in Appendix \ref{sec: conventional proof}.
This is a particular case in the complement of condition \eqref{eq: initB} in Assumption~\ref{assmpt: initial}. More general cases about the impact of initial conditions on the functional limit for multi-scaling GJNs and SRBMs will be investigated in a subsequent paper.

\subsection{GJNs under lowest-rate initial condition} \label{sec: GJN cv}

We consider the case where all initial queue lengths are of the magnitude $1/(1-\rho_1^\uu)$.

\begin{assumption}[Lowest-rate initial condition] \label{assmpt: conventional}
    There exists an $\R_+^J$-valued random vector $\xi$ such that 
    \begin{equation*}
        \left( \gamma_1(r)Z^\uu_1(0),~\gamma_1(r)Z^\uu_2(0),~\ldots,~\gamma_1(r)Z^\uu_J(0) \right)  \Longrightarrow \xi \quad \text{as } r\to 0.
    \end{equation*} 
\end{assumption}

Note that under this assumption, condition \eqref{eq: initA} of Assumption \ref{assmpt: initial} holds with $\xi = (\xi_1,0,\ldots,0)$, but condition \eqref{eq: initB} fails.

In the following theorem, we demonstrate that under Assumption \ref{assmpt: conventional}, the sequence of GJNs still converges weakly but,
in general,  to a different limit process as $r\to 0$ when $J\geq 2$.

\begin{theorem}\label{thm:GJN cv}
    Suppose Assumptions \ref{assmpt: moment}, \ref{assmpt: multiscale} and \ref{assmpt: conventional} hold. Then 
    \begin{equation*} % \label{eq: lim GJN cv}
        \left\{ \left( \gamma_1(r)Z_1^\uu(t/\gamma_1^2(r)), \gamma_2(r)Z_2^\uu(t/\gamma_2^2(r)), \ldots, \gamma_J(r)Z_J^\uu(t/\gamma_J^2(r)) \right);~t\geq 0 \right\} \Longrightarrow Z^* \ \text{as } r\to 0,
    \end{equation*}
    where $Z^*=\{Z^*(t);~t\geq 0\}$ is a $J$-dimensional process whose components are mutually independent. Specifically, each coordinate {process} $Z^*_j$ is the first coordinate {process} of a $(J-j+1)$-dimensional SRBM:
    \begin{equation*}
        \RBM\left(\xi^{(j)}, -G^{(j)}_{[j:J],j}, (E^{(j)} \Gamma (E^{(j)})')_{[j:J],[j:J]}, G^{(j)}_{[j:J], [j:J]}\right), \quad j\in \J,
    \end{equation*}
    where the $(J-j+1)$-dimensional vector $\xi^{(j)}$ is defined as $\xi$ for $j=1$ and $\mathbf{0}$ for $j>1$, $E^{(j)}$ and $G^{(j)}$ are defined in \eqref{eq: Gauss elim}, and $\Gamma$ is given in \eqref{eq: SA_Gamma}.
\end{theorem}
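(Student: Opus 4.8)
The plan is to mirror the two-stage proof of Theorem~\ref{thm:GJN}: reduce the problem to the associated SRBMs via the asymptotic strong approximation, establish the SRBM functional limit under the lowest-rate initial condition, and then recombine.

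\textbf{Step 1 (reduction to SRBMs).} Under Assumption~\ref{assmpt: multiscale} we have $\mu_j^\uu-\lambda_j=\gamma_j(r)>0$ with $\gamma_j(r)=O(\gamma_1(r))$, so Proposition~\ref{prop: SA} produces a family of SRBMs $\tilde Z^\uu$ on a common space with $\tilde Z^\uu(0)\aseq Z^\uu(0)$, reflection matrix $R=I-P^\T$, drift $\theta^\uu=-R\delta^\uu$ for $\delta^\uu=(\gamma_1(r),\dots,\gamma_J(r))^\T$, covariance $\Gamma$ from~\eqref{eq: SA_Gamma}, and $\sup_{0\le t\le T}\max_{k\in\J}\norm{\gamma_k(r)Z^\uu(t/\gamma_k^2(r))-\gamma_k(r)\tilde Z^\uu(t/\gamma_k^2(r))}\asto0$. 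Hence Assumption~\ref{assmpt: multiscaling} holds for $\tilde Z^\uu$, and $\tilde Z^\uu(0)\aseq Z^\uu(0)$ together with Assumption~\ref{assmpt: conventional} gives $(\gamma_1(r)\tilde Z_1^\uu(0),\dots,\gamma_1(r)\tilde Z_J^\uu(0))\Longrightarrow\xi$. By the convergence-together theorem (Theorem~11.4.7 in \citet{Whit2002}) it suffices to prove the stated limit with $Z^\uu$ replaced by $\tilde Z^\uu$.

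\textbf{Step 2 (SRBM functional limit, the crux).} This is the analogue of Proposition~\ref{prop: rk initial} under the lowest-rate condition, and it is where the new behaviour appears: all rescaled initial states now vanish at every scale finer than $\gamma_1$, but the regulators $\tilde Y_j^\uu$ seen at scale $k$ with $j>k$ no longer go to zero, so the scale-$k$ limit is a genuine $(J-k+1)$-dimensional SRBM rather than a one-dimensional one. I would argue exactly as in Section~\ref{sec: proof multi-SRBM}. By the Skorokhod representation theorem pass to a.s.\ versions with $\scale{\hat Z}{k}{{}}(0)\asto\hat\xi$ ($\overset{d}{=}\xi$) for $k=1$ and $\scale{\hat Z}{k}{{}}(0)\asto\mathbf0$ for $k\ge2$ (since $\gamma_k(r)/\gamma_1(r)\to0$), with $\scale{\hat W}{k}{{}}\asto\scaleL{\hat W}{k}{{}}$ u.o.c., and---by Corollary~\ref{cor: BM}---with $\{\scaleL{\hat W}{k}{{}}\}_{k\in\J}$ mutually independent and independent of $\hat\xi$. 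For each $k$ left-multiply the $\gamma_k^2(r)$-scaled SRBM equation~\eqref{eq: RBM1 r} by $E^{(k)}$ and extract the $[k:J]$-block~\eqref{eq: RBM1 rk k}, so that $\scale{\hat Z}{k}{{[k:J]}}$ solves the Skorokhod problem with reflection matrix $G^{(k)}_{[k:J],[k:J]}$ and driving process $\scale{\hat S}{k}{{}}$. The decisive observation is that the block drift $-G^{(k)}_{[k:J],[k:J]}\delta^\uu_{[k:J]}/\gamma_k(r)$ now \emph{converges}, to $-G^{(k)}_{[k:J],k}$, because we divide by the smallest rate of the block; and the coupling term $-E^{(k)}_{[k:J],[1:k-1]}\scale{\hat Z}{k}{{[1:k-1]}}(t)$ still tends to $0$ u.o.c., since the first-part induction of Section~\ref{sec: proof multi-SRBM}---giving $\scale{\hat Z}{k}{{\ell}}\asto0$ for $\ell<k$---uses only $\scale{\hat Z}{k}{{\ell}}(0)\asto0$ (valid here for $k\ge2$) and the divergence of the relevant one-dimensional drifts to $-\infty$ (a property of the $\caM$-matrix structure, unchanged), and does not invoke the matching-rate condition~\eqref{eq: initB}. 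Consequently $\scale{\hat S}{k}{{}}$ converges u.o.c.\ to the Brownian motion $\{-G^{(k)}_{[k:J],k}t+E^{(k)}_{[k:J],[1:J]}L\,\scaleL{\hat W}{k}{{}}(t);~t\ge0\}$ started from $\hat\xi$ when $k=1$ and from $\mathbf0$ when $k\ge2$ (for $k=1$ the same argument applies with $E^{(1)}=I$, $G^{(1)}=R$, and no coupling term), and Lipschitz continuity of the Skorokhod map~\eqref{eq: Lipschitz} yields $\scale{\hat Z}{k}{{[k:J]}}\asto\RBM(\xi^{(k)},-G^{(k)}_{[k:J],k},(E^{(k)}\Gamma(E^{(k)})^\T)_{[k:J],[k:J]},G^{(k)}_{[k:J],[k:J]})$ u.o.c., using $E^{(k)}_{[k:J],[1:J]}L(E^{(k)}_{[k:J],[1:J]}L)^\T=(E^{(k)}\Gamma(E^{(k)})^\T)_{[k:J],[k:J]}$. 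Its first coordinate is $Z^*_k$, a measurable functional of $\scaleL{\hat W}{k}{{}}$ (and of $\hat\xi$ only for $k=1$), so the mutual independence of the $\scaleL{\hat W}{k}{{}}$ and their independence of $\hat\xi$ make $Z^*_1,\dots,Z^*_J$ mutually independent. Reading off the joint a.s.\ convergence of $(\scale{\hat Z}{1}{1},\dots,\scale{\hat Z}{J}{J})$ and undoing the representation gives $(\gamma_1(r)\tilde Z_1^\uu(t/\gamma_1^2(r)),\dots,\gamma_J(r)\tilde Z_J^\uu(t/\gamma_J^2(r)))\Longrightarrow Z^*$.

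\textbf{Step 3 and main obstacle.} Combining Steps~1 and~2 through the convergence-together theorem gives the statement for the GJN queue-length process $Z^\uu$. The only genuinely new work is the $[k:J]$-block analysis of Step~2: one must recognize that, unlike in Proposition~\ref{prop: rk initial}, the block drift converges rather than diverges---so the block regulators survive and build a $(J-k+1)$-dimensional SRBM---while the within-block coupling term still vanishes by an induction that happens not to use~\eqref{eq: initB}. The independence bookkeeping (tracking that $\xi$ feeds only the coordinate $Z^*_1$) is then immediate from Corollary~\ref{cor: BM}.
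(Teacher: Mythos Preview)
Your proposal is correct and follows essentially the same approach as the paper: the paper likewise reduces to the SRBM limit via Proposition~\ref{prop: SA} and the convergence-together theorem, then establishes the SRBM limit (its Theorem~\ref{prop: multi-SRBM conventional} via Proposition~\ref{prop: r initial}) by exactly the $E^{(k)}$-multiplication / $[k{:}J]$-block extraction you describe, noting that $G^{(k)}_{[k:J],[k:J]}\delta^\uu_{[k:J]}/\gamma_k(r)\to G^{(k)}_{[k:J],k}$ while the induction $\scale{\hat Z}{k}{[1:k-1]}\asto\mathbf{0}$ carries over verbatim from Section~\ref{sec: proof multi-SRBM}, and concluding independence from Corollary~\ref{cor: BM}. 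Your treatment is slightly more explicit than the paper's in tracking that $\hat\xi$ enters only the $k=1$ limit, which is the correct bookkeeping for the independence claim.
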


Note that, unlike the case in Theorem \ref{thm:GJN}, for $J\geq 2$, 
 the limit process $Z^*$ here is not a Markov process in general.

\subsection{SRBMs under lowest-rate initial condition} \label{sec: SRBM cv}

Since the asymptotic strong approximation in Proposition \ref{prop: SA} is still valid under the lowest-rate initial condition, it is sufficient to prove the functional limit result of SRBMs analogous to Theorem \ref{prop: multi-SRBM}.

We now consider the SRBM family from Section \ref{sec: SRBM} but under the following lowest-rate initial condition.
\begin{assumption}[Lowest-rate initial condition for SRBMs]
    \label{assmpt: conventional SRBM}
    There exists an $\R_+^J$-valued random vector $\tilde \xi$ such that 
    \begin{equation*}
        \left( \gamma_1(r)\tilde Z_1^\uu(0), \gamma_1(r)\tilde Z_2^\uu(0), \ldots, \gamma_1(r)\tilde Z_J^\uu(0) \right)  \Longrightarrow \tilde\xi \quad \text{as } r\to 0.
    \end{equation*}
\end{assumption}

In the following theorem, we establish that under Assumption \ref{assmpt: conventional SRBM}, the sequence of SRBMs still converges weakly but to a different limit process as $r\to 0$ when $J\geq 2$.

\begin{theorem}\label{prop: multi-SRBM conventional}
    Suppose Assumptions \ref{assmpt: multiscaling} and \ref{assmpt: conventional SRBM}  hold. Then
    \begin{equation*}
        \left\{ \left( \gamma_1(r)\tilde Z_1^\uu(t/\gamma_1^2(r)), \gamma_2(r)\tilde Z_2^\uu(t/\gamma_2^2(r)), \ldots, \gamma_J(r)\tilde Z_J^\uu(t/\gamma_J^2(r)) \right);~t\geq 0 \right\} \Longrightarrow \tilde Z^* \ \text{as } r\to 0,
    \end{equation*}
    where $\tilde Z^*=\{\tilde Z^*(t);~t\geq 0\}$ is a $J$-dimensional process whose components are mutually independent. Specifically, each coordinate {process} $\tilde Z_j^*$ is the first coordinate {process} of a $(J-j+1)$-dimensional SRBM:
    \begin{equation*} %\label{eq: limit cv}
        \RBM\left(\tilde\xi^{(j)}, -G^{(j)}_{[j:J],j}, (E^{(j)} \Gamma (E^{(j)})')_{[j:J],[j:J]}, G^{(j)}_{[j:J], [j:J]}\right), \quad j\in \J,
    \end{equation*}
    where the $(J-j+1)$-dimensional vector $\tilde\xi^{(j)}$ is defined as $\tilde\xi$ for $j=1$ and $\mathbf{0}$ for $j>1$, and $E^{(j)}$ and $G^{(j)}$ are defined in \eqref{eq: Gauss elim}.
\end{theorem}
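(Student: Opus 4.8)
The plan is to mirror the architecture of the proof of Theorem~\ref{prop: multi-SRBM} in Section~\ref{sec: proof SRBM}, with one structural change forced by the weaker initial condition: under Assumption~\ref{assmpt: conventional SRBM} the more congested stations no longer start asymptotically infinitely large relative to their neighbours, so there is no counterpart to the vanishing of the regulators $\scale{\tilde Y}{k}{{j}}$ for $j>k$, and when the system is viewed on the $\gamma_k^2(r)$ timescale one cannot isolate coordinate $k$ but must retain the whole block of coordinates $[k:J]$. Thus the first step is to prove the following replacement for Proposition~\ref{prop: rk initial}: for each $k\in\J$, as $r\to 0$,
\begin{equation*}
    \scale{\tilde Z}{k}{i}\Longrightarrow 0 \ \text{ for } i<k, \qquad
    \scale{\tilde Z}{k}{[k:J]}\Longrightarrow \RBM\!\big(\tilde\xi^{(k)},\,-G^{(k)}_{[k:J],k},\,(E^{(k)}\Gamma(E^{(k)})')_{[k:J],[k:J]},\,G^{(k)}_{[k:J],[k:J]}\big),
\end{equation*}
where $\tilde\xi^{(k)}=\tilde\xi$ for $k=1$ and $\tilde\xi^{(k)}=\mathbf 0$ for $k\ge 2$. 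Exactly as in Section~\ref{sec: proof multi-SRBM each}, since the only randomness of $\tilde Z^\uu$ is its initial state and the driving Brownian motion, I would invoke the Skorokhod representation theorem together with Corollary~\ref{cor: BM} to build couplings $\scale{\hat Z}{k}{{}}(0)\overset{d}{=}\scale{\tilde Z}{k}{{}}(0)$ and $\scale{\hat W}{k}{{}}\overset{d}{=}\scale{W}{k}{{}}$ on a common space with $\scale{\hat Z}{k}{{}}(0)\asto\tilde\xi$ for $k=1$, $\scale{\hat Z}{k}{{}}(0)\asto\mathbf 0$ for $k\ge 2$, and $\scale{\hat W}{k}{{}}\asto\scaleL{\hat W}{k}{{}}$ u.o.c., where $\{\scaleL{\hat W}{k}{{}}\}_{k\in\J}$ is a $J\times J$ standard Brownian motion; it then suffices to prove $\scale{\hat Z}{k}{i}\asto 0$ for $i<k$ and $\scale{\hat Z}{k}{[k:J]}\asto$ the corresponding SRBM, u.o.c.

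The claim $\scale{\hat Z}{k}{i}\asto 0$ for $i<k$ follows essentially verbatim from the induction in Section~\ref{sec: proof multi-SRBM}: via the block Gaussian elimination \eqref{eq: Gauss elim} and the identity \eqref{eq: RBM1 r ell}, one shows inductively in $\ell=1,\dots,k-1$ that $\scale{\hat Z}{k}{\ell}$ is the one-dimensional Skorokhod map of a path whose drift $-\big(G^{(\ell)}_{\ell\ell}\gamma_\ell(r)/\gamma_k(r)+\sum_{j>\ell}G^{(\ell)}_{\ell j}\gamma_j(r)/\gamma_k(r)\big)t$ diverges to $-\infty$ (because $G^{(\ell)}_{\ell\ell}>0$, $G^{(\ell)}_{\ell j}\le 0$, $\gamma_\ell(r)/\gamma_k(r)\to\infty$), whose Brownian part converges by Corollary~\ref{cor: BM}, and whose remaining regulator terms $\sum_{j>\ell}G^{(\ell)}_{\ell j}\scale{\hat Y}{k}{j}$ are nonincreasing with zero initial value; Lemma~\ref{lemma: Skorokhod negative infty} then delivers the limit. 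The only input about the initial state is $\scale{\hat Z}{k}{i}(0)\asto 0$ for $i<k$, which holds since $\gamma_k(r)\tilde Z^\uu_i(0)=\big(\gamma_k(r)/\gamma_1(r)\big)\gamma_1(r)\tilde Z^\uu_i(0)\asto 0$ for $k\ge 2$ under Assumption~\ref{assmpt: conventional SRBM}, so nothing changes here.

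For the block, I would left-multiply the $\gamma_k^2(r)$-scaled SRBM identity \eqref{eq: RBM1 r} by $E^{(k)}$ and take its last $J-k+1$ rows, which as in \eqref{eq: RBM1 rk k} present $\scale{\hat Z}{k}{[k:J]}$ as the Skorokhod reflection map with reflection matrix the Schur complement $G^{(k)}_{[k:J],[k:J]}$ --- an $\caM$-matrix by Lemma~3.5 of \citet{McDoSivaSushTsatWendWend2020}, hence $\Phi$ is single-valued and Lipschitz by \eqref{eq: Lipschitz} --- of the driving path
\begin{equation*}
    \scale{\hat S}{k}{{}}(t)=-E^{(k)}_{[k:J],[1:k-1]}\scale{\hat Z}{k}{[1:k-1]}(t)+E^{(k)}\scale{\hat Z}{k}{{}}(0)-G^{(k)}_{[k:J],[k:J]}\,\frac{\delta^\uu_{[k:J]}}{\gamma_k(r)}\,t+E^{(k)}_{[k:J],[1:J]}L\,\scale{\hat W}{k}{{}}(t).
\end{equation*}
Crucially this path carries no regulator terms (since $G^{(k)}_{[k:J],[1:k-1]}=0$), so no analogue of Lemma~\ref{lemma: Skorokhod positive infty} or a tightness argument for regulators is needed: each of the four summands converges a.s.\ u.o.c.\ --- the first to $0$ by the previous paragraph, the $[k:J]$-rows of the second to $\tilde\xi^{(k)}$, the third to $-G^{(k)}_{[k:J],k}\,t$ because $\delta^\uu_{[k:J]}/\gamma_k(r)=(1,\gamma_{k+1}(r)/\gamma_k(r),\dots,\gamma_J(r)/\gamma_k(r))'\to(1,0,\dots,0)'$, and the fourth by Corollary~\ref{cor: BM} to a Brownian motion with covariance $E^{(k)}_{[k:J],[1:J]}\Gamma\,(E^{(k)}_{[k:J],[1:J]})'=(E^{(k)}\Gamma(E^{(k)})')_{[k:J],[k:J]}$. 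Lipschitz continuity of $\Phi$ then yields the asserted SRBM limit for $\scale{\hat Z}{k}{[k:J]}$, and $\scale{\hat Z}{k}{k}$, its first coordinate, converges to $\tilde Z^*_k$.

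To finish I would proceed as in Section~\ref{sec: proof multi-SRBM T}: the scaled vector in the statement equals $(\scale{\tilde Z}{1}{1},\dots,\scale{\tilde Z}{J}{J})$, and under the coupling $\scale{\hat Z}{k}{k}$ converges a.s.\ to a measurable functional of $\scaleL{\hat W}{k}{{}}$ (and, for $k=1$, also of $\tilde\xi$); since Corollary~\ref{cor: BM} makes $\{\scaleL{\hat W}{k}{{}}\}_{k\in\J}$ mutually independent and $\tilde\xi$ is independent of all of them (it already is in the prelimit, where the initial state is independent of $W$), the limit coordinate processes $\tilde Z^*_1,\dots,\tilde Z^*_J$ are mutually independent, and the a.s.\ convergence upgrades to the claimed joint weak convergence. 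I expect this theorem to be in fact somewhat easier than Theorem~\ref{prop: multi-SRBM}, since one never has to prove that a regulator vanishes; the one point demanding care is the \emph{ordering} of the argument --- the low-index coordinates $\scale{\hat Z}{k}{[1:k-1]}$ must be shown to vanish first, since they feed the block's driving path through the coupling term $-E^{(k)}_{[k:J],[1:k-1]}\scale{\hat Z}{k}{[1:k-1]}$ --- together with the bookkeeping that correctly tracks the emergence of the Schur-complement reflection matrix $G^{(k)}_{[k:J],[k:J]}$ and the transformed covariance $(E^{(k)}\Gamma(E^{(k)})')_{[k:J],[k:J]}$ in the limiting SRBM data.
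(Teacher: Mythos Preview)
Your proposal is correct and follows essentially the same route as the paper: it proves an analogue of Proposition~\ref{prop: rk initial} (the paper's Proposition~\ref{prop: r initial}) via Skorokhod representation, the same induction with Lemma~\ref{lemma: Skorokhod negative infty} for $\scale{\hat Z}{k}{[1:k-1]}\to 0$, and the block identity \eqref{eq: RBM1 rk k} together with Lipschitz continuity of $\Phi$ for $\scale{\hat Z}{k}{[k:J]}$, then assembles joint convergence and independence exactly as in Section~\ref{sec: proof multi-SRBM T}. Your observation that no regulator-vanishing argument (Lemma~\ref{lemma: Skorokhod positive infty}) is needed here is precisely why the paper's proof of Proposition~\ref{prop: r initial} is shorter than that of Proposition~\ref{prop: rk initial}.
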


\section{Blockwise Multi-scale Heavy Traffic} \label{sec: block}

In this section, we consider the family of GJNs in blockwise multi-scale heavy traffic, where the network's stations are partitioned into $K$ blocks for some integer $K$ with $1 \le K \le J$. In this regime, the stations of different blocks approach heavy traffic at different rates, but the stations within the same block share a common rate. 

Specifically, we consider a partition of the index set $\{1,\ldots,J\}$ as follows:
\begin{equation*}
    \left( 1,2,\ldots, J \right) = 
    \Big( \underbrace{1=\alpha_1,\ldots, \beta_1}_{\text{block 1}}, \underbrace{\alpha_2, \ldots, \beta_2}_{\text{block 2}}, \ldots, \underbrace{\alpha_K, \ldots, \beta_K=J }_{\text{block K}}\Big).
\end{equation*}
We denote the $k$th block by $A_k = \{\indexa{k},\ldots,\indexb{k}\}$ for $k\in \K \defi \{1,\ldots,K\}$, and then the $k$th block contains $|A_k| = \indexb{k}-\indexa{k}+1$ elements.

\begin{assumption}[Blockwise multi-scale heavy traffic] \label{assmpt: blockscale}
    For all $r \in (0,1)$,
    \begin{equation*}
        \big( \mu^\uu - \lambda \big)_{A_k} = \gamma_k(r) b^{(k)} \quad \text{ for all } k \in \K,
    \end{equation*}
    where $b^{(k)}$ is a positive vector in $\R^{|A_k|}$.
\end{assumption}
For instance, $ \mu^\uu - \lambda =(r, 2r, 3r^2, 2r^2, r^2, 2r^3)$ means that $A_1=\{1,2\}$, $A_2=\{3,4,5\}$, with $A_3=\{6\}$, $b^{(1)} = [1,2]^\T$, $b^{(2)} = [3,2,1]^\T$, $b^{(3)}=2$ and $\gamma_1(r)=r$, $\gamma_2(r)=r^2$, $\gamma_3(r)=r^3$. Consequently, both multi-scale heavy traffic and conventional heavy traffic are special cases of {blockwise multi-scale} heavy traffic.

Similarly, the convergence of the initial states plays a crucial role in determining the limit process of $Z^{(r)}$ as $r \to 0$. The matching-rate initial condition for this regime is defined in Assumption \ref{assmpt: initial block} below. We also consider the lowest-rate initial condition from Assumption \ref{assmpt: conventional}. Their proofs are given in Appendix~\ref{sec: block heavy traffic proof}.

\begin{assumption}[Matching-rate initial condition] \label{assmpt: initial block}
    There exists an $\R_+^J$-valued   random vector $ \xi$ such that 
    \begin{equation*}
        \left( \gamma_1(r)Z^\uu_{A_1}(0),~\gamma_2(r)Z^\uu_{A_2}(0),~\ldots,~\gamma_K(r)Z^\uu_{A_K}(0) \right)  \Longrightarrow \xi \quad \text{as } r\to 0,
    \end{equation*} 
    and for all $k=2,\ldots,K$,
    \begin{equation*}
        \gamma_{k-1}(r) Z_{A_k}^\uu (0) \Longrightarrow \infty \quad \text{as } r\to 0.
    \end{equation*}
\end{assumption}

We demonstrate that the family of GJNs converges weakly to a limit process under the matching-rate initial condition and the lowest-rate initial condition as follows.

\begin{theorem}\label{thm:GJN block}
    Suppose Assumptions \ref{assmpt: moment}, \ref{assmpt: blockscale} and \ref{assmpt: initial block} hold. Then
    \begin{equation*} 
        \left\{ \left( \gamma_1(r)Z_{A_1}^\uu(t/\gamma_1^2(r)), \gamma_2(r)Z_{A_2}^\uu(t/\gamma_2^2(r)), \ldots, \gamma_K(r)Z_{A_K}^\uu(t/\gamma_K^2(r)) \right);~t\geq 0 \right\} \Longrightarrow Z^* \ \text{as } r\to 0,
    \end{equation*}
    where $Z^*=\{Z^*(t);~t\geq 0\}$ is a $J$-dimensional diffusion process.
    Furthermore, all corresponding blocks of $Z^*$ are mutually independent, and each block $Z^*_{A_k}$ for $k\in\K$ is a $|A_k|$-dimensional SRBM:
    \begin{equation*}
        \RBM\left(\xi_{A_k}, -G^{(\indexa{k})}_{A_k,A_k}b^{(k)}, (E^{(\indexa{k})} \Gamma (E^{(\indexa{k})})')_{A_k,A_k}, G^{(\indexa{k})}_{A_k,A_k}\right)
    \end{equation*}
    where $\indexa{k}$ is the lowest index in $A_k$, matrices $G$ and $E$ are defined in \eqref{eq: Gauss elim} and $\Gamma$ is given by \eqref{eq: SA_Gamma}.
\end{theorem}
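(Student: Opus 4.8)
The plan is to follow the two-stage strategy used for Theorem~\ref{thm:GJN}. Since Assumption~\ref{assmpt: blockscale} gives $(\mu^\uu-\lambda)_{A_k}=\gamma_k(r)b^{(k)}$ for each $k\in\K$ and $\gamma_k(r)=O(\gamma_1(r))$ for every $k$, Proposition~\ref{prop: SA} (which needs only Assumption~\ref{assmpt: moment}) provides, on a rich enough space, a family of SRBMs $\tilde Z^\uu$ with reflection matrix $R=I-P^\T$, drift $\theta^\uu=-R\delta^\uu$ with $\delta^\uu_{A_k}=\gamma_k(r)b^{(k)}$, covariance $\Gamma$ as in \eqref{eq: SA_Gamma}, initial state $\tilde Z^\uu(0)\aseq Z^\uu(0)$, and $\sup_{0\le t\le T}\max_{k\in\K}\norm{\gamma_k(r)Z^\uu(t/\gamma_k^2(r))-\gamma_k(r)\tilde Z^\uu(t/\gamma_k^2(r))}\to 0$ almost surely. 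By the convergence-together theorem it then suffices to prove the blockwise analogue of Theorem~\ref{prop: multi-SRBM}: under Assumption~\ref{assmpt: blockscale} and the corresponding SRBM initial condition, $\big(\gamma_1(r)\tilde Z^\uu_{A_1}(t/\gamma_1^2(r)),\ldots,\gamma_K(r)\tilde Z^\uu_{A_K}(t/\gamma_K^2(r))\big)\Longrightarrow\tilde Z^*$, a diffusion whose blocks are mutually independent and whose $k$th block is the $|A_k|$-dimensional SRBM named in the theorem.

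I would prove this SRBM statement by mirroring Section~\ref{sec: proof SRBM}. The first ingredient is blockwise asymptotic independence of the driving Brownian motions: applying Proposition~\ref{proposition: joint BM} to each coordinate of the $J$-dimensional $W$ with the $K$ separated scale functions $\gamma_1,\ldots,\gamma_K$ shows, exactly as in Corollary~\ref{cor: BM}, that $\{\gamma_k(r)W(\cdot/\gamma_k^2(r))\}_{k\in\K}$ converge jointly to $K$ mutually independent $J$-dimensional standard Brownian motions. The second, and main, ingredient is the blockwise analogue of Proposition~\ref{prop: rk initial}: for each fixed $k\in\K$, under the $\gamma_k^2(r)$-scaling one has $\gamma_k(r)\tilde Z^\uu_{A_\ell}(t/\gamma_k^2(r))\Longrightarrow0$ for every block index $\ell<k$, $\gamma_k(r)\tilde Y^\uu_{A_\ell}(t/\gamma_k^2(r))\Longrightarrow0$ for every $\ell>k$, and $\gamma_k(r)\tilde Z^\uu_{A_k}(t/\gamma_k^2(r))$ converges weakly to the SRBM with data $\big(\xi_{A_k},-G^{(\indexa{k})}_{A_k,A_k}b^{(k)},(E^{(\indexa{k})}\Gamma(E^{(\indexa{k})})^\T)_{A_k,A_k},G^{(\indexa{k})}_{A_k,A_k}\big)$, driven by a functional of the weak limit of $\gamma_k(r)W(\cdot/\gamma_k^2(r))$. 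Granting this, the argument of Section~\ref{sec: proof multi-SRBM T} applies verbatim: via the Skorokhod representation theorem the $k$th block equals in distribution $\Phi$ applied to a one-parameter family of driving processes converging almost surely to a limit that, by the blockwise independence of the limiting Brownian motions, is mutually independent across $k$; the Lipschitz continuity of $\Phi$ and the convergence-together theorem give the claim, and Proposition~\ref{prop: SA} transfers it back to the GJN.

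To obtain the blockwise $\gamma_k^2(r)$-scaling convergence I would repeat the block-Gaussian-elimination argument of Section~\ref{sec: proof multi-SRBM}, partitioning $R$ at the block boundaries rather than at single indices. Fix $k$. First, by induction on $\ell=1,\ldots,k-1$ one shows $\gamma_k(r)\tilde Z^\uu_{A_\ell}(t/\gamma_k^2(r))\Longrightarrow0$: left-multiplying the $\gamma_k^2(r)$-scaled SRBM equation by $E^{(\indexa{\ell})}$ from \eqref{eq: Gauss elim} and using $G^{(\indexa{\ell})}_{[\indexa{\ell}:J],[1:\indexa{\ell}-1]}=0$ together with the induction hypothesis, the $A_\ell$-rows become a $|A_\ell|$-dimensional Skorokhod problem whose $\caM$-matrix reflection is $G^{(\indexa{\ell})}_{A_\ell,A_\ell}$, whose drift has leading term $-G^{(\indexa{\ell})}_{A_\ell,A_\ell}b^{(\ell)}\cdot\gamma_\ell(r)/\gamma_k(r)$ (diverging, since $\ell<k$), and whose couplings to the higher blocks' regulators are nonincreasing, while the initial state $\gamma_k(r)\tilde Z^\uu_{A_\ell}(0)\to0$ since $\gamma_k(r)/\gamma_\ell(r)\to0$. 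Second, a comparison argument as in Section~\ref{sec: proof multi-SRBM} shows the regulators $\tilde Y^\uu_{A_\ell}$ for $\ell\ge k$ are pathwise bounded by a convergent process; then, left-multiplying by $E^{(\indexa{k})}$, for each $\ell>k$ the $A_\ell$-rows become a Skorokhod-$\Psi$ problem with $\caM$-matrix reflection $G^{(\indexa{k})}_{A_\ell,A_\ell}$, drift converging to the componentwise-nonnegative vector $-G^{(\indexa{k})}_{A_\ell,A_k}b^{(k)}$, and initial state diverging to $\infty$ (since $\gamma_k(r)\tilde Z^\uu_{A_\ell}(0)\ge\gamma_{\ell-1}(r)\tilde Z^\uu_{A_\ell}(0)\to\infty$), whence $\gamma_k(r)\tilde Y^\uu_{A_\ell}(t/\gamma_k^2(r))\Longrightarrow0$. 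Finally the $A_k$-rows of the $E^{(\indexa{k})}$-transformed equation, with the lower-block states and higher-block regulators now negligible and the initial state converging to $\xi_{A_k}$, reduce by uniqueness of the $|A_k|$-dimensional Skorokhod problem and the Lipschitz continuity of $\Phi$ to the $|A_k|$-dimensional SRBM with reflection $G^{(\indexa{k})}_{A_k,A_k}$, drift $-G^{(\indexa{k})}_{A_k,A_k}b^{(k)}$ (the block-$>k$ contributions being of smaller order after dividing by $\gamma_k(r)$), and variance $(E^{(\indexa{k})}_{A_k,[1:J]}L)(E^{(\indexa{k})}_{A_k,[1:J]}L)^\T=(E^{(\indexa{k})}\Gamma(E^{(\indexa{k})})^\T)_{A_k,A_k}$, matching the theorem; all relevant Schur complements $G^{(\indexa{m})}_{[\indexa{m}:J],[\indexa{m}:J]}$ are $\caM$-matrices (Schur complements of $\caM$-matrices, as in Section~\ref{sec: proof multi-SRBM}), so their principal blocks are well-posed reflection matrices.

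The main obstacle is the step $\gamma_k(r)\tilde Z^\uu_{A_\ell}(t/\gamma_k^2(r))\Longrightarrow0$ for $\ell<k$. When every block is a singleton this is the one-dimensional statement of Lemma~\ref{lemma: Skorokhod negative infty}, in which the divergent drift is a negative scalar; but in the blockwise regime the divergent drift direction $-G^{(\indexa{\ell})}_{A_\ell,A_\ell}b^{(\ell)}$ need not be componentwise negative (an $\caM$-matrix times a strictly positive vector can have positive entries), so one cannot argue coordinate by coordinate, and a genuinely multidimensional vanishing lemma is required, of the form: if $M$ is an $\caM$-matrix and the SRBM with reflection $M$ has drift $-M\delta^\uu$ with $\delta^\uu=c^\uu b+o(c^\uu)$, $c^\uu\to\infty$, $b>0$, while the driving perturbation converges u.o.c.\ and the initial state tends to $0$, then the reflected process tends to $0$ u.o.c. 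Here the $\caM$-matrix structure is indispensable: the associated SRBM is positive recurrent precisely because $-M^{-1}(-M\delta^\uu)=\delta^\uu>0$, i.e.\ condition \eqref{eq1:necessary} holds, which for an $\caM$-matrix reflection is sufficient for stability, and as $c^\uu\to\infty$ its stationary law concentrates at the origin at rate $1/c^\uu$; converting this into the required pathwise (functional) vanishing statement---via the pathwise-boundedness comparison together with a uniform-in-$r$ drift or Lyapunov estimate for the $\caM$-matrix reflection map---is the technical heart of the argument. The companion claim $\gamma_k(r)\tilde Y^\uu_{A_\ell}(t/\gamma_k^2(r))\Longrightarrow0$ for $\ell>k$ is comparatively routine, since there the dominant drift $-G^{(\indexa{k})}_{A_\ell,A_k}b^{(k)}$ is componentwise nonnegative and the initial state diverges, so a multidimensional version of Lemma~\ref{lemma: Skorokhod positive infty} applies with no sign subtlety.
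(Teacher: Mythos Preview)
Your overall architecture matches the paper exactly: reduce to SRBMs via Proposition~\ref{prop: SA}, establish a blockwise analogue of Proposition~\ref{prop: rk initial} by block-Gaussian-elimination and induction on $\ell<k$, then combine with the blockwise asymptotic independence from Corollary~\ref{cor: BM} as in Section~\ref{sec: proof multi-SRBM T}. You also correctly isolate the one genuinely new ingredient---a multidimensional version of Lemma~\ref{lemma: Skorokhod negative infty} for an $\caM$-matrix reflection---and correctly flag the sign issue that $-G^{(\indexa{\ell})}_{A_\ell,A_\ell}b^{(\ell)}$ need not be componentwise negative.

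The only divergence is in how you propose to prove that multidimensional vanishing lemma. You sketch a Lyapunov/stability route, leveraging that $-M^{-1}(-M\delta^\uu)=\delta^\uu>0$ implies positive recurrence and that the stationary law concentrates at~$0$; you acknowledge yourself that converting this into a pathwise u.o.c.\ statement is ``the technical heart'' and leave it open. The paper's Lemma~\ref{lemma: block Skorokhod negative infty} avoids this entirely by nesting the \emph{same} Gaussian-elimination trick one level deeper. First, the driving process $\{-v^\uu(t)-Ma^\uu t\}$ (with $u^\uu$ removed) has exact reflected path~$0$, since $y^\uu(t)=M^{-1}v^\uu(t)+a^\uu t$ is a valid regulator ($M^{-1}\ge0$, $a^\uu>0$ eventually); Lipschitz continuity of $\Phi$ then bounds the full reflected path by $\kappa(M)\norm{u^\uu}_T$, uniformly in $r$. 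Second, for any single coordinate $d$ of the block, left-multiply by $E^{(d)}$: the $d$th row becomes a \emph{one}-dimensional Skorokhod problem with scalar drift $-G^{(d)}_{d,d}a^\uu_d$, and since $G^{(d)}_{d,d}>0$ this diverges to $-\infty$, so the original Lemma~\ref{lemma: Skorokhod negative infty} applies directly. The sign subtlety you worried about disappears after one more elimination inside the block.

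One minor correction: for the higher-block regulators you do not need a multidimensional analogue of Lemma~\ref{lemma: Skorokhod positive infty}. The paper handles each coordinate $\ell\ge\indexa{k+1}$ separately with the one-dimensional lemma, since each $\scale{\hat Z}{k}{\ell}(0)\to\infty$ individually and the coupling to the other regulators enters through nonpositive off-diagonal Schur-complement entries, hence as a nondecreasing subtracted term.
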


In the following theorem, we consider the lowest-rate initial condition where all initial queue lengths are of the magnitude $1/\gamma_1(r)$.

\begin{theorem}\label{thm:GJN block cv}
    Suppose Assumptions \ref{assmpt: moment}, \ref{assmpt: conventional} and \ref{assmpt: blockscale} all hold. Then
    \begin{equation*} 
        \left\{ \left( \gamma_1(r)Z_{A_1}^\uu(t/\gamma_1^2(r)), \gamma_2(r)Z_{A_2}^\uu(t/\gamma_2^2(r)), \ldots, \gamma_K(r)Z_{A_K}^\uu(t/\gamma_K^2(r)) \right);~t\geq 0 \right\} \Longrightarrow Z^* \ \text{as } r\to 0,
    \end{equation*}
    where $Z^*=\{Z^*(t);~t\geq 0\}$ is a $J$-dimensional process.
    Furthermore, all corresponding blocks of $Z^*$ are mutually independent, and each block $Z^*_{A_k}$ for $k\in \K$ is the first $|A_k|$ coordinate {processes} of a $(J-\alpha_k+1)$-dimensional SRBM:
    \begin{equation*}
        \RBM\left(\xi^{(k)}, -G^{(\indexa{k})}_{[\indexa{k},J],A_k}b^{(k)}, (E^{(\indexa{k})} \Gamma (E^{(\indexa{k})})')_{[\indexa{k},J],[\indexa{k},J]}, G^{(\indexa{k})}_{[\indexa{k},J],[\indexa{k},J]}\right)
    \end{equation*}
    where $\indexa{k}$ is the lowest index in $A_k$, $\xi^{(\cdot)}$ is defined in Theorem \ref{thm:GJN cv}, matrices $G$ and $E$ are defined in \eqref{eq: Gauss elim} and $\Gamma$ is given by \eqref{eq: SA_Gamma}.
\end{theorem}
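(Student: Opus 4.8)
To prove Theorem~\ref{thm:GJN block cv}, the plan is to follow the two-stage architecture of the proof of Theorem~\ref{thm:GJN}: first replace the generalized Jackson network by an explicit family of SRBMs through the asymptotic functional strong approximation, then establish the blockwise, lowest-rate analogue of Theorem~\ref{prop: multi-SRBM conventional} for that family. Under Assumption~\ref{assmpt: blockscale} one has $\mu_j^\uu-\lambda_j=O(\gamma_1(r))$ for every station $j$, so Proposition~\ref{prop: SA} applies and produces a family of SRBMs $\tilde Z^\uu$ with reflection matrix $R=I-P^\T$, covariance matrix $\Gamma$ of \eqref{eq: SA_Gamma}, initial state $\tilde Z^\uu(0)\aseq Z^\uu(0)$, and drift $\theta^\uu=-R(\mu^\uu-\lambda)$, where $(\mu^\uu-\lambda)_{A_k}=\gamma_k(r)b^{(k)}$; these SRBMs are asymptotically indistinguishable from $Z^\uu$ uniformly over the scalings $\gamma_1^2(r),\dots,\gamma_K^2(r)$. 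By the convergence-together theorem, exactly as in the proof of Theorem~\ref{thm:GJN}, it therefore suffices to prove the asserted weak limit for $\tilde Z^\uu$ under the lowest-rate initial condition for SRBMs (Assumption~\ref{assmpt: conventional SRBM}, which follows from Assumption~\ref{assmpt: conventional} with $\tilde\xi\overset{d}{=}\xi$).

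For the SRBM limit I would argue as in Section~\ref{sec: proof multi-SRBM each}: use the Skorokhod representation theorem together with Corollary~\ref{cor: BM} to realize, on one probability space, almost-sure versions in which, for each block index $k$, the scaled initial state $\gamma_k(r)\hat Z^\uu(0)$ converges a.s.\ (to $\xi$ when $k=1$ and, since $\gamma_k(r)/\gamma_1(r)\to 0$, to $\mathbf 0$ when $k\ge 2$ --- this is precisely why $\xi^{(k)}$ equals $\xi$ for $k=1$ and $\mathbf 0$ otherwise) and the scaled Brownian motions $\scale{\hat W}{k}{{}}$ converge jointly and a.s.\ u.o.c.\ to mutually independent standard Brownian motions. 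Fixing $k$, write the $\gamma_k^2(r)$-scaled equation $\scale{\hat Z}{k}{{}}=\scale{\hat X}{k}{{}}+R\scale{\hat Y}{k}{{}}$ as in \eqref{eq: RBM1 r}, subject to \eqref{eq: RBM2 r}--\eqref{eq: RBM4 r}, with $\gamma_k$ now the $k$-th block's rate. The first task is to show $\scale{\hat Z}{k}{\ell}\asto 0$ u.o.c.\ for every $\ell<\indexa{k}$, which I would obtain by induction over the lower blocks $1,\dots,k-1$: for each such block one left-multiplies \eqref{eq: RBM1 r} by the appropriate elementary matrix from \eqref{eq: Gauss elim}, and the post-elimination drift, after factoring out $\gamma_m(r)/\gamma_k(r)$ (where $A_m$ is the block being treated) and using $\gamma_{m'}(r)/\gamma_m(r)\to 0$ for $m'>m$, is asymptotically $-(\gamma_m(r)/\gamma_k(r))\,G^{(\indexa{m})}_{A_m,A_m}b^{(m)}\,t$ plus lower-order and nondecreasing-regulator terms; when $|A_m|=1$ this is a diverging negative scalar and Lemma~\ref{lemma: Skorokhod negative infty} applies verbatim as in Section~\ref{sec: proof multi-SRBM}, while for $|A_m|\ge 2$ a multidimensional version of that lemma is needed (discussed below). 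Granting the collapse of all lower blocks, one then left-multiplies \eqref{eq: RBM1 r} by $E^{(\indexa{k})}$.

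Since $E^{(\indexa{k})}_{[\indexa{k}:J],[\indexa{k}:J]}=I$ and $G^{(\indexa{k})}_{[\indexa{k}:J],[1:\indexa{k}-1]}=0$, the last $J-\indexa{k}+1$ rows decouple from the regulators of the lower blocks and give $\scale{\hat Z}{k}{[\indexa{k}:J]}=\Phi(\scale{\hat S}{k}{{}};G^{(\indexa{k})}_{[\indexa{k}:J],[\indexa{k}:J]})$, where $\scale{\hat S}{k}{{}}$ is a bounded linear image of the already-collapsed $\scale{\hat Z}{k}{[1:\indexa{k}-1]}$, plus the scaled initial state (tending to $\xi^{(k)}$), minus $G^{(\indexa{k})}_{[\indexa{k}:J],[\indexa{k}:J]}(\mu^\uu-\lambda)_{[\indexa{k}:J]}\,t/\gamma_k(r)$, plus $(E^{(\indexa{k})}L\scale{\hat W}{k}{{}})_{[\indexa{k}:J]}$. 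Because $(\mu^\uu-\lambda)_{[\indexa{k}:J]}/\gamma_k(r)\to(b^{(k)},\mathbf 0)$ and by Corollary~\ref{cor: BM}, $\scale{\hat S}{k}{{}}$ converges a.s.\ u.o.c.\ to the Brownian motion with initial state $\xi^{(k)}$, drift $-G^{(\indexa{k})}_{[\indexa{k}:J],A_k}b^{(k)}$, and covariance $(E^{(\indexa{k})}\Gamma(E^{(\indexa{k})})^\T)_{[\indexa{k}:J],[\indexa{k}:J]}$; the Lipschitz continuity \eqref{eq: Lipschitz} of the reflection map then yields a.s.\ u.o.c.\ convergence of $\scale{\hat Z}{k}{[\indexa{k}:J]}$ to the $(J-\indexa{k}+1)$-dimensional SRBM named in the theorem, whose first $|A_k|$ coordinate processes are the limit of block $A_k$. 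Blockwise independence follows because this limit is a measurable functional of the (degenerate for $k\ge 2$) limiting initial state together with the limiting Brownian motion $\scaleL{\hat W}{k}{{}}$, and $\{\scaleL{\hat W}{k}{{}}\}_{k\in\K}$ are mutually independent by Corollary~\ref{cor: BM}; undoing the Skorokhod representation and combining with the convergence-together step of the first stage finishes the proof.

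\textbf{Main obstacle.} The delicate point is the collapse of a lower block $A_m$ with $|A_m|\ge 2$: the relevant diverging drift is the vector $-G^{(\indexa{m})}_{A_m,A_m}b^{(m)}$, and although $G^{(\indexa{m})}_{A_m,A_m}$ is an $\mathcal M$-matrix and $b^{(m)}>\mathbf 0$, this product need not be negative entrywise, so the one-dimensional, station-by-station argument behind Lemma~\ref{lemma: Skorokhod negative infty} cannot be applied coordinate-by-coordinate. I expect the resolution to be a genuinely $|A_m|$-dimensional analogue of Lemma~\ref{lemma: Skorokhod negative infty}: since an $\mathcal M$-matrix is inverse-nonnegative, there is a positive left vector $u^{(m)}$ with $(u^{(m)})^\T G^{(\indexa{m})}_{A_m,A_m}>\mathbf 0$, for which the scalar $(u^{(m)})^\T G^{(\indexa{m})}_{A_m,A_m}b^{(m)}$ is strictly positive; testing the block-$A_m$ Skorokhod system against $u^{(m)}$ gives a one-dimensional comparison that, combined with the orthant constraint $\scale{\hat Z}{k}{A_m}\ge\mathbf 0$, the nonnegativity of all regulators, and the induction hypothesis, should squeeze the whole block to $\mathbf 0$. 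An alternative route is to use that the block-$A_m$ limit SRBM is positive recurrent --- condition \eqref{eq1:necessary} holds for it with $\delta=b^{(m)}>\mathbf 0$, and its reflection matrix is an $\mathcal M$-matrix --- together with a time-change that reads the block-$A_m$ dynamics on the much longer scale $\gamma_k^2(r)$. Making this multidimensional collapse rigorous is, I believe, the only ingredient of the proof not already supplied, up to routine bookkeeping, by the arguments of Sections~\ref{sec: proof GJN}--\ref{sec: proof SRBM}.
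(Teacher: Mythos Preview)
Your two-stage architecture is exactly the paper's: reduce GJN to SRBM via Proposition~\ref{prop: SA} (noting that Assumption~\ref{assmpt: blockscale} gives $\mu_j^\uu-\lambda_j=O(\gamma_1(r))$), then prove the blockwise lowest-rate SRBM limit (the paper's Theorem~\ref{thm:SRBM block cv}) by Skorokhod representation, induction over lower blocks to show $\scale{\hat Z}{k}{[1:\indexb{k-1}]}\asto\mathbf 0$, left-multiplication by $E^{(\indexa{k})}$ to isolate the $[\indexa{k}:J]$ block as $\Phi(\scale{\hat S}{k}{{}};G^{(\indexa{k})}_{[\indexa{k}:J],[\indexa{k}:J]})$, and blockwise independence from Corollary~\ref{cor: BM}. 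All of this matches the paper precisely.

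The one substantive difference is how you propose to handle your ``main obstacle,'' the collapse of a lower block $A_m$ with $|A_m|\ge 2$. The paper supplies this as a standalone tool, Lemma~\ref{lemma: block Skorokhod negative infty}, but proves it by a route different from either of yours: it again uses Gaussian elimination inside the $|A_m|$-dimensional subsystem, left-multiplying by $E^{(d)}$ so that the $d$-th row involves only the single regulator $y_d$, and controls the extra $z_{[1:d-1]}$ terms that appear on the right-hand side via the Lipschitz bound $\|z^\uu\|_T\le\kappa(R)\|u^\uu\|_T$ obtained by comparing against the driving path $-v^\uu(t)-Ra^\uu t$ (whose reflected image is identically zero). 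One then invokes the one-dimensional Lemma~\ref{lemma: Skorokhod negative infty} coordinate by coordinate. This is more in keeping with the rest of the machinery than your positive-left-vector projection, which runs into the genuine difficulty that after projecting onto $u^{(m)}$ the complementarity structure is lost: $(u^{(m)})^\T G_{A_m,A_m}\,y$ increases whenever \emph{any} $z_i=0$, not only when $(u^{(m)})^\T z=0$, so you do not get a one-dimensional Skorokhod problem and the ``squeeze'' needs an additional argument you have not supplied. Your alternative via positive recurrence and a time change would also work in principle but is substantially heavier than what is needed.
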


Analogous limit theorems can be established for blockwise multi-scaling SRBMs under both matching-rate and lowest-rate initial conditions.

\appendix

\section{Computation of the variance of $Z^*$ and $\tilde Z^*$ in the proof of Theorem \ref{thm:GJN}} \label{sec: proof GJN sigma}

In this section, we show that the variances of $Z^*$ and $\tilde Z^*$ are indeed the same in Section~\ref{sec: proof GJN}.
We compute $\tilde{\sigma}^2_j$ as follows:
\begin{equation*}
    \begin{aligned}
        \tilde \sigma^2_j &= u^\T \Gamma u = \sum_{k\in \J} \sum_{\ell\in \J} u_k \Gamma_{k\ell} u_\ell \\
        & = \sum_{k\in \J} \sum_{\ell\in \J}  u_k  u_\ell \Big( \alpha_k c_{e,k}^2 \delta_{k\ell} + \sum_{i\in\J} \lambda_i \left[ P_{i k}\left(\delta_{k\ell}-P_{i \ell}\right) + c_{s,i}^2 (\delta_{ik}-P_{ik})(\delta_{i\ell}-P_{i\ell}) \right] \Big)  \\
        & =  \sum_{k\in\J} \alpha_ku_k^2 c_{e, k}^2  + \sum_{k\in\J}    u_k^2\sum_{i\in \J}\lambda_iP_{i k}-  \sum_{i\in \J} \lambda_i\sum_{k\in \J}P_{i k}u_k \sum_{\ell\in \J}P_{i \ell}u_\ell \\
        &\quad +\sum_{i\in \J}  \lambda_i c_{s,i}^2 \sum_{k\in \J} u_k (\delta_{ik}-P_{ik})\sum_{\ell\in \J} u_\ell(\delta_{i\ell}-P_{i\ell}).
    \end{aligned}
\end{equation*}
Since the reflection matrix in Proposition \ref{prop: SA} is $R=I-P^\T$,
by the definitions of $w$ in \eqref{eq: w} and $u$ in \eqref{eq: u},
\begin{equation} \label{eq: property}
    \sum_{k\in \J}P_{ik}u_k = w_{ij}, \quad \text{and} \quad -u_i + w_{ij} 
    = \begin{cases}
        0, & \text{if } i<j, \\
        -1 + w_{jj}, & \text{if } i=j, \\
        w_{ij}, & \text{if } i>j.
    \end{cases}
\end{equation}
Together with traffic equation in \eqref{eq:traffic}, we have
\begin{equation*}
    \begin{aligned}
    \tilde{\sigma}^2_j &= \sum_{k\in\J} \alpha_ku_k^2 c_{e, k}^2  + \sum_{k\in\J}    u_k^2\left( \lambda_k - \alpha_k \right)-  \sum_{i\in \J} \lambda_i w_{ij}^2 +\sum_{i\in \J}  \lambda_i c_{s,i}^2 (u_i - w_{ij})^2  \\
    &= \sum_{i\in\J} \alpha_iu_i^2 c_{e, i}^2  - \sum_{i\in\J}  \alpha_i  u_i^2 + \sum_{i\in\J}  \lambda_i  \left( u_i^2-   w_{ij}^2 \right) +\sum_{i\in \J}  \lambda_i c_{s,i}^2 (u_i - w_{ij})^2  \\
    & = \sum_{i<j} \alpha_iw_{ij}^2 c_{e, i}^2  + \alpha_j c_{e, j}^2  - \sum_{i<j}  \alpha_i w_{ij}^2 - \alpha_j + \lambda_j  \left( 1-   w_{jj}^2 \right)- \sum_{i>j}  \lambda_i  w_{ij}^2  \\
    &\quad  +\lambda_j c_{s,j}^2 (1 - w_{jj})^2  +\sum_{i>j}  \lambda_i c_{s,i}^2 w_{ij}^2 .
    \end{aligned}
\end{equation*}
We can see the terms regarding $c_{e,j}$ and $c_{s,j}$ are the same as the expressions $\sigma^2_j$ in Theorem~\ref{thm:GJN}. Hence, we can compute
\begin{equation*}
    \begin{aligned}
    \sigma^2_j-\tilde{\sigma}^2_j   & =  \sum_{i<j} \alpha_iw_{i j}\left(1-w_{i j}\right)+\sum_{i>j} \lambda_iw_{i j}\left(1-w_{i j}\right) +\lambda_j w_{j j}\left(1-w_{j j}\right)\\
    & \quad  -\Big( - \sum_{i<j}  \alpha_i w_{ij}^2 - \alpha_j + \lambda_j  \left( 1-   w_{jj}^2 \right)- \sum_{i>j}  \lambda_i  w_{ij}^2 \Big)  \\
    &  =  \sum_{i<j} \alpha_iw_{i j} + \alpha_j+\sum_{i>j} \lambda_iw_{i j} -\lambda_j\left(1-w_{j j}\right) =  \sum_{i\in \J} \alpha_i u_i+\sum_{i\in \J} \lambda_i \left( - u_i + w_{ij} \right)=0,
    \end{aligned}
\end{equation*}
where the last equality follows from the traffic equation in \eqref{eq:traffic} and the property in \eqref{eq: property}:
\begin{equation*}
    \sum_{i\in \J} \alpha_i u_i  =  \sum_{i\in \J} \Big( \lambda_i - \sum_{k\in \J}\lambda_k P_{ki} \Big) u_i= \sum_{i\in \J} \lambda_i u_i - \sum_{k\in \J}\lambda_k \sum_{i\in \J} P_{ki} u_i = \sum_{i\in \J} \lambda_i u_i - \sum_{k\in \J}\lambda_k w_{kj}.
\end{equation*}

This proves that two variances are the same. This completes the proof of Theorem~\ref{thm:GJN}.

\section{Proof of technical lemmas in Section \ref{sec: proof SA}} \label{sec: lem proof SA}

The proof of Lemma \ref{lemma: SA_diffusion} depends on the following lemma, which shows that the primitive processes $A,  {S}, \psi$ have the strong approximation by Brownian motions.
\begin{lemma}[FSA, Theorem 2.1 of \citet{Horv1984} or Theorem 5.14 of \citet{ChenYao2001}] \label{lemma: SA}
    Suppose that Assumption \ref{assmpt: moment} holds. If the underlying probability space is rich enough, there exist $J + 2$ independent $J$-dimensional standard Brownian motions $W^{(e)}$, $W^{(s)}$ and $W^{(\ell)}$, $\ell=1,\ldots,J$ on it and a set $\Omega_1 \subseteq \Omega$ with $\Prob(\Omega_1)=1$, such that for any $\omega\in \Omega_1$,  as $T\to \infty$,
    \begin{equation}
        \sup_{0\leq t \leq T} \Norm{A(t, \omega) - \alpha t - (\Gamma^{(e)})^{1/2}W^{(e)}(t, \omega)}  = o(T^{1/(2+\varepsilon)}), \label{eq: SAp_A}
    \end{equation}
    \begin{equation}
        \sup_{0\leq t \leq T} \Norm{ {S}(t, \omega) - et - (\Gamma^{(s)})^{1/2}W^{(s)}(t, \omega)}  = o(T^{1/(2+\varepsilon)}), \label{eq: SAp_S}
    \end{equation}
    and for any $T\geq 1$,
    \begin{equation}
        \sup_{0\leq t \leq T} \Norm{\psi_{\ell}(t, \omega) - p_{\ell}t - (\Gamma^{(\ell)})^{1/2}W^{(\ell)}(t, \omega)}   \leq \frac{C_1(\omega)}{4} \log T, \label{eq: SAp_psi}
    \end{equation}
    where $C_1(\omega)$ is a positive random variable.
    The covariance matrices of $J+2$ Brownian motions and the vector $p_\ell$ are given in Lemma~\ref{lemma: SA_diffusion}.
\end{lemma}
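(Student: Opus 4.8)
This is the classical \emph{functional strong approximation} (FSA) for renewal counting processes and for partial sums of i.i.d. bounded random vectors; the plan is to follow the route of \citet{Horv1984} (see also Chapter~5 of \citet{ChenYao2001}). I would treat the three families of primitive processes separately: (a)~the renewal counting processes $A_j$ and $S_j$; (b)~the cumulative routing sums $\psi_\ell$; and then (c)~assemble the pieces on a common rich enough space so that all the limiting Brownian motions are mutually independent, finally taking $\Omega_1$ to be the intersection of the finitely many probability-one sets produced along the way.

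For~(b), each $\route{\ell}(i)$ takes values in $\{\e{1},\dots,\e{J},\mathbf 0\}$, hence $\{\route{\ell}(i)\}_{i\in\N}$ is i.i.d. and bounded, with all moments finite, so exponential moments in particular. Its mean is the $\ell$th column of $P^\T$, and its covariance is computed directly: writing $\route{\ell,j}(1)$ for the $j$th coordinate, for $j\neq k$ one has $\route{\ell,j}(1)\route{\ell,k}(1)=0$ while $\route{\ell,j}(1)^2=\route{\ell,j}(1)$, so $\operatorname{Cov}(\route{\ell,j}(1),\route{\ell,k}(1))=P_{\ell j}\delta_{jk}-P_{\ell j}P_{\ell k}=\Gamma^{(\ell)}_{jk}$. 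The multidimensional Koml\'os--Major--Tusn\'ady strong approximation then yields a standard $J$-dimensional Brownian motion $W^{(\ell)}$ with the logarithmic bound~\eqref{eq: SAp_psi} for the (piecewise-constant extension of the) partial-sum process $\psi_\ell(n)=\sum_{i\le n}\route{\ell}(i)$; the $\tfrac14$ in $\tfrac{C_1(\omega)}{4}\log T$ is harmless normalization. No inversion is needed here.

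For~(a), fix $j$ and let $\zeta(n)=\sum_{i=1}^{n}T_{s,j}(i)$. Assumption~\ref{assmpt: moment} gives $\E[(T_{s,j}(1))^{2+\varepsilon}]<\infty$, so the strong approximation of Major/Cs\"org\H{o}--R\'ev\'esz under a $(2+\varepsilon)$-th moment supplies a one-dimensional Brownian motion $\tilde W$ with $\abs{\zeta(n)-n-c_{s,j}\tilde W(n)}=o(n^{1/(2+\varepsilon)})$ a.s. Since $S_j(t)=\max\{n:\zeta(n)\le t\}$ is the generalized inverse of $\zeta$, a standard inversion argument converts this into the FSA for $S_j$: write $S_j(t)-t=-\bigl(\zeta(S_j(t))-S_j(t)\bigr)+O(1)$, substitute the approximation of $\zeta$ at the random index $S_j(t)$, and control the leftover Brownian increment $c_{s,j}\bigl(\tilde W(S_j(t))-\tilde W(t)\bigr)$ over a window of size $\abs{S_j(t)-t}=O(\sqrt{t\log\log t})$ (renewal LIL) by the modulus of continuity of $\tilde W$. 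This produces a Brownian motion $W^{(s)}_j$ (essentially $-\tilde W$, time-reparametrized) with $\sup_{0\le t\le T}\abs{S_j(t,\omega)-t-c_{s,j}W^{(s)}_j(t,\omega)}=o(T^{1/(2+\varepsilon)})$; stacking over $j$ gives~\eqref{eq: SAp_S} with $\Gamma^{(s)}_{jk}=c_{s,j}^2\delta_{jk}$. The same argument applied to $\sum_{i\le n}T_{e,j}(i)$, followed by the deterministic time change $s=\alpha_j t$ (which multiplies the limiting variance by $\alpha_j$), yields~\eqref{eq: SAp_A} with $\Gamma^{(e)}_{jk}=\alpha_j c_{e,j}^2\delta_{jk}$.

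For~(c), the $4J$ primitive sequences in~\eqref{eq: 4J sequence} are mutually independent, so the KMT constructions above can be performed independently for each sequence on its own probability space and then glued together by a product construction (or, equivalently, realized on a rich enough $(\Omega,\mathscr F,\mathbb P)$ via the Skorokhod representation theorem, as noted in the Remark). This makes $W^{(e)},W^{(s)},W^{(1)},\dots,W^{(J)}$ mutually independent standard $J$-dimensional Brownian motions, and the coordinate Brownian motions inside $W^{(e)}$ (resp.\ $W^{(s)}$) are themselves independent because $\{T_{e,j}(\cdot)\}_{j\in\J}$ (resp.\ $\{T_{s,j}(\cdot)\}_{j\in\J}$) are. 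The main obstacle is the inversion step in~(a): passing from the strong approximation of the partial-sum process to that of the renewal counting process \emph{without losing the rate} $o(T^{1/(2+\varepsilon)})$ requires estimating both the integer-rounding remainder and the Brownian increment over the $O(\sqrt{t\log\log t})$ window with care, and it is here that the $(2+\varepsilon)$-th moment is used sharply --- this is precisely the content of Horv\'ath's renewal strong approximation. Step~(b) is comparatively routine because boundedness of $\route{\ell}(i)$ delivers the stronger logarithmic rate with no inversion at all.
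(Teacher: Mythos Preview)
The paper does not actually prove this lemma: it is stated with the attribution ``Theorem~2.1 of \citet{Horv1984} or Theorem~5.14 of \citet{ChenYao2001}'' and then used as a black box in the proof of Lemma~\ref{lemma: SA_diffusion}. Your proposal is a faithful outline of the proof one would find in those references --- KMT for the bounded routing vectors giving the logarithmic rate, Major/Cs\"org\H{o}--R\'ev\'esz plus Horv\'ath's renewal inversion for the counting processes giving the $o(T^{1/(2+\varepsilon)})$ rate, and a product-space construction to secure joint independence --- so there is nothing to compare against here beyond noting that your sketch matches the cited literature.
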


\begin{proof}[Proof of Lemma~\ref{lemma: SA_diffusion}.]
    By multiplying $\gamma_k(r)$ on both sides and respectively replacing $t$ and $T$ with $t/\gamma_k^2(r)$ and $T/\gamma_k^2(r)$ in \eqref{eq: SAp_A}, we obtain that for any fixed $T>0$ and $\omega\in \Omega_1$, as $r\to 0$,
    \begin{equation*}
        \begin{aligned}
            &\sup_{0\leq t/\gamma_k^2(r) \leq T/\gamma_k^2(r)} \Norm{\gamma_k(r)A(t/\gamma_k^2(r), \omega) - \alpha t/\gamma_k(r) - (\Gamma^{(e)})^{1/2}\gamma_k(r)W^{(e)}(t/\gamma_k^2(r), \omega)}\\
            &\quad   = o\left(\gamma_k(r)(T/\gamma_k^2(r))^{1/(2+\varepsilon)}\right) = o\left(T^{1/(2+\varepsilon)}\gamma_k^{\varepsilon/(2+\varepsilon)}(r)\right) = o\left(\gamma_k^{\varepsilon/(2+\varepsilon)}(r)\right),
        \end{aligned}
    \end{equation*}
    which completes the proof of \eqref{eq: ASA_A}.
    The proof of \eqref{eq: ASA_S} is similarly derived from \eqref{eq: SAp_S} in Lemma~\ref{lemma: SA}, so we omit it. 
    
    Similarly, by multiplying $\gamma_k(r)$ on both sides and respectively replacing $t$ and $T$ with $t/\gamma_k^2(r)$ and $T/\gamma_k^2(r)$ in \eqref{eq: SAp_psi}, we conclude that for any fixed $T>0$ and $\omega\in \Omega_1$, 
    \begin{equation*}
        \begin{aligned}
            &\sup_{0\leq t/\gamma_k^2(r) \leq T/\gamma_k^2(r)} \Norm{\gamma_k(r)\psi_{\ell}(t/\gamma_k^2(r), \omega) - p_{\ell}t/\gamma_k(r) - (\Gamma^{(\ell)})^{1/2}\gamma_k(r)W^{(\ell)}(t/\gamma_k^2(r), \omega)}\\
            &\quad   \leq \frac{1}{4}C_1(\omega)\gamma_k(r)\log (T/\gamma_k^2(r))  \leq C_1(\omega)\gamma_k(r)\log (1/\gamma_k(r)),
        \end{aligned}
    \end{equation*}
    for any $r\in (0, \gamma_k^{-1}(T^{-1/2} \wedge 1))$.
\end{proof}

The proof of Lemma \ref{lemma: SA_fluid} depends on the following lemma, which shows that the primitive processes $A,  {S}, \psi$ have the FLIL bound and then the busy time process $B$ also has the FLIL bound.
\begin{lemma}[FLIL, Theorem 5.13 of \citet{ChenYao2001}] \label{lemma: FLIL}
    Suppose that Assumption~\ref{assmpt: moment} holds. There exists a set $\Omega_2 \subseteq \Omega$ with $\Prob(\Omega_2)=1$, such that for any $T \geq e$, 
    \begin{align}
        \sup_{0\leq t \leq T} \Norm{A(t, \omega) - \alpha t}  &\le C_0(\omega)\sqrt{T\log\log T}, \label{eq: LILp_A}\\
        \sup_{0\leq t \leq T} \Norm{ {S}(t, \omega) - et} &\le C_0(\omega)\sqrt{T\log\log T}, \label{eq: LILp_S}\\
        \sup_{0\leq t \leq T} \Norm{\psi_{\ell}(t, \omega) - p_{\ell}t}  &\le C_0(\omega)\sqrt{T\log\log T}. \label{eq: LILp_psi}
    \end{align}
\end{lemma}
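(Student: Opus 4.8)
The statement is a functional law of the iterated logarithm (FLIL) for the three families of primitive processes, and the plan is to reduce all three to the classical Hartman--Wintner LIL for sums of i.i.d.\ random variables and then upgrade from the pointwise to the uniform (functional) form. The starting observation is that Assumption~\ref{assmpt: moment} gives in particular $\operatorname{Var}(T_{e,j}(1))=c_{e,j}^2<\infty$ for $j\in\caE$ and $\operatorname{Var}(T_{s,j}(1))=c_{s,j}^2<\infty$ for $j\in\J$, while each routing component $\psi_{j,k}(n)=\sum_{i=1}^n\II{\route{j}(i)=\e{k}}$ is a partial sum of i.i.d.\ Bernoulli$(P_{jk})$ variables, which are bounded and hence have all moments finite. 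Thus in every case the relevant centered partial-sum sequence obeys the Hartman--Wintner LIL, so on a full-measure set the centered sums are dominated by $C(\omega)\sqrt{n\log\log n}$ for all $n$ beyond a fixed threshold, with the $\limsup$ constant and the finitely many small $n$ folded into $C(\omega)$.

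Next I would transfer this to the counting processes. For $S_j$, the unit-rate renewal process of $\{T_{s,j}(i)\}$, and $A_j$, with $A_j(t)=N_j(\alpha_j t)$ for $N_j$ the unit-rate renewal process of $\{T_{e,j}(i)\}$ when $j\in\caE$ (and $A_j\equiv0$, $\alpha_j=0$, so the bound trivial, when $j\notin\caE$), the inversion identity $\{S_j(t)\ge n\}=\{\sum_{i=1}^nT_{s,j}(i)\le t\}$ turns the partial-sum bound into $|S_j(t)-t|\le C'(\omega)\sqrt{t\log\log t}$ for large $t$ via a short deterministic estimate, and likewise $|A_j(t)-\alpha_j t|\le C'(\omega)\sqrt{t\log\log t}$. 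For $\psi_{j,k}$ the LIL applies directly to the integer-indexed sums, and since the increments are bounded by one, passing from integer $n$ to real $t$ (with $\psi_j(t):=\psi_j(\floor{t})$) costs only an additive $O(1)$.

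The one substantive point is the \emph{functional} upgrade: a single random constant $C_0(\omega)$ and a single null set must work for all $T\ge e$ simultaneously, which is genuinely stronger than the $\limsup$ statements above. The clean route is Strassen's functional LIL: $t\mapsto(S_j(Tt)-Tt)/\sqrt{2T\log\log T}$ is, as $T\to\infty$, a.s.\ relatively compact in $C([0,1])$, and relative compactness is exactly the uniform bound $\sup_{0\le t\le T}|S_j(t)-t|\le C_0(\omega)\sqrt{T\log\log T}$. A more hands-on alternative applies the pointwise LIL at the dyadic epochs $T=2^m$, bounds the fluctuation of the centered process over each doubling interval $[2^m,2^{m+1}]$ by a reflection/maximal inequality together with the partial-sum LIL, and sums the geometric series, absorbing the finitely many small $T$ into the constant. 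Finally one takes a common constant over the finitely many stations $j\in\J$, the finitely many routing targets, and the three families, and intersects the finitely many null sets, producing the claimed $\Omega_2$ and $C_0(\omega)$. The main obstacle is therefore purely organizational --- packaging the classical LIL ingredients into one uniform-in-$T$ bound --- with no new analytic difficulty; indeed this is precisely Theorem~5.13 of \citet{ChenYao2001}, which could simply be cited.
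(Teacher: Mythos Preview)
Your proposal is correct, and it aligns with the paper's treatment: the paper gives no proof of this lemma at all but simply states it as Theorem~5.13 of \citet{ChenYao2001}, exactly as you suggest in your final sentence. The reduction you sketch (Hartman--Wintner LIL for the partial sums, inversion for the renewal counts, Strassen's functional LIL for the uniform-in-$T$ upgrade, then intersecting finitely many full-measure sets) is the standard route and is more than the paper provides.
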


\begin{proof}[Proof of Lemma~\ref{lemma: SA_fluid}.]
    By multiplying $\gamma_k^2(r)$ on both sides and respectively replacing $t$ and $T$ with $t/\gamma_k^2(r)$ and $T/\gamma_k^2(r)$ in \eqref{eq: LILp_A}, we obtain that for any fixed $T>0$ and $\omega\in \Omega_2$,
    \begin{equation*}
        \begin{aligned}
            \sup_{0\leq t/\gamma_k^2(r) \leq T/\gamma_k^2(r)} \Norm{\gamma_k^2(r) A(t/\gamma_k^2(r), \omega) - \alpha t}  &\le C_0(\omega)\gamma_k^2(r)\sqrt{T/\gamma_k^2(r)\log\log(T/\gamma_k^2(r))}\\
            &\leq 2\sqrt{T}C_0(\omega) \gamma_k(r)\sqrt{ \log\log (1/\gamma_k(r))},
        \end{aligned}
    \end{equation*}
    for any $r\in (0, r_2(T))$ with $r_2(T)\defi\gamma_k^{-1}(\exp(-(1+\sqrt{\log (1 \vee Te)})/2) \wedge \sqrt{(1\wedge T)/e})$.  Hence, \eqref{eq: LIL_A} holds by setting $C_2(\omega, T)=2\sqrt{T}C_0(\omega)$ for any $r\in (0,r_2(T))$.
    The proofs of \eqref{eq: LIL_S} and \eqref{eq: LIL_psi} are similarly derived by \eqref{eq: LILp_S} and \eqref{eq: LILp_psi} in Lemma~\ref{lemma: FLIL}, so we omit their proof details.

    To prove \eqref{eq: LIL_B}, we need to establish the Skorokhod reflection mapping for fluid scaled GJN as follows:
    \begin{equation*}
        \scale{\bar{Z}}{k}{{}} = \Phi(\scale{\bar{X}}{k}{{}}; R), \quad \scale{\bar{Y}}{k}{{}} = \Psi(\scale{\bar{X}}{k}{{}}; R),
    \end{equation*}
    and for any $j\in \J$,
    \begin{align}
        \scale{\bar{X}}{k}{{}}(t) &= \gamma_k^2(r) Z^\uu(0) - R(\mu^\uu-\lambda)t + \scale{\bar{V}}{k}{{}}(t), \notag\\
        \scale{\bar{Y}}{k}{{j}}(t)&=  \mu_j^\uu \left( t- \scale{\bar{B}}{k}{{j}}(t) \right), \label{eq: LY}\\
        \scale{\bar{V}}{k}{{j}}(t) &= \left[\scale{\bar{A}}{k}{j}(t)-\alpha_j t\right] \notag \\
        &\quad +\sum_{i\in\J} \left( P_{i j} - \delta_{i j} \right)\left[ \scale{\bar{S}}{k}{i}\left(\mu_i^\uu \scale{\bar{B}}{k}{i}(t)\right)-\mu_i^\uu  \scale{\bar{B}}{k}{i}(t)\right] \notag\\
        &\quad +\sum_{i\in\J}\left[\scale{\bar{\psi}}{k}{i,j}\left(\scale{\bar{S}}{k}{i} \left(\mu_i^\uu \scale{\bar{B}}{k}{i}(t)\right)\right)-P_{i j}  \scale{\bar{S}}{k}{i}\left(\mu_i^\uu \scale{\bar{B}}{k}{i}(t)\right)\right]. \notag
    \end{align}
    
    It follows from \eqref{eq: LIL_A} in Lemma~\ref{lemma: FLIL} that
    \begin{equation*}
        \sup_{0\leq t \leq T }\abs{\scale{\bar{A}}{k}{j}(t)-\alpha_j t}\leq C_2(\omega, T)\gamma_k(r)\sqrt{\log\log(1/\gamma_k(r))}
    \end{equation*}
    for any $r\in (0,r_2(T))$ and $\omega\in \Omega_2$.
    By \eqref{eq: B_bound} and replacing $T$ with $(\lambda_{\max} + c_0)T$ in \eqref{eq: LIL_S}, we have
    \begin{equation*}
        \begin{aligned}
            &\sup_{0\leq t \leq T } \abs{\sum_{i\in\J} \left( P_{i j} - \delta_{i j} \right)\left[ \scale{\bar{S}}{k}{i}\left(\mu_i^\uu \scale{\bar{B}}{k}{i}(t)\right)-\mu_i^\uu  \scale{\bar{B}}{k}{i}(t)\right]}\\
            & \quad \leq \sum_{i\in\J} \abs{P_{i j} - \delta_{i j}}C_2(\omega, (\lambda_{\max} + c_0)T)\gamma_k(r)\sqrt{\log\log(1/\gamma_k(r))}\\
            & \quad \leq JC_2(\omega, (\lambda_{\max} + c_0)T)\gamma_k(r)\sqrt{\log\log(1/\gamma_k(r))}
        \end{aligned}
    \end{equation*}
     for any $r\in (0,r_2((\lambda_{\max} + c_0)T))$ and $\omega\in \Omega_2$. 
    By \eqref{eq: V_SB_max} and replacing $T$ with $C_2''(\omega, T)$ in \eqref{eq: LIL_psi}, we have
    \begin{equation*}
        \begin{aligned}
            &\sup_{0\leq t \leq T} \abs{\sum_{i\in\J}\left[\scale{\bar{\psi}}{k}{i,j}\left(\scale{\bar{S}}{k}{i} \left(\mu_i^\uu \scale{\bar{B}}{k}{i}(t)\right)\right)-P_{i j}  \scale{\bar{S}}{k}{i}\left(\mu_i^\uu \scale{\bar{B}}{k}{i}(t)\right)\right]} \\
            & \quad \leq \sum_{i\in \J}C_2(\omega, C_2''(\omega, T))\gamma_k(r)\sqrt{\log\log(1/\gamma_k(r))}\\
            &\quad =JC_2(\omega, C_2''(\omega, T))\gamma_k(r)\sqrt{\log\log(1/\gamma_k(r))}
        \end{aligned}
    \end{equation*}
    for any $r\in (0,r_2(C_2''(\omega, T)))$ and $\omega\in \Omega_2$. For any fixed $T>0$ and $\omega\in \Omega_2$, there exists a random variable $C_p(\omega, T)$ so that for any $r\in [r_2(C_2''(\omega, T)), \gamma_k^{-1}(1/e))$,
    \begin{equation*}
        \begin{aligned}
            &\sup_{0\leq t \leq T} \abs{\sum_{i\in\J}\left[\scale{\bar{\psi}}{k}{i,j}\left(\scale{\bar{S}}{k}{i} \left(\mu_i^\uu \scale{\bar{B}}{k}{i}(t)\right)\right)-P_{i j}  \scale{\bar{S}}{k}{i}\left(\mu_i^\uu \scale{\bar{B}}{k}{i}(t)\right)\right]} \leq C_p(\omega, T)\\
            & \quad \leq C_p(\omega, T)(\gamma_k(r)/\gamma_k(r_2(C_2''(\omega, T))))\sqrt{\log\log((1/e)^{2}/\gamma_k(r))}.
        \end{aligned}
    \end{equation*}
    Hence, for the fixed $T>0$, there exist a random variable $C_b(\omega, T)$ such that for any $r\in (0,\gamma_k^{-1}(1/e))$,
    \begin{equation*}
        \begin{aligned}
            &\sup_{0\leq t \leq T} \abs{\sum_{i\in\J}\left[\scale{\bar{\psi}}{k}{i,j}\left(\scale{\bar{S}}{k}{i} \left(\mu_i^\uu \scale{\bar{B}}{k}{i}(t)\right)\right)-P_{i j}  \scale{\bar{S}}{k}{i}\left(\mu_i^\uu \scale{\bar{B}}{k}{i}(t)\right)\right]} \\
            & \quad \leq C_b(\omega, T)\gamma_k(r)\sqrt{\log\log(1/\gamma_k(r))}.
        \end{aligned}
    \end{equation*}
    
    Therefore, for any $r\in (0, r_v(T))$ with $r_v(T):=r_2(T) \wedge r_2((\lambda_{\max} + c_0)T) \wedge \gamma_k^{-1}(1/e)$ and $\omega\in \Omega_2$,
    $$\sup_{0\leq t \leq T}\abs{\scale{\bar{V}}{k}{{j}}(t, \omega)}\leq \gamma_k(r)\sqrt{\log\log(1/\gamma_k(r))},$$ 
    where $C_v(\omega, T):=\left( C_2(\omega, T) + JC_2(\omega, (\lambda_{\max} + c_0)T) +  C_b(\omega, T)\right)$.
    Hence, we have
    \begin{equation*}
        \begin{aligned}
            &\sup_{0\leq t \leq T} \Norm{\scale{\bar{X}}{k}{}(t, \omega) - \gamma_k^2(r) Z^\uu(0, \omega)} \leq \sup_{0\leq t \leq T}\Norm{R(\mu^\uu-\lambda)t} + \sup_{0\leq t \leq T}\Norm{\scale{\bar{V}}{k}{{j}}(t, \omega)}\\
            &\qquad \leq \max_{i\in \J}\abs{\sum_{j\in \J}R_{ij}c_0 \gamma_1(r)T} + C_v(\omega, T)\gamma_k(r)\sqrt{\log\log(1/\gamma_k(r))}
        \end{aligned}
    \end{equation*}
    for any $r\in (0,r_v(T))$ and $\omega\in \Omega_2$. Since $\lim_{r\to 0} \gamma_k(r)/\gamma_1(r)=0$ for any $k\geq 2$, there exists a constant $r_2'(T)\in (0, r_v(T))$ such that $\gamma_k(r)\sqrt{\log\log(1/\gamma_k(r))}\leq \gamma_1(r)\sqrt{\log\log(1/\gamma_1(r))}$ for any $r\in (0,r_2'(T))$. Hence, we have
    \begin{equation*}
        \begin{aligned}
            &\sup_{0\leq t \leq T} \Norm{\scale{\bar{X}}{k}{}(t, \omega) - \gamma_k^2(r) Z^\uu(0, \omega)} \\
            & \qquad \leq \Big( c_0T \max_{i\in \J}\sum_{j\in \J}\abs{R_{ij}}+ C_v(\omega, T) \Big)\gamma_1(r)\sqrt{\log\log(1/\gamma_1(r))}
        \end{aligned}
    \end{equation*}
    for any $r\in (0,r_2'(T))$ and $\omega\in \Omega_2$.

    Therefore, the Lipschitz continuity of Skorokhod reflection mapping implies that for any $r\in (0,r_2'(T))$ and $\omega\in \Omega_2$,
    \begin{equation*}
        \begin{aligned}
            \sup_{0\leq t \leq T} \Norm{\scale{\bar{Y}}{k}{}(t, \omega)}  &= \sup_{0\leq t \leq T} \Norm{\scale{\bar{Y}}{k}{}(t, \omega) - \Psi(\gamma_k^2(r) Z^\uu(0, \omega); R)}\\
            & \leq \kappa(R) \Big( c_0T \max_{i\in \J}\sum_{j\in \J}\abs{R_{ij}}+ C_v(\omega, T) \Big) \gamma_1(r)\sqrt{\log\log(1/\gamma_1(r))},
        \end{aligned}
    \end{equation*}
    where  $\kappa(R)$ is defined in \eqref{eq: Lipschitz}.
    Hence, we can obtain \eqref{eq: LIL_B} by the relation between $\scale{\bar{Y}}{k}{}$ and $\scale{\bar{B}}{k}{}$ in \eqref{eq: LY}.      
\end{proof}

The proof of Lemma \ref{lemma: SA_BM} utilizes the following lemma.

\begin{lemma}[Theorem 1.2.1 of \citet{Csor1981}] \label{lemma: SA_BM_2}
    Let $a_{\tau}$ be a monotonically nondecreasing function of $\tau$ for which
    \begin{enumerate}
        \item[(i)] $0<a_\tau \leq \tau$,
        \item[(ii)] $\tau / a_\tau$ is monotonically  nondecreasing as $\tau$ increases.
    \end{enumerate}
    Then there exists a set $\Omega_2 \subseteq \Omega$ with $\Prob(\Omega_2)=1$ such that for any $\omega\in \Omega_2$
    $$
    \limsup_{\tau \rightarrow \infty} \sup _{0 \leq t \leq \tau-a_\tau} \sup_{0\leq s \leq a_\tau} \frac{\abs{W\left(t+s, \omega\right)-W(t, \omega)}}{\sqrt{2 a_\tau \left[\log \frac{\tau }{a_\tau }+\log \log \tau \right]}} = 1.
    $$
\end{lemma}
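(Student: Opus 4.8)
The plan is to prove the almost-sure equality by establishing the two matching one-sided bounds $\limsup\le 1$ and $\limsup\ge 1$ separately, then combining them. Write $h(\tau)\defi\sqrt{2a_\tau[\log(\tau/a_\tau)+\log\log\tau]}$ for the normalizing denominator and $M(\tau)\defi\sup_{0\le t\le\tau-a_\tau}\sup_{0\le s\le a_\tau}\abs{W(t+s)-W(t)}$. The two hypotheses on $a_\tau$ play complementary roles: monotonicity of $\tau/a_\tau$ forces $a_\tau/\tau$ to decrease to a limit $L\in[0,1]$, which separates the two regimes $\tau/a_\tau\to\infty$ and $\tau/a_\tau\to$ finite; monotonicity of $a_\tau$ itself is what makes the continuous $\limsup$ interpolable from a geometric subsequence $\tau_k\defi\theta^k$ with fixed $\theta>1$. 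The engine throughout is the Gaussian increment estimate: by the reflection principle $\Prob(\sup_{0\le s\le a}\abs{W(t+s)-W(t)}>x)\le 4\exp(-x^2/(2a))$, together with the matching lower tail $\Prob(W(a)>x)\ge c\,(\sqrt a/x)\exp(-x^2/(2a))$ valid for $x/\sqrt a$ large.

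For the upper bound, fix $\epsilon>0$ and cover $[0,\tau_k]$ by $O(\tau_k/a_{\tau_k})$ windows of length $a_{\tau_k}$, arranged so that every increment of span at most $a_{\tau_k}$ lies inside one window. Applying the reflection bound with $x=(1+\epsilon)h(\tau_k)$ and a union bound gives failure probability of order $(\tau_k/a_{\tau_k})\exp(-(1+\epsilon)^2[\log(\tau_k/a_{\tau_k})+\log\log\tau_k])=(\tau_k/a_{\tau_k})^{1-(1+\epsilon)^2}(\log\tau_k)^{-(1+\epsilon)^2}$. Since $(1+\epsilon)^2>1$, the factor $(\log\tau_k)^{-(1+\epsilon)^2}=(k\log\theta)^{-(1+\epsilon)^2}$ is summable in $k$, so Borel--Cantelli yields $\limsup_k M(\tau_k)/h(\tau_k)\le 1+\epsilon$ a.s. For $\tau\in[\tau_k,\tau_{k+1}]$ one sandwiches $M(\tau)$ between $M(\tau_k)$ and $M(\tau_{k+1})$ using monotonicity of $a_\tau$ and $\tau/a_\tau$, and checks $h(\tau_{k+1})/h(\tau_k)\to 1$ as $\theta\downarrow 1$; letting $\theta\downarrow 1$ and then $\epsilon\downarrow 0$ gives $\limsup M(\tau)/h(\tau)\le 1$.

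For the lower bound, fix $\epsilon>0$; the goal is to exhibit, for infinitely many $\tau$, a single increment of span at most $a_\tau$ of size at least $(1-\epsilon)h(\tau)$. Partitioning into disjoint windows of length $a_\tau$ produces independent $N(0,a_\tau)$ increments, each exceeding $(1-\epsilon)h(\tau)$ with probability $p_\tau$ of order $(\tau/a_\tau)^{-(1-\epsilon)^2}(\log\tau)^{-(1-\epsilon)^2}$ up to a slowly varying factor from the sharp lower tail. Aggregating these independent trials — both those inside a fixed block $[\tau_{k-1},\tau_k]$ and those across distinct $k$ along $\tau_k=\theta^k$, with thresholds tied to $h(\tau_k)$ — makes the governing Borel--Cantelli sum diverge for every $\epsilon>0$; the second Borel--Cantelli lemma then forces a qualifying increment infinitely often, giving $\limsup M(\tau)/h(\tau)\ge 1-\epsilon$, and $\epsilon\downarrow 0$ finishes.

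The main obstacle is precisely the lower bound, namely pinning the constant at exactly $1$ rather than merely at some positive value: this requires the sharp two-sided Gaussian tail asymptotics together with a careful count of independent windows whose normalization matches $h(\tau_k)$, a count that behaves qualitatively differently according to whether $\tau/a_\tau$ stays bounded (the law-of-iterated-logarithm regime, where independence must be manufactured across the subsequence) or diverges (where many windows already sit inside a single scale). Conditions (i)--(ii) are exactly what keep this count well-behaved and the upper-bound interpolation valid. A secondary technical point, present in both directions, is upgrading the discrete grid of window endpoints to the genuine continuous double supremum; this is controlled by a standard chaining estimate on Brownian oscillation over intervals of length $a_\tau$, whose contribution is $o(h(\tau))$.
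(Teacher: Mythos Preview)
The paper does not prove this lemma at all: it is quoted verbatim as Theorem~1.2.1 of \citet{Csor1981} and used as a black box in the proof of Lemma~\ref{lemma: SA_BM}. So there is no ``paper's own proof'' to compare against.

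That said, your sketch is the standard route to the Cs\"org\H{o}--R\'ev\'esz increment theorem: upper bound via a covering of $[0,\tau]$ by windows of width $a_\tau$, the reflection-principle tail $\exp(-x^2/(2a))$, a union bound, and Borel--Cantelli along a geometric subsequence $\tau_k=\theta^k$; lower bound via disjoint windows giving independent increments, the sharp lower Gaussian tail, and the second Borel--Cantelli lemma. You correctly flag the two regimes ($\tau/a_\tau$ bounded versus $\tau/a_\tau\to\infty$) and that the lower bound is where the sharp constant $1$ requires care. The interpolation from the subsequence to continuous $\tau$ using monotonicity of $a_\tau$ and of $\tau/a_\tau$, and the chaining argument to pass from grid endpoints to the full continuous supremum, are both standard ingredients of the original proof. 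Your outline is faithful to that argument; since the paper simply cites the result, there is nothing further to compare.
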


\begin{proof}[Proof of Lemma \ref{lemma: SA_BM}.]
    By  
    \begin{equation*} 
        \begin{aligned}
            &\left\{ (s,t): |s-t| \leq a_\tau, s, t \in [0, \tau]\right\} \\
            &\quad \subseteq \left\{ (s,t): 0 \leq s \leq \tau, 0\leq t-s\leq a_\tau\right\}  \cup \left\{ (s,t): 0 \leq t \leq \tau, 0\leq s-t\leq a_\tau\right\},
        \end{aligned}
    \end{equation*}
    and respectively replacing $\tau$, $t$ and $s$ with $\max\{t, s\}$, $\tau+a_\tau$ and $|t-s|$ in Lemma~\ref{lemma: SA_BM_2}, then there exist a constant $\tau_0$  such that for any $\tau\geq \tau_0$
    \begin{equation}
        \sup _{\substack{s, t \in [0, \tau]\\|s-t| \leq a_\tau}}\abs{W\left(s, \omega\right)-W(t, \omega)} \leq 2\sqrt{ a_\tau \left[\log \frac{\tau+a_\tau }{a_\tau }+\log \log (\tau+a_\tau) \right]}. \label{eq: SA_BM_2}
    \end{equation}

    Let $\tau = T/\gamma_k^2(r)$ and $a_\tau = (\gamma_1(r)/\gamma_k^2(r))C\sqrt{\log\log (1/\gamma_1(r))}$. Then, we can check that (i) $0< (\gamma_1(r)/\gamma_k^2(r))C\sqrt{\log\log (1/\gamma_1(r))} \leq T/\gamma_k^2(r)$ for any $r \in (0, \gamma_1^{-1}(2(T/C)^2 \wedge (1/e)))$, and (ii) for any $r \in (0,\gamma_1^{-1}(1/e))$,  $\tau / a_\tau  = T/(C \gamma_1(r)\sqrt{\log\log (1/\gamma_1(r))})$ is monotonically increasing as $\gamma_k(r)$ decreases, because $\gamma_k$ for $k\in \J$ are all strictly increasing.
    Therefore, by replacing $s$ and $t$ with $s/\gamma_k^2(r)$ and $t/\gamma_k^2(r)$, Lemma \ref{lemma: SA_BM_2} and \eqref{eq: SA_BM_2} indicate that for any $\omega \in \Omega_3$ and $r \in (0, r_3(T,C))$ with $r_3(T,C):=\gamma_1^{-1}((T/C) \wedge (C/2T) \wedge T \wedge \exp(-e/2)) \wedge \gamma_k^{-1}((2T)^{-1/3} \wedge \exp(-6) \wedge (T/\tau_0)^{1/2})$,
    \begin{equation*}
        \begin{aligned}
            &\sup _{\substack{s, t \in [0, T]\\|s-t| \leq C \gamma_1(r)\sqrt{\log\log (1/\gamma_1(r))}}}\abs{W\left(s/\gamma_k^2(r), \omega\right)-W(t/\gamma_k^2(r), \omega)} \\
            &\leq  2\sqrt{ C \frac{\gamma_1(r)}{\gamma_k^2(r)}\sqrt{\log\log \frac{1}{\gamma_1(r)}}\left[\log \frac{T+C \gamma_1(r)\sqrt{\log\log \frac{1}{\gamma_1(r)}}}{C \gamma_1(r)\sqrt{\log\log \frac{1}{\gamma_1(r)}}}+\log \log \left( \frac{T+C \gamma_1(r)\sqrt{\log\log \frac{1}{\gamma_1(r)}}}{\gamma_k^2(r)} \right)\right]} \\ 
            &\leq 4\gamma_k^{-1}(r)\sqrt{ C \gamma_1(r)\sqrt{\log\log {1}/{\gamma_1(r)}}\left[\log(1/ \gamma_1(r))+\log \log \left( 1/\gamma_k(r)\right)\right]}.
        \end{aligned}
    \end{equation*}
    % where the first inequality follows from the fact that $\log(x)\leq x$ for any $x\geq 0$.

    Multiplying $\gamma_k(r)$ on both sides,  we have for any $r\in (0,r_3(T,C))$ and $\omega\in \Omega_3$
    \begin{equation*}
        \begin{aligned}
            &\sup _{\substack{s, t \in [0, T]\\|s-t| \leq C \gamma_1(r)\sqrt{\log\log (1/\gamma_1(r))}}}\abs{\gamma_k(r)W\left(s/\gamma_k^2(r), \omega\right)-\gamma_k(r)W(t/\gamma_k^2(r), \omega)}\\
            &\leq 4\sqrt{C}\sqrt{ \gamma_1(r)\sqrt{\log\log {1}/{\gamma_1(r)}}\left[\log(1/ \gamma_1(r))+\log \log \left( 1/\gamma_k(r)\right)\right]}.
        \end{aligned}
    \end{equation*}
    This completes the proof.         
\end{proof}

\section{Proof of technical proposition and lemma in Section \ref{sec: proof SRBM}} \label{sec: proof lemma k}

\begin{proof}[Proof of Proposition \ref{proposition: joint BM}.] 
    First, we note that the laws of the processes $\{W_0^\uu; 0<r<1\}$ are tight in $C([0,\infty),\mathbb{R}^J)$ because each coordinate process is a  one-dimensional standard  Brownian motion and hence is tight. 
Let $\tilde W_0=\{ (\tilde  W_{0, 1}(t), \cdots, \tilde W_{0, J}(t));~t\geq 0\}$ be any sub-sequential weak limit of $W_0^\uu$ as $r\to 0$, say, along the sequence $r_k\to 0$.  As the weak convergence in $C([0,\infty),\mathbb{R}^J)$ implies the convergence of finite dimensional distributions and 
the weak limit of multivariate Gaussian random vectors is Gaussian (see, e.g., Proposition 3.36 on p.148 of \citet{Baldi}), 
  $\tilde W_0$ is a $J$-dimensional Gaussian process. Evidently, each $\tilde W_{0, j}$ is a one-dimensional standard Brownian motion.
   For each $1\leq i<j\leq J$ and $s, t>0$, 
  $(\tilde W_{0, i}(s), \tilde W_{0, j}(t))$ is jointly Gaussian with covariance
  \begin{eqnarray*}
  {\rm Cov} (\tilde W_{0, i}(s), \tilde W_{0, j}(t))
  &=&  \lim_{k\to \infty} \E \left[  \gamma_i (r_k) \gamma_j (r_k)  W_0(s/\gamma_i^2(r_k))   W_0(t/\gamma_j^2(r_k)) \right]  \\
  &=& \lim_{k\to \infty}    \gamma_i (r_k) \gamma_j (r_k)  \left( \frac{s}{\gamma_i^2(r_k)} \wedge \frac{t}{\gamma_j^2(r_k)} \right)=0.
  \end{eqnarray*}
  So  $\tilde W_{0, i}(s)$ and $ \tilde W_{0, j}(t)$ are independent for every  $1\leq i<j\leq J$ and every  $s, t>0$. 
  It follows that $\{ \tilde W_{0, j}; 1\leq j\leq J\}$ are mutually independent, and so $\tilde W_0$ is a  $J$-dimensional  standard 
  Brownian motion.
  As this is true for every sub-sequential weak limit of $W_0^\uu$ as $r\to 0$, we conclude that  $W_0^\uu$ converges weakly as $r\to 0$ 
  to  a $J$-dimensional standard Brownian motion. 
\end{proof}

\begin{proof}[Proof of Lemma \ref{lemma: Skorokhod positive infty}.]
    {By} the one-dimensional Skorokhod reflection mapping, we have the following representation:
    \begin{equation*}
        cy^\uu(t)=\Psi\left(\left\{ u^\uu(t)-v^\uu(t)+a^\uu ;~t\geq 0 \right\}; 1\right) = \sup_{0\leq s \leq t} \left[-u^\uu(s)+v^\uu(s)-a^\uu \right]^+.
    \end{equation*}
    For any $\varepsilon>0$ and $T>0$, there exists $r_0\in (0,1)$ such that for all $r\in (0,r_0)$, we have
    \begin{equation*}
        \norm{u^\uu - u}_{T} \leq \varepsilon, \quad \norm{\tilde{v}^\uu- \tilde{v}}_{T} \leq \varepsilon \quad \text{ and } \quad a^\uu \geq 2\varepsilon + \sup_{0\leq s \leq T}\abs{u(s)} + \sup_{0\leq s \leq T}\abs{\tilde{v}(s)},
    \end{equation*}
    where $\norm{\cdot}_{T}$ denotes the uniform norm on $[0,T]$ defined in \eqref{eq: uniform norm}.
    Therefore, for all $r\in (0,r_0)$, for any $s\in[0,T]$, we have
    \begin{equation*}
        -u^\uu(s)+v^\uu(s)-a^\uu \leq -u(s) + \varepsilon + \tilde{v}(s) + \varepsilon - 2\varepsilon - \abs{u(s)} - \abs{\tilde{v}(s)} \leq 0.
    \end{equation*}
    Hence, for all $r\in (0,r_0)$, we have for any $t\in [0,T]$
    \begin{equation*}
        cy^\uu(t)= \sup_{0\leq s \leq t} \left[-u^\uu(s)+v^\uu(s)-a^\uu \right]^+ = \sup_{0\leq s \leq t} 0 = 0,
    \end{equation*}
    which implies that $y^\uu \to 0$ u.o.c.~as $r\to 0$.     
\end{proof}

\section{Proof of Convergence under Conventional Initial Condition} \label{sec: conventional proof}

The proof of Theorem \ref{thm:GJN cv} is {the} same as that of Theorem \ref{thm:GJN} by utilizing the asymptotic strong approximation in Proposition~\ref{prop: SA}, so we omit it here.

To prove Theorem~\ref{prop: multi-SRBM conventional}, we first introduce the following proposition similar to Proposition~\ref{prop: rk initial}, which states the process-level convergence at each individual scaling.
\begin{proposition} \label{prop: r initial}
       Suppose that Assumptions \ref{assmpt: multiscaling} and \ref{assmpt: conventional SRBM} hold. Then as $r\to 0$,
    \begin{equation*}
        \scale{\tilde Z}{k}{[1:k-1]} \Longrightarrow \mathbf{0} \quad \text{and} \quad \scale{\tilde Z}{k}{[k:J]} \Longrightarrow \RBM\left(\tilde\xi^{(k)}, -G^{(k)}_{[k:J],k}, (E^{(k)} \Gamma (E^{(k)})')_{[k:J],[k:J]}, G^{(k)}_{[k:J], [k:J]}\right).
    \end{equation*}
\end{proposition}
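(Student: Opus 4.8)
The plan is to follow the proof of Proposition~\ref{prop: rk initial} in Section~\ref{sec: proof multi-SRBM}, the one genuine change being that under the lowest-rate initial condition the coordinates $k,\dots,J$ no longer decouple and the block $[k:J]$ must be handled jointly as a single multidimensional SRBM. First I would invoke the Skorokhod representation theorem exactly as there, producing on a common space copies $\scale{\hat Z}{k}{{}}(0)\overset{d}{=}\scale{\tilde Z}{k}{{}}(0)$, $\hat\xi\overset{d}{=}\tilde\xi$ and scaled Brownian motions $\scale{\hat W}{k}{{}}\overset{d}{=}\scale{W}{k}{{}}$, $\scaleL{\hat W}{k}{{}}\overset{d}{=}\scaleL{W}{k}{{}}$ with $\scale{\hat W}{k}{{}}\asto\scaleL{\hat W}{k}{{}}$ u.o.c.\ as $r\to0$ (Corollary~\ref{cor: BM}); under Assumption~\ref{assmpt: conventional SRBM} this gives $\scale{\hat Z}{1}{{}}(0)\asto\hat\xi$ and, for $k\ge2$, $\scale{\hat Z}{k}{i}(0)\asto0$ for \emph{every} $i\in\J$ since $\gamma_k(r)/\gamma_1(r)\to0$. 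It then suffices to show, a.s.\ and u.o.c.\ as $r\to0$, that $\scale{\hat Z}{k}{[1:k-1]}\asto\mathbf 0$ and $\scale{\hat Z}{k}{[k:J]}\asto\RBM\big(\hat\xi^{(k)},-G^{(k)}_{[k:J],k},(E^{(k)}\Gamma(E^{(k)})^\T)_{[k:J],[k:J]},G^{(k)}_{[k:J],[k:J]}\big)$, where $\hat\xi^{(1)}=\hat\xi$ and $\hat\xi^{(k)}=\mathbf 0$ for $k\ge2$; transferring these back through the Skorokhod representation yields the proposition.

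The convergence $\scale{\hat Z}{k}{[1:k-1]}\asto\mathbf 0$ is vacuous for $k=1$, and for $k\ge2$ it is verbatim the induction of Section~\ref{sec: proof multi-SRBM} that proves the first part of \eqref{eq: suff prop3}: left-multiplying the $\gamma_k^2(r)$-scaled equation \eqref{eq: RBM1 r} by $E^{(\ell)}$, its $\ell$-th component exhibits $\scale{\hat Z}{k}{\ell}$ as the one-dimensional reflection $\Phi$ of a driving process whose drift coefficient $G^{(\ell)}_{\ell\ell}\gamma_\ell(r)/\gamma_k(r)+\sum_{j>\ell}G^{(\ell)}_{\ell j}\gamma_j(r)/\gamma_k(r)$ tends to $+\infty$ (because $\ell<k$, $G^{(\ell)}_{\ell\ell}>0$, $G^{(\ell)}_{\ell j}\le0$), whose Brownian part converges by Corollary~\ref{cor: BM}, and whose remaining terms go to $0$ by the induction hypothesis together with $\scale{\hat Z}{k}{i}(0)\asto0$ for $i<k$; Lemma~\ref{lemma: Skorokhod negative infty} then delivers $\scale{\hat Z}{k}{\ell}\asto0$. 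Since this argument only uses that the lower-index initial states vanish — which holds under Assumption~\ref{assmpt: conventional SRBM} — it transfers with no modification.

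For the second convergence I would left-multiply \eqref{eq: RBM1 r} by $E^{(k)}$ and read off the $k$-th through $J$-th components, as in \eqref{eq: RBM1 rk k}, to obtain $\scale{\hat Z}{k}{[k:J]}=\Phi\big(\scale{\hat S}{k}{{}};G^{(k)}_{[k:J],[k:J]}\big)$, where $G^{(k)}_{[k:J],[k:J]}$ is an $\caM$-matrix by the Schur-complement argument recalled in Section~\ref{sec: proof multi-SRBM}, so the multidimensional Skorokhod map is well defined and Lipschitz, and
\begin{equation*}
\scale{\hat S}{k}{{}}(t)=-E^{(k)}_{[k:J],[1:k-1]}\scale{\hat Z}{k}{[1:k-1]}(t)+\big(E^{(k)}\scale{\hat Z}{k}{{}}(0)\big)_{[k:J]}-G^{(k)}_{[k:J],[k:J]}\frac{\delta_{[k:J]}^\uu}{\gamma_k(r)}\,t+E^{(k)}_{[k:J],[1:J]}L\,\scale{\hat W}{k}{{}}(t).
\end{equation*}
Here the first term vanishes by the previous paragraph; the initial-state term converges to $\hat\xi^{(k)}$ (for $k=1$ one has $E^{(1)}=I$ and $\scale{\hat Z}{1}{{}}(0)\asto\hat\xi$, and for $k\ge2$ both blocks of $\scale{\hat Z}{k}{{}}(0)$ vanish); the drift term converges because $\delta_{[k:J]}^\uu/\gamma_k(r)=(1,\gamma_{k+1}(r)/\gamma_k(r),\dots,\gamma_J(r)/\gamma_k(r))^\T\to(1,0,\dots,0)^\T$, so $G^{(k)}_{[k:J],[k:J]}\delta_{[k:J]}^\uu/\gamma_k(r)\to G^{(k)}_{[k:J],k}$; and the Brownian term converges by Corollary~\ref{cor: BM}. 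Hence $\scale{\hat S}{k}{{}}$ converges u.o.c.\ a.s.\ to the $(J-k+1)$-dimensional Brownian motion with initial state $\hat\xi^{(k)}$, drift $-G^{(k)}_{[k:J],k}$ and covariance $(E^{(k)}_{[k:J],[1:J]}L)(E^{(k)}_{[k:J],[1:J]}L)^\T=(E^{(k)}\Gamma(E^{(k)})^\T)_{[k:J],[k:J]}$, and the Lipschitz continuity of $\Phi(\cdot;G^{(k)}_{[k:J],[k:J]})$ gives the asserted convergence of $\scale{\hat Z}{k}{[k:J]}$.

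The main obstacle is exactly where this proof parts from that of Proposition~\ref{prop: rk initial}: there the separation hypothesis \eqref{eq: initB} of Assumption~\ref{assmpt: initial SRBM} forced every higher-index regulator $\scale{\hat Y}{k}{j}$ ($j>k$) to vanish via Lemma~\ref{lemma: Skorokhod positive infty}, collapsing the $[k:J]$ block to a one-dimensional SRBM, whereas under Assumption~\ref{assmpt: conventional SRBM} the initial queue lengths at stations $k,\dots,J$ are all of order $1/\gamma_1(r)$, those regulators remain active, and one must carry the full $(J-k+1)$-dimensional reflected process through. The only substantive points to verify are that the reduced reflection matrix $G^{(k)}_{[k:J],[k:J]}$ is still an $\caM$-matrix (already established) and that, after the block elimination, the diverging drift ratios $\gamma_i(r)/\gamma_k(r)$ with $i<k$ have been absorbed entirely into the components $1,\dots,k-1$, so that only the finite limit drift $-G^{(k)}_{[k:J],k}$ survives in the surviving block.
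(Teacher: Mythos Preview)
Your proposal is correct and follows essentially the same route as the paper's own proof: Skorokhod representation to reduce to a.s.\ convergence, the verbatim induction from Section~\ref{sec: proof multi-SRBM} for $\scale{\hat Z}{k}{[1:k-1]}\asto\mathbf 0$, and then reading off the $[k:J]$ block of $E^{(k)}\times$\eqref{eq: RBM1 r} as a $(J-k+1)$-dimensional Skorokhod problem whose driving process $\scale{\hat S}{k}{{}}$ converges u.o.c.\ to the asserted Brownian motion, with the conclusion following from Lipschitz continuity of $\Phi(\cdot;G^{(k)}_{[k:J],[k:J]})$. Your identification of the key departure from Proposition~\ref{prop: rk initial}---that under Assumption~\ref{assmpt: conventional SRBM} the regulators $\scale{\hat Y}{k}{j}$ for $j>k$ no longer vanish, so the block must be carried as a multidimensional SRBM rather than collapsed to one dimension---is exactly the point.
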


\begin{proof}[Proof of Proposition \ref{prop: r initial}.]
    The proof also utilizes the Skorokhod representation theorem similar to Proposition \ref{prop: rk initial} in Section \ref{sec: proof multi-SRBM each}. Specifically, by the weak convergence of the initial states in Assumption \ref{assmpt: conventional SRBM} and Brownian motions in Corollary~\ref{cor: BM}, it is sufficient to prove that there exists random variables $\scale{\hat Z}{k}{{}}(0)\overset{d}{=}\scale{\tilde Z}{k}{{}}(0)$ and $\hat{\xi}\overset{d}{=} \tilde{\xi}$ with $\scale{\hat Z}{1}{{j}}(0)\overset{a.s.}{\to} \hat{\xi}_j$ for $j\in \J$ as $r\to 0$ and stochastic processes $\scale{\hat W}{k}{{}}\overset{d}{=}\scale{W}{k}{{}}$ and $\scaleL{\hat W}{k}{{}} \overset{d}{=} \scaleL{ W}{k}{{}}$ with $\scale{\hat W}{k}{{}}\overset{a.s.}{\to} \scaleL{\hat W}{k}{{}}$ u.o.c.~as $r\to 0$ such that
    \begin{equation} \label{eq: suff prop3B}
        \scale{\hat Z}{k}{{[1:k-1]}} \overset{a.s.}{\to} \mathbf{0} \quad  \text{and} \quad \scale{\hat Z}{k}{[k:J]} \overset{a.s.}{\to} \RBM\left(\hat\xi^{(k)}, -G^{(k)}_{[k:J],k}, (E^{(k)} \Gamma (E^{(k)})')_{[k:J],[k:J]}, G^{(k)}_{[k:J], [k:J]}\right)
    \end{equation}
    u.o.c.~as $r\to 0$, where 
    \begin{equation*}
        \scale{\hat Z}{k}{} \defi \Phi\left(\scale{\hat X}{k}{{}}; R\right) \quad \text{and} \quad \scale{\hat X}{k}{{}}(t) = \scale{\hat Z}{k}{{}}(0) - R \delta^\uu t / \gamma_k(r) + \scale{\hat W}{k}{{}}(t).
    \end{equation*}

    The proof of the first part of \eqref{eq: suff prop3B} is the same as that of Proposition~\ref{prop: rk initial} by utilizing the mathematical induction. So we omit the details here. 

    To prove the second part of \eqref{eq: suff prop3B}, we note that \eqref{eq: RBM2 r}-\eqref{eq: RBM4 r} and \eqref{eq: RBM1 rk k} uniquely define $\scale{\hat Z}{k}{[k:J]}$ as the Skorokhod reflection mapping $\Phi$ with reflection matrix $G^{(k)}_{[k:J],[k:J]}$ of the $J+1-k$-dimensional driving process $\scale{\hat S}{k}{{}}$, where $\scale{\hat S}{k}{{}}$ is the process inside the square bracket of \eqref{eq: RBM1 rk k}:
    \begin{equation} \label{eq: limit XrB}
        \begin{aligned}
            \scale{\hat S}{k}{{}}(t) &= -E^{(k)}_{[k:J],[1:k-1]}\scale{\hat Z}{k}{[1:k-1]}(t) + E^{(k)}\scale{\hat Z}{k}{{}}(0)- G^{(k)}_{[k:J],[k:J]}\delta_{[k:J]}^\uu t/\gamma_k(r) \\
            &\quad  + E^{(k)}_{[k:J], [1:J]}L \scale{\hat W}{k}{{}}(t).
        \end{aligned}
    \end{equation}
    
    Since the initial condition in Assumption \ref{assmpt: conventional SRBM} indicates $\hat Z^{(r,1)} \asto \hat\xi$ and $\hat Z^{(r,k)} \asto \mathbf{0}$ for $k\geq 2$ as $r\to 0$, we have $E^{(k)}_{[k:J],[1:J]} \scale{\hat Z}{k}{}(0) \Longrightarrow \hat\xi^{(k)}$ as $r\to 0$, where the $(J-j+1)$-dimensional vector $\hat\xi^{(j)}$ is defined as $\hat\xi$ for $j=1$ and $\mathbf{0}$ for $j>1$. Hence, $\scale{\hat Z}{k}{[1:k-1]} \asto \mathbf{0}$ and Corollary \ref{cor: BM} imply that as $r\to 0$,
    \begin{equation}   \label{eq: limit X}
        \scale{\hat S}{k}{{}}(t) \Longrightarrow \scaleL{\hat S}{k}{{}}(t) \defi \hat\xi^{(k)} - G^{(k)}_{[k:J],k} t + E^{(k)}_{[k:J], [1:J]}L \scaleL{\hat W}{k}{{}}(t),
    \end{equation}
    which is a Brownian motion starting from $\hat\xi^{(k)}$ with drift $-G^{(k)}_{[k:J],k}$ and covariance matrix $E^{(k)}_{[k:J], [1:J]}L(E^{(k)}_{[k:J], [1:J]}L)^\T = (E^{(k)}\Gamma (E^{(k)})^\T)_{[k:J],[k:J]}$. Therefore, the proof is completed by the Lipschitz continuity of the Skorokhod reflection mapping.               
\end{proof}

\begin{proof}[Proof of Theorem \ref{prop: multi-SRBM conventional}.]
    The proof is similar to that of Theorem \ref{prop: multi-SRBM} in Section \ref{sec: proof multi-SRBM T}. We provide it here for completeness.

    It follows from Skorokhod representation theorem in the proof of Proposition \ref{prop: r initial} that
    \begin{equation*}
        \begin{aligned}
            &\left\{ \left( \gamma_1(r)\tilde Z_1^\uu(t/\gamma_1^2(r)),  \ldots, \gamma_J(r)\tilde Z_J^\uu(t/\gamma_J^2(r)) \right);~t\geq 0 \right\} = \left( \scale{\tilde Z}{1}{1}, \ldots, \scale{\tilde Z}{J}{J} \right) \\
            &\quad\overset{d}{=} \left( \scale{\hat Z}{1}{1}, \ldots, \scale{\hat Z}{J}{J} \right) =\left( \Phi_1(\scale{\hat S}{1}{}), \ldots, \Phi_1(\scale{\hat S}{J}{}) \right) \asto \left( \Phi_1(\scaleL{\hat S}{1}{}), \ldots, \Phi_1(\scaleL{\hat S}{J}{}) \right)
        \end{aligned}
    \end{equation*}
as $r\to 0$,
    where $\Phi_1$ is the first coordinate process of the Skorokhod reflection mapping, and for each $k\in \J$, both $\scale{\hat S}{k}{}$ and $\scaleL{\hat S}{k}{}$ are defined in the square bracket of \eqref{eq: limit XrB} and in \eqref{eq: limit X}, respectively.

    Crucially, Corollary~\ref{cor: BM} implies that the components of the limit process, $\{\scaleL{\hat S}{k}{}\}_{k \in \mathcal{J}}$, are mutually independent.
    By the Skorokhod representation theorem,
    we have
    \begin{equation*}
        \big(
            \scale{\tilde Z}{1}{1}, \scale{\tilde Z}{2}{2}, \ldots, \scale{\tilde Z}{J}{J}
        \big)  \Longrightarrow \big(
            \tilde Z_1^*, \tilde Z_2^*, \ldots, \tilde Z_J^*
        \big) \quad \text{ as $r\to 0$},
    \end{equation*}
    and all the coordinate processes of $\tilde Z^*$ are mutually independent. This completes the proof.      
\end{proof}

\section{Proof of Convergence in Blockwise multi-scale Heavy Traffic} \label{sec: block heavy traffic proof}

\subsection{SRBMs in blockwise multi-scale heavy traffic}\label{sec: block SRBM}
We similarly consider a family of SRBMs indexed by $r\in (0,1)$, and denoted by $\tilde Z^{(r)}=\{\tilde Z^{(r)}(t);~t\geq 0\}$. For each $r\in (0,1)$, the $r$th SRBM $\tilde Z^{(r)}$ is characterized by the primitives  $(\tilde Z^{(r)}(0), \theta^\uu, $ $\Gamma, R)$. Consistent with Assumption \ref{assmpt: blockscale}, we introduce the {blockwise multi-scaling} regime for SRBMs as follows.

\begin{assumption}[Blockwise multi-scaling regime for SRBMs] \label{assmpt: blockscaling}
    For all $r\in (0,1)$, 
    \begin{equation*}%\label{eq: drift epsilon block}
        \theta^\uu= -R\delta^\uu, \quad \text{where } \delta^\uu_{A_k} = \gamma_k(r) b^{(k)} \text{ for $k\in \K$,}
    \end{equation*}
    where $b^{(k)}$ is a positive vector in $\R^{|A_k|}$.
\end{assumption}

In exact analogy to Assumption \ref{assmpt: initial block}, we consider the following initial condition for the SRBMs.
\begin{assumption}[Matching-rate initial condition for SRBMs]
    \label{assmpt: initial SRBM block}
There exists an $\R_+^J$-valued   random vector $\tilde \xi$ such that 
    \begin{equation*}
        \left( \gamma_1(r)\tilde Z^\uu_{A_1}(0),~\gamma_2(r) \tilde Z^\uu_{A_2}(0),~\ldots,~\gamma_K(r) \tilde Z^\uu_{A_K}(0) \right)  \Longrightarrow \tilde \xi \quad \text{as } r\to 0,
    \end{equation*}
    and for all $k=2,\ldots,K$,
    \begin{equation*}
        \gamma_{k-1}(r) \tilde Z_{A_k}^\uu (0) \Longrightarrow \infty \quad \text{as } r\to 0.
    \end{equation*}
\end{assumption}

\begin{theorem}\label{thm:SRBM block}
    Suppose Assumptions \ref{assmpt: blockscaling} and \ref{assmpt: initial SRBM block} hold. Then
    \begin{equation*} % \label{eq: lim GJN cv}
        \left\{ \left( \gamma_1(r)\tilde Z_{A_1}^\uu(t/\gamma_1^2(r)), \gamma_2(r)\tilde Z_{A_2}^\uu(t/\gamma_2^2(r)), \ldots, \gamma_K(r)\tilde Z_{A_K}^\uu(t/\gamma_K^2(r)) \right);~t\geq 0 \right\} \Longrightarrow \tilde Z^* \quad \text{as } r\to 0,
    \end{equation*}
    where $\tilde Z^*=\{\tilde Z^*(t);~t\geq 0\}$ is a $J$-dimensional diffusion process.
    Furthermore, all corresponding blocks of $\tilde Z^*$ are mutually independent, and each block $\tilde Z^*_{A_k}$ for $k\in \K$ is a $|A_k|$-dimensional SRBM:
    \begin{equation*}
        \RBM\left(\tilde\xi_{A_k}, -G^{(\indexa{k})}_{A_k,A_k}b^{(k)}, (E^{(\indexa{k})} \Gamma (E^{(\indexa{k})})')_{A_k,A_k}, G^{(\indexa{k})}_{A_k,A_k}\right)
    \end{equation*}
    where $\indexa{k}$ is the lowest index in $A_k$, and matrices $G$ and $E$ are defined in \eqref{eq: Gauss elim}.
\end{theorem}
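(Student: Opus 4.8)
The plan is to mirror the proof of Theorem~\ref{prop: multi-SRBM}, with the coordinatewise scale separation of Proposition~\ref{prop: rk initial} replaced by a blockwise one. The first step is the blockwise analogue of Corollary~\ref{cor: BM}: writing $\scale{W}{k}{{}}\defi\{\gamma_k(r)W(t/\gamma_k^2(r));~t\geq 0\}$ for $k\in\K$, the family $(\scale{W}{1}{{}},\ldots,\scale{W}{K}{{}})$ converges weakly as $r\to 0$ to $K$ mutually independent $J$-dimensional standard Brownian motions $(\scaleL{W}{1}{{}},\ldots,\scaleL{W}{K}{{}})$. Since the block rates $\gamma_1,\ldots,\gamma_K$ are separated in the same sense as the station rates, this follows directly from Proposition~\ref{proposition: joint BM} applied componentwise, the cross-covariances $\gamma_i(r)\gamma_j(r)\big(s/\gamma_i^2(r)\wedge t/\gamma_j^2(r)\big)$ vanishing for $i<j$.

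The core step is a blockwise version of Proposition~\ref{prop: rk initial}: for each $k\in\K$, under the $\gamma_k^2(r)$-scaling (with $k$ now a block index),
\begin{equation*}
    \gamma_k(r)\tilde Z^\uu_i\big(\cdot/\gamma_k^2(r)\big)\Longrightarrow 0 \quad (i<\indexa{k}), \qquad \gamma_k(r)\tilde Y^\uu_j\big(\cdot/\gamma_k^2(r)\big)\Longrightarrow 0 \quad (j>\indexb{k}),
\end{equation*}
while the block $A_k$ coordinates converge to the $|A_k|$-dimensional SRBM named in the theorem. I would prove this along the Skorokhod-representation and Lipschitz-continuity lines of Section~\ref{sec: proof multi-SRBM}, using the block Gaussian elimination matrices $E^{(\indexa{k})},G^{(\indexa{k})}$ from \eqref{eq: Gauss elim} in place of $E^{(k)},G^{(k)}$. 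Vanishing of the lower-block states $\scale{\hat Z}{k}{1},\ldots,\scale{\hat Z}{k}{\indexa{k}-1}$ is obtained by the same induction: the $\ell$th coordinate solves a one-dimensional Skorokhod problem whose drift $G^{(\ell)}_{\ell\ell}\gamma_\ell(r)/\gamma_k(r)+\sum_{j>\ell}G^{(\ell)}_{\ell j}\gamma_j(r)/\gamma_k(r)$ diverges to $+\infty$, so Lemma~\ref{lemma: Skorokhod negative infty} applies once one notes $\scale{\hat Z}{k}{i}(0)\asto 0$ (Assumption~\ref{assmpt: initial SRBM block}, since $i\in A_\ell$ with $\ell<k$ gives $\gamma_k(r)/\gamma_\ell(r)\to 0$). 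Vanishing of the higher-block regulators $\scale{\hat Y}{k}{j}$, $j>\indexb{k}$, follows from Lemma~\ref{lemma: Skorokhod positive infty} after the standard comparison argument bounding the regulator family pathwise by a continuous nondecreasing process; the crucial input is $\scale{\hat Z}{k}{j}(0)\asto\infty$, which holds because $j\in A_\ell$ with $\ell\geq k+1$ forces $\gamma_k(r)\tilde Z^\uu_j(0)\geq\gamma_{\ell-1}(r)\tilde Z^\uu_j(0)\to\infty$ by Assumption~\ref{assmpt: initial SRBM block}.

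For the block $A_k$ itself, I would left-multiply the $\gamma_k^2(r)$-scaled SRBM equation (the block-indexed analogue of \eqref{eq: RBM1 r}, with the drift of Assumption~\ref{assmpt: blockscaling}) by $E^{(\indexa{k})}$ and extract rows $A_k$. The potentially diverging drift $G^{(\indexa{k})}_{A_k,[\indexa{k}:J]}\delta^\uu_{[\indexa{k}:J]}/\gamma_k(r)$ splits into the block-$A_k$ contribution $G^{(\indexa{k})}_{A_k,A_k}b^{(k)}$, which is finite, and contributions from blocks $A_{k+1},\ldots,A_K$ of order $\gamma_{k+1}(r)/\gamma_k(r)\to 0$. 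Substituting the already-established limits $\scale{\hat Z}{k}{[1:\indexa{k}-1]}\asto\mathbf 0$ and $\scale{\hat Y}{k}{[\indexb{k}+1:J]}\asto\mathbf 0$, and using that the $E^{(\indexa{k})}$-mixed initial-state term reduces to $\tilde\xi_{A_k}$, the equation for the $A_k$ coordinates becomes in the limit an $|A_k|$-dimensional Skorokhod problem with reflection matrix $G^{(\indexa{k})}_{A_k,A_k}$ --- an $\caM$-matrix as a principal block of the Schur complement, by Lemma~3.5 of \citet{McDoSivaSushTsatWendWend2020} --- driven by a Brownian motion with initial state $\tilde\xi_{A_k}$, drift $-G^{(\indexa{k})}_{A_k,A_k}b^{(k)}$ and covariance $(E^{(\indexa{k})}\Gamma(E^{(\indexa{k})})')_{A_k,A_k}$. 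Lipschitz continuity of $\Phi$ then delivers the claimed convergence. Assembling the blocks on a common probability space through the Skorokhod representation theorem, as in Section~\ref{sec: proof multi-SRBM T}, each limiting block is a measurable functional of $\big(\tilde\xi_{A_k},\scaleL{W}{k}{{}}\big)$ alone, and these data are mutually independent across $k$ by the first step, so $\tilde Z^*$ has mutually independent blocks.

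The step I expect to be the main obstacle is the block reduction to a standard Skorokhod problem: whereas in Proposition~\ref{prop: rk initial} each surviving coordinate solved a scalar Skorokhod problem, the block $A_k$ here solves a genuinely $|A_k|$-dimensional one, so I must carefully verify that, after the vanishing cross-block terms are discarded, the residual reflection data is \emph{exactly} $G^{(\indexa{k})}_{A_k,A_k}$, that this matrix is an $\caM$-matrix so the limiting SRBM is well-defined, and that the Lipschitz bound \eqref{eq: Lipschitz} is applied with a constant $\kappa$ that stays bounded along $r\to 0$.
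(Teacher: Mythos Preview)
Your overall strategy matches the paper's (Proposition~\ref{prop: block rk initial} plus Corollary~\ref{cor: BM}), but there is a genuine gap in the ``vanishing of the lower-block states'' step. You propose to run the induction coordinate-by-coordinate, invoking the one-dimensional Lemma~\ref{lemma: Skorokhod negative infty} with drift $G^{(\ell)}_{\ell\ell}\gamma_\ell(r)/\gamma_k(r)+\sum_{j>\ell}G^{(\ell)}_{\ell j}\gamma_j(r)/\gamma_k(r)$ diverging to $+\infty$. In the block setting this need not hold. If $\ell$ lies in a block $A_m$ with $|A_m|\geq 2$, the same-block terms $j\in A_m$, $j>\ell$, contribute at the \emph{same} order $\gamma_m(r)/\gamma_k(r)$ as the diagonal, and the leading coefficient is $G^{(\ell)}_{\ell,[\ell:\indexb{m}]}\, b^{(m)}_{[\ell-\indexa{m}+1:|A_m|]}$. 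An $\caM$-matrix gives no sign control on such a partial row--vector product: take $J=3$, $A_1=\{1,2\}$, $A_2=\{3\}$, $R_{[1:2],[1:2]}=\bigl(\begin{smallmatrix}1&-0.9\\-0.9&1\end{smallmatrix}\bigr)$, $R_{13}=R_{31}=R_{23}=R_{32}=0$, $b^{(1)}=(1,2)^\T$. At $\ell=1$ the coefficient is $1-1.8=-0.8<0$, so the one-dimensional drift goes to $-\infty$ and Lemma~\ref{lemma: Skorokhod negative infty} is inapplicable.

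The paper resolves this by inducting block-by-block and proving a multidimensional analogue, Lemma~\ref{lemma: block Skorokhod negative infty}: if $a^\uu\to\infty$ componentwise and $R$ is an $\caM$-matrix, then $\Phi(u^\uu-v^\uu-Ra^\uu t;R)\to\mathbf 0$. The hypothesis is on $a^\uu$, not on $Ra^\uu$; at block level one has $a^\uu=b^{(m)}\gamma_m(r)/\gamma_k(r)+o(\gamma_m(r)/\gamma_k(r))\to\infty$ since $b^{(m)}>0$, regardless of the signs of $Rb^{(m)}$. Its proof singles out a coordinate only after eliminating \emph{all} others in the block (via $E^{(d)}$ in dimension $d$), so the surviving one-dimensional drift is $G^{(d)}_{dd}a^\uu_d$ alone, with no same-order cross terms. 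Once you replace your coordinate induction by this block lemma, the rest of your plan --- the comparison bound and Lemma~\ref{lemma: Skorokhod positive infty} for the higher-block regulators, the $A_k$-block Skorokhod reduction with reflection matrix $G^{(\indexa{k})}_{A_k,A_k}$, and the assembly via independence of the $\scaleL{\hat W}{k}{{}}$ --- goes through as written.
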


The following gives the functional limit result for block multi-scaling SRBMs under the lowest-rate initial condition in Assumption \ref{assmpt: conventional SRBM}.

\begin{theorem}\label{thm:SRBM block cv}
    Suppose Assumptions \ref{assmpt: conventional SRBM} and \ref{assmpt: blockscaling}  hold. Then
    \begin{equation*} % \label{eq: lim GJN cv}
        \left\{ \left( \gamma_1(r)\tilde Z_{A_1}^\uu(t/\gamma_1^2(r)), \gamma_2(r)\tilde Z_{A_2}^\uu(t/\gamma_2^2(r)), \ldots, \gamma_K(r)\tilde Z_{A_K}^\uu(t/\gamma_K^2(r)) \right);~t\geq 0 \right\} \Longrightarrow \tilde Z^* \ \text{as } r\to 0,
    \end{equation*}
    where $\tilde Z^*=\{\tilde Z^*(t);~t\geq 0\}$ is a $J$-dimensional process.
    Furthermore, all corresponding blocks of $\tilde Z^*$ are mutually independent, and each block $\tilde Z^*_{A_k}$ for $k\in \K$ is the first $|A_k|$ coordinates of a $(J-\alpha_k+1)$-dimensional SRBM:
    \begin{equation*}
        \RBM\left(\tilde\xi^{(k)}, -G^{(\indexa{k})}_{[\indexa{k},J],A_k}b^{(k)}, (E^{(\indexa{k})} \Gamma (E^{(\indexa{k})})')_{[\indexa{k},J],[\indexa{k},J]}, G^{(\indexa{k})}_{[\indexa{k},J],[\indexa{k},J]}\right)
    \end{equation*}
    where $\indexa{k}$ is the lowest index in $A_k$, $\tilde\xi^{(\cdot)}$ is defined in Theorem \ref{prop: multi-SRBM conventional}, and matrices $G$ and $E$ are defined in \eqref{eq: Gauss elim}.
\end{theorem}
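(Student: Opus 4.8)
The plan is to follow the proof of Theorem~\ref{prop: multi-SRBM conventional} and its engine Proposition~\ref{prop: r initial}, with the single structural change that \emph{blocks}, rather than individual stations, play the role of the scale-separated units. The first goal is a blockwise analogue of Proposition~\ref{prop: r initial}: for each $k\in\K$—writing $\indexa{k}$ for the smallest index in $A_k$ and $\scale{\cdot}{k}{\cdot}$ for the $\gamma_k^2(r)$-scaled process of~\eqref{eq: def scale}—the claim is that $\scale{\tilde Z}{k}{[1:\indexa{k}-1]}\Longrightarrow\mathbf{0}$ while $\scale{\tilde Z}{k}{[\indexa{k}:J]}$ converges weakly to the $(J-\indexa{k}+1)$-dimensional SRBM displayed in the theorem statement. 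As in Propositions~\ref{prop: rk initial} and~\ref{prop: r initial}, I would pass to almost sure u.o.c.\ convergence on a common probability space via the Skorokhod representation theorem, using Assumption~\ref{assmpt: conventional SRBM} together with the blockwise counterpart of Corollary~\ref{cor: BM} (itself immediate from Proposition~\ref{proposition: joint BM} applied coordinatewise to $W$ with the $K$ block rates $\gamma_1,\dots,\gamma_K$), and then use the block Gaussian elimination matrices $E^{(\indexa{k})},G^{(\indexa{k})}$ of~\eqref{eq: Gauss elim}, recalling that $G^{(\indexa{k})}_{[\indexa{k}:J],[\indexa{k}:J]}$ and each of its principal submatrices is an $\caM$-matrix and that Assumption~\ref{assmpt: blockscaling} gives $\delta^\uu_{A_k}=\gamma_k(r)b^{(k)}$.

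\textbf{Vanishing of the lower blocks.} For $k\ge2$ I would prove $\scale{\hat Z}{k}{[1:\indexa{k}-1]}\asto\mathbf{0}$ u.o.c.\ by induction over the blocks $A_1,\dots,A_{k-1}$. For $A_m$ with $m<k$, left-multiplying the $\gamma_k^2(r)$-scaled SRBM equation by $E^{(\indexa{m})}$ and feeding in the induction hypothesis (the coordinates $1,\dots,\indexa{m}-1$ already vanish) exhibits $\scale{\hat Z}{k}{A_m}$ as the Skorokhod reflection, with reflection matrix $G^{(\indexa{m})}_{A_m,A_m}$, of a driving path consisting of a u.o.c.-convergent process (the block-$A_m$ rows of $E^{(\indexa{m})}$ applied to the vanishing initial state, to $L\scale{\hat W}{k}{}$, and to the already-vanished lower coordinates), plus the higher-index-regulator contribution (of the form $-\tilde v^\uu$ with $\tilde v^\uu\ge\mathbf{0}$ nondecreasing, since it has the nonpositive off-diagonal coefficients of the $\caM$-matrix $G^{(\indexa{m})}_{[\indexa{m}:J],[\indexa{m}:J]}$), minus a drift $\frac{\gamma_m(r)}{\gamma_k(r)}\big(G^{(\indexa{m})}_{A_m,A_m}b^{(m)}+o(1)\big)t$—here $\delta^\uu_{A_m}=\gamma_m(r)b^{(m)}$ dominates the higher-block drift contributions of order $o(\gamma_m(r))$. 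Since $\gamma_m(r)/\gamma_k(r)\to\infty$ and $(G^{(\indexa{m})}_{A_m,A_m})^{-1}\big(G^{(\indexa{m})}_{A_m,A_m}b^{(m)}\big)=b^{(m)}>0$ lies in the stability cone, a \emph{multidimensional} analogue of Lemma~\ref{lemma: Skorokhod negative infty} forces $\scale{\hat Z}{k}{A_m}\asto\mathbf{0}$ u.o.c.; iterating over $m=1,\dots,k-1$ completes the step.

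\textbf{The remaining coordinates and assembly.} Left-multiplying the $\gamma_k^2(r)$-scaled equation by $E^{(\indexa{k})}$, rows $\indexa{k},\dots,J$ present $\scale{\hat Z}{k}{[\indexa{k}:J]}$ as the Skorokhod reflection, with reflection matrix $G^{(\indexa{k})}_{[\indexa{k}:J],[\indexa{k}:J]}$, of a driving path $\scale{\hat S}{k}{}$ whose coordinate-$[1:\indexa{k}-1]$ feedback vanishes by the previous step, whose initial value converges to $\tilde\xi^{(k)}$ (under Assumption~\ref{assmpt: conventional SRBM} the coordinates $1,\dots,\indexa{k}-1$ and, for $k\ge2$, block $A_k$ itself start at $\mathbf{0}$ at the $\gamma_k$-scale, while for $k=1$ the full vector converges to $\tilde\xi$), whose drift converges to $-G^{(\indexa{k})}_{[\indexa{k}:J],A_k}b^{(k)}$, and whose Brownian part converges u.o.c.\ to $E^{(\indexa{k})}_{[\indexa{k}:J],\cdot}L\scaleL{\hat W}{k}{}$ with covariance $(E^{(\indexa{k})}\Gamma(E^{(\indexa{k})})')_{[\indexa{k}:J],[\indexa{k}:J]}$, by Corollary~\ref{cor: BM}. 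The Lipschitz continuity~\eqref{eq: Lipschitz} of the Skorokhod map then yields the limiting SRBM of the theorem, whose first $|A_k|$ coordinates form $\tilde Z^*_{A_k}$. Finally, as in the proof of Theorem~\ref{prop: multi-SRBM conventional}, the limiting driving paths $\scaleL{\hat S}{k}{}$, $k\in\K$, are functionals of $\hat\xi$ and of the mutually independent Brownian motions $\scaleL{\hat W}{1}{},\dots,\scaleL{\hat W}{K}{}$, so the blocks $\tilde Z^*_{A_1},\dots,\tilde Z^*_{A_K}$ are mutually independent, and the Skorokhod representation theorem converts the a.s.\ convergence of $(\scale{\hat Z}{1}{A_1},\dots,\scale{\hat Z}{K}{A_K})$ into the asserted weak convergence of the jointly scaled process.

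\textbf{The main obstacle.} The one genuinely new ingredient is the multidimensional extension of Lemma~\ref{lemma: Skorokhod negative infty}: if $M$ is a nonsingular $\caM$-matrix, $v$ satisfies $M^{-1}v>0$, $c^\uu\to\infty$, $u^\uu\to u$ u.o.c.\ with $u^\uu(0)\to\mathbf{0}$, and $\tilde v^\uu\ge\mathbf{0}$ is componentwise nondecreasing with $\tilde v^\uu(0)=\mathbf{0}$, then $\Phi(\{u^\uu(t)-\tilde v^\uu(t)-c^\uu v\,t;t\ge0\};M)\to\mathbf{0}$ u.o.c. In the fully multi-scale regime of Proposition~\ref{prop: r initial} this is not needed, because the strictly separated rates permit eliminating \emph{one coordinate at a time}, after which the diagonal entry of the eliminated matrix dominates and the one-dimensional Lemma~\ref{lemma: Skorokhod negative infty} applies directly; but within a multi-station block all stations share the rate $\gamma_m(r)$, the scalar drift of a single eliminated coordinate need not diverge to $-\infty$ (indeed $G^{(\indexa{m})}_{A_m,A_m}b^{(m)}$ need not be componentwise positive), and the block must be handled as a unit. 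I would prove this extension by comparing $\Phi(\{u^\uu(t)-\tilde v^\uu(t)-c^\uu v\,t\};M)$ with the reflection of the pure inward-drift input $\{u^\uu(0)-c^\uu v\,t\}$—which, writing $-c^\uu v=M(-c^\uu M^{-1}v)$ with $M^{-1}v>0$, drains to $\mathbf{0}$ in a time tending to $0$ and stays there—and absorbing the perturbations through the Lipschitz bound~\eqref{eq: Lipschitz}; a Lyapunov estimate along a left eigenvector $p>0$ with $p'M>0$ (hence $p'v>0$) gives an alternative route.
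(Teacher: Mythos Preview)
Your overall strategy is exactly the paper's: the paper states that the proof of Theorem~\ref{thm:SRBM block cv} ``is the same as the proof of Theorem~\ref{prop: multi-SRBM conventional}'', which in the block setting means precisely the blockwise analogue of Proposition~\ref{prop: r initial} that you outline, assembled via Corollary~\ref{cor: BM} and Skorokhod representation. The ``main obstacle'' you isolate is also the one the paper singles out: the multidimensional extension of Lemma~\ref{lemma: Skorokhod negative infty} appears in the paper as Lemma~\ref{lemma: block Skorokhod negative infty} and is used verbatim in the block induction for the vanishing of the lower blocks.

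The one place where your sketch and the paper diverge is the proof of that lemma, and your version has a gap. You propose to compare $\Phi(\{u^\uu(t)-\tilde v^\uu(t)-c^\uu vt\};M)$ with $\Phi(\{u^\uu(0)-c^\uu vt\};M)$ via the Lipschitz bound~\eqref{eq: Lipschitz}. But the difference of the two inputs is $u^\uu(\cdot)-u^\uu(0)-\tilde v^\uu(\cdot)$, and $\tilde v^\uu$ is only assumed nondecreasing, not bounded, so the Lipschitz estimate does not yield a vanishing bound. (The left-eigenvector route you mention has the same problem: $p'z^\uu$ is not a one-dimensional Skorokhod problem because $p'My^\uu$ does not increase only when $p'z^\uu=0$.) The paper's proof of Lemma~\ref{lemma: block Skorokhod negative infty} avoids this by choosing a reference input that \emph{absorbs} $v^\uu$: with $\tilde x^\uu(t):=-v^\uu(t)-Ra^\uu t$ one has $\Phi(\tilde x^\uu;R)\equiv\mathbf 0$ (since $R^{-1}\ge0$ makes $R^{-1}v^\uu+a^\uu t$ a legitimate regulator), so Lipschitz continuity gives the uniform bound $\|z^\uu\|_T\le\kappa(R)\|u^\uu\|_T$. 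One then eliminates down to a single coordinate via $E^{(d)}$, and the now-bounded cross term $-E^{(d)}_{d,[1:d-1]}z^\uu_{[1:d-1]}$ can be folded into the ``$u$'' part of the one-dimensional Lemma~\ref{lemma: Skorokhod negative infty}. Once you swap in this argument for your direct comparison, the rest of your plan goes through unchanged.
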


\subsection{Convergence of GJNs in blockwise multi-scale heavy traffic}

The proofs of Theorem~\ref{thm:GJN block} and \ref{thm:GJN block cv} are almost the same as that of Theorem \ref{thm:GJN} and \ref{thm:GJN cv} by using the asymptotic functional strong approximation of GJNs in Proposition \ref{prop: SA}, so we omit it here and only prove the limit behavior of SRBMs in Theorem~\ref{thm:SRBM block} and \ref{thm:SRBM block cv}. 
    
\subsection{Convergence of SRBMs in blockwise multi-scale heavy traffic}

To prove Theorem \ref{thm:SRBM block}, we first introduce the following proposition, which states the process-level convergence {results for} SRBMs at each individual scaling.

\begin{proposition}\label{prop: block rk initial}
    Suppose that Assumptions \ref{assmpt: blockscaling} and \ref{assmpt: initial SRBM block} hold. Then  as $r\to 0$, 
    \begin{equation*}
        \scale{\tilde Z}{k}{[1:\indexb{k-1}]} \Longrightarrow \mathbf{0}, \quad \scale{\tilde Y}{k}{[\indexa{k+1}:J]} \Longrightarrow \mathbf{0}, \quad \scale{\tilde Z}{k}{A_k}  \Longrightarrow \tilde Z^*_{A_k} , 
    \end{equation*}
    where $\tilde Z^*$ is the limit process defined in Theorem \ref{thm:SRBM block}.
\end{proposition}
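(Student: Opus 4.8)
The plan is to follow the argument for Proposition~\ref{prop: rk initial} in Section~\ref{sec: proof multi-SRBM}, with the single index $k$ there replaced by the whole block $A_k$ and the reflection matrix partitioned along the block boundaries $\alpha_1<\cdots<\alpha_K$ rather than at one coordinate. First, invoking the Skorokhod representation theorem together with Assumption~\ref{assmpt: initial SRBM block} and Corollary~\ref{cor: BM}, I would pass to almost surely convergent versions of the $\gamma_k^2(r)$-scaled SRBMs (again written $\scale{\hat Z}{k}{{}}$, $\scale{\hat Y}{k}{{}}$) on a common probability space, on which $\gamma_k(r)\tilde Z^\uu_{A_m}(0)\to\mathbf 0$ for $m<k$, $\to\tilde\xi_{A_k}$ for the block itself, and $\to\infty$ for $m>k$; it then suffices to prove u.o.c.\ almost sure convergence. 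For each $m$, the block Gaussian elimination \eqref{eq: Gauss elim} performed at the boundary $\alpha_m$ produces $E^{(\alpha_m)}\ge\mathbf 0$ and a Schur complement $G^{(\alpha_m)}_{[\alpha_m:J],[\alpha_m:J]}$ that is again an $\mathcal M$-matrix, hence so is its principal submatrix $G^{(\alpha_m)}_{A_m,A_m}$; the algebraic input I will lean on throughout is $\big(G^{(\alpha_m)}_{A_m,A_m}\big)^{-1}\big(G^{(\alpha_m)}_{A_m,A_m}b^{(m)}\big)=b^{(m)}>\mathbf 0$.

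The three conclusions are then obtained in order. (i) \emph{Earlier blocks vanish.} By induction on $m=1,\dots,k-1$: left-multiply the $\gamma_k^2(r)$-scaled equation (cf.\ \eqref{eq: RBM1 r}) by $E^{(\alpha_m)}$, discard the contribution of blocks $1,\dots,m-1$ using the induction hypothesis, and read off that $\scale{\hat Z}{k}{A_m}$ is the Skorokhod reflection, with $\mathcal M$-reflection matrix $G^{(\alpha_m)}_{A_m,A_m}$, of a driving process equal to a u.o.c.\ convergent Brownian term with initial value tending to $\mathbf 0$, minus the nondecreasing term $-G^{(\alpha_m)}_{A_m,[\beta_m+1:J]}\scale{\hat Y}{k}{[\beta_m+1:J]}$ from the later blocks, minus a drift $\big(\gamma_m(r)/\gamma_k(r)\big)G^{(\alpha_m)}_{A_m,A_m}b^{(m)}\,t+o\big(\gamma_m(r)/\gamma_k(r)\big)t$; since $\gamma_m(r)/\gamma_k(r)\to\infty$ and the identity above holds, a multidimensional version of Lemma~\ref{lemma: Skorokhod negative infty} forces $\scale{\hat Z}{k}{A_m}\asto\mathbf 0$ u.o.c. (ii) \emph{Later blocks' regulators vanish.} Exactly as in Proposition~\ref{prop: rk initial}: a comparison argument based on the Lipschitz bound \eqref{eq: Lipschitz} first shows that $\{\scale{\hat Y}{k}{[\alpha_k:J]}\}_r$ is pathwise dominated by a continuous nondecreasing process; then for each $j>\beta_k$ the $j$-th row of $E^{(\alpha_k)}$ applied to \eqref{eq: RBM1 r} exhibits $\scale{\hat Y}{k}{j}$ as a one-dimensional $\Psi$ whose input has constant part $\scale{\hat Z}{k}{j}(0)\to\infty$, so Lemma~\ref{lemma: Skorokhod positive infty} gives $\scale{\hat Y}{k}{j}\asto 0$ u.o.c. (iii) \emph{Block $A_k$ converges.} Left-multiply \eqref{eq: RBM1 r} by $E^{(\alpha_k)}$ and restrict to the rows $A_k$; by (i)--(ii) the earlier-block term and the later-regulator term vanish, the drift tends to $-G^{(\alpha_k)}_{A_k,A_k}b^{(k)}$, the initial value to $\tilde\xi_{A_k}$, and the noise to a Brownian motion with covariance $(E^{(\alpha_k)}\Gamma(E^{(\alpha_k)})^\T)_{A_k,A_k}$, so Lipschitz continuity of $\Phi$ yields $\scale{\hat Z}{k}{A_k}\asto\RBM\big(\tilde\xi_{A_k},-G^{(\alpha_k)}_{A_k,A_k}b^{(k)},(E^{(\alpha_k)}\Gamma(E^{(\alpha_k)})^\T)_{A_k,A_k},G^{(\alpha_k)}_{A_k,A_k}\big)$, which is the limit $\tilde Z^*_{A_k}$ of Theorem~\ref{thm:SRBM block}.

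The main obstacle is step~(i). In Proposition~\ref{prop: rk initial} the earlier stations can be peeled off one coordinate at a time precisely because their idle rates are strictly separated, so each reduces to a scalar Skorokhod problem with drift diverging to $-\infty$; inside a single block the rates coincide and the coordinates remain genuinely coupled through the nonpositive off-diagonal entries of $G^{(\alpha_m)}_{A_m,A_m}$, so a naive coordinatewise reduction can yield a scalar problem whose drift diverges to $+\infty$. I therefore expect the bulk of the work to be establishing the required multidimensional analogue of Lemma~\ref{lemma: Skorokhod negative infty}: for an $\mathcal M$-reflection matrix $\hat R$, a drift scaled by a factor $c^\uu\to\infty$ in a direction $\vartheta$ with $\hat R^{-1}\vartheta>\mathbf 0$, u.o.c.\ convergent noise, and an additional nondecreasing perturbation, the reflected process tends to $\mathbf 0$ u.o.c. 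This can be proved using monotonicity of the Skorokhod map for $\mathcal M$-matrices (to discard the nondecreasing perturbation), together with the fast-timescale fluid behaviour obtained by the time change $s\mapsto s/c^\uu$ (the noise then scales down by $1/\sqrt{c^\uu}$ while the limiting fluid path $\Phi(\,\cdot\,-\hat R b^{(m)}s;\hat R)$ reaches and remains at the origin because $\hat R^{-1}(\hat R b^{(m)})=b^{(m)}>\mathbf 0$), or by a direct estimate of the regulator using $\hat R^{-1}\ge\mathbf 0$. Everything else is a routine transcription of Section~\ref{sec: proof multi-SRBM}.
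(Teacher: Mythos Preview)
Your proposal is correct and follows essentially the same approach as the paper: Skorokhod representation, induction over earlier blocks using a multidimensional extension of Lemma~\ref{lemma: Skorokhod negative infty}, the comparison argument plus Lemma~\ref{lemma: Skorokhod positive infty} coordinatewise for the later regulators, and Lipschitz continuity of $\Phi$ for the $A_k$ block. The paper introduces exactly the auxiliary result you anticipate (its Lemma~\ref{lemma: block Skorokhod negative infty}); the only minor difference is that the paper proves this lemma by reducing to the scalar case via Gaussian elimination on the reflection matrix (showing $z^\uu_d\to 0$ using $E^{(d)}$ and a comparison bound on the remaining coordinates), whereas you propose monotonicity of the $\mathcal M$-matrix Skorokhod map together with a fluid time-change, which would also work.
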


To prove Proposition \ref{prop: block rk initial}, we need the following lemma, which is an extension of Lemma~\ref{lemma: Skorokhod negative infty}.

\begin{lemma} \label{lemma: block Skorokhod negative infty}
    For any fixed integer $d$, consider three sequences of functions $u^\uu,v^\uu\in \C([0,\infty),\R^d)$ and vector $a^\uu\in \R^d$ for $r\in (0,1)$. Suppose the following conditions hold:
    \begin{enumerate}  
        \item[\rm (i)] $u^\uu(0)=\mathbf{0}$ for all $r\in(0,1)$ and $u^\uu \rightarrow u$ u.o.c.~as $r\to 0$;
        \item[\rm (ii)]	 $v^\uu(0)=\mathbf{0}$ and $v^\uu$ is element-wise  nondecreasing for each $r\in (0,1)$;
        \item[\rm (iii)] $\lim_{r\to0}a^\uu = \infty$.
    \end{enumerate}
    Define
    $$
    z^\uu=\Phi\left(\left\{ u^\uu(t)-v^\uu(t)-Ra^\uu t \right\};R\right),
    $$
    where the reflection matrix $R\in \R^{d\times d}$ is an $\mathcal{M}$-matrix.
    Then  $z^\uu$ converges to $\mathbf{0}$ u.o.c.~as $r\to 0$.
\end{lemma}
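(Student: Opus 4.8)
\textbf{Proof proposal for Lemma~\ref{lemma: block Skorokhod negative infty}.}

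The plan is to reduce the multidimensional statement to the scalar Lemma~\ref{lemma: Skorokhod negative infty} by exploiting the $\mathcal{M}$-matrix structure of $R$, exactly in the spirit of the block Gaussian elimination already used in Section~\ref{sec: proof multi-SRBM}. First I would observe that because $R$ is an $\mathcal{M}$-matrix, we may write $R = sI - B$ with $s>0$ and $B\geq 0$ having spectral radius strictly less than $s$. The key quantitative consequence is that $R$ is ``row-wise dominant up to a positive scalar'': since $R a^\uu > 0$ componentwise whenever $a^\uu>0$ (indeed $R$ has a positive inverse, so $R\mathbf{1}$ or more precisely each $Ra^\uu$ with $a^\uu\to\infty$ componentwise has all entries tending to $+\infty$), the drift term $-Ra^\uu t$ pushes every coordinate in the strictly decreasing direction at a rate diverging to $+\infty$. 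This is the analogue of condition (iii) in the scalar lemma.

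The core of the argument is an induction on the coordinate index $i = 1, \ldots, d$, mirroring the induction in the proof of Proposition~\ref{prop: rk initial}. For the base case $i=1$: the first component of the defining relation $z^\uu(t) = u^\uu(t) - v^\uu(t) - Ra^\uu t + R y^\uu(t)$ (where $y^\uu = \Psi(\cdot\,;R)$ is the regulator) reads
\begin{equation*}
    z^\uu_1(t) = \Big[ u^\uu_1(t) - v^\uu_1(t) - (Ra^\uu)_1 t + \sum_{j=2}^d R_{1j} y^\uu_j(t) \Big] + R_{11} y^\uu_1(t).
\end{equation*}
Since $R_{1j}\leq 0$ for $j\neq 1$ and each $y^\uu_j$ is nondecreasing, the process inside the bracket is of the form ``$u^\uu_1 - (\text{nondecreasing}) - m^\uu t$'' with $m^\uu = (Ra^\uu)_1 \to \infty$; by the uniqueness of the one-dimensional Skorokhod problem, $z^\uu_1 = \Phi(\cdot)$ of that bracketed process, and Lemma~\ref{lemma: Skorokhod negative infty} (with $v^\uu$ there taken to be $v^\uu_1 - \sum_{j\geq 2} R_{1j} y^\uu_j$, which is nondecreasing and starts at $0$) gives $z^\uu_1 \to 0$ u.o.c. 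For the induction step at coordinate $i$, I would left-multiply the vector identity by the elementary matrix $E^{(i)}$ from \eqref{eq: Gauss elim} so that in the $i$th row the coefficients of $y^\uu_j$ for $j<i$ vanish; then, using the induction hypothesis $z^\uu_1,\ldots,z^\uu_{i-1}\to 0$ together with $G^{(i)}_{ii}>0$, $G^{(i)}_{ij}\leq 0$ for $j>i$ (the Schur complement is again an $\mathcal{M}$-matrix by Lemma~3.5 of \citet{McDoSivaSushTsatWendWend2020}), and the fact that $(G^{(i)}a^\uu_{[i:d]})_i \to \infty$, one again casts $z^\uu_i$ as a one-dimensional reflection of a process with diverging negative drift, nondecreasing subtracted part, and a u.o.c.-convergent noise part $u^\uu_i$ corrected by the $E^{(i)}$-combination of $u^\uu$ and of the already-vanishing $z^\uu_{[1:i-1]}$. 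Applying Lemma~\ref{lemma: Skorokhod negative infty} once more yields $z^\uu_i\to 0$ u.o.c.

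One subtlety that must be handled before the induction: the scalar Lemma~\ref{lemma: Skorokhod negative infty} requires its input to actually be expressible as ``$u^\uu - v^\uu - m^\uu t$'' with $v^\uu$ \emph{nondecreasing}, which needs the regulator terms $y^\uu_j$ ($j\neq i$) to have well-defined, finite, nondecreasing sample paths — equivalently, the family $\{y^\uu\}_{r\in(0,1)}$ must not blow up. I would address this exactly as in Section~\ref{sec: proof multi-SRBM}: by a comparison argument using the Lipschitz continuity \eqref{eq: Lipschitz} of the Skorokhod map, $\sup_{t\in[0,T]} \|y^\uu(t)\|$ is bounded (for each fixed $T$, uniformly in small $r$) by a constant times $\sup_{t\in[0,T]}\|u^\uu(t) - v^\uu(t) - Ra^\uu t + (\text{const})\| $ — but here one must be slightly careful, since $a^\uu\to\infty$ would superficially make this bound diverge. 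The resolution is that the drift $-Ra^\uu t$ only \emph{decreases} the input, so no boundary pushing is ever needed in the ``downward'' directions, and in fact one can pre-process by subtracting $Ra^\uu t$ from both the input and noting that $\Psi$ applied to a process that is eventually forced strictly negative in every coordinate is eventually zero — this is morally the content of the scalar lemma itself applied to the regulator. The cleanest route, which I expect to be the main technical obstacle, is to prove the boundedness of $\{y^\uu\}$ and the convergence $z^\uu\to 0$ \emph{simultaneously} within the induction, handling at step $i$ both $z^\uu_i \to 0$ and the asymptotic vanishing (or at least boundedness) of $y^\uu_i$ via Lemma~\ref{lemma: Skorokhod negative infty} and its regulator-level counterpart; this is precisely the pattern already executed for the $\gamma_k^2(r)$-scaling in the proof of Proposition~\ref{prop: rk initial}, so the argument here is an adaptation rather than a genuinely new difficulty.
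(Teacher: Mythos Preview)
Your induction on $i=1,\ldots,d$ mirrors the proof of Proposition~\ref{prop: rk initial}, but the base case (and each subsequent step) contains a genuine error: the claim that $m^\uu=(Ra^\uu)_1\to\infty$ is false for a general $\mathcal{M}$-matrix $R$ and a general $a^\uu\to\infty$. Your inference ``$R$ has a positive inverse, so each $Ra^\uu$ with $a^\uu\to\infty$ componentwise has all entries tending to $+\infty$'' runs the wrong way: $R^{-1}\geq 0$ gives $R^{-1}b\geq0$ for $b\geq0$, not $Rb>0$ for $b>0$. Concretely, with $d=2$, $R=\bigl(\begin{smallmatrix}1&-2\\0&1\end{smallmatrix}\bigr)$ (an $\mathcal{M}$-matrix, since $R^{-1}=\bigl(\begin{smallmatrix}1&2\\0&1\end{smallmatrix}\bigr)\geq0$) and $a^\uu=(1/r,1/r)$, one has $(Ra^\uu)_1=-1/r\to-\infty$, so Lemma~\ref{lemma: Skorokhod negative infty} cannot be invoked at $i=1$. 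The same failure recurs at every induction step: $G^{(i)}_{i,[i:d]}a^\uu_{[i:d]}=G^{(i)}_{ii}a^\uu_i+\sum_{j>i}G^{(i)}_{ij}a^\uu_j$ need not diverge to $+\infty$, because the off-diagonal terms carry negative coefficients and the $a^\uu_j$ for $j>i$ may overwhelm $a^\uu_i$. The reason this style of induction \emph{does} work in Proposition~\ref{prop: rk initial} is the specific scale ordering $\gamma_1(r)/\gamma_k(r)\gg\gamma_2(r)/\gamma_k(r)\gg\cdots$ there, which forces the diagonal term to dominate; Lemma~\ref{lemma: block Skorokhod negative infty} is stated for arbitrary $a^\uu\to\infty$ componentwise, with no such hierarchy.

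The paper's proof avoids this entirely. It does not induct on the coordinate index; instead, for each fixed coordinate (``without loss of generality'' the last one, after relabeling), it performs the \emph{full} elimination $E^{(d)}$, so that in the $d$th row of $G^{(d)}=E^{(d)}R$ only the single entry $G^{(d)}_{dd}>0$ survives and the scalar drift becomes exactly $-G^{(d)}_{dd}a^\uu_d\to-\infty$, regardless of the relative sizes of the other $a^\uu_j$. The remaining components $z^\uu_{[1:d-1]}$ that appear in the one-dimensional driving process are not assumed to vanish by an induction hypothesis; rather, they are controlled on compacts via the Lipschitz comparison with the auxiliary input $\tilde{x}^\uu(t)=-v^\uu(t)-Ra^\uu t$, whose reflected path is identically zero (this is where $R^{-1}\geq0$ is used correctly, to make $\tilde{y}^\uu=R^{-1}v^\uu+a^\uu t$ nondecreasing), yielding $\|z^\uu\|_{[0,T]}\leq\kappa(R)\|u^\uu\|_{[0,T]}$. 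That uniform bound, together with the clean diverging drift, is what the paper feeds into Lemma~\ref{lemma: Skorokhod negative infty}.
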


\begin{proof}[Proof of Lemma \ref{lemma: block Skorokhod negative infty}.]
    If $d=1$, this is  just Lemma \ref{lemma: Skorokhod negative infty}. If $d\geq 2$, without loss of generality, we will only prove $z^\uu_d\to 0$ u.o.c.~as $r\to 0$.
    From the definition of the Skorokhod reflection mapping, we have
    \begin{equation*}
        \begin{aligned}
            &z^\uu(t) = u^\uu(t) - v^\uu(t) - Ra^\uu t + R y^\uu(t),\\
            &z^\uu(t)\in \R_+^d \text{ for all }t\geq 0,\\
            &\text{$y^\uu$ is continuous and nondecreasing with $y^\uu_i(0)=0,\quad i=1,\ldots,d,$} \\
            &\int_0^\infty z^\uu_i(t)dy^\uu_i(t) = 0, \quad i=1,\ldots,d.
        \end{aligned}
    \end{equation*}
    Left-multiplying $E^{(d)}$ on both sides of the first equation, we have
    \begin{equation*}
        E^{(d)}z^\uu(t) = E^{(d)}u^\uu(t) - E^{(d)}v^\uu(t) - G^{(d)} a^\uu t + G^{(d)} y^\uu(t).
    \end{equation*}
    Since $E^{(d)}_{d,d}=1$ and $G^{(d)}_{d,j}=0$ for $j<d$ in the Gauss elimination \eqref{eq: Gauss elim}, the $d$th component of the above equation is
    \begin{equation*}
        E^{(d)}_{d,[1:d-1]}z^\uu_{[1:d-1]}(t) + z^\uu_d(t) = E^{(d)}_{d,[1:d]}u^\uu(t) - E^{(d)}_{d,[1:d]}v^\uu(t) - G^{(d)}_{d,d} a^\uu_d t + G^{(d)}_{d,d} y^\uu_d(t),
    \end{equation*}
    Therefore, we have 
    \begin{equation*}
        z_d^\uu = \Phi\left( \left\{- E^{(d)}_{d,[1:d-1]}z^\uu_{[1:d-1]}(t) +  E^{(d)}_{d,[1:d]}u^\uu(t) - E^{(d)}_{d,[1:d]}v^\uu(t) - G^{(d)}_{d,d} a^\uu_d t  \right\} ; G^{(d)}_{d,d}\right)
    \end{equation*}
    
    Since $\lim_{r\to 0}a^\uu = \infty$, there exists a $r_1\in (0,1)$ such that for all $r\in (0,r_1)$, we have $a^\uu>0$. 
    Let $\tilde{x}^\uu:=\{ -v^\uu(t)-Ra^\uu t \}$. Then $\tilde{z}^\uu\defi\Phi(\tilde{x}^\uu;R)=\mathbf{0}$ and $\tilde{y}^\uu\defi\Psi(\tilde{x}^\uu;R)=\{R^{-1}v^\uu(t)+a^\uu t\}$ for any $t\geq 0$ and $r\in (0,r_1)$. Hence, the Lipschitz continuity of the Skorokhod reflection mapping in \eqref{eq: Lipschitz} implies that there exists a constant $\kappa(R)$ dependent only on $R$ such that for any fixed $T>0$
    \begin{equation*}
        \begin{aligned}
            \sup_{1\leq i \leq d-1}z^\uu(T)&\leq \norm{z^\uu}_{L^\infty[0,T]}=\norm{z^\uu - \tilde{z}^\uu}_{L^\infty[0,T]} \leq \kappa(R) \norm{u^\uu}_{L^\infty[0,T]},
        \end{aligned}
    \end{equation*}
    where $\norm{\cdot}_{L^\infty[0,T]}$ denotes the uniform norm on $[0,T]$ defined in \eqref{eq: uniform norm}. 
    It follows from Lemma~\ref{lemma: Skorokhod negative infty} that $z^\uu_d \rightarrow 0$ u.o.c.~as $r\to 0$.      
\end{proof}

\begin{proof}[Proof of Proposition \ref{prop: block rk initial}.]
    The proof is similar to that of Proposition \ref{prop: rk initial} by changing each component to each block.

    We also utilize the Skorokhod representation theorem similar to Proposition \ref{prop: rk initial} in Section \ref{sec: proof multi-SRBM each}. Specifically, by the weak convergence of the initial states in Assumption \ref{assmpt: initial block} and Brownian motions in Corollary~\ref{cor: BM}, it is sufficient to prove that there exists random variables $\scale{\hat Z}{k}{{}}(0)\overset{d}{=}\scale{\tilde Z}{k}{{}}(0)$ and $\hat{\xi}\overset{d}{=} \tilde{\xi}$ with $\scale{\hat Z}{j}{{A_j}}(0)\overset{a.s.}{\to} \hat{\xi}_{A_j}$ for $j\in \K$ as $r\to 0$ and stochastic processes $\scale{\hat W}{k}{{}}\overset{d}{=}\scale{W}{k}{{}}$ and $\scaleL{\hat W}{k}{{}} \overset{d}{=} \scaleL{W}{k}{{}}$ with $\scale{\hat W}{k}{{}}\overset{a.s.}{\to} \scaleL{\hat W}{k}{{}}$ u.o.c.~as $r\to 0$ such that
    \begin{equation} \label{eq: suff prop3C}
        \scale{\hat Z}{k}{[1:\indexb{k-1}]} \asto \mathbf{0}, \quad \scale{\hat Y}{k}{[\indexa{k+1}:J]} \asto \mathbf{0}, \quad \scale{\hat Z}{k}{A_k}  \asto \hat Z^*_{A_k} , \quad \text{u.o.c.~as } r\to 0, 
    \end{equation}
    where 
\begin{equation*}
    \scale{\hat Z}{k}{} \defi \Phi\left(\scale{\hat X}{k}{{}}; R\right), \quad \scale{\hat Y}{k}{} \defi \Psi\left(\scale{\hat X}{k}{{}}; R\right), \quad \hat Z^*_{A_k} \overset{d}{=} \tilde Z^*_{A_k},
\end{equation*}
and 
\begin{equation*}
    \scale{\hat X}{k}{{}} = \scale{\hat Z}{k}{{}}(0) - R \delta^\uu t / \gamma_k(r) + \scale{\hat W}{k}{{}}(t).
\end{equation*}

We are now ready to prove the first statement of \eqref{eq: suff prop3C}. The first part, $\scale{\hat Z}{k}{[1:\indexb{k-1}]} \asto \mathbf{0}$ u.o.c.~as $r\to 0$, is vacuously true for $k=1$. For $k\geq 2$, we use the mathematical induction to prove $\scale{\hat Z}{k}{A_\ell} \asto \mathbf{0}$ u.o.c.~as $r\to 0$ from $\ell=1$ to $k-1$.

    For the base case of block $\ell=1$, the components of the first block of \eqref{eq: RBM1 r} is
    \begin{equation*}
        \scale{\hat Z}{k}{A_1}(t) = \scale{\hat Z}{k}{A_1}(0) - R_{A_1,[1:J]}\delta^\uu t / \gamma_k(r) + L_{A_1,[1:J]} \scale{\hat W}{k}{{}}(t) + R_{A_1,[1:J]} \scale{\hat Y}{k}{{}}(t),
    \end{equation*}
    which is equivalent to, for $t\geq 0$,
    \begin{equation}\label{eq: block RBM1 r 1}
        \begin{aligned} 
            \scale{\hat Z}{k}{A_1}(t)&= \left[ \scale{\hat Z}{k}{A_1}(0) - \left( R_{A_1,A_1} b^{(1)} \frac{\gamma_1(r)}{\gamma_k(r)} + \sum_{j=2}^K R_{A_1,A_j}b^{(j)}\frac{\gamma_j(r)}{\gamma_k(r)} \right)t +  L_{A_1,[1:J]} \scale{\hat W}{k}{{}}(t) \right.  \\
            &\quad \left. + \sum_{j=2}^K R_{A_1,A_j}\scale{\hat Y}{k}{A_j}(t) \right] + R_{A_1,A_1}\scale{\hat Y}{k}{A_1}(t). 
        \end{aligned}
    \end{equation}
    Therefore, \eqref{eq: RBM2 r}-\eqref{eq: RBM4 r} and \eqref{eq: block RBM1 r 1} imply $\scale{\hat Z}{k}{A_1} = \Phi (\scale{\hat U}{k}{1}; R_{A_1,A_1})$, where $\scale{\hat U}{k}{1}(t)$ is a $|A_1|$-dimensional process, defined in the square bracket of \eqref{eq: block RBM1 r 1}. 
    
    The initial condition in Assumption \ref{assmpt: initial block} indicates $\scale{\hat Z}{k}{A_1}(0) \asto \mathbf{0}$ as $r\to 0$ for $k\geq 2$. Since $R$ is an $\mathcal{M}$-matrix, $R_{A_1,A_1}$ is also an $\caM$-matrix and $R_{A_1,A_j} \leq 0$ for $j>1$, we have $  b^{(1)} \gamma_1(r)/\gamma_k(r) + \sum_{j=2}^K R_{A_1,A_1}^{-1}R_{A_1,A_j}b^{(j)}\gamma_j(r)/\gamma_k(r)  \to \infty$ as $r\to 0$. By Corollary~\ref{cor: BM}, the driving Brownian motion converges weakly. Since the drift term in $\scale{\hat U}{k}{1}$ diverges to $-\infty$, we can apply Lemma~\ref{lemma: Skorokhod negative infty} to conclude that $\scale{\hat Z}{k}{A_1} \asto 0$ u.o.c.~as $r\to 0$.

    For the induction step $2\leq \ell< k$, we assume that $\scale{\hat Z}{k}{i} \asto {0}$ u.o.c.~as $r\to 0$ for $i=1,\ldots,\indexb{\ell-1}$. Left-multiplying $E^{(\indexa{\ell})}$ on both sides of \eqref{eq: RBM1 r}, for $t\geq 0$, we have
    \begin{equation*}
        E^{(\indexa{\ell})}\scale{\hat Z}{k}{{}}(t) = E^{(\indexa{\ell})}\scale{\hat Z}{k}{{}}(0) - G^{(\indexa{\ell})}\delta^\uu t / \gamma_k(r) + E^{(\indexa{\ell})}L \scale{\hat W}{k}{{}}(t) + G^{(\indexa{\ell})}\scale{\hat Y}{k}{{}}(t).
    \end{equation*}
    Since $E^{(\indexa{\ell})}_{A_{\ell}, A_{\ell}} = I_{|A_\ell|}$, $E^{(\indexa{\ell})}_{A_{\ell}, [\indexa{\ell+1}:J]} = 0$ and $G^{(\indexa{\ell})}_{A_{\ell}, [1:\indexb{\ell-1}]} = 0$  in \eqref{eq: Gauss elim}, the components $A_{\ell}$ of the above equation become
    \begin{equation*}
        \begin{aligned}
            E^{(\indexa{\ell})}_{A_{\ell}, [1:\indexb{\ell-1}]}\scale{\hat Z}{k}{{[1:\indexb{\ell-1}]}}(t) + \scale{\hat Z}{k}{{A_{\ell}}}(t) &= E^{(\indexa{\ell})}_{A_{\ell}, [1:\indexb{\ell}]}\scale{\hat Z}{k}{{[1:\indexb{\ell}]}}(0) - G^{(\indexa{\ell})}_{A_{\ell}, [\indexa{\ell}:J]}\delta^\uu_{[\indexa{\ell}:J]} t / \gamma_k(r) \\
            &\quad + E^{(\indexa{\ell})}_{A_{\ell}, [1:J]}L \scale{\hat W}{k}{{}}(t) + G^{(\indexa{\ell})}_{A_{\ell}, [\indexa{\ell}:J]}\scale{\hat Y}{k}{{[\indexa{\ell}:J]}}(t),
        \end{aligned}
    \end{equation*}
    which is equivalent to
    \begin{equation} \label{eq: block RBM1 r ell}
        \begin{aligned}
            \scale{\hat Z}{k}{{A_{\ell}}}(t) & = \Big[ - E^{(\indexa{\ell})}_{A_{\ell}, [1:\indexb{\ell-1}]}\scale{\hat Z}{k}{{[1:\indexb{\ell-1}]}}(t) + E^{(\indexa{\ell})}_{A_{\ell}, [1:\indexb{\ell}]}\scale{\hat Z}{k}{{[1:\indexb{\ell}]}}(0)  + E^{(\indexa{\ell})}_{A_{\ell}, [1:J]}L \scale{\hat W}{k}{{}}(t) \\
            &\qquad  - \Big( G^{(\indexa{\ell})}_{A_{\ell}, A_{\ell}} b^{(\ell)} \frac{\gamma_\ell(r)}{\gamma_k(r)} + \sum_{j=\ell+1}^K G^{(\indexa{\ell})}_{A_{\ell}, A_j}b^{(j)}\frac{\gamma_j(r)}{\gamma_k(r)} \Big)t  \\
            &\qquad + \sum_{j=\ell+1}^K G^{(\indexa{\ell})}_{A_{\ell}, A_j} \scale{\hat Y}{k}{{A_j}}(t) \Big] + G^{(\indexa{\ell})}_{A_{\ell}, A_{\ell}}\scale{\hat Y}{k}{{A_{\ell}}}(t), 
        \end{aligned}
    \end{equation}
    Therefore, \eqref{eq: RBM2 r}-\eqref{eq: RBM4 r} and \eqref{eq: block RBM1 r ell} imply $\scale{\hat Z}{k}{A_{\ell}} = \Phi (\scale{\hat U}{k}{\ell}; G_{A_{\ell},A_{\ell}}^{(\indexa{\ell})})$, where $\scale{\hat U}{k}{\ell}$ is a $|A_\ell|$-dimensional process, defined in the square bracket of \eqref{eq: block RBM1 r ell}. 
    
    By the hypothesis induction, we have $\scale{\hat Z}{k}{i} \asto 0$ u.o.c.~as $r\to 0$ for $i=1,\ldots,\indexb{\ell-1}$. The initial condition in Assumption \ref{assmpt: initial block} indicates $\scale{\hat Z}{k}{{i}}(0)\asto \mathbf{0}$ as $r\to 0$ for $i=1,\ldots,\indexb{k-1}$ and $k\geq 2$. Since $G^{(\indexa{\ell})}_{A_{\ell}, A_{\ell}}$ is an $\mathcal{M}$-matrix, $(G^{(\indexa{\ell})}_{A_{\ell}, A_{\ell}})^{-1}\geq 0$ and $G^{(\indexa{\ell})}_{A_{\ell}, [\indexa{\ell+1}:J]}\leq 0$, we have $ \gamma_\ell(r)/\gamma_k(r) + \sum_{j=\ell+1}^K (G^{(\indexa{\ell})}_{A_{\ell}, A_{\ell}})^{-1}$ $G^{(\indexa{\ell})}_{A_{\ell}, A_j}\gamma_j(r)/\gamma_k(r) \to \infty$ as $r\to 0$. Hence, we can apply Lemma \ref{lemma: block Skorokhod negative infty} and conclude that $\scale{\hat Z}{k}{A_{\ell}} \asto 0$ u.o.c.~as $r\to 0$.

    In summary, by the mathematical induction, we have proved that $\scale{\hat Z}{k}{[1:\indexb{k-1}]} \asto 0$ u.o.c.~as $r\to 0$. 

    \vspace{1em}
    
    To prove the remaining two parts, we left-multiply $E^{(\indexa{k})}$ on both sides of \eqref{eq: RBM1 r}, for $t\geq 0$, we have
    \begin{equation*}
        E^{(\indexa{k})}\scale{\hat Z}{k}{{}}(t) = E^{(\indexa{k})}\scale{\hat Z}{k}{{}}(0) - G^{(\indexa{k})}\delta^\uu t / \gamma_k(r) + E^{(\indexa{k})}L \scale{\hat W}{k}{{}}(t) + G^{(\indexa{k})}\scale{\hat Y}{k}{{}}(t).
    \end{equation*}
    Since $E^{(\indexa{k})}_{[\indexa{k}:J], [\indexa{k}:J]} = I_{J-\indexa{k}+1}$ and $G^{(\indexa{k})}_{[\indexa{k}:J], [1:\indexb{k-1}]}=0$ in \eqref{eq: Gauss elim}, the components $[\indexa{k}:J]$ of the above equation become
    \begin{equation}\label{eq: RBM1 rk k block}
        \begin{aligned} 
            \scale{\hat Z}{k}{[\indexa{k}:J]}(t) &= \left[ -E^{(\indexa{k})}_{[\indexa{k}:J],[1:\indexb{k-1}]}\scale{\hat Z}{k}{[1:\indexb{k-1}]}(t) + E^{(\indexa{k})}\scale{\hat Z}{k}{{}}(0)- G^{(\indexa{k})}_{[\indexa{k}:J],[\indexa{k}:J]}\delta_{[\indexa{k}:J]}^\uu t / \gamma_k(r) \right. \\
            &\quad \left. + E^{(\indexa{k})}_{[\indexa{k}:J], [1:J]}L \scale{\hat W}{k}{{}}(t) \right]  + G^{(\indexa{k})}_{[\indexa{k}:J],[\indexa{k}:J]}\scale{\hat Y}{k}{{[\indexa{k}:J]}}(t). 
        \end{aligned}
    \end{equation}
    Therefore, \eqref{eq: RBM2 r}-\eqref{eq: RBM4 r} and \eqref{eq: RBM1 rk k block} imply $\scale{\hat Y}{k}{[\indexa{k}:J]} = \Psi (\scale{\hat S}{k}{{}}; G^{(\indexa{k})}_{[\indexa{k}:J],[\indexa{k}:J]})$, where $\scale{\hat S}{k}{{}}$ is a $(d-\indexa{k}+1)$-dimensional process, defined in the square bracket of \eqref{eq: RBM1 rk k block}. 

    To prove the second statement of \eqref{eq: suff prop3C}. We will first show that the family  $\{\hat Y^{(r,k)}_{[\indexa{k}:J]}(t);~t\geq 0\}_{r\in(0,1)}$ is pathwise bounded by a continuous nondecreasing process. 

    To show this, we use a comparison argument. Consider a simple driving process $\scale{\check{S}}{k}{{}}:=E^{(\indexa{k})}\scale{\hat Z}{k}{{}}(0)$. Its associated regulator process is $\scale{\check{Y}}{k}{{}}=\Psi(\scale{\check{S}}{k}{{}}; G^{(k)}_{[\indexa{k}:J],[\indexa{k}:J]})=\mathbf{0}$. By the Lipschitz continuity of the Skorokhod reflection mapping in \eqref{eq: Lipschitz}, we have, for any fixed $T>0$,
    \begin{equation*}
        \begin{aligned}
            &\sup_{t\in [0,T]}\sup_{\indexa{k}\leq \ell \leq J}{\scale{\hat Y}{k}{\ell}(t) }  = \sup_{t\in [0,T]}\Norm{\scale{\hat Y}{k}{[\indexa{k}:J]}(t) - \scale{\check{Y}}{k}{{}}(t) }  \\
            &\quad \leq \kappa(G^{(\indexa{k})}_{[\indexa{k}:J],[\indexa{k}:J]})\sup_{t\in [0,T]}\Norm{ \scale{\hat S}{k}{{}}(t) - \scale{\check{S}}{k}{{}}(t)}  \\
            &\quad \leq \kappa(G^{(\indexa{k})}_{[\indexa{k}:J],[\indexa{k}:J]})\sup_{t\in [0,T]}\left( \Norm{ \left\{ E^{(\indexa{k})}_{[\indexa{k}:J],[1:\indexb{k-1}]}\scale{\hat Z}{k}{[1:\indexb{k-1}]}(t) \right\} } + \Norm{ \left\{  E^{(\indexa{k})}_{[\indexa{k}:J], [1:J]}L \scale{\hat W}{k}{{}} (t) \right\} }  \right) \\
            &\qquad +  \kappa(G^{(\indexa{k})}_{[\indexa{k}:J],[\indexa{k}:J]})\Norm{ G^{(\indexa{k})}_{[\indexa{k}:J],[\indexa{k}:J]}\delta_{[\indexa{k}:J]}^\uu / \gamma_k(r) }t , \quad \text{a.s.}
        \end{aligned}
    \end{equation*}
    which implies that the family of the scaled process $\{\scale{\hat Y}{k}{[\indexa{k}:J]}(t)\}$ with $r\in(0,1)$ is pathwise bounded by a continuous nondecreasing process.

    To apply Lemma \ref{lemma: Skorokhod positive infty} to prove $\{\hat Y^{(r,k)}_{\ell}(t);~t\geq 0\}\asto 0$ u.o.c.~as $r\to 0$ for $\ell \geq \indexa{k+1}$, we consider the $\ell$th component of \eqref{eq: RBM1 rk k block}.
    Since $E^{(\indexa{k})}_{\ell, \ell} = 1$ for $\ell\geq \indexa{k}$, $E^{(\indexa{k})}_{\ell, j} = 0$ for $\ell,j\geq \indexa{k}$ and $\ell\neq j$, and $G^{(\indexa{k})}_{\ell, j} = 0$ for $j<\indexa{k}\leq\ell$ in \eqref{eq: Gauss elim}, the $\ell$th component of \eqref{eq: RBM1 rk k block} for $\ell \geq \indexa{k+1}$ becomes
    \begin{equation} \label{eq: RBM1 rk ell block}
        \begin{aligned} 
            &\scale{\hat Z}{k}{\ell}(t) \\
            &\quad = \Big[ -E^{(\indexa{k})}_{\ell,[1:\indexb{k-1}]}\scale{\hat Z}{k}{[1:\indexb{k-1}]}(t) + E^{(\indexa{k})}_{\ell, [1:\indexb{k-1}]}\scale{\hat Z}{k}{{[1:\indexb{k-1}]}}(0)+ \scale{\hat Z}{k}{\ell}(0) - G^{(\indexa{k})}_{\ell,[\indexa{k}:J]}\delta_{[\indexa{k}:J]}^\uu t / \gamma_k(r)   \\ 
            &\qquad + E^{(\indexa{k})}_{\ell, [1:J]}L \scale{\hat W}{k}{{}}(t) + \sum_{j=\indexa{k},j\neq \ell}^J G^{(\indexa{k})}_{\ell,j}\scale{\hat Y}{k}{{j}}(t) \Big]  + G^{(\indexa{k})}_{\ell,\ell}\scale{\hat Y}{k}{{\ell}}(t). 
        \end{aligned}
    \end{equation}
    Therefore, \eqref{eq: RBM2 r}-\eqref{eq: RBM4 r} and \eqref{eq: RBM1 rk ell block} imply that $\scale{\hat Z}{k}{\ell} = \Phi (\scale{\hat U}{k}{\ell}; G^{(\indexa{k})}_{\ell,\ell})$, where $\scale{\hat U}{k}{\ell}$ is a one-dimensional process, which is defined in the square bracket of \eqref{eq: RBM1 rk ell block}. 
    
    By the initial condition in Assumption \ref{assmpt: initial block}, we have $\scale{\hat Z}{k}{{[1:\indexb{k-1}]}}(0)\asto \mathbf{0}$ and $\hat Z^\uu_\ell(0)\asto \infty$ as $r\to 0$ for $\ell \geq \indexa{k+1}$. Hence, we can apply Lemma \ref{lemma: Skorokhod positive infty} and conclude that $\scale{\hat Y}{k}{\ell} \asto 0$ u.o.c.~as $r\to 0$ for $\ell \geq \indexa{k+1}$. In summary, $\scale{\hat Y}{k}{[\indexa{k+1}:J]} \asto \mathbf{0}$ u.o.c.~as $r\to 0$.

    The components in $A_k$ of \eqref{eq: RBM1 rk k block} are
    \begin{equation} \label{eq: RBM1 rk ell k block}
        \begin{aligned} 
            \scale{\hat Z}{k}{A_k}(t) &= \left[ -E^{(\indexa{k})}_{A_k,[1:\indexb{k-1}]}\scale{\hat Z}{k}{[1:\indexb{k-1}]}(t) + E^{(\indexa{k})}_{A_k, [1:\indexb{k-1}]}\scale{\hat Z}{k}{{[1:\indexb{k-1}]}}(0) + \scale{\hat Z}{k}{A_k}(0)  \right.  \\ 
            &\quad - G^{(\indexa{k})}_{A_k,[\indexa{k}:J]}\delta_{[\indexa{k}:J]}^\uu t / \gamma_k(r)+ E^{(\indexa{k})}_{A_k, [1:J]}L \scale{\hat W}{k}{{}}(t)  +\left.G^{(\indexa{k})}_{A_k,[\indexa{k+1}:J]}\scale{\hat Y}{k}{{[\indexa{k+1}:J]}}(t) \right]  \\
            &\quad  + G^{(\indexa{k})}_{A_k,A_k}\scale{\hat Y}{k}{{A_k}}(t). 
        \end{aligned}
    \end{equation}
    Therefore, \eqref{eq: RBM2 r}-\eqref{eq: RBM4 r} and \eqref{eq: RBM1 rk ell k block} imply $\scale{\hat Z}{k}{k} = \Phi (\scale{\hat U}{k}{k}; G^{(k)}_{k,k})$, where $\scale{\hat U}{k}{k}$ is a $|A_k|$-dimensional process, which is defined in the square bracket of \eqref{eq: RBM1 rk ell k block}. 
    
    By Assumption~\ref{assmpt: initial block}, we have $\scale{\hat Z}{k}{{[1:\indexb{k-1}]}}(0)\asto \mathbf{0}$ and $\scale{\hat Z}{k}{A_k}(0)\asto \hat{\xi}_{A_k}$ as $r\to 0$.  Since we have proved that $\scale{\hat Z}{k}{[1:\indexb{k-1}]} \asto \mathbf{0}$ and  $\scale{\hat Y}{k}{[\indexa{k+1}:J]} \asto \mathbf{0}$ as $r\to 0$, we have
    \begin{equation*}
        \scale{\hat U}{k}{k}(t) \asto \scaleL{\hat U}{k}{k} \defi \left\{ \hat{\xi}_{A_k} - G^{(\indexa{k})}_{A_k,A_k} b^{(k)} t + E^{(\indexa{k})}_{A_k,[1:J]}L \scaleL{\hat W}{k}{};~t\geq0 \right\}, \quad \text{u.o.c.~as $r\to 0$,}
    \end{equation*}
    which is a Brownian motion with the initial state $\hat{\xi}_{A_k}$, the drift vector $ - G^{(\indexa{k})}_{A_k,A_k} b^{(k)}$ and the covariance matrix $E^{(\indexa{k})}_{A_k,[1:J]}L (E^{(\indexa{k})}_{A_k,[1:J]}L)^\T = (E^{(\indexa{k})}\Gamma(E^{(\indexa{k})})^\T)_{A_k,A_k}$. Thus, the proof is completed by the Lipschitz continuity of the Skorokhod reflection mapping.      
\end{proof}

The proof of Theorem \ref{thm:SRBM block} is the same as the proof of Theorem \ref{prop: multi-SRBM} in Section \ref{sec: proof multi-SRBM A} by applying Corollary \ref{cor: BM}. The proof of Theorem \ref{thm:SRBM block cv} is the same as the proof of Theorem~\ref{prop: multi-SRBM conventional} in Appendix \ref{sec: conventional proof}. So we omit their proofs here.

\bibliographystyle{plainnat} 
\bibliography{sample}

\begin{thebibliography}{49}
\providecommand{\natexlab}[1]{#1}
\providecommand{\url}[1]{\texttt{#1}}
\expandafter\ifx\csname urlstyle\endcsname\relax
  \providecommand{\doi}[1]{doi: #1}\else
  \providecommand{\doi}{doi: \begingroup \urlstyle{rm}\Url}\fi

\bibitem[Baldi(2023)]{Baldi}
Paolo Baldi.
\newblock \emph{Probability---an introduction through theory and exercises}.
\newblock Universitext. Springer, Cham, 2023.

\bibitem[Blanchet and Chen(2015)]{BlanChen2015}
Jose Blanchet and Xinyun Chen.
\newblock Steady-state simulation of reflected {B}rownian motion and related
  stochastic networks.
\newblock \emph{Annals of Applied Probability}, 25\penalty0 (6):\penalty0
  3209--3250, 2015.

\bibitem[Blanchet et~al.(2021)Blanchet, Chen, Si, and
  Glynn]{BlanChenSiGlyn2021}
Jose Blanchet, Xinyun Chen, Nian Si, and Peter~W. Glynn.
\newblock Efficient steady-state simulation of high-dimensional stochastic
  networks.
\newblock \emph{Stochastic Systems}, 11\penalty0 (2):\penalty0 174--192, 2021.

\bibitem[Bramson(1998)]{Bram1998}
Maury Bramson.
\newblock State space collapse with application to heavy traffic limits for
  multiclass queueing networks.
\newblock \emph{Queueing Systems}, 30:\penalty0 89--140, 1998.

\bibitem[Bramson and Dai(2001)]{BramDai2001}
Maury Bramson and Jim~G Dai.
\newblock Heavy traffic limits for some queueing networks.
\newblock \emph{Annals of Applied Probability}, pages 49--90, 2001.

\bibitem[Bramson et~al.(2010)Bramson, Dai, and Harrison]{BramDaiHarr2010}
Maury Bramson, J.~G. Dai, and J.~Michael Harrison.
\newblock Positive recurrence of reflecting {Brownian} motion in three
  dimensions.
\newblock \emph{Annals of Applied Probability}, 20\penalty0 (2):\penalty0
  753--783, 2010.

\bibitem[Braverman et~al.(2017)Braverman, Dai, and Miyazawa]{BravDaiMiya2017}
Anton Braverman, Jim~G Dai, and Masakiyo Miyazawa.
\newblock Heavy traffic approximation for the stationary distribution of a
  generalized {J}ackson network: The {BAR} approach.
\newblock \emph{Stochastic Systems}, 7\penalty0 (1):\penalty0 143--196, 2017.

\bibitem[Braverman et~al.(2025)Braverman, Dai, and Miyazawa]{BravDaiMiya2024}
Anton Braverman, J.~G. Dai, and Masakiyo Miyazawa.
\newblock The {BAR} approach for multiclass queueing networks with {SBP}
  service policies.
\newblock \emph{Stochastic Systems}, 15\penalty0 (1):\penalty0 1--49, 2025.

\bibitem[Budhiraja and Lee(2009)]{BudhLee2009}
Amarjit Budhiraja and Chihoon Lee.
\newblock Stationary distribution convergence for generalized {J}ackson
  networks in heavy traffic.
\newblock \emph{Mathematics of Operations Research}, 34\penalty0 (1):\penalty0
  45--56, 2009.

\bibitem[Chen and Yao(2001)]{ChenYao2001}
H.~Chen and D.~Yao.
\newblock \emph{Fundamentals of queueing networks: performance, asymptotics,
  and optimization}.
\newblock Springer-Verlag, New York, 2001.

\bibitem[Chen and Mandelbaum(1994)]{ChenMand1994}
Hong Chen and Avi Mandelbaum.
\newblock Hierarchical modeling of stochastic networks, part {II}: Strong
  approximations.
\newblock In \emph{Stochastic modeling and analysis of manufacturing systems},
  pages 107--131. Springer, 1994.

\bibitem[Chen and Zhang(1996)]{ChenZhan1996}
Hong Chen and Hanqin Zhang.
\newblock Diffusion approximations for re-entrant lines with a
  first-buffer-first-served priority discipline.
\newblock \emph{Queueing Systems}, 23:\penalty0 177--195, 1996.

\bibitem[Chen and Zhang(2000)]{ChenZhan2000}
Hong Chen and Hanqin Zhang.
\newblock Diffusion approximations for some multiclass queueing networks with
  {FIFO} service disciplines.
\newblock \emph{Mathematics of Operations Research}, 25\penalty0 (4):\penalty0
  679--707, 2000.

\bibitem[Cs{\"o}rgo and R{\'e}v{\'e}sz(1981)]{Csor1981}
Miklos Cs{\"o}rgo and P{\'a}l R{\'e}v{\'e}sz.
\newblock \emph{Strong approximations in probability and statistics}.
\newblock Academic Press, 1981.

\bibitem[Dai(1995)]{Dai1995}
J.~G. Dai.
\newblock On positive {H}arris recurrence of multiclass queueing networks: a
  unified approach via fluid limit models.
\newblock \emph{Annals of Applied Probability}, 5\penalty0 (1):\penalty0
  49--77, 1995.

\bibitem[Dai and Harrison(1992)]{DaiHarr1992}
J.~G. Dai and J.~M. Harrison.
\newblock Reflected {B}rownian motion in an orthant: numerical methods for
  steady-state analysis.
\newblock \emph{Annals of Applied Probability}, 2\penalty0 (1):\penalty0
  65--86, 1992.

\bibitem[Dai and Huo(2024)]{DaiHuo2024}
J.~G. Dai and Dongyan Huo.
\newblock Asymptotic product-form steady-state for multiclass queueing networks
  with {SBP} service policies in multi-scale heavy traffic.
\newblock \emph{arXiv preprint arXiv:2403.04090}, 2024.

\bibitem[Dai et~al.(2023)Dai, Glynn, and Xu]{DaiGlynXu2023}
J.~G. Dai, P.~Glynn, and Y.~Xu.
\newblock Multi-scale heavy traffic steady-state convergence in generalized
  {Jackson} networks.
\newblock \emph{arXiv preprint arXiv:2304.01499}, 2023.

\bibitem[Dai and Kurtz(1995)]{DaiKurt1995}
Jim~G Dai and Thomas~G Kurtz.
\newblock A multiclass station with markovian feedback in heavy traffic.
\newblock \emph{Mathematics of Operations Research}, 20\penalty0 (3):\penalty0
  721--742, 1995.

\bibitem[Dai et~al.(1997)Dai, Yeh, and Zhou]{DaiYehZhou1997}
Jim~G Dai, Din-Horng Yeh, and Chen Zhou.
\newblock The {QNET} method for re-entrant queueing networks with priority
  disciplines.
\newblock \emph{Operations Research}, 45\penalty0 (4):\penalty0 610--623, 1997.

\bibitem[Foddy(1983)]{Fodd1983}
M.~Foddy.
\newblock \emph{Analysis of {B}rownian motion with drift, confined to a
  quadrant by oblique reflection}.
\newblock PhD thesis, Department of Mathematics, Stanford University, 1983.

\bibitem[Gamarnik and Zeevi(2006)]{GamaZeev2006}
David Gamarnik and Assaf Zeevi.
\newblock Validity of heavy traffic steady-state approximation in generalized
  {J}ackson networks.
\newblock \emph{Annals of Applied Probability}, 16\penalty0 (1):\penalty0
  56--90, 2006.

\bibitem[Guang et~al.(2024)Guang, Chen, Dai, and Glynn]{GuanChenDaiGlyn2024}
Jin Guang, Xinyun Chen, J.~G. Dai, and Peter Glynn.
\newblock Asymptotic product-form steady-state distribution for semimartingale
  reflecting {B}rownian motion in multi-scaling regime.
\newblock \emph{arXiv preprint arXiv:2503.19710}, 2024.

\bibitem[Harchol-Balter(2013)]{Mor2013}
Mor Harchol-Balter.
\newblock \emph{Performance modeling and design of computer systems: queueing
  theory in action}.
\newblock Cambridge University Press, 2013.

\bibitem[Harrison and Reiman(1981)]{HarrReim1981}
J.~M. Harrison and M.~I. Reiman.
\newblock Reflected {B}rownian motion on an orthant.
\newblock \emph{Annals of Probability}, 9\penalty0 (2):\penalty0 302--308,
  1981.

\bibitem[Harrison and Williams(1987)]{HarrWill1987}
J.~M. Harrison and R.~J. Williams.
\newblock Brownian models of open queueing networks with homogeneous customer
  populations.
\newblock \emph{Stochastics}, 22\penalty0 (2):\penalty0 77--115, 1987.

\bibitem[Harrison and Nguyen(1993)]{HarrNguy1993}
J.~Michael Harrison and Vi\^{e}n Nguyen.
\newblock Brownian models of multiclass queueing networks: current status and
  open problems.
\newblock \emph{Queueing Systems. Theory and Applications}, 13\penalty0
  (1-3):\penalty0 5--40, 1993.

\bibitem[Horv{\'a}th(1984)]{Horv1984}
Lajos Horv{\'a}th.
\newblock Strong approximation of renewal processes.
\newblock \emph{Stochastic Processes and their Applications}, 18\penalty0
  (1):\penalty0 127--138, 1984.

\bibitem[Horv{\'a}th(1992)]{Horv1992}
Lajos Horv{\'a}th.
\newblock Strong approximations of open queueing networks.
\newblock \emph{Mathematics of Operations Research}, 17\penalty0 (2):\penalty0
  487--508, 1992.

\bibitem[Jackson(1957)]{Jack1957}
James~R. Jackson.
\newblock Networks of waiting lines.
\newblock \emph{Operations Research}, 5\penalty0 (4):\penalty0 518--521, 1957.

\bibitem[Jackson(1963)]{Jack1963}
James~R. Jackson.
\newblock Jobshop-like queueing systems.
\newblock \emph{Management Science}, 10\penalty0 (1):\penalty0 131--142, 1963.

\bibitem[Jacod and Shiryaev(2013)]{jacod2013limit}
Jean Jacod and Albert Shiryaev.
\newblock \emph{Limit theorems for stochastic processes}, volume 288.
\newblock Springer Science \& Business Media, 2013.

\bibitem[Johnson(1983)]{John1983}
Daniel~Peter Johnson.
\newblock \emph{Diffusion approximations for optimal filtering of jump
  processes and for queueing networks}.
\newblock The University of Wisconsin-Madison, 1983.

\bibitem[Katsuda(2010)]{Kats2010}
Toshiyuki Katsuda.
\newblock State-space collapse in stationarity and its application to a
  multiclass single-server queue in heavy traffic.
\newblock \emph{Queueing Systems}, 65\penalty0 (3):\penalty0 237--273, 2010.

\bibitem[Kriukov et~al.(2025)Kriukov, Debicki, and Mandjes]{kriukov2025}
Nikolai Kriukov, Krzysztof Debicki, and Michel Mandjes.
\newblock Functional limit theorems for {G}aussian-fed queueing network in
  light and heavy traffic.
\newblock \emph{arXiv preprint arXiv:2503.16633}, 2025.

\bibitem[McDonald et~al.(2020)McDonald, Nandi, Sivakumar, Sushmitha,
  Tsatsomeros, Wendler, and Wendler]{McDoSivaSushTsatWendWend2020}
J.~J. McDonald, R.~Nandi, K.~C. Sivakumar, P.~Sushmitha, M.~J. Tsatsomeros,
  E.~Wendler, and M.~Wendler.
\newblock M-matrix and inverse {M}-matrix extensions.
\newblock \emph{Special Matrices}, 8\penalty0 (1):\penalty0 186--203, 2020.

\bibitem[Miyazawa(2015)]{Miya2015}
Masakiyo Miyazawa.
\newblock Diffusion approximation for stationary analysis of queues and their
  networks: a review.
\newblock \emph{Journal of the Operations Research Society of Japan},
  58\penalty0 (1):\penalty0 104--148, 2015.

\bibitem[Miyazawa(2017)]{Miya2017}
Masakiyo Miyazawa.
\newblock A unified approach for large queue asymptotics in a heterogeneous
  multiserver queue.
\newblock \emph{Advances in Applied Probability}, 49\penalty0 (1):\penalty0
  182--220, 2017.

\bibitem[Peterson(1991)]{Pete1991}
William~P Peterson.
\newblock A heavy traffic limit theorem for networks of queues with multiple
  customer types.
\newblock \emph{Mathematics of Operations Research}, 16\penalty0 (1):\penalty0
  90--118, 1991.

\bibitem[Reiman(1984)]{Reim1984}
Martin~I. Reiman.
\newblock Open queueing networks in heavy traffic.
\newblock \emph{Mathematics of Operations Research}, 9:\penalty0 441--458,
  1984.

\bibitem[Reiman(1988)]{Reim1988}
Martin~I Reiman.
\newblock A multiclass feedback queue in heavy traffic.
\newblock \emph{Advances in Applied Probability}, 20\penalty0 (1):\penalty0
  179--207, 1988.

\bibitem[Shen et~al.(2002)Shen, Chen, Dai, and Dai]{ShenChenDaiDai2002}
Xinyang Shen, Hong Chen, J.~G. Dai, and Wanyang Dai.
\newblock The finite element method for computing the stationary distribution
  of an {SRBM} in a hypercube with applications to finite buffer queueing
  networks.
\newblock \emph{Queueing Systems}, 42:\penalty0 33--62, 2002.

\bibitem[Sigman(1990)]{Sigm1990}
Karl Sigman.
\newblock The stability of open queueing networks.
\newblock \emph{Stochastic Processes and their Applications}, 35:\penalty0
  11--25, 1990.

\bibitem[Srikant and Ying(2014)]{SrikYing2014}
R.~Srikant and Lei Ying.
\newblock \emph{Communication Networks: An Optimization, Control and Stochastic
  Networks Perspective}.
\newblock Cambridge University Press, Cambridge, UK, 2014.

\bibitem[Stolyar(1995)]{Stol1995}
A.~L. Stolyar.
\newblock On the stability of multiclass queueing networks: a relaxed
  sufficient condition via limiting fluid processes.
\newblock \emph{Markov Processes and Related Fields}, 1\penalty0 (4):\penalty0
  491--512, 1995.

\bibitem[Whitt(2002)]{Whit2002}
Ward Whitt.
\newblock \emph{Stochastic-process limits: an introduction to
  stochastic-process limits and their application to queues}.
\newblock Springer Science \& Business Media, 2002.

\bibitem[Williams(1995)]{Will1995}
Ruth~J. Williams.
\newblock Semimartingale reflecting {B}rownian motions in the orthant.
\newblock In Frank~P. Kelly and Ruth~J. Williams, editors, \emph{Stochastic
  Networks}, volume~71 of \emph{The {IMA} Volumes in Mathematics and its
  Applications}, pages 125--137, New York, 1995. Springer.

\bibitem[Williams(1998)]{Will1998}
Ruth~J Williams.
\newblock Diffusion approximations for open multiclass queueing networks:
  sufficient conditions involving state space collapse.
\newblock \emph{Queueing systems}, 30:\penalty0 27--88, 1998.

\bibitem[Zhang and Zwart(2008)]{ZhanZwar2008}
Jiheng Zhang and Bert Zwart.
\newblock Steady state approximations of limited processor sharing queues in
  heavy traffic.
\newblock \emph{Queueing Systems}, 60\penalty0 (3-4):\penalty0 227--246, 2008.

\end{thebibliography}

\end{document}